\newcommand{\hide}[1]{}
\newtheorem{theorem}{Theorem}[section]
\newtheorem{proposition}[theorem]{Proposition}
\newtheorem{corollary}[theorem]{Corollary}
\newtheorem{lemma}[theorem]{Lemma}
\theoremstyle{definition}
\newtheorem{remark}[theorem]{Remark}
\newtheorem{example}[theorem]{Example}
\newtheorem{definition}[theorem]{Definition}
\numberwithin{equation}{section}
\newcommand{\red}[1]{{\color{red} #1}}
\newcommand{\blue}[1]{{\color{blue} #1}}
\def\ZZ{{\mathbb Z}}
\def\xx{{\mathbf{x}}}
\def\Quiv {{\mathbf{Quiv}}}
\newcommand{\T}[2]{\mu[#1](#2)} 
\newcommand{\mutation}[1]{\stackrel{#1}{\rule[.5ex]{2em}{0.5pt}}}
\newcommand{\shortmutation}[1]{\stackrel{#1}{\rule[.5ex]{1.2em}{0.5pt}}}
\newcommand{\vmutation}[1]{  {\scriptstyle #1}{\Bigl|\Bigr.}  }
\newcommand{\points}{\!\dasharrow\!}
\def\nsquare{\scalebox{.85}{$\scriptscriptstyle\Box$}}
\def\ssquare{\scalebox{.75}{$\scriptscriptstyle\Box$}}
\begin{document}

\title{Long mutation cycles}
\author{Sergey Fomin}
\address{Department of Mathematics, University of Michigan,
Ann Arbor, MI 48109, USA}
\email{fomin@umich.edu}

\author{Scott Neville}
\address{Department of Mathematics, University of Michigan, Ann Arbor, MI 48109, USA}
\email{nevilles@umich.edu}

\newcommand{\h}{u} 
\newcommand{\oboxed}[1]{\ovalbox{$#1$}}
\cornersize*{12pt}
\setlength{\fboxsep}{2pt}

\date{April 22, 2023. Revised July 24, 2024 and July 14, 2025.}

\thanks{Partially supported by NSF grants DMS-2054231 and 2348501 (S.~F.\ and~S.~N.),
DMS-1840234 (S.~N.), 
and by BSF grant 2022157 and a Simons Fellowship (S.~F.).
}

\subjclass{
Primary
13F60, 
Secondary
05C20. 
}

\keywords{Quiver mutation, mutation cycle, cluster algebra.}

\begin{abstract}
A mutation cycle is a cycle in a graph whose vertices are labeled by~the quivers
in a given mutation  class and whose edges correspond to single mutations. 
For any fixed $n\ge 4$, we describe arbitrarily long mutation cycles involving $n$-vertex quivers.
Each of these mutation cycles allows for an arbitrary choice of $\binom{n}{2}$ positive integer parameters. 
None of the mutation cycles we construct can be paved by short mutation cycles. 
\end{abstract}


\maketitle


\section{Introduction}

Quiver mutations play fundamental role in the theory of cluster algebras and its numerous applications, 
see, e.g., \cite{FWZ} and references therein. 
In this paper, we focus on a particular aspect of the combinatorics of quiver mutations, namely 
the study of mutation cycles.
These are cycles in the \emph{mutation graph} of a mutation equivalence class,
the graph whose vertices correspond to the quivers in the class
and whose edges correspond to single mutations that relate quivers to each other. 
A~bit more precisely, a \emph{mutation cycle} of length~$N$ rooted at a quiver~$Q$
is a sequence of mutation steps that, when successively applied to~$Q$, ends up returning to~$Q$: 
\begin{equation*}
Q=Q^{(0)} \mutation{i_1}
Q^{(1)} \mutation{i_2}
\cdots
\mutation{i_{N-1}} Q^{(N-1)} \mutation{i_N}
Q^{(N)}=Q; 
\end{equation*}
here we require  $i_1\neq i_2\neq \cdots\neq i_{N-1}\neq i_N\neq i_1$ to ensure that we never 
apply the same mutation twice in a row. 

Not much is known about mutation cycles. Existing constructions involve 
\begin{itemize}[leftmargin=.15in]
\item 
quivers of finite mutation type, e.g., those associated with triangulated surfaces~\cite{MR2448067},
where elements of the mapping class group give rise to mutation cycles; 
\item
quivers related to square products of Dynkin diagrams~\cite{keller-annals, MR2767952} and more generally, 
flip moves in plabic graphs \cite{balitskiy-wellman}; 
\item
mutation-periodic quivers of  A.~Fordy and B.~R.~Marsh~\cite{Fordy-Marsh}. 
\end{itemize}
All of these mutation cycles rely on small arrow multiplicities and/or particular symmetries of the quivers involved. 
Perhaps more importantly, the only way to get a very long mutation cycle using either of these constructions
(provided we treat quivers up to isomorphism) 
is to use quivers with a lot of vertices.

In this paper, we considerably expand the zoo of mutation cycles by constructing,
for any fixed~$n\ge 4$, 
arbitrarily long mutation cycles involving $n$-vertex quivers. \linebreak[3]
Our main result (cf.\ Theorem~\ref{thm:Summary} below)
is a construction of a family of $n$-vertex quivers 
each of which lies on a mutation cycle of length~${n+4k}$. 
All quivers along any such mutation cycle are distinct. 
Moreover, none of these cycles can be paved by mutation cycles of length~${\le 4k}$. 
Thus, if $n$ is fixed and $k$ is large, say, $n=5$ and $k=100$,
then our construction produces quivers on 5~vertices that lie on mutation cycles of length~405 that cannot be paved by
cycles shorter than~400. 

Furthermore, there are lots of such mutation cycles! 
For fixed $n\ge 4$ and $k\ge 1$, our construction depends on an arbitrary choice of 
$\binom{n}{2}$ parameters $q_{ij}\in \ZZ_{\ge2}$; here $1\le i<j\le n$.
Different choices of parameters produce different mutation cycles. 
The parametrization $(q_{ij})\mapsto Q$
(as well as its inverse) is given by polynomials over~$\ZZ$. 

\medskip

We next state our main result, which combines 
Theorems~\ref{thm:GeneralSemiCycles}, \ref{thm:GeneralSemiCycleDistinct}, and~\ref{thm:primitive}. 

\begin{theorem}
\label{thm:Summary}
Let $n\ge 4$ and $k\ge 1$. 
Choose integers $q_{ij} \geq 2$ for all pairs $1 \leq i < j \leq n$. 
Define the sequence $p_0,p_1,p_2,\dots$ by the recurrence 
\begin{equation}
p_{j+1}=q_{12} p_j -p_{j-1}, 
\end{equation}
with initial values $p_0=1$, $p_1=q_{12}$. 
Let $Q$ be the quiver on the vertex set $\{1,\dots,n\}$ whose exchange matrix $B(Q)=(b_{ij})$ is defined as follows. 
For $1\le i<j\le n$, set 
\[
b_{ij} = 
\begin{cases}
-p_{2k-2} q_{1j} - p_{2k-1} q_{2j} & \text{if $i=1$ and $3\le j \le n-1$}; \\ 
p_{2k-1} q_{1j} + p_{2k} q_{2j} & \text{if $i=2$ and $3\le j \le n-1$}; \\
q_{ij} & \text{otherwise}; 
\end{cases}
\]
also set $b_{ii}=0$ and $b_{ji}=-b_{ij}$. 
Then 
\begin{itemize}[leftmargin=.2in]
\item 
applying the following sequence of mutations to~$Q$ recovers the same quiver~$Q$: 
\begin{equation}
\label{eq:main-cycle}
n, \underbrace{1,2,\dots ,1,2}_{\textup{$k$ times}}, n-1, n-2, \dots, 2, 1, \underbrace{2,1,\dots ,2,1}_{\textup{$k$ times}}; 
\end{equation}
\item
all $n+4k$ quivers lying on this mutation cycle are distinct; 
\item
this mutation cycle cannot be paved by mutation cycles of length $\le 4k$. 
\end{itemize}
\end{theorem}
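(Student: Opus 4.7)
The plan is to prove the three bullets separately, exploiting the natural four-segment structure of the mutation sequence~\eqref{eq:main-cycle}: the initial $\mu_n$; the block of $2k$ alternating mutations $\mu_1,\mu_2,\dots,\mu_1,\mu_2$; the ``staircase'' $\mu_{n-1}\mu_{n-2}\cdots\mu_2\mu_1$; and the final block of $2k$ alternating mutations $\mu_2,\mu_1,\dots,\mu_2,\mu_1$.

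For the closure claim, my key intermediate lemma would describe the effect of $k$ iterations of $\mu_1\mu_2$ on any skew-symmetric exchange matrix~$B$ with $b_{12}=q_{12}$: the column pair $(b_{1j},b_{2j})$ transforms, for each $j\ge 3$ (under a suitable sign pattern), by a $2\times 2$ linear map with entries $p_{2k-2},p_{2k-1},p_{2k}$ and determinant $\pm 1$, while $|b_{12}|$ is preserved. This directly explains the coefficients appearing in the theorem's $b_{ij}$ formulas: they are chosen precisely so that, after the initial $\mu_n$ and the first block of $2k$ alternating mutations, the relevant exchange matrix entries collapse to $\pm q_{ij}$, giving a uniformly-shaped ``normalized'' intermediate quiver $Q'$. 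I would then show that the staircase $\mu_{n-1}\mu_{n-2}\cdots\mu_1$ applied to $Q'$ interchanges the roles of vertices~$1$ and~$2$ on the remaining entries (a claim I would verify by induction on $n$, peeling off one vertex at a time). The final block of $2k$ alternating mutations then acts as the mirror of the first block, reinstating the $p_j$-weighted coefficients and returning us to~$Q$.

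For distinctness, I would exploit the strict monotonicity of $(p_j)$—immediate from $q_{12}\ge 2$—to separate quivers within each alternating block: the preceding lemma shows that a specific exchange matrix entry depends injectively on the step index via $p_j$, so the $2k+1$ quivers in each block are pairwise distinct. Within the staircase segment, quivers differ in the location of the most recently mutated vertex, and cross-segment comparisons follow from sign patterns of entries involving vertex~$n$ or of the Chebyshev weights.

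For non-pavability, I would seek a quantity $\nu(Q)$ (for example, the absolute value of a specific $b_{ij}$ or a weighted matrix norm) that grows to order $p_k$ near the midpoint of~\eqref{eq:main-cycle} while changing by only a bounded amount under any single mutation. Any paving by cycles of length at most $4k$ would force some short cycle to traverse values of $\nu$ differing by more than such a cycle can manage, yielding a contradiction. The hardest step is expected to be the staircase analysis in the closure argument: unlike the alternating blocks, which enjoy a clean linear-algebraic description via the $p_j$ recurrence, the staircase simultaneously touches all $n-1$ vertices, so either a careful direct entrywise computation or an inductive reduction on~$n$ will likely be required to pin down its precise action on the normalized quiver.
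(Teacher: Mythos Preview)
Your closure argument has the right Chebyshev skeleton for the alternating blocks, but your description of the staircase is off and you have not isolated the hard step. The staircase $\mu_{n-1}\cdots\mu_1$ does not ``interchange the roles of $1$ and $2$'': restricted to $[1,n-1]$, the quiver $R$ reached after the first alternating block is exactly the acyclic quiver $\tilde R$ with $b_{ij}=q_{ij}$ for $i<j$, and the staircase is the standard sink-rotation cycle that returns $\tilde R$ to itself. So on $[1,n-1]$ the staircase is the identity, and the second alternating block is literally the inverse of the first. The genuine work lies in the entries $b_{in}$ for $3\le i\le n-1$: these are \emph{not} fixed by the staircase (its final two steps $\mu_2,\mu_1$ alter them) and are altered again near the end of the second block; closure requires an explicit cancellation of four Chebyshev-weighted increments, which does not follow from any mirror symmetry. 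Your proposal does not mention this cancellation, and your induction-on-$n$ plan for the staircase would have to confront it directly.

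Your non-pavability strategy has a more serious gap. No quantity $\nu$ on these quivers changes by only a bounded amount under a single mutation: with all weights $\ge 2$, one mutation can multiply an entry by a factor $\ge 2$, so any polynomial invariant can jump exponentially. More importantly, even granting such a $\nu$, your contradiction does not follow: a paving is a homological decomposition, and there is no reason any individual short cycle must traverse a large $\nu$-gap---each could sit in a region of roughly constant $\nu$, meeting its neighbours along shared edges. The paper's argument is structural, not metric: it develops the notion of an \emph{exit}---a vertex whose mutation leads irrevocably into a tree-shaped region of the mutation graph---and shows, via a theory of vortex-free quivers and global descents, that every mutation off the cycle along its two alternating sides and its top edge is an exit. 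Hence any cycle through $Q$ must contain all $4k+2$ quivers on those three sides, forcing length $\ge 4k+2$. A monotone quantity alone cannot supply this rigidity; what is needed is the one-way, non-returning behaviour that the exit machinery formalizes.
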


Figure~\ref{fig:generic-8-cycle} shows the simplest instance of this construction 
(for $n=4$ and $k=1$), a mutation cycle of length~8 involving 4-vertex quivers.
This mutation cycle cannot be paved by shorter cycles. 

\begin{figure}[ht]
\begin{equation*}
\hspace{7pt} 
\begin{array}{ccccccc}
\hspace{-17pt} Q=\!\!\begin{tikzcd}[arrows={-stealth}, sep=6em]
  1  \arrow[r,  "a"]
  & 2  \arrow[d, swap, "ba^2-b+ad"]
  \\
   4 \arrow[ur, "e", swap, bend left=12, near start, outer sep=-1.8, stealth-] \arrow[u, "f", swap, stealth-] \arrow[r, "c", swap, stealth-]  
   & 3  \arrow[ul, "d+ab", swap, bend left=12, very near end, outer sep=-1.8]
\end{tikzcd}
 & \mutation{4}
 &
\begin{tikzcd}[arrows={-stealth}, sep=6em]
  1  \arrow[r,  "a"]
  & 2  \arrow[d, swap, "ba^2-b+ad"]
  \\
   4 \arrow[ur, "e", swap, near start, bend left=12, outer sep=-1.8] \arrow[u, swap, "f"] \arrow[r, "c", swap]  
   & 3  \arrow[ul, "d+ab", swap, very near end, bend left=12, outer sep=-1.8]
\end{tikzcd}
 & \mutation{1}
 & \begin{tikzcd}[arrows={-stealth}, sep=6em]
  1  \arrow[r,  "a", stealth-]
  & 2  \arrow[d, "b", swap, stealth-]
  \\
   4 \arrow[ur, "e+af", swap, near start, outer sep=-1.8] \arrow[u, swap, "f", stealth-] \arrow[r, "c", swap]  
   & 3  \arrow[ul, "d+ab", swap, very near end, outer sep=-1.8, stealth-]
\end{tikzcd}
\\
\\[-8pt]
\vmutation{1}   & & & & \vmutation{2} \\[-6pt]
\\
\begin{tikzcd}[arrows={stealth-}, sep=6em]
  1  \arrow[r,  "a"]
  & 2  \arrow[d, swap, "b"]
  \\
   4 \arrow[ur, "e", swap, near start, outer sep=-1.8, stealth-] \arrow[u, swap, "f", -stealth] \arrow[r, "c+df+abf", swap, stealth-]  
   & 3  \arrow[ul, "d+ab", swap, very near end, outer sep=-1.8]
\end{tikzcd}
 &\multicolumn{3}{c}{}
 
&  \begin{tikzcd}[arrows={-stealth}, sep=6em]
  1     \arrow[dr, "d", bend left=12, near start, outer sep=-.8]  \arrow[d, "fa^2-f+ae", stealth-]   
  &2  \arrow[d, swap, "b"] \arrow[l, "a", swap, stealth-]
  \\
   4   \arrow[ur, "e+af", bend right=12, swap, near start, outer sep=-1.8, stealth-]
   &3  \arrow[l, "c", stealth-] 
\end{tikzcd}
 & \\
\\[-8pt]
\vmutation{2}   & & & & \vmutation{3} \\[-6pt]
\\
\begin{tikzcd}[arrows={-stealth}, sep=6em]
  1  \arrow[r,  "a"]
  & 2  \arrow[d, swap, "b"]
  \\
   4 \arrow[ur, "e", swap, near start, outer sep=-1] \arrow[u, swap, "f", -stealth] \arrow[r, "c+df+abf+eb", swap, stealth-]  
   & 3  \arrow[ul, "d", near end, swap, outer sep=-1, stealth-]
\end{tikzcd}
& \mutation 1
& \begin{tikzcd}[arrows={-stealth}, sep=6em]
  1     \arrow[dr, "d", near start, outer sep=-.8, stealth-]  \arrow[d,  "f"]   
  &2  \arrow[d, swap, "b"] \arrow[l, "a", swap ]
  \\
   4   \arrow[ur, "e+af", swap, near start, outer sep=-1.8]
   &3  \arrow[l, "c+eb+abf"] 
\end{tikzcd}
& \mutation 2 
&  \begin{tikzcd}[arrows={-stealth}, sep=6em]
  1     \arrow[dr, "d", near start, outer sep=-.8, bend left=12, stealth-]  \arrow[d, "fa^2-f+ae", stealth-]   
  &2  \arrow[d, swap, "b", stealth-] \arrow[l, "a", swap, stealth-]
  \\
   4   \arrow[ur, "e+af", swap, bend right=12, near start, outer sep=-1.8, stealth-]
   &3  \arrow[l, "c"] 
\end{tikzcd}
\end{array}
\hspace{-17pt} 
\end{equation*}
\caption{Mutation cycle of length~8 involving 4-vertex quivers.  
The $\binom{4}{2}=6$ integer parameters $a,b,c,d,e,f \geq 2$ can be chosen arbitrarily. 
(In fact, any $a,b,c,d,e,f\ge0$ would work if negative weights are interpreted using arrow reversal.) 
In the notation of Theorem~\ref{thm:Summary}, $a=q_{12}$, $b=q_{23}$, $c=q_{34}$, $d=q_{13}$, $e=q_{24}$, $f=q_{14}$. 
Each of these parameters appears somewhere along the cycle as an individual arrow multiplicity.
This phenomenon holds for any values of~$k$ and $n$. 
}
\label{fig:generic-8-cycle}
\vspace{-.2in}
\end{figure}
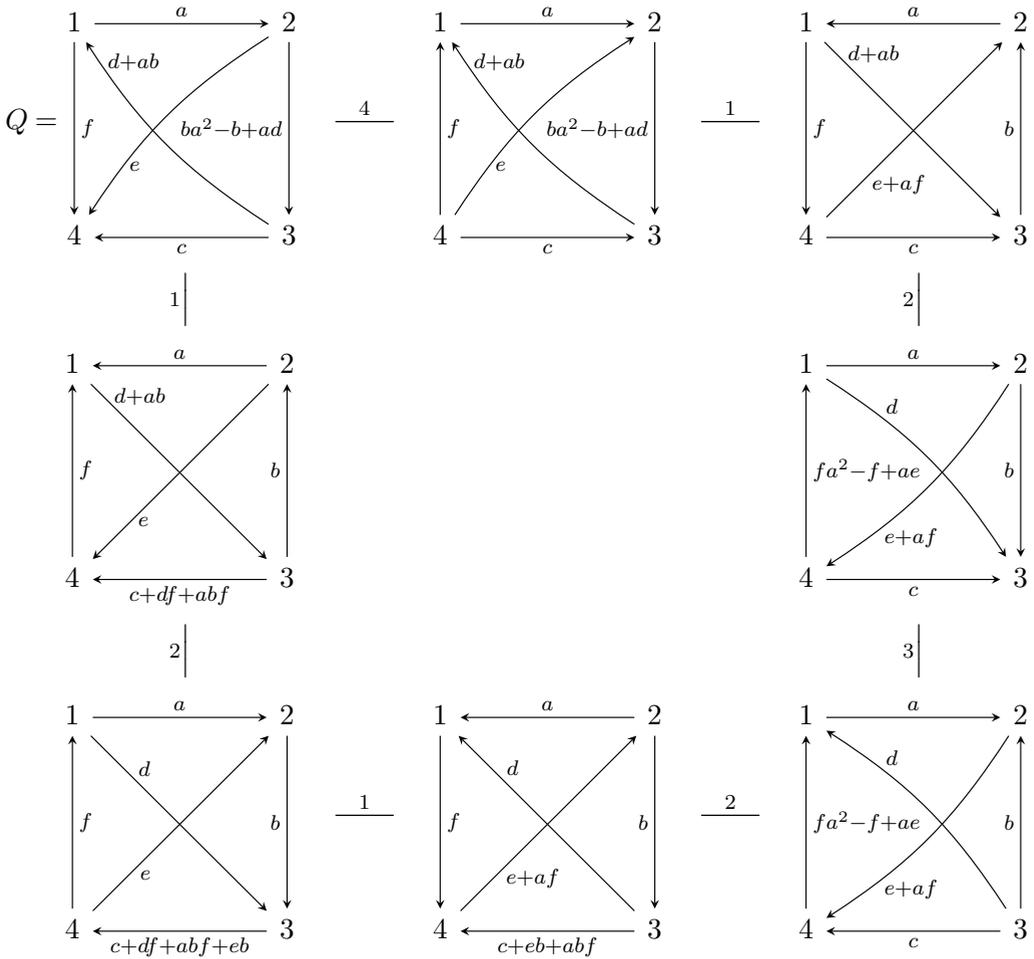


For $n\ge 5$, the  statements in Theorem~\ref{thm:Summary} 
remain true even if we view our quivers up to isomorphism and/or global reversal of arrows. 

We conjecture (and prove in the $n=4$ case) that each of these mutation cycles is the unique cycle in the corresponding mutation graph, 
so that the associated cluster modular group~\cite{FG} 
(or cluster automorphism group~\cite{ASS}) is isomorphic to~$\ZZ$. 

All quivers appearing in the mutation cycles that we construct 
have arrow multiplicities~$\ge2$, i.e., any two vertices are connected by at least two arrows. 

\medskip

The construction from Theorem 1.1 can be extended to yield mutation cycles  (with different initial quivers)
whose mutation sequences are obtained by replacing the $1, 2, \dots, 1, 2$ and $2, 1, \dots, 2, 1$ fragments in~\eqref{eq:main-cycle} 
by much more general subwords. 
This has been independently observed by T.~Ervin~\cite{ervin}.

We describe several other ways to obtain nontrivial mutation cycles, see, e.g., 
Examples~\ref{eg:6-cycle-vortices} and \ref{eg:rosette}--\ref{eg:big-horseshoe}. 

\pagebreak[3]

Some of our original motivation came from the major unsolved problem of detecting mutation equivalence of quivers. 
For $3$-vertex quivers, the problem can be solved (cf.~\cite{ABBS}) using a descent algorithm that
repeatedly decreases the arrow multiplicities until it reaches a canonical minimal representative within a given mutation class.
In light of our results, any descent algorithm for detecting mutation equivalence of quivers 
would have to be confluent over arbitrarily large diamonds. 

It also follows that an algorithm for finding a shortest sequence of mutations
that connects two mutation-equivalent quivers cannot be based on a steepest descent heuristic.

\pagebreak[3]

None of the quivers appearing in this paper have frozen vertices.
In fact, we did not observe any large mutation cycles in which a particular vertex is frozen. 
To rephrase, each of the large mutation cycles that we found involves mutations at all vertices. 

\medskip

Throughout this paper, we use the following conventions. 
All our quivers are labeled, so that an $n$-vertex quiver~$Q$ typically has the vertex set $[1,n]=\{1,\ldots, n\}$. 
(Subquivers of such a quiver~$Q$ have vertex sets $V\subset [1,n]$.) 
It is natural to consider three notions of quiver equivalence: 
(i)~equality of quivers as labeled graphs, 
\hbox{(ii)~isomorphism} of quivers  (i.e., quivers are the same up to a relabeling of vertices), and 
(iii)~isomorphism that may be combined with the global reversal of arrows. 
While we default to notion~(i), many of our results hold with respect to (ii) and/or~(iii).

\medskip

The paper is organized as follows. 

General background on quiver mutations is intro\-duced in Sections~\ref{sec:quiver-mut}--\ref{sec:3Vertex}. 

Various aspects of our main result are established in 
Sections~\ref{Sec:GeneralSemiCycles}--\ref{sec:genericity}.
In Section~\ref{Sec:GeneralSemiCycles}, we verify that applying the mutation sequence~\eqref{eq:main-cycle}
recovers the original quiver~$Q$ (cf.\ Theorem~\ref{thm:GeneralSemiCycles}). 
In Section~\ref{sec:distinctness}, we prove that all  quivers lying on this mutation cycle are distinct
(cf.\ Theorem~\ref{thm:GeneralSemiCycleDistinct}),
even if we treat them up to isomorphism and/or reversal of all arrows 
(under mild assumptions, cf.\ Theorem~\ref{thm:GeneralSemiCycleDistinct-noniso}). 
In Section~\ref{sec:exits}, we introduce and study certain properties of quivers that propagate under mutation in some directions. 
This combinatorial machinery is then used in Sections~\ref{Sec:primitiveCycles}--\ref{sec:genericity}.  
In~Section~\ref{Sec:primitiveCycles}, we show that the long mutation cycles constructed in Theorem~\ref{thm:GeneralSemiCycles}
cannot be paved by mutation cycles of bounded length. 
In Section~\ref{sec:genericity}, we demonstrate that the set of quivers that appear in our main construction 
is in some sense ``full-dimensional:''
it can be parametrized by $\binom{n}{2}$-tuples of nonnegative integers. 

In Section~\ref{sec:no-seed-cycles}, we show that 
none of the mutation cycles we construct gives rise to a cycle in the exchange graph of the associated cluster algebra. 
More generally, we show (see Theorem~\ref{th:no-seed-cycle}) that
no mutation cycle consisting entirely of quivers with at least two arrows between each pair of vertices 
yields a cycle in the exchange graph.

Each birational map obtained by composing seed mutations along one of our mutation cycles 
gives rise to a discrete dynamical system that deserves further study. 

In Section~\ref{sec:moreCycles} we present several additional mutation cycles 
that do not come from the construction of Theorem~\ref{thm:GeneralSemiCycles}. 
While each of these cycles belongs to a family of arbitrarily long (for a fixed~$n$) mutation cycles,
we do not attempt to describe these families explicitly. 

\medskip

\section*{Acknowledgments}
Some of our results were reported at the OPAC conference in Minneapolis (May 2022) and at the AMS special session on cluster algebras, positivity, and related topics (April 2023).
We thank the organizers of these events. 

After having developed the auxiliary machinery presented in Section~\ref{sec:exits},
we found out that a substantial part of it has appeared, in different form, in the earlier work by M.~Warkentin~\cite{Warkentin}. 
We are grateful to Tucker Ervin for bringing our attention to~\cite{Warkentin} and for sharing his own observations.

We thank Danielle Ensign for stimulating discussions and assistance with computer simulations. We thank the anonymous referee for their comments and corrections.

\clearpage

\newpage

\section{Quiver mutations}
\label{sec:quiver-mut}

In this section, we establish terminology and remind the reader of the relevant definitions and results. 
For a systematic introduction to the combinatorics of quiver mutations, see~\cite{FWZ}. 
For a sampling of additional results, see, e.g., \cite{BBH, Fordy-Marsh, Lawson-Mills, Warkentin}. 

\begin{definition}
\label{def:quiver}
A \emph{quiver} $Q$ is a finite directed graph with no directed cycles of length 1 or~2.
Its (directed) edges are called \emph{arrows}. 
Multiple arrows between a given pair of vertices are allowed. 
When drawing a quiver, we will typically write edge multiplicities next to single arrows,
rather then drawing multiple arrows.
Thus, we would draw $\bullet\!\stackrel{\scriptstyle2}{\longrightarrow}\!\bullet$ instead of 
$\bullet \!\rightrightarrows\! \bullet\,$. 

All our quivers will be \emph{labeled}, i.e., we will distinguish between different isomorphic quivers on the same vertex set. 
Accordingly, if $Q$ and $Q'$ are two quivers, then 
notation $Q = Q'$ will mean that $Q$ and $Q'$ are \emph{equal} as labeled graphs.
We note that in the literature, quivers are often considered up to graph isomorphism (a relabeling of vertices) and/or a global reversal of arrows; we do not follow this convention.
Cf.\ Example~\ref{eg:acyclicClasses}. 

For an $n$-vertex quiver~$Q$, we will typically use the~set 
\[
[1,n]=\{1,2,\dots,n\}
\]
as the set of (labels of the) vertices of~$Q$.  

\end{definition}

\begin{definition}
\label{def:B(Q)}
Each quiver gives rise to a skew-symmetric matrix $B(Q) = (b_{ij})$ 
whose entries $b_{ij}=b_{ij}(Q)$ indicate how many arrows run between each pair of vertices and what their orientation is.
The \emph{weight} $|b_{ij}(Q)|$ is the number of arrows between the vertices $i$ and~$j$. 
Notation $i \points j$ will indicate that all arrows between the vertices $i$ and~$j$ 
are directed from $i$ to~$j$; in other words, $b_{ij}\geq 0$. 
\end{definition}

\begin{definition}
A vertex $i$ in a quiver~$Q$ is called a \emph{sink} (resp., \emph{source}) if $j \points i$ (resp.,~$i \points j$) for all other vertices $j$ in $Q$. 
If $i$ is either a sink or a source, but we do not care which, we may say $i$ is a sink/source.
\end{definition}

\begin{definition}
We always use the term \emph{subquiver} to mean ``full subquiver'' (i.e., an induced subgraph). 
We denote the (full) subquiver of~$Q$ with vertex set $S$ by~$Q|_S$.
For example, $Q|_{ijk}$ denotes the subquiver of~$Q$ supported by the vertices~$\{i,j,k\}$. 
\end{definition}

\begin{definition}
\label{def:quiver-mutation}
To \emph{mutate} a quiver $Q$ at a vertex~$i$, perform the following steps:
\begin{enumerate}[leftmargin=.3in]
    \item for each path $j \rightarrow i \rightarrow k$ in~$Q$, add a new arrow from $j$ to $k$. 
    (Thus, if we have $j \stackrel{\scriptstyle a}{\longrightarrow} i \stackrel{\scriptstyle b}{\longrightarrow} k$ in~$Q$,
    then we should add $ab$ new arrows from $j$ to~$k$.)
    \item reverse all arrows incident to $i$.
    \item repeatedly remove oriented $2$-cycles until there are none left.
\end{enumerate}
The transformed (mutated) quiver is denoted by~$\T{i}{Q}$. We will write $Q \mutation{i} Q'$ to mean that $Q' = \T{i}{Q}$.
\end{definition}

The notation $\mu[i]$ is non-standard; in most of the literature, mutation at a vertex~$i$ is denoted by~$\mu_i$.  
We break convention in this paper to avoid nested subscripts and improve legibility.

\pagebreak[3]

Important properties of quiver mutation include:
\begin{itemize}[leftmargin=.2in]
\item 
$\mu[i]$ is an involution;
\item
$\mu[i]$ commutes with restriction, provided $i$ is in the restricted subset;
\item
$\mu[i]$ commutes with the reversal of all arrows in the quiver.
\end{itemize}

\begin{definition}
A \emph{sink/source mutation} $\mu[i]$ is a mutation at a sink or source vertex~$i$. 
Such a mutation simply reverses all the arrows incident to~$i$.
\end{definition}

\begin{remark}
If $i$ is a sink or source in $Q$ and $S$ is a vertex set not containing~$i$, then
\[
\T{i}{Q}|_S = Q|_S.
\]
\end{remark}

We introduce the following notational shorthand to improve legibility. 

\begin{definition}
\label{def:stringOfMutations}
We denote $\mu[i_k i_{k-1}\cdots i_2 i_1] = \mu[i_k] \circ \cdots \circ \mu[i_2] \circ \mu[i_1]$, 
so that 
\begin{align*}
\mu[i_k i_{k-1}\cdots\, i_2 i_1](Q) &= \mu[i_k] \circ \mu[i_{k-1}] \circ \cdots \circ \mu[i_2] \circ \mu[i_1](Q).
\end{align*}
In other words, we apply the mutations indexed by the bracketed symbols 
in the right-to-left order (as is usual when composing maps).
\end{definition}

\begin{example}
\label{ex:notation-mu}
The following identities use the notation introduced in Definition~\ref{def:stringOfMutations}:
\begin{align*}
\T{i i}{Q} &= Q, \\
\T{j}{\T{i}{Q}} &= \T{j i }{Q}, \\
\T{i j } {\T{k l m}{Q}}&= \T{i j k l m}{Q}, \\
\T{1 2 3 2 3}{Q|_{123}} &= \T{12323}{Q}|_{123}.  
\end{align*}
\end{example}

\begin{definition}
\label{def:121212}
Notation $(i j)^k$ will denote the sequence $i j i j\cdots i j$ of length $2k$, alternating between $i$ and~$j$. 
Thus 
\[
\T{(i j)^k}{Q} \stackrel{\rm def}{=} \mu[i] \circ \mu[j] \circ \cdots \circ \mu[i] \circ \mu[j](Q)
\]
denotes the result of applying $k$ iterations of $\mu[i] \circ \mu[j]$ to~$Q$. 
\end{definition}

\begin{definition}
\label{def:mutationClass}
The \emph{mutation class} $[Q]$ of a (labeled) quiver $Q$ is the set of (labeled) quivers 
that can be obtained from~$Q$ by applying a sequence of mutations. 
\end{definition}

\begin{example}[\emph{Type~$\mathbf{A}_3$}]
Let $Q$ be an orientation of  the 3-vertex tree 
\begin{equation*}
\bullet\!\!-\!\!\!-\!\!\bullet\!\!-\!\!\!-\!\!\bullet 
\end{equation*}
(the Dynkin diagram of type~$A_3$). 
The mutation class $\mathbf{A}_3\!=\![Q]$ consists of $14$ quivers:
\begin{itemize}[leftmargin=.2in]
\item 
two oriented $3$-cycles on the vertices $1,2,3$; 
\item
$12$ quivers obtained by choosing all possible vertex labelings and edge orientations of the 3-vertex tree. 
\end{itemize}
By comparison, there are only $4$ quivers up to isomorphism in the mutation class~$\mathbf{A}_3$, 
namely the $3$-cycle and three different orientations of the 3-vertex tree:
\begin{equation*}
\bullet\!\rightarrow\!\bullet\!\rightarrow\!\bullet
\qquad\qquad
\bullet\!\rightarrow\!\bullet\!\leftarrow\!\bullet
\qquad\qquad
\bullet\!\leftarrow\!\bullet\!\rightarrow\!\bullet
\end{equation*}
The last two of these are equivalent up to global reversal of arrows, so there are $3$ quivers up to isomorphism and global reversal of arrows in the mutation class~$\mathbf{A}_3$.
\end{example}

\begin{remark}
Two isomorphic (labeled) quivers may belong to different mutation classes; see Example~\ref{eg:acyclicClasses} below.  
\end{remark}

\begin{definition}
\label{def:mutationGraph}
The \emph{mutation graph} of a mutation class is the graph whose vertices are the quivers in the mutation class 
and whose edges correspond to mutations: for each pair of quivers $Q$ and~$Q'=\T{i}{Q}$, there is an edge
labeled $i$ connecting $Q$ and~$Q'$.  
\end{definition}

\begin{definition}
\label{def:mutation cycle}
A \emph{mutation cycle} in a mutation graph is a closed walk in which no two consecutive edges or vertices coincide. 
More precisely, a mutation cycle of length $N>0$ is defined by a quiver~$Q$ and a sequence $\mathbf{i}=i_1\cdots i_N$ such that
\begin{itemize}[leftmargin=.2in]
\item 
$\mu[i_N\cdots i_1](Q)=Q$; 
\item
$i_1\neq i_2\neq \cdots\neq i_{N-1}\neq i_N\neq i_1$; 
\item
no mutation is applied at an isolated vertex. 
\end{itemize}
We then say this mutation cycle is \emph{based} at~$Q$. 

We note that $\mu[i_N\cdots i_1](Q)=Q$ if and only if $\mu[i_1\cdots i_N](Q)=Q$.
\end{definition}

\begin{definition}
\label{def:acyclic}
A quiver is \emph{acyclic} if it contains no oriented cycles. 
\end{definition}

Note that the above definitions involve two kinds of cycles:
(a) oriented cycles within a particular quiver and 
(b) mutation cycles within a mutation graph (whose vertices correspond to quivers in a given mutation class).

\begin{proposition}
\label{pr:acyclic-mutation-cycle}
Let $Q$ be an acyclic quiver on the vertex set $[1,n]$,
with $b_{ij}(Q)\ge 0$ for $i<j$. 
Then $\T{12\cdots n}{Q}=Q$ and, equivalently,  $\T{n \cdots 21}{Q}=Q$.
\end{proposition}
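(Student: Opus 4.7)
The plan is to prove $\mu[n\cdots 21](Q) = Q$ by showing that every one of the $n$ mutations in this sequence is in fact a source mutation at the moment it is carried out, so it only reverses the arrows at its vertex without changing any multiplicities. The dual identity $\mu[12\cdots n](Q) = Q$ will then drop out formally from the fact that each $\mu[i]$ is an involution.

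The hypothesis $b_{ij}(Q) \geq 0$ for $i < j$ means every arrow of $Q$ runs from a lower- to a higher-indexed vertex, so vertex $1$ is a source of $Q$. The main inductive claim is: for each $0 \leq k \leq n$, the quiver $Q_k := \mu[k\cdots 21](Q)$ (with $Q_0 = Q$) satisfies
\[
b_{ij}(Q_k) = \begin{cases} b_{ij}(Q) & \text{if } i,j \in [1,k] \text{ or } i,j \in [k+1,n], \\ -b_{ij}(Q) & \text{if exactly one of } i,j \text{ lies in } [1,k]. \end{cases}
\]
Granting the claim at level $k$, vertex $k+1$ is a source of $Q_k$: for $i \leq k$ one has $b_{k+1,i}(Q_k) = -b_{k+1,i}(Q) = b_{i,k+1}(Q) \geq 0$, so the arrows between $i$ and $k+1$ point out of $k+1$; and for $j \geq k+2$ one has $b_{k+1,j}(Q_k) = b_{k+1,j}(Q) \geq 0$, giving the same conclusion. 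Hence $\mu[k+1]$ acts on $Q_k$ by simply reversing all arrows incident to $k+1$, and a short case analysis over the five arrow types (within $[1,k]$, within $[k+2,n]$, between $i \in [1,k]$ and $k+1$, between $k+1$ and $j \geq k+2$, and between $[1,k]$ and $[k+2,n]$) shows that this flip produces exactly the sign pattern prescribed at level $k+1$. Taking $k = n$ in the claim yields $Q_n = Q$, which is the first half of the proposition.

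The second half is immediate from the involution property of mutations: since $12\cdots n$ is the reverse of $n\cdots 21$, the composition $\mu[12\cdots n] \circ \mu[n\cdots 21]$ telescopes to the identity, and therefore $\mu[12\cdots n](Q) = \mu[12\cdots n](\mu[n\cdots 21](Q)) = Q$. There is no genuine obstacle in the argument; it is really just careful sign bookkeeping across the partition $[1,n] = [1,k] \sqcup [k+1,n]$. The only pitfall to watch for is the right-to-left convention in Definition~\ref{def:stringOfMutations}, which makes $\mu[n\cdots 21]$ apply $\mu[1]$ first rather than $\mu[n]$ first.
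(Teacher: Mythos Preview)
Your proof is correct and follows essentially the same idea as the paper: each mutation in the sequence is a sink/source mutation, so weights never change and only orientations cycle back around. The paper phrases this more tersely by mutating at the sink $n$ first (thus proving $\mu[12\cdots n](Q)=Q$ directly) and observing that each step cyclically rotates the linear order, whereas you dually start at the source $1$ and make the sign pattern explicit via the partition $[1,k]\sqcup[k+1,n]$; the underlying mechanism is identical.
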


\begin{proof}
When we mutate $Q$ at the sink~$n$, the latter vertex becomes a source
and the quiver becomes acyclic with respect to the linear ordering \hbox{$n\to 1\to 2\to\cdots \to n-1$},
a cyclic rearrangement of the original linear ordering. (The weights do not change.) 
After $n$ rotations, we return to the original quiver. See Figure~\ref{fig:central-triangle}. 
\end{proof}

\begin{figure}[ht]
\begin{equation*}
Q=\begin{tikzcd}[arrows={-stealth}]
  1 \arrow[r, "a"]  \arrow[rd, swap,"c"] 
  & 2 \arrow[d, "b" ] 
  \\
  & 3
\end{tikzcd}
\quad \mutation{1}
\quad
\begin{tikzcd}[arrows={-stealth}]
  1    
  & 2 \arrow[d, "b" ]  \arrow[l, swap, "a"]
  \\
  & 3 \arrow[lu, "c"]
\end{tikzcd}
\quad \mutation{2}
\quad
\begin{tikzcd}[arrows={-stealth}]
  1    \arrow[r, "a"]
  & 2  
  \\
  & 3 \arrow[lu, "c"] \arrow[u, swap, "b" ] 
\end{tikzcd}
\quad \mutation{3}
\quad
Q
\end{equation*}
\vspace{-.1in}
\caption{Acyclic quivers forming a mutation cycle. } 
\label{fig:central-triangle}
\end{figure}
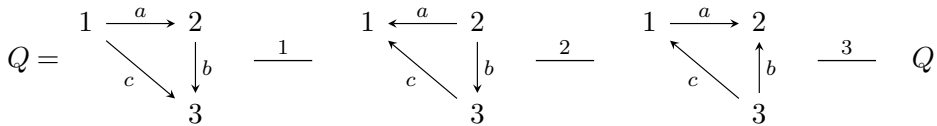

Proposition~\ref{pr:acyclic-mutation-cycle} shows that any $n$-vertex acyclic quiver~$Q$ lies on a mutation cycle of length~$n$
consisting entirely of acyclic quivers. 
All of them are re-orientations~of~$Q$. 

\medskip

In this paper, we will mostly study quivers satisfying the following condition:  

\begin{definition}
\label{def:largeWeights}
We say that a quiver~$Q$ has \emph{large weights} if $|b_{ij}(Q)| \geq 2$ for all pairs of distinct vertices~$i$ and~$j$. 
(Such quivers were called ``abundant'' in~\cite{Warkentin} and ``2-complete'' in~\cite{Felikson-Tumarkin-2018, LeeLee}.)
\end{definition}

Various results concerning quiver mutation simplify considerably when restricted to quivers with large weights. 
In particular, this is the case for the classification of mutation classes of 3-vertex quivers, to be discussed in Section~\ref{sec:3Vertex}.

\newpage

\section{Quivers on three vertices}
\label{sec:3Vertex}

Mutations of 3-vertex quivers are well understood, see especially~\cite{ABBS}.
In this section, we state and prove, without claiming any originality, all basic results about 3-vertex quivers that will be needed in the sequel.
To be specific, we will only need to treat the case of \emph{mutation-acyclic} 3-vertex quivers,
i.e., those quivers that are mutation equivalent to an acyclic quiver. 

\begin{definition}
\label{def:cyclic-quiver}
A 3-vertex quiver~$Q$ is called \emph{cyclic} 
if it is not acyclic, i.e., if $Q$ contains an oriented 3-cycle. 
We will only use the term ``cyclic'' for 3-vertex quivers.
\end{definition}

\begin{definition}
\label{def:elbow}
Let $Q$ be an acyclic 3-vertex quiver with nonzero weights. 
Then $Q$ contains one sink and one source.
The remaining vertex is called the \emph{elbow} of~$Q$. 
\end{definition}

A mutation $\mu[i]$ in a 3-vertex quiver leaves two of the three weights unchanged. 

\begin{definition}
\label{def:ascent}
In a 3-vertex quiver~$Q$, 
a vertex~$i$ (or the mutation~$\mu[i]$)
is called an \emph{ascent} (resp., \emph{descent})
if $\mu[i]$ strictly increases (resp., decreases) one of the weights: 
\[
|b_{jk}(\T{i}{Q})| > |b_{jk}(Q)|. 
\]
Thus, $i$ is an ascent in $Q$ if and only if $i$ is a descent in~$\T{i}{Q}$. 
\end{definition}

\begin{example}
\label{eg:ascents}
Let $Q$ be a 3-vertex cyclic quiver shown below, with $b, c\ge 2$: 
\begin{equation}
\label{eq:2bc-3vertex}
Q=\begin{tikzcd}[arrows={-stealth}]
  1 \arrow[r, "2"] 
  & 2 \arrow[d, "b" ] 
  \\
  & 3   \arrow[lu, "c"] 
\end{tikzcd}
\end{equation}
(Thus $Q$ has large weights.) Then the ascents and descents of $Q$ are as follows:
\begin{itemize}[leftmargin=.2in]
\item 
if $2=b=c$ (the Markov quiver), 1, 2, 3 are neither ascents nor descents; 
\item
if $2<b=c$, then 3 is an ascent, 1 and 2 are neither ascents nor descents; 
\item 
if $2\le b<c$, then 1 and 3 are ascents and 2 is a descent; 
\item
if $2\le c<b$, then 2 and 3 are ascents and 1 is a descent. 
\end{itemize}

\end{example}

The following lemma is immediate from Definition~\ref{def:ascent}. 

\begin{lemma}
\label{lem:acyclic-descent}
Let $Q$ be an acyclic 3-vertex quiver with nonzero weights. 
Then $Q$ has one ascent (namely the elbow) and no descents. 
\end{lemma}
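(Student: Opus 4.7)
The plan is to verify the claim by a direct case analysis of each of the three possible mutations. Label the vertices of $Q$ as $s$ (source), $e$ (elbow), $t$ (sink), and write $a = |b_{se}(Q)|$, $b = |b_{et}(Q)|$, $c = |b_{st}(Q)|$, all of which are positive by hypothesis. Since $Q$ is acyclic with a source and a sink, the three arrows are $s \to e$, $e \to t$, and $s \to t$, with multiplicities $a$, $b$, $c$ respectively. I need to show that $\mu[e]$ strictly raises one of these weights, while $\mu[s]$ and $\mu[t]$ leave every weight unchanged.

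First I would dispatch the source and sink. Since $s$ has no incoming arrows in $Q$, there are no paths $j \to s \to k$ through $s$, so step~(1) of Definition~\ref{def:quiver-mutation} creates no new arrows; consequently $\mu[s]$ merely reverses the arrows incident to $s$ and the weights $a, b, c$ are preserved. The sink $t$ is handled identically (or, equivalently, by invoking the fact that mutation commutes with the global reversal of arrows, which swaps source and sink). Hence neither $s$ nor $t$ is an ascent or a descent.

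It remains to analyze $\mu[e]$. The only two-step path through $e$ is $s \to e \to t$, so step~(1) adds exactly $ab$ new arrows $s \to t$, which are parallel to the $c$ existing arrows $s \to t$; no oriented $2$-cycle is created, so step~(3) performs no cancellation. Step~(2) reverses the two arrows incident to $e$, preserving their multiplicities. Therefore $|b_{st}|$ increases strictly from $c$ to $c+ab$, while $|b_{se}|$ and $|b_{et}|$ remain equal to $a$ and $b$. This shows that $e$ is an ascent, and combined with the previous paragraph it establishes that $e$ is the unique ascent and that no vertex is a descent.

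There is no real obstacle here; the only point to watch is confirming that the $ab$ arrows added at step~(1) of $\mu[e]$ point in the same direction as the pre-existing $s \to t$ arrows, so that step~(3) does not cancel any of them. This is immediate from the orientation $s \to e \to t$ forced by acyclicity of $Q$.
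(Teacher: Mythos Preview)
Your proof is correct and essentially matches the paper's approach: the paper simply declares the lemma ``immediate from Definition~\ref{def:ascent}'' without writing out any argument, and your case analysis is exactly the verification one would supply if pressed to make that immediacy explicit.
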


\begin{lemma}
\label{lem:3VertexUniqueDescents}
A cyclic $3$-vertex quiver $Q$ with large weights has at most one descent. 
If $Q$ has a descent, then it is unique, and is the vertex opposite the maximum weight.
Moreover, if $Q$ has a descent, then the other two vertices are ascents. 
\end{lemma}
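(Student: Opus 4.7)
The plan is to reduce everything to a single numerical condition on the three weights, then exploit the large-weights hypothesis $w_i \ge 2$.

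First, I would analyze what happens when we mutate a cyclic 3-vertex quiver at vertex $i$. Since the two edges at $i$ get their arrows reversed but keep their multiplicities, only the weight of the edge opposite~$i$ can change. Write $w_1, w_2, w_3$ for the three weights, indexed so that $w_i$ is the weight on the edge opposite vertex~$i$. A direct computation in the cyclic case shows that $\mu[i]$ sends $w_i$ to $|w_j w_k - w_i|$, where $\{i,j,k\}=\{1,2,3\}$. Consequently vertex~$i$ is a descent precisely when $|w_j w_k - w_i| < w_i$, i.e.\ when
\[
0 < w_j w_k < 2w_i,
\]
and an ascent precisely when $w_j w_k > 2w_i$.

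Next I would prove uniqueness of a descent. Suppose for contradiction that two vertices, say $i$ and $j$, are both descents. Then $w_j w_k < 2w_i$ and $w_i w_k < 2 w_j$. Multiplying these two strict inequalities (all quantities positive) and cancelling $w_i w_j$ yields $w_k^2 < 4$, contradicting $w_k \ge 2$. Hence at most one vertex is a descent.

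Now suppose $i$ is a descent. Then $2 w_i > w_j w_k$; using $w_j \ge 2$ and $w_k \ge 2$ separately gives $2 w_i > 2 w_k$ and $2 w_i > 2 w_j$, so $w_i > w_j$ and $w_i > w_k$. Thus $w_i$ is the (strictly) maximum weight, and $i$ is the vertex opposite this maximum weight, which also pins down uniqueness from a structural standpoint. Finally, for the ``moreover'' part, I note that $w_i > w_k$ together with $w_j \ge 2$ yields $w_i w_j \ge 2 w_i > 2 w_k$, so $k$ is an ascent; symmetrically $j$ is an ascent.

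The computations are routine; the only real content is the uniqueness argument, where the large-weights hypothesis $w_k \ge 2$ is exactly what prevents two descents from coexisting. No step looks like a serious obstacle, but care is needed in the opening paragraph to verify the formula $w_i \mapsto |w_j w_k - w_i|$ without confusing the roles of incident versus opposite edges.
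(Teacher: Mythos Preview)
Your proof is correct and follows essentially the same route as the paper's: both reduce a descent at~$i$ to the inequality $w_j w_k < 2 w_i$ (in your notation), use $w_j, w_k \ge 2$ to conclude $w_i$ is the strict maximum, and then check the remaining vertices are ascents. The only cosmetic difference is in the uniqueness step---you multiply the two descent inequalities to obtain $w_k^2 < 4$, while the paper first establishes that any descent lies opposite the strict maximum and then observes that two descents would force two distinct strict maxima---but this is a minor reordering rather than a different argument.
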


\begin{proof}
By reversing arrows if necessary, we may assume that $1 \points 2 \points 3 \points 1$ in $Q.$ By relabeling the vertices, we may further assume that vertex $2$ is a descent of $Q.$ So we can compute explicitly
\[
|b_{13}(\mu[2](Q))| = |b_{31}(Q) - b_{12}(Q) b_{23}(Q)| < b_{31}(Q) 
\]
(where the inequality is due to $2$ being a descent of~$Q$). Since all of $b_{12}(Q), b_{23}(Q)$ and $b_{31}(Q)$ are positive and at least $2$, 
this implies that 
\[
2 b_{31}(Q) > b_{12}(Q) b_{23}(Q) \geq 2 \max (b_{12}(Q), b_{23}(Q)).
\]
So $b_{31}(Q),$ the weight opposite our descent, is the largest weight in~$Q.$ 
But $2$ was an arbitrary descent, so if we repeat the argument for another descent we would get two different maximal weights, a contradiction.

Next we check that if $Q$ has descent $2$, then the other vertices are ascents in $Q$. 
We compute
\[
b_{32}(\T{1}{Q}) = b_{31}(Q) b_{12}(Q) - b_{23}(Q) \ge 2 b_{31}(Q) - b_{23}(Q) > b_{23}(Q) > 0,
\]
so $1$ is an ascent in~$Q$. (Here we have used that the maximum weight is unique.) 
An analogous argument shows that $3$ is an ascent too.
\end{proof}

\begin{lemma}
\label{lem:3VertexAcyclicAscents}
Let $Q$ be an acyclic quiver with large weights on the vertex set $\{1,2,3\}$,
with the elbow at~2.  
Consider a sequence $i_1, i_2, i_3,\ldots\in\{1,2,3\}$ such that $i_1=2$ and $i_j\ne i_{j+1}$ for $j\ge 1$. 
Set $Q^{(0)}=Q$, $Q^{(1)}=\mu[i_1](Q)$, and more generally, 
\begin{equation}
\label{eq:Q^{(k)}}
Q^{(k)}=\mu[i_k \cdots i_2 i_1](Q) \quad (k\ge1).
\end{equation}
Then for any $k\ge 1$: 
\begin{itemize}[leftmargin=.2in]
\item
the mutation $\mu[i_{k}]$ is a descent in $Q^{(k)}$; 
\item
the quiver $Q^{(k)}$  is cyclic;
\item
the quivers $Q^{(k)}$ and $Q^{(k+1)}$ have opposite edge orientations. 
\end{itemize}
\end{lemma}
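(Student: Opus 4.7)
The plan is to proceed by induction on~$k$, strengthening the three bullets with the extra clause that $Q^{(k)}$ has large weights. This additional invariant is exactly what makes Lemma~\ref{lem:3VertexUniqueDescents} applicable at each step, and bullet~(c) will then follow from the strengthened $(k)$-th hypothesis together with a single explicit mutation computation.

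For the base case $k=1$, I would appeal to Lemma~\ref{lem:acyclic-descent}: the elbow~$2$ is the unique ascent of $Q=Q^{(0)}$, so $\mu[i_1]=\mu[2]$ is a descent in $Q^{(1)}$. A direct inspection of $\mu[2](Q)$ shows that the edge between the source and the sink of $Q$ acquires weight $c + ab$, where $a,b,c \ge 2$ are the original weights; combined with the arrow reversal at vertex~$2$, this makes $Q^{(1)}$ cyclic with large weights (with maximum weight $c+ab$ opposite the descent~$2$, consistent with Lemma~\ref{lem:3VertexUniqueDescents}).

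For the inductive step, I would assume the strengthened invariants at step~$k \ge 1$ and derive them at step $k+1$. Lemma~\ref{lem:3VertexUniqueDescents}, applied to $Q^{(k)}$, says that $i_k$ is the unique descent of $Q^{(k)}$ and that the two other vertices are ascents; since $i_{k+1}\neq i_k$, the vertex $i_{k+1}$ is an ascent of~$Q^{(k)}$, and hence $\mu[i_{k+1}]$ is a descent of~$Q^{(k+1)}$, giving bullet~(a) for $k+1$. It then remains to show that $Q^{(k+1)}$ is cyclic with large weights and that every edge of $Q^{(k)}$ is reversed in~$Q^{(k+1)}$.

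The crux of the argument is to analyze what happens when a cyclic 3-vertex quiver with large weights $a,b,c$ is mutated at an ascent. Taking the ascent opposite the edge of weight~$b$, the ascent condition $|ac-b|>b$ together with $b \ge 2$ forces $ac-b > b \ge 2$; in particular $ac > b$, so the exchange matrix formula shows that on the pair of unmutated vertices the edge flips its orientation and acquires weight $ac-b \ge 2$. Combined with the arrow reversals at the mutated vertex, this simultaneously delivers bullets~(b), (c), and the strengthened large-weight invariant. The main obstacle is this sign bookkeeping---verifying that the untouched edge really does flip direction---which ultimately rests on the inequality $ac > b$ guaranteed by the ``large weights plus ascent'' hypothesis; everything else in the induction is then mechanical.
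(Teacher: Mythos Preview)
Your proof is correct and follows essentially the same inductive approach as the paper, relying on Lemma~\ref{lem:3VertexUniqueDescents} to propagate the descent/ascent structure. The only differences are cosmetic: you explicitly carry the large-weights invariant through the induction (the paper uses it implicitly when invoking Lemma~\ref{lem:3VertexUniqueDescents}), and you verify the orientation flip by a direct sign computation, whereas the paper argues by contradiction (if the untouched edge did not flip, $Q^{(k)}$ would have two descents).
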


\begin{proof}
We argue by induction on~$k$. 

\emph{Base case:} $k=1$.
We have $i_1=2$ and $Q^{(1)} = \mu[2](Q).$ 
Suppose, without loss of generality, that in~$Q$ we have $1 \points 2 \points 3$ and hence $1 \points 3.$ 
Therefore
\begin{align*}
b_{13}(Q^{(1)}) &= b_{13}(Q) + b_{12}(Q) b_{23}(Q) > b_{13}(Q), \\
b_{12}(Q^{(1)}) &= - b_{12}(Q), \\
b_{23}(Q^{(1)}) &= - b_{23}(Q).
\end{align*}
In particular, $2$ is a descent in $Q^{(1)}$ and $Q^{(1)}$ is cyclic. 

Without loss of generality, we may assume that $i_2=1$, so that $Q^{(2)}= \mu[1](Q^{(1)})$. 
By Lemma~\ref{lem:3VertexUniqueDescents}, $1$ is an ascent in $Q^{(1)}$ (hence~$|b_{32}(Q^{(2)})| \geq |b_{32}(Q^{(1)})|$). 
Since the weights are nonzero and $2 \points 1 \points 3$ in $Q^{(1)},$ we also have $b_{32}(Q^{(2)}) < b_{32}(Q^{(1)})$. 
It follows that $b_{32}(Q^{(2)}) < 0,$ so $Q^{(1)}$ and $Q^{(2)}$ have opposite edge orientations. 

\emph{Induction step:} 
suppose that the claims are true for $Q^{(k-1)}.$ 
We will treat the case where $i_k = 2, i_{k+1}=1$ and $1 \points 2$ in $Q^{(k)},$ all other cases being similar. 
By induction,~$Q^{(k)}$ has opposite edge orientations from~$Q^{(k-1)},$ so in particular is cyclic. 
Also by induction,~$Q^{(k-1)}$ has a descent at $i_{k-1} \neq 2,$ and so Lemma~\ref{lem:3VertexUniqueDescents} implies that~$Q^{(k-1)}$
 has ascent $2$, or equivalently $Q^{(k)}$ has descent~$2$. 

Finally, the mutation $\mu[1]$ reverses all arrows incident to $1$ in $Q^{(k+1)} = \mu[1](Q^{(k)}),$ 
so it only remains to show that $3 \points 2$ in $Q^{(k+1)}.$ 
If not, then $Q^{(k+1)}$ is acyclic and~${b_{23}(Q^{(k)}) > b_{23}(Q^{(k+1)})}$ (since $Q^{(k)}$ has a path~$3 \points 1 \points 2$). 
But then $Q^{(k)}$ has two descents, contradicting Lemma~\ref{lem:3VertexUniqueDescents}.
\end{proof}

\pagebreak[3]

The mutation class of an arbitrary 3-vertex acyclic quiver with large weights can be parametrized as follows. 

\begin{lemma}
\label{lem:3VertexAcyclicAscents-1}
Let $Q$ be an acyclic quiver with large weights on the vertex set $\{1,2,3\}$,
with the elbow at the vertex~2.  
Then the map
\begin{equation*}
(i_1,\dots,i_k) \mapsto \mu[i_k \cdots i_2 i_1](Q)
\end{equation*}
(cf.~\eqref{eq:Q^{(k)}}) is a bijection between 
\begin{itemize}[leftmargin=.2in]
\item 
the set of  finite sequences of vertices $(i_1,\dots,i_k)$, $k\ge 0$, satisfying $i_j \neq i_{j+1}$ and $i_2\ne 2$, and  
\item
the mutation class~$[Q]$ (see Definition~\ref{def:mutationClass}). 
\end{itemize}
Thus the mutation graph of $Q$ consists of three complete infinite binary trees of cyclic quivers, with each root connected to a different acyclic quiver.
The three acyclic quivers form a mutation cycle of sink/source mutations (see Figure~\ref{fig:central-triangle}).
See Figure~\ref{fig:largeWeightAcyclic}. 
\end{lemma}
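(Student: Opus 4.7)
The plan is to characterize the mutation graph of $[Q]$ as a triangle of three acyclic quivers with three complete infinite binary trees of cyclic quivers dangling from it, and then to verify that the valid sequences parametrize this graph bijectively. First, by Proposition~\ref{pr:acyclic-mutation-cycle}, three successive sink/source mutations trace out a $3$-cycle of distinct acyclic quivers $Q$, $Q^{(1)} := \mu[1](Q)$, $Q^{(2)} := \mu[3](Q) = \mu[2]\mu[1](Q)$, whose respective elbows are $2$, $3$, and~$1$. By Lemma~\ref{lem:acyclic-descent}, the only mutation that can leave the acyclic world is mutation at the elbow; sink/source mutations simply cycle among these three acyclic quivers. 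Hence $[Q]$ contains exactly three acyclic quivers, and mutation at the elbow of each produces a cyclic ``portal'' quiver.

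Next, I would show that starting at each portal and following any non-backtracking sequence of mutations produces a complete infinite binary tree of cyclic quivers with large weights. For the tree rooted at $\mu[2](Q)$, Lemma~\ref{lem:3VertexAcyclicAscents} says that every non-backtracking sequence $(2, i_2, i_3, \ldots)$ yields cyclic quivers $Q^{(k)}$ whose unique descent is the last index~$i_k$. An easy induction shows that large weights are preserved throughout: an ascent mutation strictly increases exactly one weight while leaving the other two unchanged, so starting from the portal (which has large weights by direct computation) we remain in the large-weight regime. Lemma~\ref{lem:3VertexUniqueDescents} then guarantees the descent is unique and the other two vertices are ascents, so each cyclic quiver in the tree has exactly one neighbor back toward the portal (via its descent) and exactly two neighbors going deeper (via its ascents), realizing a complete infinite binary tree. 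The analogous analysis applies to the trees rooted at $\mu[3](Q^{(1)})$ and $\mu[1](Q^{(2)})$.

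Finally, I would verify the bijection. For injectivity, a valid sequence of length $k \geq 2$ lands at a cyclic quiver whose unique descent is forced to equal $i_k$, so the sequence can be recovered recursively by reading off descents; the length-$0$ and length-$1$ cases map to the three acyclic quivers and the portal $\mu[2](Q)$, which are pairwise distinct. For surjectivity, I would verify that the image $S$ of the map is closed under mutation: from each acyclic quiver the three mutations yield the other two acyclic quivers together with the attached portal; from each cyclic quiver the descent yields either its parent in the tree or (if the quiver is a portal) the attached acyclic quiver, while the two ascents yield the two children in the tree. Since $S$ contains $Q$ and is closed under mutation, $S = [Q]$. The final mutation-cycle statement then follows from the triangle of sink/source mutations among the three acyclic quivers.

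The main subtlety is confirming that quivers in different cyclic trees are genuinely distinct. This is handled by iterating the descent starting from any cyclic quiver: since a descent strictly decreases the total edge weight and is uniquely defined at each step, this process terminates at a portal and then crosses a single edge into one specific acyclic quiver, which identifies the tree unambiguously.
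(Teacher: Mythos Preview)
Your proof is correct and shares its core idea with the paper's argument: both establish injectivity by reading off the unique descent at each cyclic quiver (via Lemma~\ref{lem:3VertexAcyclicAscents} and Lemma~\ref{lem:3VertexUniqueDescents}) to recover the sequence, terminating at one of the three acyclic quivers. The difference lies in surjectivity. The paper argues by word reduction: any mutation word can be shortened to a valid one by cancelling adjacent repeats $i_j=i_{j+1}$ and, when $i_2=2$, applying the triangle relation $\mu[21](Q)=\mu[3](Q)$ (resp.\ $\mu[23](Q)=\mu[1](Q)$) coming from Proposition~\ref{pr:acyclic-mutation-cycle}. You instead show that the image of the valid sequences is closed under single mutations, which forces it to equal~$[Q]$. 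Your route is slightly longer because it requires the case analysis of where each mutation from each quiver lands, but it has the advantage of explicitly building the tree-plus-triangle picture of the mutation graph rather than deducing it afterward; the paper's reduction is terser but leaves the graph structure implicit.
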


\begin{figure}[ht]
\begin{tikzcd}[arrows={-stealth}, sep=5, cramped]
&&&& \bullet   \ar[rrrrd, no head, "3"] &&&&&&&& \bullet \ar[lllld, no head, swap, "1"] &&&&&& \bullet \ar[rrrrd, no head, "2"] &&&&&&&& \bullet \ar[lllld, no head, swap, "3"] \\ 
&&&&&&&& \bullet \ar[d, no head, "2"]&&&&&&&&&&&&&& \bullet \ar[d, no head, "1"] \\[20pt]
\bullet \ar[rrrrd, no head, "2"] &&&&&&&& \bullet \ar[rrrrd, no head, "1"] \ar[lllld, no head, swap, "3"] &&&&&&&&&&&&&& \bullet \ar[rrrrd, no head, "3"] \ar[lllld, no head, swap, "2"] &&&&&&&& \bullet \ar[lllld, no head, swap, "1"] \\
&&&& \bullet \ar[d, no head, "1"]&&&&&&&& \circ \ar[rrrrrr, no head, "\boxed{\scriptstyle 3}"] \ar[rrrdd, no head, near start, swap, "\boxed{\scriptstyle 2}"] &&&&&& \circ \ar[llldd, no head, near start, "\boxed{\scriptstyle 1}"] &&&&&&&& \bullet \ar[d, no head, "2"] \\[20pt]
&&&& \bullet  &&&&&&&&&&& &&&&&&&&&&& \bullet \\[-6pt]
&&&& &&&&&&&&&&& \circ \ar[d, no head, "3"]  \\[20pt]
&&&&&&&&&&&&&&& \bullet \ar[lllld, no head, swap, "1"] \ar[rrrrd, no head, "2"] \\[10pt]
&&&&&&& \bullet \ar[rrrr, no head, "3"] &&&& \bullet \ar[d, no head, "2"] &&&&&&&& \bullet \ar[d, no head, "1"] \ar[rrrr, no head, "3"] &&&& \bullet \\[20pt]
&&&&&&&&&&& \bullet &&&&&&&& \bullet
\end{tikzcd}

\caption{
Part of the mutation graph of a $3$-vertex acyclic quiver with large weights,
showing the vertices at distance $\le 3$ from the acyclic triangle. 
White 
(resp.,~black) 
\linebreak[3]
vertices correspond to acyclic (resp., cyclic) quivers. 
Each edge is labeled with the mutation performed.
The label is $\boxed{\text{boxed}}$ when the mutation is at a sink/source. 
}
\label{fig:largeWeightAcyclic}
\end{figure}
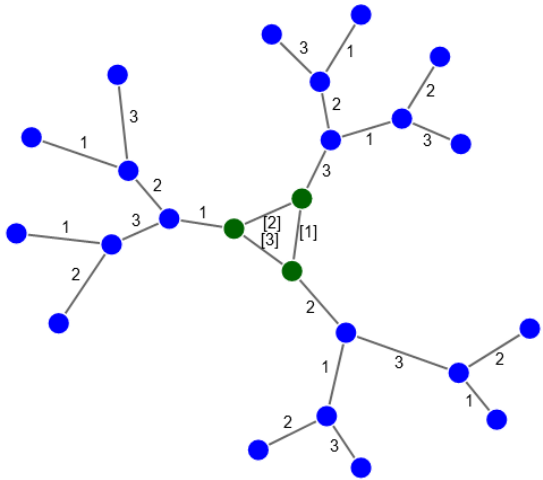

\vspace{-10pt}

\begin{proof}
Every quiver in $[Q]$ is given by $\mu[i_k \cdots i_1](Q)$, for some sequence $(i_1,\dots,i_k)$. 
If~$i_j=i_{j+1}$ for some $j$, then we can shorten the sequence by removing these two entries.
Similarly, if $i_2=2$, then we can shorten the sequence via 
\begin{align*}
\T{\cdots i_4 i_3 2 1}{Q}&=\T{\cdots i_4 i_3 3}{Q}, \\
\T{\cdots i_4 i_3 2 3}{Q}&=\T{\cdots i_4 i_3 1}{Q}
\end{align*}
(here we use that $\T{321}{Q}=\T{123}{Q}=Q$, see Proposition~\ref{pr:acyclic-mutation-cycle}). 
We may therefore assume that $i_j \neq i_{j+1}$ for all~$j$ and moreover
$i_2\neq 2$. Thus we have a surjection.

\pagebreak[3]

Let us show that for any $Q'\in[Q]$, the sequence $(i_1,\dots,i_k)$ with the required properties is unique. 
Indeed, it follows from Lemma~\ref{lem:3VertexAcyclicAscents} that $i_k$ is the unique descent in~$Q'$,~$i_{k-1}$
 is the unique descent in~$\mu[i_k](Q')$, 
$i_{k-2}$ is the unique descent in~$\mu[i_{k-1} i_k](Q')$, etc., until we reach an acyclic quiver
(either $Q$ or $\mu[1](Q)$ or~$\mu[3](Q)$). 
\end{proof}

\begin{remark}
\label{rem:3-vertex-arbitrary}
One can use the results in~\cite{ABBS}
to extend the above description to arbitrary mutation classes of 3-vertex quivers. 
This can be further generalized to $3\times 3$ skew-symmetrizable matrices, cf.~\cite{Seven3x3}. 
\end{remark}

\begin{definition}
\label{def:height}
Let $Q$ be a $3$-vertex quiver which is mutation equivalent to an acyclic quiver with large weights. 
By Lemma~\ref{lem:3VertexAcyclicAscents}, the quiver~$Q$ also has large weights.  
The \emph{descent sequence} of~$Q$ is the (unique) longest sequence $\mathbf{i}=i_1\cdots i_k$ of successive descents
originating at~$Q$; in other words, $i_1$ is the unique descent of~$Q$,
$i_2$ is the unique descent of $\mu[i_1](Q)$, etc. 
Alternatively, $\mathbf{i}$ is the (unique) shortest sequence of mutations such that the quiver $\T{\mathbf{i}}{Q}$ is acyclic. 
Cf.\ the last paragraph of the proof of Lemma~\ref{lem:3VertexAcyclicAscents-1}. 
\end{definition}

\begin{example}
\label{eg:acyclicClasses}
Let $R$ and $R'$  be acyclic $3$-vertex quivers shown below, with distinct large weights $a,b,c$: 
\begin{equation}
\label{eq:RR'-3vertex}
R=\begin{tikzcd}[arrows={-stealth}]
  1 \arrow[r, "a"] 
  \arrow[rd, swap,"c"] 
  & 2 \arrow[d, "b" ] 
  \\
  & 3
\end{tikzcd}
\qquad\qquad
R'=\begin{tikzcd}[arrows={-stealth}]
  1 
  & 2 \arrow[d, "a" ] \arrow[l, swap, "c"] 
  \\
  & 3   \arrow[lu, "b"] 
\end{tikzcd}
\end{equation}
The quivers $R$ and $R'$ are isomorphic but \emph{not} mutation equivalent. 
Indeed, by Lemma~\ref{lem:3VertexAcyclicAscents-1}, 
all acyclic quivers in the mutation class $[R]$ are obtained from~$R$ by reorienting its arrows
(while keeping the labeling intact), see Figure~\ref{fig:central-triangle}. 
By contrast, getting $R'$ from~$R$ requires relabeling of the vertices. 
\end{example}

To state our next technical result, we will need the following notation.

\begin{definition}
\label{def:chebyshev-poly}
We denote by $\h_j(a)\in\ZZ[a]$, $j=0,1,2,\dots$, the \emph{monic Chebyshev polynomials of the second kind}
defined by 
\begin{equation*}
\h_j(2\cos\theta)=\frac{\sin((j+1)\theta)}{\sin\theta}
\end{equation*}
or by the recurrence 
\begin{equation}
\label{eq:chebyshev-rec}
\h_{j+1}(a)=a\h_j(a)-\h_{j-1}(a), 
\end{equation}
with initial values $\h_0(a)=1$, $\h_1(a)=a$. 
Thus 
$\h_2(a)=a^2-1$, $\h_3(a)=a^3-2a$, \dots
\end{definition}

\begin{proposition}
\label{pr:3VertexAlternatingMutations}
Let $Q^{(1)}$ be an acyclic quiver on the vertex set $\{1,2,3\}$, with large weights and~${3\points1\points2}$ (thus~${3\points2}$). 
Let $a\!=\! b_{12}(Q^{(1)})$, $b\!=\!b_{31}(Q^{(1)})$, $c\!=\!b_{32}(Q^{(1)})$. 
For $j=1,2,\dots$,~set
\begin{align*}
Q^{(2j+1)}&=\T{(2 1)^j}{Q^{(1)}}, \\
Q^{(2j+2)}&=\T{1(2 1)^j}{Q^{(1)}},
\end{align*}
so that
\begin{equation}
\label{eq:mu12121...}
Q^{(1)} \mutation{1} 
Q^{(2)} \mutation{2}
Q^{(3)} \mutation{1}
Q^{(4)} \mutation{2}
\cdots,
\end{equation}
cf.\ Figure~\ref{fig:mu1mu2}.
Then the weights of the cyclic quivers $Q^{(2)}$, $Q^{(3)}$, $Q^{(4)}$, \dots are given by the following formulas. 
For $j\ge0$, we have
\begin{align*}
b_{21}(Q^{(2j+2)}) &=  a,\\
b_{13}(Q^{(2j+2)}) &=  \h_{2j}(a)b + \h_{2j-1}(a)c,\\
b_{32}(Q^{(2j+2)}) &=  \h_{2j+1}(a)b + \h_{2j}(a)c. 
\end{align*}
and for $j>0$, we have
\begin{align*} 
b_{12}(Q^{(2j+1)}) &=  a,\\
b_{23}(Q^{(2j+1)}) &=  \h_{2j-1}(a)b + \h_{2j-2}(a)c,\\
b_{31}(Q^{(2j+1)}) &=  \h_{2j}(a)b + \h_{2j-1}(a)c.\\
\end{align*}
\end{proposition}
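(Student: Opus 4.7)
The argument proceeds by induction on $j$, exploiting three structural facts. First, a mutation $\mu[i]$ of a $3$-vertex quiver preserves the absolute values of the two weights on arrows incident to $i$ and modifies only the third weight, via the standard path contribution through $i$. Consequently, along the alternating mutation sequence $\mu[1],\mu[2],\mu[1],\dots$, the weight $|b_{12}|$ is unaffected at every step (and hence stays equal to $|a|$), while $|b_{13}|$ is changed only by $\mu[2]$ and $|b_{23}|$ only by $\mu[1]$. Second, Lemma~\ref{lem:3VertexAcyclicAscents} ensures that the quivers $Q^{(k)}$ for $k\ge 2$ are cyclic and that consecutive ones have opposite orientation, so that the path $3\!\to\!1\!\to\!2$ (or its reverse) is genuinely present whenever we apply a mutation at $1$ (respectively, at $2$). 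Third, the Chebyshev recurrence $\h_{j+1}(a)=a\,\h_j(a)-\h_{j-1}(a)$ is precisely the recurrence that the changing weight satisfies under each mutation.

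The base case $j=0$ is a direct computation: applying $\mu[1]$ to $Q^{(1)}$ reverses the two arrows at vertex~$1$, giving $b_{21}(Q^{(2)})=a$ and $b_{13}(Q^{(2)})=b=\h_0(a)\,b$, while the path $3\!\to\!1\!\to\!2$ contributes $ab$ new arrows to produce $b_{32}(Q^{(2)})=ab+c=\h_1(a)\,b+\h_0(a)\,c$. For the inductive step from $Q^{(2j+1)}$ to $Q^{(2j+2)}=\mu[1](Q^{(2j+1)})$, the sign-reversal of the arrows at $1$ immediately gives $b_{21}(Q^{(2j+2)})=a$ and $b_{13}(Q^{(2j+2)})=\h_{2j}(a)\,b+\h_{2j-1}(a)\,c$, while the path $3\!\to\!1\!\to\!2$ contributes $a\cdot(\h_{2j}(a)\,b+\h_{2j-1}(a)\,c)$ arrows from $3$ to $2$; after cancelling the $2$-cycle with the existing $b_{23}(Q^{(2j+1)})=\h_{2j-1}(a)\,b+\h_{2j-2}(a)\,c$ arrows from $2$ to $3$, one obtains
\[
b_{32}(Q^{(2j+2)})=a\bigl(\h_{2j}(a)\,b+\h_{2j-1}(a)\,c\bigr)-\bigl(\h_{2j-1}(a)\,b+\h_{2j-2}(a)\,c\bigr),
\]
which collapses via the Chebyshev recurrence to $\h_{2j+1}(a)\,b+\h_{2j}(a)\,c$, as claimed. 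The step $Q^{(2j)}\to Q^{(2j+1)}=\mu[2](Q^{(2j)})$ is handled by an entirely analogous calculation, with the roles of vertices $1$ and $3$ interchanged.

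No genuine obstacle appears beyond careful sign bookkeeping: once Lemma~\ref{lem:3VertexAcyclicAscents} has pinned down the cyclic orientation of each intermediate $Q^{(k)}$, the remainder of the proof reduces to a one-line application of the defining recurrence for the Chebyshev polynomials $\h_j$.
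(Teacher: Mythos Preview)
Your proof is correct and follows essentially the same approach as the paper: both invoke Lemma~\ref{lem:3VertexAcyclicAscents} to guarantee that the quivers $Q^{(2)},Q^{(3)},\dots$ are cyclic with predictable (alternating) orientations, both observe that each mutation leaves two weights intact and alters only the third, and both then verify that this single changing weight satisfies the Chebyshev recurrence~\eqref{eq:chebyshev-rec}. Your write-up is simply a more explicit unrolling of the paper's two-sentence sketch, spelling out the base case and the inductive mutation computation in full.
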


\begin{figure}[ht]
\begin{equation*}
\!\!\!\!\begin{array}{ccccccc}
\begin{tikzcd}[arrows={-stealth}]
  1 \arrow[r, "a"]  
  & 2 
  \\
  3 \arrow[u,"b"] \arrow[ur, swap, "c" ] &
\end{tikzcd}
& &
\begin{tikzcd}[arrows={-stealth}]
  1    \arrow[d, swap, "b"] 
  & 2 \arrow[l, swap, "a" ]  
  \\
   3 \arrow[ur, swap, "ab+c"] &
\end{tikzcd}
& &
\!\begin{tikzcd}[arrows={-stealth}]
  1    \arrow[r, "a"]
  & 2  \arrow[dl, "ab+c" ] 
  \\
   3 \arrow[u, "\!\!\!(a^2\!-1)b+ac"] &
\end{tikzcd}
& &
\!\begin{tikzcd}[arrows={-stealth}]
  1    \arrow[d, swap, "\!\!\!(a^2\!-1)b+ac"] 
  & 2 \arrow[l, swap, "a" ]  
  \\
   3 \arrow[ur, swap, "\scriptscriptstyle{(a^3\!-2a)b+(a^2\!-1)c}", near start]& 
\end{tikzcd}
\\
Q^{(1)}
& \!\!\!\!\!\!\mutation{1}\!\!\!\!\!\!
& Q^{(2)}
& \!\!\!\!\!\!\mutation{2}\!\!\!\!\!\!
& \!Q^{(3)}
& \!\!\!\!\!\!\!\!\mutation{1}\!\!\!\!\!\!\!\!
& \!\!\!\!\!\!\!\!\!\!\!\!\!\!\!\!\!\!\!\!Q^{(4)}
\end{array}
\!
\end{equation*}
\caption{The first three mutations in Proposition~\ref{pr:3VertexAlternatingMutations}.}
\label{fig:mu1mu2}
\end{figure}
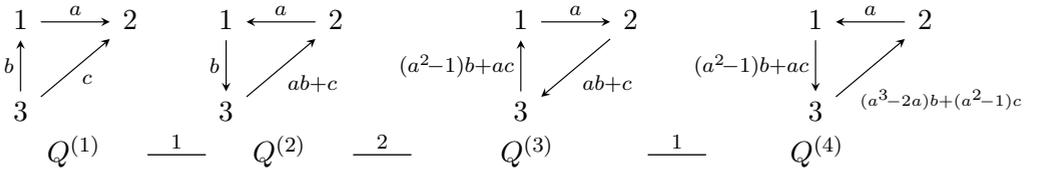

\begin{proof} 
Note that $Q^{(2)}, Q^{(3)}, \ldots$ are cyclic by Lemma~\ref{lem:3VertexAcyclicAscents}, since our first mutation is at the elbow~$1.$ 
Each of the mutations \eqref{eq:mu12121...} keeps two of the three weights intact and creates one new weight.
These new weights, together with $b$, form the sequence
\begin{equation*}
b, ab+c, (a^2-1)b+ac, (a^3-2a)b+(a^2-1)c, \dots 
\end{equation*}
that satisfies the same recurrence as the monic Chebyshev polynomials, see~\eqref{eq:chebyshev-rec}.
The desired formulas follow.
\end{proof}

We will later need the following simple observation about 3-vertex subquivers of a larger quiver. 

\begin{lemma}
\label{lem:4VertexWeightChanges}
Let $i,j,u,v$ be four vertices in a quiver~$Q$. 
Suppose that $i$ is an ascent or descent in $Q|_{iju}$ and an ascent or descent in $Q|_{ijv}$.
Then $b_{uv}(Q) = b_{uv}(\T{i}{Q})$.
\end{lemma}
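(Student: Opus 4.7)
The plan is to reduce the claim to the mutation formula for $b_{uv}$ and to show that the hypothesis forces $u$ and $v$ to sit on opposite sides of $i$ in~$Q$. Recall that $b_{uv}(\mu[i](Q))$ differs from $b_{uv}(Q)$ exactly when there is an oriented 2-path $u\to i\to v$ or $v\to i\to u$ in~$Q$; equivalently, when $b_{ui}(Q)$ and $b_{iv}(Q)$ are both nonzero and have the \emph{same} sign. It therefore suffices to rule out such a 2-path.

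First I would verify the short implication that if $i$ is an ascent or descent in a 3-vertex subquiver $Q|_{ijw}$, then $Q|_{ijw}$ contains an oriented 2-path through $i$ joining $j$ and $w$. Indeed, if no such path existed, the mutation formula would leave $b_{jw}$, and hence $|b_{jw}|$, unchanged, contradicting the ascent/descent hypothesis. Since mutation commutes with restriction at a vertex belonging to the subset, these subquiver 2-paths are genuine 2-paths in~$Q$. Applied with $w=u$ and $w=v$, the hypothesis then yields one 2-path through $i$ of the form $j\to i\to u$ or $u\to i\to j$, and another of the form $j\to i\to v$ or $v\to i\to j$.

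The heart of the argument is a consistency check at the shared edge between $i$ and $j$: that edge has a fixed orientation in $Q$, so the two 2-paths must agree on whether $j\to i$ or $i\to j$. Only two configurations survive: either (a)~$j\to i\to u$ and $j\to i\to v$, in which case $b_{iu}(Q)>0$ and $b_{iv}(Q)>0$, so $b_{ui}(Q)<0$ and $b_{iv}(Q)>0$ have opposite signs; or (b)~$u\to i\to j$ and $v\to i\to j$, in which case analogously $b_{ui}(Q)>0$ and $b_{iv}(Q)<0$. In either case there is no 2-path through $i$ joining $u$ and $v$, so the mutation formula forces $b_{uv}(\mu[i](Q))=b_{uv}(Q)$, as desired.

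There is no deep obstacle: the whole argument reduces to a small sign case analysis once the implication ``ascent or descent $\Rightarrow$ oriented 2-path through $i$'' is in place. The only care required is keeping straight the sign conventions $b_{ij}=-b_{ji}$, so as not to confuse ``$b_{iu}$ and $b_{iv}$ agree in sign (coherent configurations at $i$)'' with ``$b_{ui}$ and $b_{iv}$ disagree in sign (no $u$-$i$-$v$ 2-path).''
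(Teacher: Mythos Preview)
Your proof is correct and follows essentially the same approach as the paper's: both arguments observe that the ascent/descent hypothesis forces an oriented 2-path $j\to i\to w$ or $w\to i\to j$ for $w\in\{u,v\}$, and then use consistency of the $i$--$j$ orientation to conclude that $u$ and $v$ lie on the same side of~$i$, so $\mu[i]$ leaves $b_{uv}$ unchanged. The paper's version is simply terser, fixing the orientation $j\to i$ without loss of generality rather than treating both cases.
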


\begin{proof}
Assume, without loss of generality, that we have orientation $j \points i$ in $Q$. 
In order for $i$ to be an ascent or descent in $Q|_{iju}$, 
we must have $i \points u$, and similarly~$i \points v$. 
But then $i$ is a source in $Q|_{iuv}$, so $\mu[i]$ does not affect $b_{uv}$.
\end{proof}

\newpage

\section{Main construction}
\label{Sec:GeneralSemiCycles}

In this section (see Theorem~\ref{thm:GeneralSemiCycles}), 
we establish a key claim made in Theorem~\ref{thm:Summary}: 
the quiver~$Q$ from Theorem~\ref{thm:Summary} 
is fixed by the mutation sequence~\eqref{eq:main-cycle} (cf.~\eqref{eq:GeneralSemiCycle}).
Note that while the constructions of~$Q$ presented in Theorems~\ref{thm:Summary} and~\ref{thm:GeneralSemiCycles} 
may seem different, they actually yield the same quiver, 
by Proposition~\ref{pr:3VertexAlternatingMutations} and Lemma~\ref{lem:4VertexWeightChanges}.

For $a,b\in\ZZ$, we will use the notation $[a,b]=\{a,a+1,\dots,b-1,b\}$. 

\begin{theorem}
\label{thm:GeneralSemiCycles}
Let $n\ge 4$ and $k> 0$.
Let $\tilde R$ be a quiver on the vertex set $[1,n-1]$ 
such that $b_{ij}(\tilde R)\ge 2$ 
whenever $i<j$. 
(In particular, $\tilde R$ is acyclic and has large weights.) 
Define the quiver $Q$ on the vertex set $[1,n]$ 
by setting 
\begin{equation}
\label{eq:Q{[1,n]}}
Q|_{[1,n-1]}= \T{(12)^k}{\tilde R}
\end{equation}
and choosing the values $b_{in}(Q) \geq 2$ arbitrarily; 
in particular, $n$~is a sink in~$Q$.  
Then the quiver $Q$ lies on the following mutation cycle of length $n+4k$:
\begin{equation}
\label{eq:GeneralSemiCycle}
    Q = \T{(1 2)^k 1 2 3 \cdots (n-2) (n-1) (2 1)^k n}{Q}.
\end{equation}
\end{theorem}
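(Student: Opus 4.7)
The plan is to follow the quiver through the four phases of the cycle. Writing $Q_0 = Q$, $Q_1 = \mu[n](Q_0)$, and $Q_2, Q_3, Q_4$ for the quivers obtained after the $(1,2)^k$, the $(n{-}1, n{-}2, \ldots, 1)$, and the $(2,1)^k$ phases respectively, the goal is to show $Q_4 = Q_0$. Let $F = \mu[1] \circ \mu[2]$, so that the $(2,1)^k$ phase (applied in left-to-right order) acts as the composition $F^k$, while the $(1,2)^k$ phase acts as $F^{-k}$. The central reduction is to prove
\[
Q_3 = F^{-k}(Q);
\]
once this is established, $Q_4 = F^k(Q_3) = F^k \circ F^{-k}(Q) = Q$ follows immediately.

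For the restriction to $[1, n-1]$ this is routine. Since mutations at vertices in $[1, n-1]$ commute with restriction to $[1, n-1]$, and $\mu[n]$ fixes this restriction, we have $Q_2|_{[1, n-1]} = F^{-k}(F^k(\tilde R)) = \tilde R$. Proposition \ref{pr:acyclic-mutation-cycle} applied to $\tilde R$ on $[1, n-1]$ then yields $Q_3|_{[1, n-1]} = \tilde R$, matching $F^{-k}(Q)|_{[1, n-1]} = \tilde R$.

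For the $n$-connections, I would compute both sides via Proposition \ref{pr:3VertexAlternatingMutations}. Writing $\alpha_i = b_{in}(Q)$, the key observation is that mutations at $\{1, 2\}$ commute with restriction to $\{1, 2, j, n\}$ for each $j \in [3, n-1]$, so the evolution of the three-vertex subquiver $Q|_{12n}$ during the $(1,2)^k$ phase is governed by Proposition \ref{pr:3VertexAlternatingMutations} with parameters $a = q_{12}$, $b = \alpha_1$, $c = \alpha_2$. This produces explicit Chebyshev-polynomial formulas for $b_{1n}(Q_2)$ and $b_{2n}(Q_2)$. The weights $b_{jn}(Q_2)$ for $j \in [3, n-1]$ are unchanged, since no vertex in $[3, n-1] \cup \{n\}$ is mutated during this phase. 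An analogous computation describes the $n$-connections of $F^{-k}(Q)$.

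The bulk of the work lies in the third phase. Applied to $Q_2$, the first $n{-}3$ mutations $\mu[n-1], \mu[n-2], \ldots, \mu[3]$ are successive sink mutations: at each step the vertex being mutated is a sink in the current quiver, because $Q_2|_{[1, n-1]} = \tilde R$ is acyclic with the order $1 < 2 < \cdots < n-1$ and, in $Q_2$, vertex $n$ has outgoing arrows to each vertex of $[3, n-1]$. These sink mutations merely reverse the arrows at the mutated vertex and create no arrows via path composition. The final two mutations $\mu[2]$ and $\mu[1]$ are genuine (non-sink) mutations and do produce new arrows through paths involving $n$. The main obstacle will be this careful bookkeeping: using the Chebyshev recurrence $p_{j+1} = q_{12} p_j - p_{j-1}$, the weights of the arrows $n \to 1$, $2 \leftrightarrow n$, and $u \to n$ for $u \in [3, n-1]$ in $Q_3$ must each be shown to match the corresponding weights in $F^{-k}(Q)$; each such match reduces to a polynomial identity in $q_{12}$, the $q_{ij}$'s and the $\alpha_i$'s, routine but tedious to verify.
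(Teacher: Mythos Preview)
Your overall strategy—reducing to $Q_3 = F^{-k}(Q)$ and handling the $[1,n-1]$ restriction and the $\{1,2,n\}$ subquiver separately—is sound and roughly parallels the paper's decomposition. However, there is a genuine gap in your treatment of the weights $b_{jn}$ for $j \in [3,n-1]$.

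You claim that $b_{jn}(Q_2) = b_{jn}(Q_1)$ ``since no vertex in $[3,n-1] \cup \{n\}$ is mutated during this phase.'' This justification is incorrect: mutating at vertex $1$ changes $b_{jn}$ whenever $1$ is neither a sink nor a source in the subquiver on $\{1,j,n\}$, regardless of whether $j$ or $n$ is mutated. The conclusion $b_{jn}(Q_2) = b_{jn}(Q_1)$ happens to be true, but for a subtler reason: at each step the vertex being mutated is simultaneously an ascent or descent in both $Q^{(\ell)}|_{12j}$ and $Q^{(\ell)}|_{12n}$, which forces it to be a sink or source in the subquiver on $\{\varepsilon(\ell),j,n\}$; this is the content of Lemma~\ref{lem:4VertexWeightChanges} and Lemma~\ref{lem:binRQ} in the paper. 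More seriously, your ``analogous computation'' for $F^{-k}(Q)$ breaks down completely: in $Q$ (as opposed to $Q_1 = \mu[n](Q)$) vertex $n$ is a sink, so $1$ is the elbow in $Q|_{1jn}$, and the very first mutation $\mu[1]$ \emph{does} change $b_{jn}$. Thus $b_{jn}(F^{-k}(Q))$ requires a genuinely different computation through $2k$ mutations, not covered by your sketch.

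The paper sidesteps this by comparing $Q_4$ to $Q$ directly rather than $Q_3$ to $F^{-k}(Q)$. It exploits a folding symmetry (Lemmas~\ref{lem:SubquiverTildeQ} and~\ref{lem:Subquiver12nLeft}) showing that certain subquivers on the left rim of the cycle coincide with those on the right rim, which together with Lemma~\ref{lem:binUnchanged} reduces the verification of $b_{in}(Q') = b_{in}(Q)$ to showing that four explicit ``corner changes'' (Lemma~\ref{lem:fourChanges}) sum to zero---a short Chebyshev identity (Lemma~\ref{lem:bin}). Your reformulation can be made to work, but not with the bookkeeping you describe: you would need to track $b_{jn}$ through $2k$ nontrivial mutations applied to $Q$, obtaining a Chebyshev-polynomial expression, and separately compute $b_{jn}(Q_3)$ via the two non-sink mutations $\mu[2],\mu[1]$ at the end of Phase~3, then match the two expressions. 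This is feasible but closer in difficulty to the paper's full argument than your outline suggests.
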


\begin{example}
\label{eg:n=4,k=2}
The $n=4, k=2$ case of Theorem~\ref{thm:GeneralSemiCycles} is shown in Figure~\ref{fig:generic-8-cycle} , 
with the quiver~$Q$ appearing in the top left corner.
We assume that $a,b,c,d,e,f \geq 2$. 
\end{example}

\begin{example}
\label{eg:longcycle5}
Here is a specific example with $n=5$ and $k=2$. 
Choose 
\begin{equation*}
\tilde R=\begin{tikzcd}[arrows={-stealth}, row sep=40]
  1 \arrow[rr, "3"]  \arrow[rrd, near start, "6"] \arrow[d, swap, "5" ]
  && 2 \arrow[d, "8" ] \arrow[dll,swap, near end, "7"] 
  \\
4   && 3 \arrow[ll, swap, "4" ] 
\end{tikzcd}
\end{equation*}
so that
\begin{equation*}
Q|_{[1,4]}\!=\!\T{1212}{\tilde R} = \begin{tikzcd}[arrows={-stealth}, row sep=40]
  1 \arrow[rr, "3"]  
  && 2 \arrow[d, "566" ] \arrow[dll,swap, near end, outer sep=-1, "490"] 
  \\
4 \arrow[u, "187" ]  && 3 \arrow[ll, swap, "4" ] \arrow[llu, swap, near end, outer sep=-1, "216"] 
\end{tikzcd}
.
\end{equation*}
Now extend $Q|_{[1,4]}$ to $Q$ by setting arbitrary values $b_{15},b_{25}, b_{35},b_{45}\ge2$.
E.g., 
\begin{equation}
Q= \begin{tikzcd}[arrows={-stealth}, row sep=30]
  1 \arrow[rr, "3"]  \arrow[rddd, near end, swap, outer sep=-1.5, "13"]  
  && 2 \arrow[dd, "566" ] \arrow[ddll, swap, "490", pos=0.62, outer sep=-1.5] \arrow[lddd, near end, outer sep=-1.5, "17"]  
  \\
  \\
4 \arrow[uu, "187" ] \arrow[rd, swap, outer sep=-1.5, "11" ]  && 
       3 \arrow[ll, swap, "4" ] \arrow[lluu,  "216", swap, near end, outer sep=-1]  \arrow[ld, outer sep=-1.5, "19" ] 
\\
& 5
\end{tikzcd}
\end{equation}
Then \eqref{eq:GeneralSemiCycle} asserts that
$\T{1212123421215}{Q}=Q$, as in Figure~\ref{fig:egCycle}. 
\end{example}
 
\begin{figure}[ht]
\vspace{-.2in}
\begin{equation*}
\begin{tikzcd}[arrows={-stealth, cramped}
]
\red{\boxed{Q}}  \arrow[rrrr, no head, "{5}"] \arrow[d, swap, no head, "1"]
  &&&& \red{\bullet} \arrow[d, no head, "1"] \\
\blue{\bullet} \arrow[d, swap, no head,"2"] &&&&  \blue{\bullet} \arrow[d, no head,"2"]  \\
\blue{\bullet} \arrow[d, swap, no head,"1"] &&&&  \blue{\bullet} \arrow[d, no head,"1"]  \\
\blue{\bullet} \arrow[d, swap, no head,"2"] &&&&  \blue{\bullet} \arrow[d, no head,"2"]  \\
\blue{\boxed{L}} \arrow[r, no head, "1"] & \blue{\bullet} \arrow[r, no head, "2" ] 
    & \red{\bullet} \arrow[r, no head,"{3}"] & \red{\bullet} \arrow[r, no head,"{4}"] & \red{\boxed{R}}
\end{tikzcd}
\end{equation*}
\vspace{-.1in}
\caption{
The mutation cycle from Theorem~\ref{thm:GeneralSemiCycles}, for $n=5, k=2$. 
Red (resp.,~blue) vertices correspond to the quivers that have (resp., do not have) a source or sink.} 
\vspace{-.2in}
\label{fig:egCycle}
\end{figure}
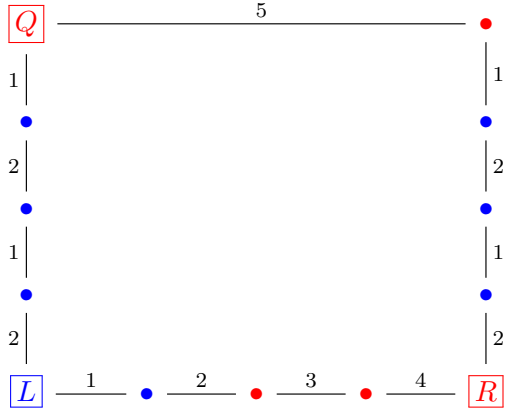

The remainder of this section is devoted to the proof of Theorem~\ref{thm:GeneralSemiCycles}. 

To make it easier to discuss particular mutations and quivers along the mutation sequence
appearing in~\eqref{eq:GeneralSemiCycle}, 
we denote by $Q^{(j)}$ the result of applying the first $j$ mutations in this sequence (starting with $\mu[n]$) to~$Q$,
so that 
\begin{equation}
\label{eq:Q123}
Q^{(0)}=Q,\ \ Q^{(1)}=\T{n}{Q},\ \  Q^{(2)}=\T{1n}{Q}, \ \  Q^{(3)}=\T{21n}{Q},  \dots, 
\end{equation}
see Figure~\ref{fig:GeneralSemiCycles-notation}. 
We then denote
\begin{align}
\label{eq:RLQ-R}
R&=Q^{(2k+1)}=\T{(2 1)^k n}{Q}, \\
\label{eq:RLQ-L}
L&= Q^{(2k+n)}=\T{1 \cdots (n-1) (21)^k n}{Q}=\T{1 \cdots (n-1)}{R} , \\
\label{eq:Qpdef}
Q' &= Q^{(4k+n)}= \T{(1 2)^k 1 2 3 \cdots (n-2) (n-1) (2 1)^k n}{Q}. 
\end{align}

\begin{figure}[ht]
\vspace{-.2in}
\begin{equation*}
\quad \begin{tikzcd}[arrows={-stealth}, sep=15]
\hspace{-.34in}
Q^{(4k+n)}=Q' \ar[d, swap, no head, "1"] 
\stackrel{?}{=}  Q  
 \arrow[rrrrrr, no head, "n"] &
                                                                    &&&&& Q^{(1)} \arrow[d, no head, "1"] \\
\bullet \arrow[d, swap, no head,"2"] &&&&&&  Q^{(2)} \arrow[d, no head,"2"]  \\
\bullet \arrow[d, swap, no head,dotted]  &&&&&&  Q^{(3)} \arrow[d, no head,dotted]  \\
\bullet \arrow[d, swap, no head,"1"] &&&&&&  Q^{(2k-1)} \arrow[d, no head,"1"]  \\
\bullet \arrow[d, swap, no head,"2"] &&&&&&  Q^{(2k)} \arrow[d, no head,"2"]  \\
\hspace{-.69in}Q^{(2k+n)}=L \arrow[r, swap, no head, "1"] 
    & \bullet \arrow[r, swap, no head, "2" ] 
    & \bullet \arrow[r, swap, no head,"3"] 
    & \bullet \arrow[r, dotted, no head] & \bullet \arrow[r, swap, no head,"n-2"] 
         & \bullet \arrow[r, swap, no head,"n-1"] & R=Q^{(2k+1)} \hspace{-.65in}
\end{tikzcd}
\end{equation*}
\vspace{-.1in}
\caption{The putative mutation cycle, see \eqref{eq:Q123}--\eqref{eq:Qpdef}. 
}
\label{fig:GeneralSemiCycles-notation}
\end{figure}
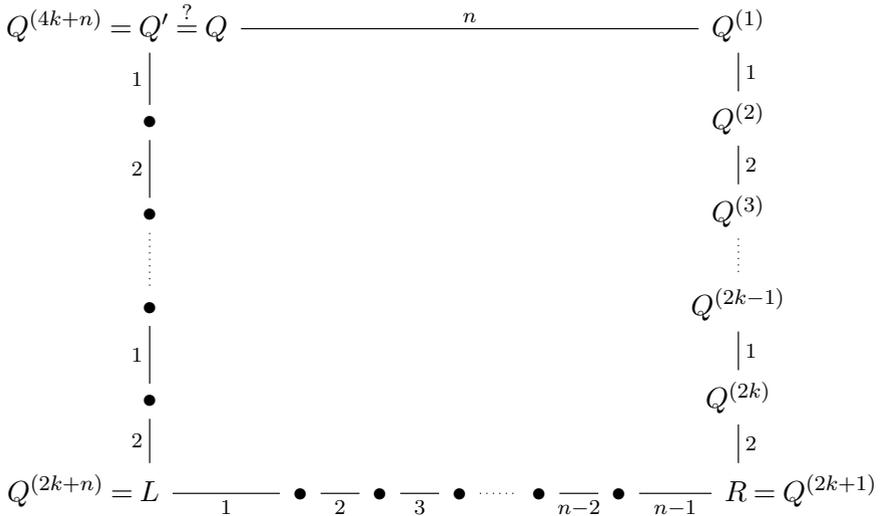

Our goal is to show that $Q = Q'$. 
We will gradually demonstrate that certain subquivers of $Q$ and $Q'$ are equal,
see Lemmas~\ref{lem:SubquiverTildeQ}, \ref{lem:Subquiver12n}, and~\ref{lem:bin}.

\begin{lemma}
\label{lem:SubquiverTildeQ} 
We have $L|_{[1,n-1]}\!=\!R|_{[1, n-1]}\!=\!\tilde R$ and $Q|_{[1,n-1]}\!=\!Q^{(1)}|_{[1,n-1]}\!=\!Q'|_{[1, n-1]}$.
More generally, 
\begin{equation}
\label{eq:SubquiverTildeQ-2}
Q^{(4k+n-\ell)}|_{[1,n-1]} = Q^{(\ell+1)}|_{[1,n-1]} \quad (0 \leq \ell \leq 2k). 
\end{equation}

\end{lemma}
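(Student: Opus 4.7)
The plan is to exploit the fact that every mutation in \eqref{eq:GeneralSemiCycle} other than the initial $\mu[n]$ takes place at a vertex of $[1,n-1]$. Since $n$ is a sink in $Q$ (by the assumption $b_{in}(Q)\ge 2$ for $i<n$), mutation $\mu[n]$ leaves $Q|_{[1,n-1]}$ unchanged; and mutation at any vertex of $[1,n-1]$ commutes with restriction to $[1,n-1]$. Together these two facts reduce each $Q^{(j)}|_{[1,n-1]}$ to a mutation sequence applied, on $[1,n-1]$, starting from $Q|_{[1,n-1]} = \mu[(12)^k](\tilde R)$.

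First, I would observe the cancellation $\mu[(21)^k]\circ\mu[(12)^k] = \mathrm{id}$: the two length-$2k$ words are reverses of each other, so the composition collapses by successive applications of $\mu[i]\circ\mu[i] = \mathrm{id}$. This gives $R|_{[1,n-1]} = \mu[(21)^k]\bigl(\mu[(12)^k](\tilde R)\bigr) = \tilde R$. Next, since $\tilde R$ is acyclic with $b_{ij}(\tilde R)\ge 0$ for $i<j$, Proposition~\ref{pr:acyclic-mutation-cycle} applied to $\tilde R$ yields $\mu[1\cdots(n-1)](\tilde R) = \tilde R$; the middle block of \eqref{eq:GeneralSemiCycle} therefore sends $R|_{[1,n-1]} = \tilde R$ back to itself, giving $L|_{[1,n-1]} = \tilde R$. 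Finally, the block $\mu[(12)^k]$ sends $\tilde R$ back to $Q|_{[1,n-1]}$, so $Q'|_{[1,n-1]} = Q|_{[1,n-1]}$; together with the trivial equality $Q^{(1)}|_{[1,n-1]} = Q|_{[1,n-1]}$, this settles the three identities in the first sentence of the lemma.

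For the general identity \eqref{eq:SubquiverTildeQ-2} with $0\le \ell \le 2k$, let $\tau_j$ denote the composition of the first $j$ mutations (in time order) of the block $\mu[(12)^k]$, i.e., the alternating word $\mu[2],\mu[1],\mu[2],\ldots$ of length $j$. On the one hand, $Q^{(4k+n-\ell)}|_{[1,n-1]} = \tau_{2k-\ell}\bigl(L|_{[1,n-1]}\bigr) = \tau_{2k-\ell}(\tilde R)$ by the already-established value of $L|_{[1,n-1]}$. On the other hand, $Q^{(\ell+1)}|_{[1,n-1]}$ is obtained by applying the first $\ell$ mutations of $\mu[(21)^k]$ to $\mu[(12)^k](\tilde R)$; those $\ell$ mutations pair-cancel at the junction with the last $\ell$ mutations of $\mu[(12)^k]$, again leaving $\tau_{2k-\ell}(\tilde R)$. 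The main obstacle is purely bookkeeping with the right-to-left convention and tracking the junction cancellation, which is routine once set up correctly.
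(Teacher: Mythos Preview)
Your proposal is correct and follows essentially the same approach as the paper's proof: both use that $n$ is a sink (so $\mu[n]$ fixes the $[1,n-1]$-subquiver), that mutations at vertices of $[1,n-1]$ commute with restriction, the cancellation $\mu[(21)^k]\circ\mu[(12)^k]=\mathrm{id}$, and Proposition~\ref{pr:acyclic-mutation-cycle} applied to $\tilde R$. For \eqref{eq:SubquiverTildeQ-2}, the paper phrases the argument as ``apply the mutations $\mu[\cdots 212]$ to the identity $L|_{[1,n-1]}=R|_{[1,n-1]}$,'' which is exactly your computation that both sides equal $\tau_{2k-\ell}(\tilde R)$.
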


\begin{proof}
Since $n$ is a sink in~$Q$, we have 
\begin{equation}
\label{eq:Q[n-1]=Q1}
Q|_{[1,n-1]}=Q^{(1)}|_{[1,n-1]}.
\end{equation}
Therefore
\begin{equation}
\label{eq:R[n-1]=tildeR}
R|_{[1,n-1]}\stackrel{\eqref{eq:RLQ-R}}{=}\T{(21)^k}{Q^{(1)}}|_{[1,n-1]}
\stackrel{\eqref{eq:Q[n-1]=Q1}}{=}\T{(21)^k}{Q}|_{[1,n-1]}\stackrel{\eqref{eq:Q{[1,n]}}}{=}\tilde R. 
\end{equation}
Recall that $\tilde R$ is acyclic with $b_{ij}(\tilde R)>0$ for $i<j$. 
It follows by Proposition~\ref{pr:acyclic-mutation-cycle} that
\begin{equation}
\label{eq:L[n-1]=tildeL}
L|_{[1, n-1]}\stackrel{\eqref{eq:RLQ-L}}{=}\T{12\cdots(n-1)}{R}|_{[1,n-1]}
\stackrel{\eqref{eq:R[n-1]=tildeR}}{=}\T{12\cdots(n-1)}{\tilde R}=\tilde R .
\end{equation}
We conclude that
\begin{equation*}
Q'|_{[1, n-1]}
\!\stackrel{\eqref{eq:Qpdef}}{=}\!   \T{(12)^k}{L}|_{[1, n-1]}
\!\stackrel{\eqref{eq:L[n-1]=tildeL}}{=\!\!=}\!   \T{(12)^k}{R}|_{[1, n-1]}
\!\stackrel{\eqref{eq:RLQ-R}}{=}\!   Q^{(1)}|_{[1,n-1]}=Q|_{[1,n-1]}. 
\end{equation*}
Identity \eqref{eq:SubquiverTildeQ-2} is deduced in the same way, by applying the mutations
$\mu[\cdots 212]$ to the identity $L|_{[1,n-1]}\!=\!R|_{[1, n-1]}$. 
\end{proof}

Now that we have shown that $Q|_{[1,n-1]}=Q'|_{[1, n-1]}$,
it remains to demonstrate that the multiplicities and directions of arrows incident to the vertex~$n$
are the same in $Q$ and~$Q'$; that is, we need to show that $b_{in}(Q)=b_{in}(Q')$ for $i=1,\dots,n-1$. 

\begin{lemma}
\label{lem:Subquiver12nStart} 
The quiver $Q^{(1)}|_{12n}$ is acyclic with large weights, with elbow at~$1$. \\
The quiver $Q|_{12n}$ is acyclic with large weights, with elbow at~$2$.
\end{lemma}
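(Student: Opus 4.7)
The plan is to compute the three weights $b_{12}$, $b_{1n}$, $b_{2n}$ in $Q$ and in $Q^{(1)}=\mu[n](Q)$, and then read off the source, sink, and elbow from their signs. By assumption $b_{1n}(Q), b_{2n}(Q) \geq 2$, and since $n$ is a sink in $Q$ we have $1 \points n$ and $2 \points n$ with large weights. The nontrivial point is to verify that $b_{12}(Q) \geq 2$ with orientation $1 \points 2$.

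To establish this, I would start from $b_{12}(\tilde R) \geq 2$ and $1 \points 2$ in $\tilde R$, and track the edge $\{1,2\}$ as we apply $\mu[(12)^k]$. The key observation is that $|b_{12}|$ is invariant under both $\mu[1]$ and $\mu[2]$: for an edge $\{u,w\}$ to be altered by the path-creation step of a mutation at $v$, one needs $v\notin\{u,w\}$, so the edge $\{1,2\}$ is never touched by that step. Thus each of $\mu[1]$, $\mu[2]$ affects $b_{12}$ only via the arrow-reversal step, which flips its sign. Since $\mu[(12)^k]$ is a composition of $2k$ such mutations (an even count), the sign is preserved, yielding $b_{12}(Q)=b_{12}(\tilde R)\geq 2$ with $1 \points 2$. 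Combining this with the remarks above, $Q|_{12n}$ is an acyclic triangle with source $1$, sink $n$, elbow $2$, and all weights $\geq 2$, as required.

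The statement about $Q^{(1)}|_{12n}$ is then immediate: because $n$ is a sink in $Q$, the mutation $\mu[n]$ is a sink/source mutation and merely reverses all arrows incident to $n$, leaving the subquiver on $[1,n-1]$ unchanged. Hence $b_{12}(Q^{(1)}) = b_{12}(Q)\geq 2$ (still $1 \points 2$), while $b_{1n}(Q^{(1)}) = -b_{1n}(Q)$ and $b_{2n}(Q^{(1)}) = -b_{2n}(Q)$, so $n \points 1$ and $n \points 2$ with large weights. This makes $Q^{(1)}|_{12n}$ acyclic with source $n$, sink $2$, and elbow $1$. The only step that requires any real argument is the sign-tracking of $b_{12}$ under $\mu[(12)^k]$; the rest is a direct unpacking of the definitions of sink/source mutation and of the elbow.
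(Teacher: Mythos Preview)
Your proof is correct and follows essentially the same approach as the paper. Both arguments hinge on the observation that mutating at $1$ or $2$ can only flip the sign of $b_{12}$, so after the $2k$ mutations in $\mu[(12)^k]$ the value $b_{12}$ is preserved; you spell out this sign-tracking more explicitly and establish $Q|_{12n}$ first before deducing $Q^{(1)}|_{12n}$, whereas the paper does it in the opposite order, but these are cosmetic differences.
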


\begin{proof}
We have $b_{12}(R) =b_{12}(\tilde R)\ge 2$ by construction. 
Since $Q^{(1)}=\T{(12)^k}{R}$, it follows that $b_{12}(Q^{(1)})=b_{12}(R)\ge 2$. 

We have $b_{1n}(Q),b_{2n}(Q)\ge 2$ by construction.
Since $Q^{(1)}=\T{n}{Q}$, it follows that~$b_{n1}(Q^{(1)}), b_{n2}(Q^{(1)})\ge 2$, 
and we are done with $Q^{(1)}|_{12n}$. 

The claim regarding $Q|_{12n}=\T{n}{Q^{(1)}|_{12n}}$ immediately follows. 
\end{proof}

For $1\le \ell\le 2k$, we denote
\begin{equation}
\label{eq:varepsilon}
\varepsilon(\ell)=
\begin{cases}
1 & \text{if $\ell$ is odd;} \\
2 & \text{if $\ell$ is even,}
\end{cases}
\end{equation}
so that the mutations along the right rim of the diagram in Figure~\ref{fig:GeneralSemiCycles-notation}
take the form
\begin{equation*}
Q^{(\ell)}\mutation{\varepsilon(\ell)}Q^{(\ell+1)}. 
\end{equation*}

\begin{lemma}
\label{lem:mut-epsilon}
Let $1\le \ell\le 2k$ and $i\!\in\! [3,n\!-\!1]$. 
Then $\varepsilon(\ell)$ is a descent in $Q^{(\ell)}|_{12i}$.
Furthermore, $Q^{(\ell)}|_{12n}$ is cyclic, with large weights and ascent at~$\varepsilon(\ell)$. 
In particular, $R|_{12n}$ has large weights and the orientation $1\points 2\points n\points 1$. 
\end{lemma}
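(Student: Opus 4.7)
The plan is to reduce everything to the 3-vertex theory of Section~\ref{sec:3Vertex}, by decoupling the analysis into the subquivers $Q^{(\ell)}|_{12i}$ (for $i\in[3,n-1]$) and the subquiver $Q^{(\ell)}|_{12n}$. The key observation is that each of the mutations $\mu[1], \mu[2], \mu[n]$ commutes with restriction to any 3-vertex subquiver either containing the mutated vertex or omitting it entirely; hence each 3-vertex subquiver in question is the image of a known acyclic quiver under an explicit word in $\{\mu[1],\mu[2]\}$, and Lemmas~\ref{lem:3VertexAcyclicAscents} and~\ref{lem:3VertexUniqueDescents} together with Proposition~\ref{pr:3VertexAlternatingMutations} can be applied directly.

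For $i\in[3,n-1]$, the hypothesis $b_{jk}(\tilde R)\ge 2$ for $j<k$ implies that $\tilde R|_{12i}$ is acyclic with large weights and elbow at $2$. Since $\mu[n]$ fixes this subquiver, we have
\begin{equation*}
Q^{(\ell)}|_{12i}=\mu[\varepsilon(\ell-1)\cdots\varepsilon(1)]\circ\mu[(12)^k]\,(\tilde R|_{12i}),
\end{equation*}
which in order of application is the concatenation of $2,1,2,1,\ldots,2,1$ ($2k$ letters) with $1,2,1,2,\ldots,\varepsilon(\ell-1)$ ($\ell-1$ letters). The two consecutive copies of $\mu[1]$ at the seam cancel, exposing two consecutive $\mu[2]$'s that also cancel, and so on. Since $\ell-1\le 2k-1$, exactly $\ell-1$ such pairs collapse, and the surviving effective word is $2,1,2,1,\ldots$ of length $2k-\ell+1\ge 1$, which by a parity check ends at $\varepsilon(\ell)$. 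Lemma~\ref{lem:3VertexAcyclicAscents} then identifies $\mu[\varepsilon(\ell)]$, as the final mutation applied, as a descent in $Q^{(\ell)}|_{12i}$.

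For the subquiver on $\{1,2,n\}$, Lemma~\ref{lem:Subquiver12nStart} supplies the base case: $Q^{(1)}|_{12n}$ is acyclic with large weights and elbow at $1=\varepsilon(1)$, so $\varepsilon(1)$ is its unique ascent (Lemma~\ref{lem:acyclic-descent}). For $\ell\ge 2$, the mutations $\mu[\varepsilon(1)],\mu[\varepsilon(2)],\ldots$ restrict to the alternating word $1,2,1,2,\ldots$ beginning at the elbow, so Lemma~\ref{lem:3VertexAcyclicAscents} yields a cyclic $Q^{(\ell)}|_{12n}$ whose unique descent is $\varepsilon(\ell-1)$; Lemma~\ref{lem:3VertexUniqueDescents} then makes the remaining two vertices ascents, including $\varepsilon(\ell)\ne\varepsilon(\ell-1)$. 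Large weights persist throughout, as confirmed by the explicit Chebyshev formulas of Proposition~\ref{pr:3VertexAlternatingMutations} (whose inputs are all $\ge 2$ under our hypotheses). The ``in particular'' clause then follows by identifying $Q^{(1)}|_{12n}$ with the proposition's starting acyclic quiver via $n\leftrightarrow 3$, and reading off $R|_{12n}=Q^{(2k+1)}|_{12n}$ at the cyclic index: its orientation is $1\points 2\points n\points 1$ with all weights $\ge 2$.

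The chief bookkeeping challenge is managing the seam cancellations in the mutation word in the first part; once that effective alternating word and its endpoints are pinned down, both halves of the lemma reduce to direct invocations of the 3-vertex results of Section~\ref{sec:3Vertex}.
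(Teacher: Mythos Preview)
Your proof is correct and follows essentially the same strategy as the paper's: both reduce to Lemma~\ref{lem:3VertexAcyclicAscents} applied to the acyclic base quivers $\tilde R|_{12i}$ (elbow at~$2$) and $Q^{(1)}|_{12n}$ (elbow at~$1$). The paper presents the first half slightly more economically---noting that $R|_{12i}=\tilde R|_{12i}$ and that each mutation along the right rim directed away from~$R$ is an ascent, so the reverse is a descent---whereas your seam-cancellation computation arrives at the same effective alternating word from~$\tilde R$; the content is identical.
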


\begin{proof}
By construction, $R|_{12i}=\tilde R|_{12i}$ is acyclic with elbow at~$2$ and large weights.
It~follows from Lemma~\ref{lem:3VertexAcyclicAscents} that every mutation 
directed away from~$R$ along the right rim of Figure~\ref{fig:GeneralSemiCycles-notation}
ascends~the subquiver on the vertices $1,2,i$. 
Hence going in the opposite direction gives a descent. 

By Lemma~\ref{lem:Subquiver12nStart}, $Q^{(1)}|_{12n}$ is acyclic with elbow at~$1$ and large weights. 
By Lemma~\ref{lem:3VertexAcyclicAscents}, every mutation 
directed towards~$R$ along the right rim of Figure~\ref{fig:GeneralSemiCycles-notation}
ascends~the subquiver on the vertices $1,2,n$ and makes this subquiver cyclic. 
Since $1 \points 2$ in $R$, the last claim follows. 
\end{proof}

\begin{lemma}
\label{lem:binRQ}
We have 
\begin{equation}
\label{eq:binRQ}
Q^{(1)}|_{[3,n]} =Q^{(2)}|_{[3,n]} = \cdots =Q^{(2k+1)}|_{[3,n]} = R|_{[3,n]}.
\end{equation}
\end{lemma}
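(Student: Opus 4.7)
The plan is to show that every mutation along the chain $Q^{(1)} \to Q^{(2)} \to \cdots \to Q^{(2k+1)} = R$ leaves the subquiver on $[3,n]$ untouched, which immediately yields the chain of equalities. Since the mutation taking $Q^{(\ell)}$ to $Q^{(\ell+1)}$ is $\mu[\varepsilon(\ell)]$ with $\varepsilon(\ell) \in \{1,2\}$, the task reduces to verifying that $b_{uv}(Q^{(\ell)}) = b_{uv}(Q^{(\ell+1)})$ for every pair $u,v \in [3,n]$ with $u \ne v$, and for every $1 \le \ell \le 2k$.

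The workhorse will be Lemma~\ref{lem:4VertexWeightChanges}, applied with $i = \varepsilon(\ell)$, $j = 3 - \varepsilon(\ell)$, and the pair $u,v \in [3,n]$ in question. To invoke it I need $i$ to be an ascent or descent in both $Q^{(\ell)}|_{12u}$ and $Q^{(\ell)}|_{12v}$. This is exactly what Lemma~\ref{lem:mut-epsilon} delivers: for $u \in [3, n-1]$, the lemma states that $\varepsilon(\ell)$ is a descent in $Q^{(\ell)}|_{12u}$, while for $u = n$, the same lemma states that $\varepsilon(\ell)$ is an ascent in the cyclic subquiver $Q^{(\ell)}|_{12n}$. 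Either way, the ascent-or-descent hypothesis of Lemma~\ref{lem:4VertexWeightChanges} holds for both $u$ and $v$, and we conclude that $b_{uv}$ is preserved by $\mu[\varepsilon(\ell)]$.

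Iterating this over $\ell = 1, 2, \dots, 2k$ gives $Q^{(1)}|_{[3,n]} = Q^{(2)}|_{[3,n]} = \cdots = Q^{(2k+1)}|_{[3,n]}$, and the final equality $Q^{(2k+1)}|_{[3,n]} = R|_{[3,n]}$ is just the definition \eqref{eq:RLQ-R}. There is no real obstacle here; the entire content has been prepackaged into Lemmas~\ref{lem:mut-epsilon} and~\ref{lem:4VertexWeightChanges}, and the proof is merely a bookkeeping exercise that checks the hypotheses of the latter using the former. The only subtlety worth flagging explicitly is that the vertex $n$ must be handled separately from $[3, n-1]$, because for $n$ the relevant 3-vertex subquiver is cyclic rather than acyclic, so one uses the "ascent" half of Lemma~\ref{lem:mut-epsilon} rather than the "descent" half.
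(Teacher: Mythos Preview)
Your proof is correct and follows essentially the same approach as the paper's own proof: both invoke Lemma~\ref{lem:mut-epsilon} to verify that $\varepsilon(\ell)$ is an ascent or descent in each relevant subquiver $Q^{(\ell)}|_{12u}$, then apply Lemma~\ref{lem:4VertexWeightChanges} to conclude that $b_{uv}$ is unchanged. Your exposition is slightly more detailed in distinguishing the descent case ($u\in[3,n-1]$) from the ascent case ($u=n$), whereas the paper simply says ``ascent or descent,'' but the underlying argument is identical.
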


\begin{proof}
Let $1\le \ell\le 2k$ and $3\le i<j\le n$. 
By Lemma~\ref{lem:mut-epsilon},
$\varepsilon(\ell)$ is an ascent or descent in both $Q^{(\ell)}|_{12i}$ and $Q^{(\ell)}|_{12j}$.
Then Lemma~\ref{lem:4VertexWeightChanges} implies that $b_{ij}$ is unchanged by the 
mutation $Q^{(\ell)}\mutation{\varepsilon(\ell)}Q^{(\ell+1)}$. 
The claim follows. 
\end{proof}

\label{Sec:Sinks}

\begin{lemma}
\label{lem:Sinks}
Every mutation $\mu[i]$ with $i\ge 3$ in Figure~\ref{fig:GeneralSemiCycles-notation}
is a sink mutation (assuming we are moving clockwise  along the top or bottom rim). 
\end{lemma}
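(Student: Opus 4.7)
The plan is to handle the top and bottom rims separately. The top rim contains only the single mutation $\mu[n]$ applied to~$Q$, and this is a sink mutation by construction: we have $b_{in}(Q)\ge 2$ for all $i<n$, so $n$ is a sink in~$Q$.

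For the bottom rim, the idea is to induct on the position along the rim. Set $R^{(0)}=R$ and $R^{(\ell)}=\T{n-\ell}{R^{(\ell-1)}}$ for $\ell=1,\dots,n-1$, so that $L=R^{(n-1)}$ and the bottom rim consists of the successive mutations $\mu[n-1], \mu[n-2], \dots, \mu[1]$. I~would prove, by induction on~$\ell$, that for each $1\le \ell\le n-3$ the vertex $n-\ell$ is a sink in $R^{(\ell-1)}$; this covers precisely the bottom-rim mutations with index~$\ge 3$.

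The inductive step requires verifying two things: (a)~$n-\ell$ is a sink in $R^{(\ell-1)}|_{[1,n-1]}$, and (b)~the edge between $n-\ell$ and~$n$ in $R^{(\ell-1)}$ is oriented $n\to n-\ell$. For~(a), I would invoke Lemma~\ref{lem:SubquiverTildeQ} to identify $R|_{[1,n-1]}$ with the acyclic quiver $\tilde R$ whose arrows all point from smaller to larger indices, and then appeal to the proof of Proposition~\ref{pr:acyclic-mutation-cycle}: the sequence $\mu[n-1],\mu[n-2],\dots$ applied to~$\tilde R$ produces acyclic quivers each having a sink at the next-smaller index; since these sink mutations commute with restriction to $[1,n-1]$, the same is true inside $R^{(\ell-1)}|_{[1,n-1]}$. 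For~(b), since each prior mutation in the sequence was (by inductive hypothesis) a sink mutation, it creates no new arrows and only reverses the edges at the mutated vertex; as none of the indices $n-1, n-2, \dots, n-\ell+1$ coincides with~$n$ or with~$n-\ell$, the edge between $n$ and $n-\ell$ is the same in $R^{(\ell-1)}$ as in~$R$. Then Lemma~\ref{lem:binRQ} identifies this edge with the corresponding edge in $Q^{(1)}=\T{n}{Q}$ (using that $n-\ell\ge 3$), and in $Q^{(1)}$ the vertex~$n$ is a source because it was a sink in~$Q$; hence $n\to n-\ell$ in $R^{(\ell-1)}$.

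The main obstacle I anticipate is the bookkeeping for the interaction between the rotating sink structure on~$[1,n-1]$ (provided by Proposition~\ref{pr:acyclic-mutation-cycle}) and the edges connecting the ``interior'' vertices to~$n$. The key unifying observation is that sink mutations introduce no new arrows, so along the cycle from~$Q$ to~$R^{(\ell-1)}$ the only change to the edge between~$n$ and any vertex~$i\in[3,n-1]$ is the single reversal caused by~$\mu[n]$ on the top rim; everything else is inert, which is what allows both parts of the induction to close cleanly.
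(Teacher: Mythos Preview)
Your proposal is correct and follows essentially the same approach as the paper: handle $\mu[n]$ by construction, then for the bottom rim combine the acyclic structure of $\tilde R=R|_{[1,n-1]}$ (via Proposition~\ref{pr:acyclic-mutation-cycle}) with the edge orientations to~$n$ supplied by Lemma~\ref{lem:binRQ}. The only organizational difference is that, instead of your explicit induction tracking the single edge $n\to n-\ell$, the paper observes that $R|_{[3,n]}$ is itself acyclic with linear order $n\to 3\to\cdots\to n-1$ and invokes Proposition~\ref{pr:acyclic-mutation-cycle} a second time on this subquiver; since $[1,n-1]\cup[3,n]=[1,n]$, a vertex that is a sink in both restrictions is a sink in the full quiver, which dispatches all the bottom-rim mutations at once without induction.
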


We note that in Figure~\ref{fig:GeneralSemiCycles-notation},
there is exactly one mutation $\mu[i]$ for every $i\ge 3$. 

\begin{proof}
The mutation $Q\mutation{n}Q^{(1)}$ is a sink mutation by construction. 

Let us examine the mutations $\mu[\ell]$ ($3\le \ell \le n-1$) along the bottom rim:
\begin{equation}
\label{eq:bottom(n-2)}
Q^{(2k+n-2)} \mutation{3} Q^{(2k+n-3)} \mutation{4} \cdots \mutation{n-2}
         Q^{(2k+2)} \mutation{n-1} Q^{(2k+1)} =R. 
\end{equation}
Recall that $\tilde R=R|_{[1,n-1]}$ is acyclic with $b_{ij}(\tilde R)>0$ for $i<j$,
so all such mutations are sink mutations 
within the subquiver on $[1,n-1]$, cf.\ Proposition~\ref{pr:acyclic-mutation-cycle}. 

By Lemma~\ref{lem:binRQ}, for $i \in [3,n-1]$, we have~$b_{in}(R) = b_{in}(Q^{(1)}) = - b_{in}(Q) < 0$,
so~$R|_{[3,n]}$ is acyclic with linear order~$n\to 3\to 4\to\cdots\to n-1$. 
Consequently, all mutations~$\mu[\ell]$ ($3\le \ell \le n-1$) are sink mutations 
within the subquiver on~$[3,n]$, cf.~Proposition~\ref{pr:acyclic-mutation-cycle}. 
Putting everything together, we see that all mutations~$\mu[\ell]$~(${3\le \ell \le n-1}$) are sink mutations. 
\end{proof}

In order to prove that $Q$ and $Q'$ agree on their subquivers with vertices $1,2,n$
(see Lemma~\ref{lem:Subquiver12n}), we are going to completely describe all subquivers $Q^{(\ell)}_{12n}$, cf.\ Figure~\ref{fig:Q12nShapes}. 

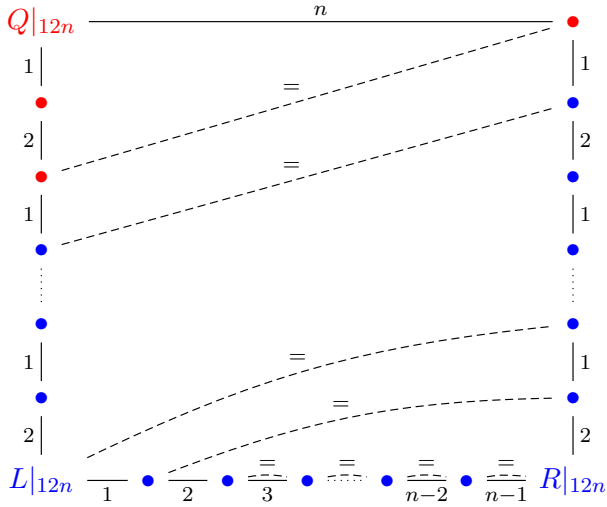
\begin{figure}[ht]
\begin{equation*}
\begin{tikzcd}[arrows={-stealth, cramped}, sep=15]
\red{Q|_{12n}}  \arrow[rrrrrr, no head, "n"] \arrow[d, swap, no head, "1"]
  &&&&&& \red{\bullet} \arrow[d, no head, "1"] \\
\red{\bullet} \arrow[d, swap, no head,"2"] &&&&&&  \blue{\bullet} \arrow[d, no head,"2"]  \\
\red{\bullet} \arrow[d, swap, no head,"1"] \arrow[uurrrrrr, dashed, no head,"="] &&&&&&  \blue{\bullet} \arrow[d, no head,"1"]  \\
\blue{\bullet} \arrow[d, swap, no head,dotted] \arrow[uurrrrrr, dashed, no head,"="] &&&&&&  \blue{\bullet} \arrow[d, no head,dotted]  \\
\blue{\bullet} \arrow[d, swap, no head,"1"] &&&&&&  \blue{\bullet} \arrow[d, no head,"1"]  \\
\blue{\bullet} \arrow[d, swap, no head,"2"] &&&&&&  \blue{\bullet} \arrow[d, no head,"2"]  \\
\blue{L|_{12n}} \arrow[r, swap, no head, "1"] \arrow[uurrrrrr, dashed, bend left=10pt, no head,"="] 
    & \blue{\bullet} \arrow[r, swap, no head, "2" ] \arrow[urrrrr, dashed, bend left=10pt, no head,"="]
   & \blue{\bullet} \arrow[r, swap, no head,"3"] \arrow[r, dashed, bend left=10pt, no head,"="]
   & \blue{\bullet} \arrow[r, dotted, no head] \arrow[r, dashed, bend left=10pt, no head,"="] 
   & \blue{\bullet} \arrow[r, swap, no head,"n-2"] \arrow[r, dashed, bend left=10pt, no head,"="] 
   & \blue{\bullet} \arrow[r, swap, no head,"n-1"] \arrow[r, dashed, bend left=10pt, in=175, no head,"="] 
   & \blue{R|_{12n}}
\end{tikzcd}
\end{equation*}
\caption{Subquivers on the vertices $1,2,n$ along the mutation cycle. 
Dashed lines indicate equality of these subquivers. 
A red dot $\red{\bullet}$ indicates that the subquiver on $1,2,n$ is acyclic; a blue dot $\blue{\bullet}$ indicates that it is cyclic.
}
\vspace{-.2in}
\label{fig:Q12nShapes}
\end{figure}

\begin{lemma}
\label{lem:Subquiver12nBottom}
We have 
$Q^{(2k+n-2)}|_{12n}=\cdots =
Q^{(2k+2)}|_{12n} = Q^{(2k+1)}|_{12n} = R|_{12n}$. 
\end{lemma}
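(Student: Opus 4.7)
The plan is to observe that each mutation separating the quivers in question takes place at a vertex outside $\{1,2,n\}$, and moreover is a sink (or source) mutation on the quiver to which it is applied, so that the remark about restriction commuting with sink/source mutation lets us conclude that the relevant subquiver is fixed.

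First I would unwind the indexing. Going forward in time (clockwise in Figure~\ref{fig:GeneralSemiCycles-notation}) from $R=Q^{(2k+1)}$ along the bottom rim, the successive mutations are $\mu[n-1], \mu[n-2], \ldots, \mu[3], \mu[2], \mu[1]$, so that for $1\le j\le n-1$ we have $Q^{(2k+1+j)}=\mu[n-j](Q^{(2k+j)})$. In particular, for $1\le j\le n-3$ the mutation vertex $n-j$ lies in $[3,n-1]$, and is therefore disjoint from $\{1,2,n\}$.

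Next, I would invoke Lemma~\ref{lem:Sinks}: each clockwise mutation $\mu[i]$ with $i\ge 3$ along the bottom rim is a sink (or source) mutation on the quiver to which it is applied. Combining this with the remark that a sink/source mutation at a vertex $i$ preserves $Q|_S$ whenever $i\notin S$ (applied here with $S=\{1,2,n\}$), we obtain
\[
Q^{(\ell+1)}|_{12n}=Q^{(\ell)}|_{12n}\qquad (2k+1\le \ell\le 2k+n-3),
\]
and chaining these equalities yields $R|_{12n}=Q^{(2k+1)}|_{12n}=Q^{(2k+2)}|_{12n}=\cdots=Q^{(2k+n-2)}|_{12n}$, as desired.

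I do not anticipate any real obstacle here: the lemma is essentially an immediate corollary of Lemma~\ref{lem:Sinks}. The only care needed is to stop the chain at $Q^{(2k+n-2)}$, since the very next mutation along the rim is $\mu[2]$, which acts at a vertex of $\{1,2,n\}$ and would in general alter the subquiver on these three vertices.
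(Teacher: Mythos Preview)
Your proof is correct and follows essentially the same approach as the paper's own proof: invoke Lemma~\ref{lem:Sinks} to see that each of the mutations $\mu[n-1],\mu[n-2],\dots,\mu[3]$ along the bottom rim is a sink mutation at a vertex outside $\{1,2,n\}$, hence leaves the subquiver on $\{1,2,n\}$ unchanged. The paper's version is just terser, omitting the explicit index bookkeeping you provide.
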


Note that these are precisely the subquivers of the quivers appearing in~\eqref{eq:bottom(n-2)}. 

\begin{proof}
By Lemma~\ref{lem:Sinks}, each mutation at $3,\dots,n-2,n-1$ is a sink mutation.
As such, it does not change the subquiver supported on the remaining vertices $1,2,n$. 
\end{proof}

\begin{lemma}
\label{lem:Subquiver12nLeft}
We have $Q^{(4k+n-2-j)}|_{12n} = Q^{(j+1)}|_{12n}$ for $ 0 \leq j \leq 2k$.
\end{lemma}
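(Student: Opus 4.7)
The plan is to express both $Q^{(j+1)}|_{12n}$ and $Q^{(4k+n-2-j)}|_{12n}$ as alternating compositions of $\mu[1]$ and $\mu[2]$ applied to $R|_{12n}$, and then to match them using the involution identity $\mu[i]^2=\mathrm{id}$. The key enabling observation is that every mutation along the right rim, every mutation along the left rim, and the last two mutations on the bottom rim (namely $\mu[2]$ at $\ell=2k+n-2$ and $\mu[1]$ at $\ell=2k+n-1$) occur at a vertex in $\{1,2\}\subset\{1,2,n\}$. Since mutation commutes with restriction when the mutated vertex lies in the restricted subset, we can compute each $Q^{(\ell)}|_{12n}$ by applying the corresponding mutations directly at the three-vertex level.

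The first step is to compute $Q^{(j+1)}|_{12n}$ via the right rim. Iteration gives $Q^{(j+1)}|_{12n}=\mu[\varepsilon(j)\cdots\varepsilon(1)]\bigl(Q^{(1)}|_{12n}\bigr)$. In particular, $R|_{12n}=\mu[\varepsilon(2k)\cdots\varepsilon(1)]\bigl(Q^{(1)}|_{12n}\bigr)$; inverting (each $\mu[i]$ is an involution) yields $Q^{(1)}|_{12n}=\mu[\varepsilon(1)\varepsilon(2)\cdots\varepsilon(2k)]\bigl(R|_{12n}\bigr)$, and hence
\begin{equation*}
Q^{(j+1)}|_{12n}=\mu[\varepsilon(j)\cdots\varepsilon(1)\varepsilon(1)\cdots\varepsilon(2k)]\bigl(R|_{12n}\bigr).
\end{equation*}
The second step is to compute $Q^{(4k+n-2-j)}|_{12n}$ from the opposite side. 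By Lemma~\ref{lem:Subquiver12nBottom}, $Q^{(2k+n-2)}|_{12n}=R|_{12n}$; applying the subsequent mutations $\mu[2],\mu[1],\mu[2],\mu[1],\dots$ (two of them at the end of the bottom rim, the remainder on the left rim) then gives, for $0\le j\le 2k$,
\begin{equation*}
Q^{(4k+n-2-j)}|_{12n}=\mu[\varepsilon(j+1)\varepsilon(j+2)\cdots\varepsilon(2k)]\bigl(R|_{12n}\bigr),
\end{equation*}
where the boundary cases $j=2k$ and $j=2k-1$ reduce to $R|_{12n}$ and $\mu[2]\bigl(R|_{12n}\bigr)$ respectively.

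The final step is to verify the operator identity
\begin{equation*}
\mu[\varepsilon(j)\cdots\varepsilon(1)\varepsilon(1)\cdots\varepsilon(2k)]=\mu[\varepsilon(j+1)\cdots\varepsilon(2k)].
\end{equation*}
Starting from the center of the word on the left, the two adjacent letters $\varepsilon(1)\varepsilon(1)$ cancel via $\mu[1]^2=\mathrm{id}$; the newly-adjacent pair $\varepsilon(2)\varepsilon(2)$ then cancels; and so on, for a total of $j$ successive cancellations, leaving exactly the suffix $\varepsilon(j+1)\cdots\varepsilon(2k)$. The overall argument is combinatorial bookkeeping; the only real subtlety is the handling of the boundary cases $j\in\{2k-1,2k\}$, where $4k+n-2-j$ lands on the bottom rim rather than the left rim, but these fit seamlessly into the same framework once we invoke Lemma~\ref{lem:Subquiver12nBottom} together with the appropriate trailing mutations.
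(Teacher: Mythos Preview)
Your proof is correct and follows essentially the same approach as the paper. The paper's proof is just a terse version of yours: it anchors at $j=2k$ using Lemma~\ref{lem:Subquiver12nBottom} and then says ``applying mutations at $2,1,2,\dots$, we obtain the desired identities for all~$j$,'' which is precisely your cancellation argument phrased as descending induction rather than as a word identity in the mutation operators.
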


\begin{proof}
In the case $j=2k$, the claim
$Q^{(2k+n-2)}|_{12n}=Q^{(2k+1)}|_{12n}$ holds by Lemma~\ref{lem:Subquiver12nBottom}. 
Applying mutations at $2, 1, 2,\dots$, we obtain the desired identities for all~$j$. 
\end{proof}

\begin{lemma}
\label{lem:Subquiver12n}
We have $Q|_{12n} = Q'|_{12n}$.
\end{lemma}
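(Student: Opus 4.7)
The plan is to reduce the equality $Q|_{12n}=Q'|_{12n}$ to a computation on a single 3-vertex acyclic quiver and then dispatch it via Proposition~\ref{pr:acyclic-mutation-cycle}. First, I would specialize Lemma~\ref{lem:Subquiver12nLeft} at $j=0$ to obtain $Q^{(4k+n-2)}|_{12n}=Q^{(1)}|_{12n}$; denote this common 3-vertex quiver by~$P$. By Lemma~\ref{lem:Subquiver12nStart}, $P$ is acyclic with large weights, with source~$n$, elbow~$1$, and sink~$2$; that is, the three arrows of~$P$ are $n\to 1$, $n\to 2$, and $1\to 2$.

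Next, I would rewrite both $Q|_{12n}$ and $Q'|_{12n}$ in terms of~$P$. Since $Q=\mu[n](Q^{(1)})$ and $Q'=\mu[1]\mu[2](Q^{(4k+n-2)})$, and since mutation at a vertex commutes with restriction to any full subquiver containing that vertex (this is immediate from the local mutation formula, as the value of $b_{jk}(\mu[i](Q))$ depends only on $b_{ji}(Q)$, $b_{ik}(Q)$, $b_{jk}(Q)$), we obtain
\[
Q|_{12n}=\mu[n](P),\qquad Q'|_{12n}=\mu[12](P).
\]
Thus the desired identity reduces to the claim $\mu[n](P)=\mu[12](P)$.

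Finally, I would invoke Proposition~\ref{pr:acyclic-mutation-cycle} on the acyclic 3-vertex quiver~$P$: re-ordering the vertex set $\{n,1,2\}$ so that every arrow of~$P$ points from a smaller to a larger label produces a 3-cycle of sink/source mutations $\mu[n]\mu[1]\mu[2](P)=P$, which rearranges to $\mu[12](P)=\mu[n](P)$. Alternatively, this can be verified by hand: $\mu[2]$ is a sink mutation on~$P$, then $\mu[1]$ is a sink mutation on $\mu[2](P)$, and the resulting quiver $\{1\to 2,\,1\to n,\,2\to n\}$ coincides, with the same weights, with $\mu[n](P)$ obtained by reversing the three arrows at the source~$n$ of~$P$. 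There is no serious obstacle here: all of the nontrivial combinatorics was already absorbed by Lemmas~\ref{lem:SubquiverTildeQ}--\ref{lem:Subquiver12nLeft}. The only subtle point worth flagging is that mutation \emph{does} commute with restriction to a subquiver containing the mutated vertex, which is what legitimizes pushing the terminal mutations $\mu[2]$ and $\mu[1]$ through the restriction to $\{1,2,n\}$.
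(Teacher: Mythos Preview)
Your proposal is correct and follows essentially the same approach as the paper's own proof: both invoke Lemma~\ref{lem:Subquiver12nLeft} at $j=0$ to identify $Q^{(4k+n-2)}|_{12n}$ with $Q^{(1)}|_{12n}$, then use Lemma~\ref{lem:Subquiver12nStart} to recognize this common quiver as acyclic with elbow~$1$, and finally close the loop via the standard mutation 3-cycle of Proposition~\ref{pr:acyclic-mutation-cycle}. Your version is simply more explicit about the commutation of mutation with restriction and about the relabeling needed to apply Proposition~\ref{pr:acyclic-mutation-cycle}.
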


\begin{proof}
By Lemma~\ref{lem:Subquiver12nLeft} (with $j=0$), we have $Q^{(4k+n-2)}|_{12n} = Q^{(1)}|_{12n}$. By Lemma~\ref{lem:Subquiver12nStart}, the subquiver~$Q^{(1)}|_{12n}$ lies on the mutation 3-cycle 
\begin{equation*}
Q|_{12n} \mutation{n} Q^{(1)}|_{12n} = Q^{(4k+n-2)}|_{12n}
\mutation{2} Q^{(4k+n-1)}|_{12n} \mutation{1} Q'|_{12n}. \qedhere
\end{equation*}
\end{proof}

We next examine the entries $b_{in}(Q^{(j)})$ for $i\in [3,n-1]$. 
Lemma~\ref{lem:fourChanges} describes the evolution of these entries under the mutations
\begin{align*}
Q^{(2k+n-2)} \mutation{2}Q^{(2k+n-1)}\mutation{1}Q^{(2k+n)} \\
Q^{(4k+n-2)} \mutation{2}Q^{(4k+n-1)}\mutation{1}Q^{(4k+n)}
\end{align*}
located near the bottom left and top left corners of 
Figure~\ref{fig:GeneralSemiCycles-notation}.

\newcommand{\xv}{a}
\newcommand{\yv}{x}
\newcommand{\zv}{y}

\begin{lemma}
\label{lem:fourChanges}
Set $\xv\!=\!b_{12}(Q)\!=\!b_{12}(R), \yv\!=\!b_{1n}(Q)$, and $\zv\!=\!b_{2n}(Q)$. 
For $i\!\in\! [3,n-1]$,  
\begin{align}
    \label{eq:change1}
    b_{in}(Q^{(2k+n-1)}) - b_{in}(Q^{(2k+n-2)})  &= b_{2i}(R) (\h_{2k-1}(\xv) \yv + \h_{2k-2}(\xv) \zv), \\
    \label{eq:change2}
    b_{in}(Q^{(2k+n)}) - b_{in}(Q^{(2k+n-1)}) &= b_{1i}(R) (\h_{2k-2}(\xv) \yv + \h_{2k-3}(\xv) \zv), \\
    \label{eq:change3}
    b_{in}(Q^{(4k+n-1)}) - b_{in}(Q^{(4k+n-2)}) &= -\zv (\h_{2k-2}(\xv) b_{2i}(R) + \h_{2k-3}(\xv) b_{1i}(R)), \\
    \label{eq:change4}
    b_{in}(Q') - b_{in}(Q^{(4k+n-1)}) &= -\yv (\h_{2k-2}(\xv) b_{1i}(R) + \h_{2k-1}(\xv) b_{2i}(R)).
\end{align}
\end{lemma}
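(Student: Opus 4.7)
The plan is to compute, for each of the four mutations, the directions (signs) and magnitudes of the relevant arrows in the quiver being mutated, and then apply the mutation rule to obtain the change in~$b_{in}$.

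I would first assemble the data for the subquiver on~$\{1,2,n\}$. By Lemma~\ref{lem:Subquiver12nStart}, $Q^{(1)}|_{12n}$ is acyclic with elbow~$1$ and orientation $n\to 1\to 2$, $n\to 2$; applying Proposition~\ref{pr:3VertexAlternatingMutations} with $a=\xv$, $b=\yv$, $c=\zv$ (letting~$n$ play the role of vertex~$3$) yields closed-form Chebyshev expressions for the weights of~$R|_{12n}$, in particular $|b_{2n}(R)|=\h_{2k-1}(\xv)\yv+\h_{2k-2}(\xv)\zv$ and $|b_{n1}(R)|=\h_{2k}(\xv)\yv+\h_{2k-1}(\xv)\zv$. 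By Lemma~\ref{lem:Subquiver12nBottom}, these weights transfer unchanged to~$Q^{(2k+n-2)}$, with the cyclic orientation $1\to 2\to n\to 1$ preserved (Lemma~\ref{lem:mut-epsilon}).

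I would next track $b_{1i}$ and~$b_{2i}$ along the bottom rim. By Lemma~\ref{lem:Sinks}, each such mutation is a sink mutation at a vertex in~$[3,n-1]$ and thus only reverses arrows at that vertex. For fixed $i\in[3,n-1]$, the weights $b_{1i}$ and~$b_{2i}$ therefore have their signs flipped exactly once (at~$\mu[i]$), giving $b_{1i}(Q^{(2k+n-2)})=-b_{1i}(R)$ and $b_{2i}(Q^{(2k+n-2)})=-b_{2i}(R)$. Combined with the directions already established on~$\{1,2,n\}$, this shows that $Q^{(2k+n-2)}$ contains the directed path $i\to 2\to n$, so applying~$\mu[2]$ adds $|b_{2i}(R)|\cdot|b_{2n}(R)|$ arrows from~$i$ to~$n$; this is exactly~\eqref{eq:change1}. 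For~\eqref{eq:change2}, the mutation rule gives $b_{1n}(Q^{(2k+n-1)})=b_{1n}(R)+\xv\,b_{2n}(R)$, which simplifies via the Chebyshev recurrence $\h_{j+1}(\xv)=\xv\h_j(\xv)-\h_{j-1}(\xv)$ to $\h_{2k-2}(\xv)\yv+\h_{2k-3}(\xv)\zv$. The arrow between $1$ and~$i$ is unaffected by~$\mu[2]$ (both $1$ and~$i$ point into~$2$ in $Q^{(2k+n-2)}$), so the path $i\to 1\to n$ is present in~$Q^{(2k+n-1)}$, and applying~$\mu[1]$ yields~\eqref{eq:change2}.

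For the top-left corner I would proceed analogously. By Lemma~\ref{lem:Subquiver12nLeft}, $Q^{(4k+n-2)}|_{12n}=Q^{(1)}|_{12n}$, which is acyclic with source~$n$. By equation~\eqref{eq:SubquiverTildeQ-2} of Lemma~\ref{lem:SubquiverTildeQ}, $Q^{(4k+n-2)}|_{[1,n-1]}=Q^{(3)}|_{[1,n-1]}=\mu[21]\mu[(12)^k](\tilde R)$; after canceling adjacent repeated mutations in the word $21\,(12)^k$, this reduces to $\mu[(12)^{k-1}](\tilde R)$. Restricting to~$\{1,2,i\}$ and running an induction parallel to the proof of Proposition~\ref{pr:3VertexAlternatingMutations} (but starting from the acyclic subquiver $\tilde R|_{12i}$ with elbow~$2$) then yields $b_{2i}(Q^{(4k+n-2)})=\h_{2k-3}(\xv)b_{1i}(R)+\h_{2k-2}(\xv)b_{2i}(R)$. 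The path $n\to 2\to i$ in $Q^{(4k+n-2)}$ gives~\eqref{eq:change3}; applying the mutation rule at~$2$ also updates $b_{1i}$ to $\h_{2k-2}(\xv)b_{1i}(R)+\h_{2k-1}(\xv)b_{2i}(R)$ via the same recurrence, and the path $n\to 1\to i$ in $Q^{(4k+n-1)}$ then gives~\eqref{eq:change4}.

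The main obstacle is the bookkeeping of signs and the repeated use of the Chebyshev recurrence to collapse intermediate expressions into the clean form asserted. The structural results in Lemmas~\ref{lem:SubquiverTildeQ}--\ref{lem:Subquiver12n} handle most of this bookkeeping once and for all, leaving only a small number of single-mutation path contributions to be checked explicitly for each of the four formulas.
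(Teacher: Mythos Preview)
Your proposal is correct and follows essentially the same approach as the paper's proof: identify the directed path through the mutated vertex using the structural lemmas already established, then read off the product of the two incident weights, with Proposition~\ref{pr:3VertexAlternatingMutations} supplying the Chebyshev expressions. The paper only writes out~\eqref{eq:change1} in detail and declares the rest analogous (noting that for~\eqref{eq:change3}--\eqref{eq:change4} one applies Proposition~\ref{pr:3VertexAlternatingMutations} to a relabeling of~$R|_{12i}$ rather than~$Q^{(1)}|_{12n}$), whereas you spell out all four cases; the only cosmetic difference is that you track $b_{1i},b_{2i}$ along the bottom rim via Lemma~\ref{lem:Sinks} (sink mutations from~$R$), while the paper reaches the same sign conclusion via Lemma~\ref{lem:SubquiverTildeQ} (source mutations from~$L$).
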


We note that the numbers $b_{12}(R), b_{1i}(R), b_{2i}(R)$ and $b_{1n}(Q), b_{2n}(Q), b_{in}(Q)$ 
are all at least $2$ (hence positive) by construction.

\begin{proof}
Let us prove the formula~\eqref{eq:change1}; 
the proofs of  \eqref{eq:change2}--
\eqref{eq:change4} are analogous.

As $Q^{(2k+n-2)} \mutation{2} Q^{(2k+n-1)}),$ Lemma~\ref{lem:Subquiver12nBottom} implies $b_{2n}(Q^{(2k+n-2)}) = b_{2n}(R|_{12n}).$ 
Thus by applying Proposition~\ref{pr:3VertexAlternatingMutations} to $Q^{(1)}|_{12n}$ (with $n$ relabeled to $3$):
\[
b_{2n}(Q^{(2k+n-2)}) =b_{2n}(R|_{12n}) = b_{2n}(\T{(21)^k}{Q^{(1)}} = \h_{2k-1}(\xv) \yv + \h_{2k-2}(\xv) \zv > 0
\]
Lemma~\ref{lem:SubquiverTildeQ} implies that $b_{2i}(Q^{(2k+n-2)}|_{[1,n-1]}) = b_{2i}(\T{21}{\tilde R}).$ 
Since both $1$ and $2$ are source mutations in $\tilde R$, we have 
$b_{2i}(Q^{(2k+n-2)}) = -b_{2i}(\tilde R) <0$.
Therefore $i \points 2 \points n$ in $Q^{(2k+n-2)}$ and we compute:
\begin{align*}
b_{in}(Q^{(2k+n-1)}) - b_{in}(Q^{(2k+n-2)}) &=b_{i2}(Q^{(2k+n-2)}) b_{2n}(Q^{(2k+n-2)}) \\
&=b_{2i}(R) (\h_{2k-1}(\xv) \yv + \h_{2k-2}(\xv) \zv).
\end{align*}

To show \eqref{eq:change3}, \eqref{eq:change4}, one should apply Proposition~\ref{pr:3VertexAlternatingMutations} to relabelings of the quivers $R|_{12i}$ instead of $Q^{(1)}|_{12n}$.
\end{proof}

\begin{lemma}
\label{lem:binUnchanged}
We have $Q^{(2k+n)}|_{[3,n]} = Q^{(2k+n+1)}|_{[3,n]} = \cdots = Q^{(4k+n-2)}|_{[3,n]}$. 
\end{lemma}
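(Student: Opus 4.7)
The plan is to step through the $2k-2$ mutations carrying $Q^{(2k+n)}$ to $Q^{(4k+n-2)}$ along the left rim of Figure~\ref{fig:GeneralSemiCycles-notation} and show that each preserves the restriction to $[3,n]$. The case $k=1$ is vacuous, so I assume $k\ge 2$. Reading the cycle word $(12)^k$ from right to left, the mutation $Q^{(2k+n+m)}\to Q^{(2k+n+m+1)}$ takes place at $c := 3-\varepsilon(m+1)\in\{1,2\}$ for $m=0,\dots,2k-3$. The task reduces to showing that in each such $Q^{(2k+n+m)}$ the entries $b_{c,u}$ all share a common sign as $u$ ranges over $[3,n]$: with that in hand, $c$ is a source (or sink) in every subquiver $Q^{(2k+n+m)}|_{cuv}$, so no length-two path through $c$ connects $u$ and $v$, and the mutation formula leaves $b_{uv}$ fixed.

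To verify the uniform sign, I pull each relevant subquiver back to a right-rim quiver. By Lemma~\ref{lem:SubquiverTildeQ}, $Q^{(2k+n+m)}|_{12u}=Q^{(2k-m+1)}|_{12u}$ for $u\in[3,n-1]$; by Lemma~\ref{lem:Subquiver12nLeft}, $Q^{(2k+n+m)}|_{12n}=Q^{(2k-m-1)}|_{12n}$. Both right-rim indices differ from $m+1$ by an even integer, so they share its parity. Starting from the acyclic seeds $R|_{12u}=\tilde R|_{12u}$ (with $1\to 2\to u$ and $1\to u$) and $Q^{(1)}|_{12n}$ (acyclic with elbow at $1$, giving $n\to 1\to 2$ and $n\to 2$), each right-rim mutation flips the cyclic orientation of the resulting $3$-vertex subquiver, and a short induction shows that for $\ell\in[2,2k]$ the cyclic orientation of $Q^{(\ell)}|_{12u}$ and of $Q^{(\ell)}|_{12n}$ depends only on the parity of $\ell$. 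Reading off the outgoing arrow at $c$ in each case---with the $m=0$ boundary handled separately via $\tilde R|_{12u}$, in which $c=2$ is the elbow and $2\to u$---gives $b_{c,u}(Q^{(2k+n+m)})>0$ uniformly for all $u\in[3,n]$.

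This yields $Q^{(2k+n+m)}|_{[3,n]}=Q^{(2k+n+m+1)}|_{[3,n]}$ for each $m$, and iterating over $m=0,1,\dots,2k-3$ produces the claimed chain of equalities. The one piece of bookkeeping that cannot be skipped is the parity alignment: the partner on the $[1,n-1]$-side has index $2k-m+1$ while the partner on the $\{1,2,n\}$-side has index $2k-m-1$, differing by $2$, so both end up with the same parity as $m+1$. This is precisely what makes the two cyclic orientations align into a single consistent sign for $b_{c,\cdot}$ on $[3,n]$---as opposed, say, to routing the argument through Lemma~\ref{lem:4VertexWeightChanges}, where a potential Markov-type degeneracy in $Q^{(2k-m-1)}|_{12n}$ could in principle obstruct the ``ascent or descent'' hypothesis.
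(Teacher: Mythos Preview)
Your argument is correct and runs parallel to the paper's. Both proofs make the same reduction via Lemmas~\ref{lem:SubquiverTildeQ} and~\ref{lem:Subquiver12nLeft} to identify the left-rim subquivers $Q^{(2k+n+m)}|_{12u}$ with right-rim subquivers. The paper then invokes Lemma~\ref{lem:mut-epsilon} to see that the mutation vertex $\varepsilon(\ell)$ is an ascent or descent in each of the two relevant $3$-vertex subquivers, and finishes with Lemma~\ref{lem:4VertexWeightChanges}. Your orientation-tracking via parity is, in effect, the proof of Lemma~\ref{lem:4VertexWeightChanges} unpacked in place: showing that $c$ is a source in $Q^{(2k+n+m)}|_{\{c\}\cup[3,n]}$ is exactly the mechanism by which that lemma works.

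One correction: your closing remark about a ``Markov-type degeneracy'' obstructing the Lemma~\ref{lem:4VertexWeightChanges} route is mistaken. The quivers $Q^{(\ell)}|_{12n}$ for $\ell\ge 2$ are obtained from the acyclic quiver $Q^{(1)}|_{12n}$ by a mutation sequence starting at its elbow; Lemma~\ref{lem:3VertexAcyclicAscents} then guarantees that each such $Q^{(\ell)}|_{12n}$ carries a genuine descent at the most recently mutated vertex $\varepsilon(\ell-1)$, and hence (by Lemma~\ref{lem:3VertexUniqueDescents}) genuine ascents at the other two. In particular, in $Q^{(\ell-1)}|_{12n}$ the vertex $\varepsilon(\ell)=\varepsilon(\ell-2)$ is the descent. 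The ``neither ascent nor descent'' scenario cannot arise along this chain, so the paper's shortcut through Lemma~\ref{lem:4VertexWeightChanges} is sound.
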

\begin{proof}
Let $2 \leq \ell \leq 2k$ and $3 \leq i < j \leq n$.  
By Equation~\eqref{eq:SubquiverTildeQ-2} (proved in Lemma~\ref{lem:SubquiverTildeQ}) and Lemma~\ref{lem:Subquiver12nLeft}, 
$Q^{(4k+n-\ell)}|_{12i}$ and $Q^{(4k+n-\ell)}|_{12j}$ correspond to subquivers with the same support in either $Q^{(\ell+1)}$ or $Q^{(\ell-1)}$. 
Lemma~\ref{lem:mut-epsilon} implies that the mutation~$\varepsilon(\ell)$ is an ascent or descent in both of these subquivers. 
Thus Lemma~\ref{lem:4VertexWeightChanges} implies that~$b_{ij}$ is unchanged by the mutation 
$Q^{(4k+n-\ell)}~\mutation{\varepsilon(\ell)}~Q^{(4k+n+1-\ell)}$. 
\end{proof}

\begin{lemma}
\label{lem:bin}
We have $b_{in}(Q) = b_{in}(Q')$ for $3 \leq i \leq n-1$.
\end{lemma}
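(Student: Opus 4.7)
The plan is to track how $b_{in}$ evolves around the cycle from $Q$ through $Q'$ and verify that the accumulated changes cancel. I will break the journey into segments controlled by the preceding lemmas, and finish with a short algebraic identity.

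First, $n$ is a sink in $Q$ by construction, so the mutation $Q\mutation{n}Q^{(1)}$ simply reverses $b_{in}$, giving $b_{in}(Q^{(1)}) = -b_{in}(Q)$. Lemma~\ref{lem:binRQ} then preserves $b_{in}$ through the next $2k$ mutations, so $b_{in}(R) = -b_{in}(Q)$. By Lemma~\ref{lem:Sinks}, the $n-3$ mutations $\mu[n-1],\mu[n-2],\dots,\mu[3]$ carrying $R$ to $Q^{(2k+n-2)}$ are all sink mutations; such a mutation affects $b_{in}$ only when performed at vertex~$i$ itself, and since $i\in[3,n-1]$ this occurs for exactly one of these $n-3$ mutations. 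Therefore $b_{in}(Q^{(2k+n-2)}) = -b_{in}(R) = b_{in}(Q)$.

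To complete the proof, I must show $b_{in}(Q') = b_{in}(Q^{(2k+n-2)})$. Between $Q^{(2k+n-2)}$ and $Q'$ there are exactly four mutations at vertices~$1$ and~$2$, interleaved with the stretch from $L=Q^{(2k+n)}$ to $Q^{(4k+n-2)}$, on which $b_{in}$ is constant by Lemma~\ref{lem:binUnchanged}. The four increments of $b_{in}$ across the mutations at $1$ and $2$ are given explicitly by formulas \eqref{eq:change1}--\eqref{eq:change4} in Lemma~\ref{lem:fourChanges}. Summing them, the total coefficient of $b_{2i}(R)$ equals $\bigl(\h_{2k-1}(a)x+\h_{2k-2}(a)y\bigr) - \bigl(\h_{2k-2}(a)y+\h_{2k-1}(a)x\bigr) = 0$, and the coefficient of $b_{1i}(R)$ vanishes by the analogous cancellation between \eqref{eq:change2} and \eqref{eq:change3}. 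Hence $b_{in}(Q') = b_{in}(Q^{(2k+n-2)}) = b_{in}(Q)$.

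The preparatory lemmas have done essentially all the substantive work, so I do not anticipate a real obstacle. The only point requiring mild care is matching the reference quiver for the weights appearing in \eqref{eq:change1}--\eqref{eq:change4}; however, those formulas are stated uniformly in terms of the weights $b_{1i}(R)$, $b_{2i}(R)$, $a$, $x$, $y$, so the cancellation falls out by inspection.
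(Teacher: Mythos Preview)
Your proof is correct and follows essentially the same approach as the paper: reduce to showing $b_{in}(Q')=b_{in}(Q^{(2k+n-2)})$ via Lemmas~\ref{lem:binRQ}--\ref{lem:Sinks}, then use Lemma~\ref{lem:binUnchanged} for the middle stretch and sum the four increments from Lemma~\ref{lem:fourChanges} to get zero. One small wording slip: the cancellation of the $b_{1i}(R)$ coefficient uses \eqref{eq:change2} against contributions from both \eqref{eq:change3} \emph{and} \eqref{eq:change4}, not just \eqref{eq:change3}; the algebra you wrote for $b_{2i}(R)$ already shows the correct pattern.
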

\begin{proof}
We have
\[
b_{in}(Q) =-b_{in}(Q^{(1)}) = -b_{in}(R) = b_{in}(Q^{(2k+n-2)})
\]
by Lemmas~\ref{lem:binRQ},~\ref{lem:Sinks}.
Further we can rewrite:
\begin{align*}
b_{in}(Q') - b_{in}(Q^{(2k+n-2)}) &= (b_{in}(Q') - b_{in}(Q^{(4k+n-2)})) \\
&\quad + (b_{in}(Q^{(4k+n-2)}) - b_{in}(Q^{(2k+n)})) \\
&\quad + (b_{in}(Q^{(2k+n)}) - b_{in}(Q^{(2k+n-2)})).
\end{align*}
Lemma~\ref{lem:binUnchanged} implies $b_{in}(Q^{(4k+n-2)})\! =\! b_{in}(Q^{(2k+n)})$. 
Summing equations~\eqref{eq:change1}--\eqref{eq:change4} from Lemma~\ref{lem:fourChanges} implies that 
\begin{align*}
    b_{in}(Q') - b_{in}(Q^{(2k+n-2)}) &=  b_{2i}(R) (\h_{2k-1}(b_{12}(R)) b_{1n}(Q) + \h_{2k-2}(b_{12}(R)) b_{2n}(Q)\\
    &\quad + b_{1i}(R) (\h_{2k-2}(b_{12}(R)) b_{1n}(Q) + \h_{2k-3}(b_{12}(R)) b_{2n}(Q))\\
    &\quad - b_{2n}(Q) (\h_{2k-2}(b_{12}(R)) b_{2i}(R) + \h_{2k-3}(b_{12}(R)) b_{1i}(R))\\
    &\quad - b_{1n}(Q) (\h_{2k-2}(b_{12}(R)) b_{1i}(R) + \h_{2k-1}(b_{12}(R)) b_{2i}(R))\\
    &=  b_{1i}(R)b_{1n}(Q) (\h_{2k-2}(b_{12}(R)) -  \h_{2k-2}(b_{12}(R)))\\
    &\quad + b_{1i}(R)b_{2n}(Q) (\h_{2k-3}(b_{12}(R)) - \h_{2k-3}(b_{12}(R))) \\
    &\quad + b_{2i}(R)b_{1n}(Q) (\h_{2k-1}(b_{12}(R)) - \h_{2k-1}(b_{12}(R))) \\
    &\quad + b_{2i}(R)b_{2n}(Q) (\h_{2k-2}(b_{12}(R)) - \h_{2k-2}(b_{12}(R))) \\
    &=  0, 
\end{align*}
as claimed.
\end{proof}

\begin{proof}[Proof of Theorem~\ref{thm:GeneralSemiCycles}]
We have $b_{ij}(Q') = b_{ij}(Q)$ for all $i<j;$ first for $i,j < n$ in Lemma~\ref{lem:SubquiverTildeQ}, 
then for $i=1,2$ and $j=n$ in Lemma~\ref{lem:Subquiver12n}, 
and finally by checking for $i>2$ and $j=n$ in Lemma~\ref{lem:bin}. 
\end{proof}

\newpage

\section{Distinctness}
\label{sec:distinctness}

In this section, we show that each mutation cycle described in Theorem~\ref{thm:GeneralSemiCycles} 
does not visit the same quiver more than once. 

\begin{definition}
\label{def:minimal-cycle}
A~mutation cycle is called \emph{simple} if any two quivers lying on the cycle are distinct. 
\end{definition}

\begin{theorem}
\label{thm:GeneralSemiCycleDistinct}
The mutation cycle in Theorem~\ref{thm:GeneralSemiCycles} is simple. 
\end{theorem}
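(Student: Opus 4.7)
My plan is to show that no two distinct positions on the cycle yield equal labeled quivers. I will use two invariants: the $(n-1)$-vertex subquiver $A(\ell)=Q^{(\ell)}|_{[1,n-1]}$ and the three-vertex subquiver $B(\ell)=Q^{(\ell)}|_{12n}$. Any equality $Q^{(a)}=Q^{(b)}$ forces both $A(a)=A(b)$ and $B(a)=B(b)$, and my goal is to rule out such coincidences.

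First I would establish \emph{within-side} distinctness for each of the four sides: the right rim $\{1,\dots,2k+1\}$, the bottom $\{2k+1,\dots,2k+n\}$, the left rim $\{2k+n,\dots,4k+n\}$, and the top edge $\{0,1\}$. On the right rim, applying Proposition~\ref{pr:3VertexAlternatingMutations} to each triangle $\{1,2,j\}$ with $j\in[3,n-1]$ shows that the weights evolve as strictly increasing Chebyshev-polynomial values in $b_{12}(\tilde R)\ge 2$, so the $A$-values are pairwise distinct along the right rim; the same proposition applied to $\{1,2,n\}$ gives analogous distinctness of $B$-values, with $B(1)$ the unique acyclic three-vertex subquiver on this side. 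The left rim is symmetric. On the bottom, Proposition~\ref{pr:acyclic-mutation-cycle} applied to $\tilde R$ shows that the $A$-values trace a length-$(n-1)$ cycle of distinct acyclic re-orientations of $\tilde R$, with the sole coincidence $A(2k+1)=A(2k+n)=\tilde R$; to separate $R$ from $L$, I invoke Lemma~\ref{lem:fourChanges}: summing equations~\eqref{eq:change1} and~\eqref{eq:change2} yields $b_{in}(L)-b_{in}(R)>0$ for any $i\in[3,n-1]$ (since $\h_j(a)>0$ for $a\ge 2$, $j\ge 0$). Finally, $Q\ne Q^{(1)}$ since $\mu[n]$ reverses nontrivial arrows at~$n$.

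Second I would establish \emph{cross-side} distinctness via a dichotomy: if $\ell$ is on the bottom rim, $A(\ell)$ is acyclic (a re-orientation of $\tilde R$), whereas if $\ell$ is off the bottom, $A(\ell)$ contains a cyclic three-vertex subquiver $\{1,2,j\}$ for every $j\in[3,n-1]$. The latter follows by iterating Lemma~\ref{lem:3VertexAcyclicAscents}: the alternating mutations $\mu[1],\mu[2],\ldots$ applied to the acyclic $\tilde R|_{12j}$ (with elbow~$2$) produce a cyclic subquiver immediately after the first elbow mutation. This forbids any bottom-rim position from matching any off-bottom position. The only remaining cross-side possibility is a right-rim-interior position $a\in[1,2k]$ matching a left-rim-interior position $b\in[2k+n+1,4k+n-1]$: within-side distinctness of $A$ together with Lemma~\ref{lem:SubquiverTildeQ} forces $a+b=4k+n+1$, while within-side distinctness of $B$ together with Lemma~\ref{lem:Subquiver12nLeft} forces the incompatible relation $a+b=4k+n-1$. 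The remaining cross-side pairs involving the top-edge vertices $\{0,1\}$ are ruled out directly: $A(0)=A(1)$ equals $\mu[(12)^k](\tilde R)$, which contains cyclic three-vertex subquivers and hence differs from every bottom $A$-value, and within-rim distinctness distinguishes it from the other right- and left-rim positions.

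The main obstacle I foresee is careful bookkeeping at the four corners $\{0,1,2k+1,2k+n\}$, each lying on two sides and inheriting subquiver identifications from both; the corner pair $(R,L)$ in particular requires the finer weight-change analysis from Lemma~\ref{lem:fourChanges} rather than just the $A$-invariant, and the top corners require the direct cyclic-versus-acyclic check above. Once this bookkeeping is in place, the incompatibility $4k+n+1\ne 4k+n-1$ closes the cross-rim argument and completes the proof.
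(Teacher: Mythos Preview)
Your approach is correct and genuinely different from the paper's. You separate positions using two subquiver invariants $A(\ell)=Q^{(\ell)}|_{[1,n-1]}$ and $B(\ell)=Q^{(\ell)}|_{12n}$, combined with the acyclic/cyclic dichotomy for $A$ and the index-shift identities from Lemmas~\ref{lem:SubquiverTildeQ} and~\ref{lem:Subquiver12nLeft} (yielding the incompatible constraints $a+b=4k+n+1$ versus $a+b=4k+n-1$). The paper instead builds a propagation machinery (Lemmas~\ref{lem:no2cycles}--\ref{lem:noIsoFixesQ}): it first shows $Q\ne Q^{(j)}$ for $j\ne 0$ via the sink/source structure, then argues that any coincidence $Q^{(\ell)}=Q^{(j)}$ must propagate along the cycle to a coincidence involving~$Q$ itself. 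Your route is more direct and elementary for Theorem~\ref{thm:GeneralSemiCycleDistinct} alone; the paper's machinery is heavier but simultaneously delivers Theorem~\ref{thm:GeneralSemiCycleDistinct-noniso} (distinctness up to isomorphism and arrow reversal), which your subquiver invariants by themselves do not give. Two small corrections: on the right rim the weights in $A(\ell)|_{12j}$ are strictly \emph{decreasing} (not increasing) as $\ell$ runs from $1$ to $2k+1$, though monotonicity is all you need; and summing \eqref{eq:change1}--\eqref{eq:change2} gives $b_{in}(L)-b_{in}(Q^{(2k+n-2)})$, not $b_{in}(L)-b_{in}(R)$---to finish, also use that the bottom sink mutations flip $b_{in}$ exactly once so that $b_{in}(Q^{(2k+n-2)})=-b_{in}(R)>0$, or more simply observe $B(L)\ne B(R)$ directly from Lemma~\ref{lem:Subquiver12nLeft}.
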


\begin{theorem}
\label{thm:GeneralSemiCycleDistinct-noniso}
Suppose that either $n\ge 5$ or else $n=4$ and $b_{14}(Q) \neq b_{23}(R)$. 
Then no two quivers on the mutation cycle described in Theorem~\ref{thm:GeneralSemiCycles} 
are related by an isomorphism and/or global reversal of arrows.
\end{theorem}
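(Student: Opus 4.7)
The plan is to combine Theorem~\ref{thm:GeneralSemiCycleDistinct} (distinctness as labeled quivers) with a careful inventory of $\sigma$-invariants, where $\sigma$ denotes an arbitrary relabeling of $[1,n]$ optionally composed with a global arrow reversal. If $\sigma$ sends some $Q^{(i)}$ to $Q^{(j)}$, then every $\sigma$-invariant must agree on them; accumulating enough invariants will force $i=j$.

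The first invariant I use is the set of sink/source vertices (sinks and sources are exchanged by arrow reversal but together form a $\sigma$-invariant set). Using Lemma~\ref{lem:Sinks}, Lemma~\ref{lem:SubquiverTildeQ}, Lemma~\ref{lem:Subquiver12nStart}, and Lemma~\ref{lem:Subquiver12nBottom}, one verifies: $Q$ and $Q^{(1)}$ each have a unique sink/source, namely $n$; $R=Q^{(2k+1)}$ has a unique sink at $n-1$; each bottom-rim quiver $Q^{(2k+\ell)}$ with $2\le \ell\le n-1$ has exactly two sink/source vertices; and $L$ together with all interior quivers on the right and left rims have no sink or source. This partitions the cycle into three $\sigma$-classes, which $\sigma$ cannot mix.

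Within each class, I use the multiset of arrow weights together with their incidence structure at distinguished vertices, both of which are $\sigma$-invariant. For $\{Q, Q^{(1)}, R\}$, the weights at the sink/source $n$ in $Q,Q^{(1)}$ are the input parameters $b_{in}(Q)\ge 2$, whereas at vertex $n-1$ in $R$ the weight $|b_{n-1,n}(R)|$ computed via Proposition~\ref{pr:3VertexAlternatingMutations} is a nontrivial Chebyshev expression in the~$q_{ij}$; this isolates $R$, and $Q, Q^{(1)}$ are not $\sigma$-equivalent because they differ by reversing arrows only at~$n$, which is not a global reversal. For the bottom-rim class, Lemma~\ref{lem:binUnchanged} and Lemma~\ref{lem:bin} show that the sink/source positions shift one vertex at a time along the rim with the incident weights changing correspondingly, giving a distinct invariant for each~$\ell$. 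For the no-sink/source class, Proposition~\ref{pr:3VertexAlternatingMutations} shows that the weights of the $\{1,2,n\}$-subquiver grow strictly along each rim (as Chebyshev polynomials $\h_j(a)b+\h_{j-1}(a)c$), while Equation~\eqref{eq:SubquiverTildeQ-2} shows that right-rim and left-rim quivers differ in their $[1,n-1]$-restrictions; together these produce a distinct weight multiset for every interior rim quiver and for~$L$.

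For $n\ge 5$, the restriction $\tilde R$ is a complete acyclic weighted tournament on $\ge 4$ vertices with arbitrary weights $q_{ij}\ge 2$, whose source/sink pair $1, n-1$ is distinguished; the weighted structure rigidifies $\sigma$ and forces it to act trivially on $[1,n-1]$, which together with the previous steps gives $i=j$. The $n=4$ case is more subtle: the only nontrivial $\tilde R$-preserving involution combined with arrow reversal is the one exchanging $1\leftrightarrow 4$ and $2\leftrightarrow 3$, and the hypothesis $b_{14}(Q)\ne b_{23}(R)$ is precisely what prevents this $\sigma$ from aligning $Q$ with any reversed and relabeled cycle quiver. The main obstacle is the $n=4$ symmetry-breaking check, together with making the monotonicity of the Chebyshev expressions $\h_j(a)b+\h_{j-1}(a)c$ rigorous enough to rule out accidental coincidences of weight multisets after an arbitrary global relabeling; the persistent large-weights assumption $|b_{ij}|\ge 2$ propagating along the cycle, and the inductive sign arguments from Section~\ref{sec:3Vertex}, are what make this tractable.
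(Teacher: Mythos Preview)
Your approach has a genuine gap. The theorem must hold for \emph{all} admissible parameter choices $q_{ij}\ge 2$, yet your invariants are weight multisets, which can collide. For instance, you claim $|b_{n-1,n}(R)|$ is a ``nontrivial Chebyshev expression''; in fact $b_{n-1,n}(R)=b_{n-1,n}(Q^{(1)})=-q_{n-1,n}$ by Lemma~\ref{lem:binRQ}, so the weight multiset at the sink of $R$ is just $\{q_{1,n-1},\dots,q_{n-2,n-1},q_{n-1,n}\}$, which for suitable parameters coincides with the multiset $\{q_{1n},\dots,q_{n-1,n}\}$ at the sink of~$Q$. Your sink/source census is also off: by Lemma~\ref{lem:oneSinkSource}, the quiver $Q^{(2k+n-2)}$ has exactly one sink/source (a source at~$3$), not two, so it falls in the same class as $Q,Q^{(1)},R$. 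And your claim that $Q$ and $Q^{(1)}$ are non-isomorphic ``because they differ by reversing arrows only at~$n$'' is a non-sequitur: a local discrepancy does not preclude a relabeling that compensates for it.

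The paper's route is structurally different and avoids these pitfalls. Rather than distinguishing all pairs directly, it proves a propagation lemma (Lemma~\ref{lem:isosMove}): any isomorphism $\sigma$ sending $Q^{(\ell)}$ to $Q^{(j)}$ (possibly with arrow reversal) must carry cycle neighbors to cycle neighbors, because the property ``$v$ is a sink/source, or $v$ is a descent in some $3$-vertex subquiver and there is at most one sink/source'' (Lemma~\ref{lem:cycleIsSinksAndDescents}) is a $\sigma$-invariant characterization of the two cycle-adjacent mutations. Hence $\sigma$ sends $Q$ to some $Q^{(j\pm\ell)}$. One then shows $Q$ is not isomorphic (even with reversal) to any other $Q^{(j)}$ (Lemma~\ref{lem:QDistinct}) and has no nontrivial automorphisms (Lemma~\ref{lem:QAutomorphisms}); the key invariants there are \emph{descent sequences} of $3$-vertex subquivers, which are determined by orientations alone and therefore robust against arbitrary parameter choices. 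The $n=4$ hypothesis $b_{14}(Q)\ne b_{23}(R)$ enters only in Lemma~\ref{lem:QDistinct}, to kill the single permutation $1\leftrightarrow 2,\ 3\leftrightarrow 4$ that the descent-sequence argument leaves open.
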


Our proofs of Theorems \ref{thm:GeneralSemiCycleDistinct}--\ref{thm:GeneralSemiCycleDistinct-noniso} 
will rely on the descriptions, 
given in Lemmas~\ref{lem:orientations}--\ref{lem:descents} below, of 
the various 3-vertex subquivers~$Q^{(j)}|_{pqr}$ of the quivers $Q^{(j)}$ (cf.\ \eqref{eq:Q123}). 
These lemmas, in turn, summarize the results obtained in Lemmas~\ref{lem:R12nSubquiv}--\ref{lem:Q[3,n]Subquiv}. 

\begin{definition}
For an $n$-vertex quiver $Q$ and a permutation~$\sigma$ in the symmetric group~$S_n$, 
we will denote by $\sigma(Q)$ the result of relabeling the vertices of~$Q$ 
by~$\sigma$.  
\end{definition}

For example, if $\sigma\in S_3$ is defined by $\sigma(1)=2$, $\sigma(2)=3$, $\sigma(3)=1$,
then $\sigma(R)=R'$, where $R$ and~$R'$ are the quivers from Example~\ref{eg:acyclicClasses}.

\begin{remark}
See Example~\ref{eg:4vertexSymmetric} for why the condition $b_{14}(Q) \neq b_{23}(R)$ 
is included in the $n=4$ case of 
Theorem~\ref{thm:GeneralSemiCycleDistinct-noniso}.
This condition is guaranteed by 
choosing a value $b_{14}(Q)\ge 2$ different from $b_{23}(\tilde R)$. 
In fact, this restriction can be further relaxed. 
Let $\sigma \in S_4$ be the permutation defined by 
${\sigma(1) = 2}$, ${\sigma(2)=1}$, ${\sigma(3)=4}$, ${\sigma(4)=3}$.  
Then the conclusion of 
Theorem~\ref{thm:GeneralSemiCycleDistinct-noniso} holds whenever reversing the arrows of $\sigma(Q)$ gives a quiver different from~$Q^{(2k+2)}$. 
\end{remark}

\begin{lemma}
\label{lem:R12nSubquiv}
Each 3-vertex quiver $Q^{(j)}|_{12n}$, for $0\le j\le 4k+n$, is mutation equivalent to an acyclic quiver with large weights. 
More specifically, the descent sequences of the quivers $Q^{(j)}|_{12n}$ are determined from: 
\begin{align}
\label{eq:R12nDesc}
& R|_{12n} = Q^{(2k+1)}|_{12n} \mutation{2} Q^{(2k)}|_{12n} \mutation{1} \cdots \mutation{1} Q^{(1)}|_{12n} = 
{\begin{tikzcd}[arrows={-stealth}, sep=small, ampersand replacement=\&]
  \scriptstyle 1 \arrow[r, dashed] 
  \& \scriptstyle 2 
  \\
   \& \scriptstyle n \arrow[u, dashed] \arrow[lu, dashed]
\end{tikzcd}},
\\
\label{eq:C12n}
& Q^{(2k+1)}|_{12n}  = Q^{(2k+2)}|_{12n} = \cdots = Q^{(2k+n-2)}|_{12n} , 
\\
\label{eq:Lp12nDesc}
& Q^{(2k+n-2)}|_{12n} \mutation{2} Q^{(2k+n-1)}|_{12n} \mutation{1} \cdots \mutation{1} Q^{(4k+n-2)}|_{12n} = 
\begin{tikzcd}[arrows={-stealth}, sep=small, ampersand replacement=\&]
  \scriptstyle 1 \arrow[r, dashed] 
  \& \scriptstyle 2 
  \\
   \& \scriptstyle n \arrow[u, dashed] \arrow[lu, dashed]
\end{tikzcd}, \\
\label{eq:Top12n}
& \text{$Q^{(4k+n-1)}|_{12n}$ and~$Q^{(0)}|_{12n}=Q^{(4k+n)}|_{12n}$ are acyclic.}
\end{align}
If $Q^{(j)}|_{12n}$ is cyclic, then its descent sequence appears as a subsequence of consecutive mutations 
in either \eqref{eq:R12nDesc} or~\eqref{eq:Lp12nDesc}. 
(For $2k+1\le j\le 2k+n-2$, first use \eqref{eq:C12n}.) 
\end{lemma}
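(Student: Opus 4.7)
The plan is to assemble four ingredients already in place: Lemma~\ref{lem:Subquiver12nStart} pins down the two acyclic corners $Q|_{12n}$ and $Q^{(1)}|_{12n}$; Lemma~\ref{lem:mut-epsilon} governs the right rim; Lemma~\ref{lem:Subquiver12nBottom} is exactly~\eqref{eq:C12n}; and Lemma~\ref{lem:Subquiver12nLeft} transports the right-rim analysis to the left rim.

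I would first pin down the orientations at the two acyclic corners. By Lemma~\ref{lem:Subquiver12nStart}, $Q^{(1)}|_{12n}$ is acyclic with large weights and elbow at $1$. Sign-tracking then fixes the orientation: since $\mu[n]$ reverses every arrow incident to $n$, we have $b_{1n}(Q^{(1)}) = -b_{1n}(Q) < 0$ and $b_{2n}(Q^{(1)}) = -b_{2n}(Q) < 0$, while $b_{12}(Q^{(1)}) = b_{12}(Q) = b_{12}(\tilde R) > 0$ because $\mu[(12)^k]$ flips the $(1,2)$-weight an even number of times (Proposition~\ref{pr:3VertexAlternatingMutations}). This produces the picture shown in~\eqref{eq:R12nDesc}. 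The cyclic triangle $R|_{12n}$ with orientation $1\points 2\points n\points 1$ and large weights is the final assertion of Lemma~\ref{lem:mut-epsilon}.

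For~\eqref{eq:R12nDesc}, Lemma~\ref{lem:mut-epsilon} asserts that for each $\ell$ with $2 \le \ell \le 2k+1$, the quiver $Q^{(\ell)}|_{12n}$ is cyclic with large weights and $\varepsilon(\ell-1)$ is a descent in it; Lemma~\ref{lem:3VertexUniqueDescents} shows this descent is unique. Walking backward from $R|_{12n}$ along the right rim via $\mu[\varepsilon(2k)], \mu[\varepsilon(2k-1)], \ldots$ therefore executes a descent at each step, terminating at the acyclic $Q^{(1)}|_{12n}$ after $2k$ mutations. By Definition~\ref{def:height} this is precisely the descent sequence of $R|_{12n}$, proving~\eqref{eq:R12nDesc} and in particular that every $Q^{(\ell)}|_{12n}$ in this range is mutation equivalent to an acyclic quiver with large weights.

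The claim~\eqref{eq:C12n} is immediate from Lemma~\ref{lem:Subquiver12nBottom}. Equation~\eqref{eq:Lp12nDesc} follows from~\eqref{eq:R12nDesc} via the bijection $Q^{(4k+n-2-j)}|_{12n} = Q^{(j+1)}|_{12n}$ of Lemma~\ref{lem:Subquiver12nLeft}: this identifies the left-rim sequence $Q^{(2k+n-2)}, Q^{(2k+n-1)}, \ldots, Q^{(4k+n-2)}$ with the reversed right-rim sequence $Q^{(2k+1)}, Q^{(2k)}, \ldots, Q^{(1)}$, and the left-rim mutation labels $\mu[2], \mu[1], \mu[2], \ldots$ match the reverse of the right-rim labels, so the descent property transports. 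For~\eqref{eq:Top12n}, $Q^{(4k+n)}|_{12n} = Q|_{12n}$ is acyclic by Lemma~\ref{lem:Subquiver12nStart}, while the proof of Lemma~\ref{lem:Subquiver12n} realizes $Q|_{12n}$, $Q^{(4k+n-1)}|_{12n}$, and $Q'|_{12n}$ as re-orientations of the acyclic $Q^{(1)}|_{12n}$ under sink/source mutations, which preserves acyclicity. The closing assertion is then immediate: any cyclic $Q^{(j)}|_{12n}$ either equals $R|_{12n}$ via~\eqref{eq:C12n} or matches a specific cyclic node on one of the two rims, so its descent sequence is the appropriate suffix of~\eqref{eq:R12nDesc} or~\eqref{eq:Lp12nDesc}, unique by Lemma~\ref{lem:3VertexUniqueDescents}. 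The main obstacle is the sign bookkeeping at the acyclic corners and verifying that the left-rim mutation labels correspond to those of the reversed right rim; beyond that, everything reduces to a direct invocation of the preceding lemmas.
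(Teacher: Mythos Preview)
Your proposal is correct and follows essentially the same route as the paper. The only notable difference is that for~\eqref{eq:R12nDesc} you invoke Lemma~\ref{lem:mut-epsilon} and track ascents/descents step by step, whereas the paper appeals directly to the structural classification in Lemma~\ref{lem:3VertexAcyclicAscents-1} (both rest on Lemma~\ref{lem:3VertexAcyclicAscents}); and for~\eqref{eq:Lp12nDesc} you cite Lemma~\ref{lem:Subquiver12nLeft} explicitly, while the paper simply remarks that the two sequences of subquivers are identical via~\eqref{eq:C12n}.
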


To illustrate, $Q^{(2k)}|_{12n}$ has descent sequence $1(21)^{k-1}$.

\pagebreak[3]

\begin{proof}

Lemma~\ref{lem:Subquiver12nStart} states $Q^{(1)}|_{12n}$ is acyclic with large weights and with elbow at~$1$. 
Thus Lemma~\ref{lem:3VertexAcyclicAscents-1} implies~\eqref{eq:R12nDesc}.
Now \eqref{eq:C12n} follows from Lemma~\ref{lem:Subquiver12nBottom}. 
(Thus sequences of quivers~\eqref{eq:R12nDesc} and~\eqref{eq:Lp12nDesc} are identical to each other.)

By Lemma~\ref{lem:Subquiver12nStart}, vertex $1$ is a source in $Q|_{12n}$, and \eqref{eq:Top12n} follows. 
\end{proof}

\begin{lemma}
\label{lem:Q12iSubquiv}
Each 3-vertex quiver $Q^{(j)}|_{12i}$, for $0\le j\le 4k+n$ and $3 \le i \le n-1$, is mutation equivalent to an acyclic quiver with large weights. 
More specifically, the descent sequences of the quivers $Q^{(j)}|_{12i}$ are determined from:
\begin{align}
\label{eq:Qone12iDesc}
& Q^{(1)}|_{12i} \mutation{1} Q^{(2)}|_{12i} \mutation{2} \cdots \mutation{2} Q^{(2k+1)}|_{12i} = R|_{12i}= 
\begin{tikzcd}[arrows={-stealth}, sep=small, ampersand replacement=\&]
  1 \arrow[r, dashed] \arrow[rd, dashed]
  \& 2 \arrow[d, dashed]
  \\
   \& i
\end{tikzcd},
\\
\label{eq:C12i}
& \text{$Q^{(2k+1)}|_{12i}, \ldots, Q^{(2k+n)}|_{12i}$ are acyclic,} 
\\
\label{eq:Q12iDesc}
& Q^{(0)}|_{12i} \mutation{1} Q^{(4k+n-1)}|_{12i} \mutation{2} \cdots \mutation{2} Q^{(2k+n)}|_{12i} = L|_{12i}= 
\begin{tikzcd}[arrows={-stealth}, sep=small, ampersand replacement=\&]
  1 \arrow[r, dashed] \arrow[rd, dashed]
  \& 2 \arrow[d, dashed]
  \\
   \& i
\end{tikzcd}.
\end{align}
If $Q^{(j)}|_{12i}$ is cyclic, then its descent sequence appears as a subsequence of consecutive mutations in either~\eqref{eq:Qone12iDesc} or~\eqref{eq:Q12iDesc}.
In particular, $Q^{(0)}|_{12i}$ has descent sequence $(12)^{k}$.
\end{lemma}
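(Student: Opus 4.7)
The plan is to mirror the proof of Lemma~\ref{lem:R12nSubquiv}. The starting point is Lemma~\ref{lem:SubquiverTildeQ}, which gives $R|_{[1,n-1]}=L|_{[1,n-1]}=\tilde R$, so both $R|_{12i}$ and $L|_{12i}$ coincide with $\tilde R|_{12i}$, a $3$-vertex acyclic quiver with large weights, arrows $1\to 2$, $1\to i$, $2\to i$, and elbow at the vertex~$2$. All the mutations along the right and left rims of Figure~\ref{fig:GeneralSemiCycles-notation} happen at $1$ or $2$, so they commute with restriction to $\{1,2,i\}$; this is the observation that makes Lemma~\ref{lem:3VertexAcyclicAscents} directly applicable to those rims.

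To establish~\eqref{eq:Qone12iDesc}, note that going backward from $R|_{12i}$ along the right rim one applies $\mu[2],\mu[1],\mu[2],\dots,\mu[1]$ of total length~$2k$, beginning at the elbow. Lemma~\ref{lem:3VertexAcyclicAscents} then asserts that each of $Q^{(2k)}|_{12i},\dots,Q^{(1)}|_{12i}$ is cyclic and that the appropriate vertex is a descent, which when read in the forward direction yields~\eqref{eq:Qone12iDesc} with first descent $\mu[1]$ at~$Q^{(1)}|_{12i}$. Symmetrically, for~\eqref{eq:Q12iDesc}, the left rim going forward from $L|_{12i}$ applies $\mu[2],\mu[1],\dots,\mu[2],\mu[1]$ of length~$2k$, again starting at the elbow; Lemma~\ref{lem:3VertexAcyclicAscents} yields that $Q^{(2k+n+1)}|_{12i},\dots,Q^{(4k+n)}|_{12i}$ are cyclic with appropriate descents. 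Since $Q^{(4k+n)}=Q^{(0)}=Q$ by Theorem~\ref{thm:GeneralSemiCycles}, reading in reverse gives~\eqref{eq:Q12iDesc} and in particular identifies the descent sequence of $Q|_{12i}$ as~$(12)^k$.

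For~\eqref{eq:C12i}, I trace the bottom rim restricted to $\{1,2,i\}$. By Lemma~\ref{lem:Sinks}, each $\mu[j]$ with $3\le j\le n-1$ along this rim is a sink/source mutation in the ambient quiver, so mutations at $j\ne i$ leave $Q^{(\cdot)}|_{12i}$ untouched, while the unique $\mu[i]$ reverses arrows at~$i$ in $\tilde R|_{12i}$ (where $i$ is the sink), yielding the acyclic triangle $1\to 2,\,i\to 1,\,i\to 2$ as $Q^{(2k+n-2)}|_{12i}$. The last two mutations $\mu[2]$ and $\mu[1]$ of the bottom rim fall outside the scope of Lemma~\ref{lem:Sinks}, and this is the one place needing direct verification: vertex~$2$ is a sink in the triangle just obtained, so $\mu[2]$ restricts to a sink mutation, and after this reversal vertex~$1$ becomes a sink, so $\mu[1]$ is again a sink mutation in the subquiver. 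The end result is $L|_{12i}=\tilde R|_{12i}$, as expected, and every intermediate subquiver appearing in~\eqref{eq:C12i} remains acyclic. The concluding subsequence statement then follows from the uniqueness clause of Lemma~\ref{lem:3VertexAcyclicAscents-1}.
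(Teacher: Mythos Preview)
Your proof is correct and follows essentially the same route as the paper's, invoking Lemma~\ref{lem:3VertexAcyclicAscents} from the acyclic base $\tilde R|_{12i}$ with elbow~$2$. The one difference worth noting is your handling of~\eqref{eq:Q12iDesc}: you re-run the ascent argument from $L|_{12i}$, whereas the paper observes that since $n$ is a sink in~$Q^{(0)}$ we have $Q^{(0)}|_{12i}=Q^{(1)}|_{12i}$, so the sequence of subquivers in~\eqref{eq:Q12iDesc} is literally the same as in~\eqref{eq:Qone12iDesc} (cf.~\eqref{eq:SubquiverTildeQ-2}) and no second argument is needed. On the other hand, your treatment of~\eqref{eq:C12i} is more careful than the paper's: you correctly flag that Lemma~\ref{lem:Sinks} only covers mutations at vertices~$\ge 3$ and explicitly verify that the final $\mu[2]$ and $\mu[1]$ are sink mutations in the restricted triangle, a point the paper's proof glosses over.
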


\begin{proof}
By construction, $R|_{12i}$ is acyclic with large weights and with elbow at $2$. 
Thus Lemma~\ref{lem:3VertexAcyclicAscents-1} implies~\eqref{eq:Qone12iDesc}.
Now \eqref{eq:C12i} follows from Lemma~\ref{lem:Subquiver12nBottom}. 
Since $n$ is a sink in $Q^{(0)},$ we have $Q^{(0)}|_{12i} = Q^{(1)}|_{12i}$ and so the sequences of quivers~\eqref{eq:Qone12iDesc} and~\eqref{eq:Q12iDesc} are identical to each other.
\end{proof}

\begin{lemma}
\label{lem:Q1inOr2inSubquiv}
Let $3\!\le i \!\le\! n-1$ and $0\!\le\! j\!\le\! 4k+n$. 
Then the quivers $Q^{(j)}|_{1in}$ and~$Q^{(j)}|_{2in}$ are mutation equivalent to acyclic quivers with large weights. 
More specifically, 
\begin{itemize}[leftmargin=.2in]
\item
if $j\notin \{2k+n,4k+n-1\}$, then $Q^{(j)}|_{1in}$ is acyclic; 
\item
if $j\!\in\! \{2k+n,4k+n-1\}$, then $Q^{(j)}|_{1in}$ is mutation acyclic with one-term descent sequence~$(1)$;
\item
if $j\notin \{2k+n-1,4k+n-2\}$, then $Q^{(j)}|_{2in}$ is acyclic; 
\item
if $j\in \{2k+n-1,4k+n-2\}$, then $Q^{(j)}|_{2in}$  is mutation acyclic with one-term descent sequence~$(2)$. 
\end{itemize}
\end{lemma}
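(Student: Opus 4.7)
My plan is to reconstruct each 3-vertex subquiver $Q^{(j)}|_{1in}$ and $Q^{(j)}|_{2in}$ by reading off the signs of its three entries from previously-analyzed data. The signs of $b_{1i}(Q^{(j)})$ and $b_{2i}(Q^{(j)})$ come from $Q^{(j)}|_{12i}$ (Lemma~\ref{lem:Q12iSubquiv}); the signs of $b_{1n}(Q^{(j)})$ and $b_{2n}(Q^{(j)})$ come from $Q^{(j)}|_{12n}$ (Lemma~\ref{lem:R12nSubquiv}); and $b_{in}(Q^{(j)})$ is constant along the right rim (Lemma~\ref{lem:binRQ}), along the left rim (Lemma~\ref{lem:binUnchanged}), and along the interior of the bottom rim (by Lemma~\ref{lem:Sinks}). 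The only positions where $b_{in}$ changes are at $\mu[n]$ (position $1$), at $\mu[i]$ (position $2k+n-i+1$), and at the four ``corner'' non-sink mutations $\mu[2]$, $\mu[1]$ at positions $2k+n-1$, $2k+n$, $4k+n-1$, $4k+n$, which are governed by Lemma~\ref{lem:fourChanges}. Large-weight control at every step is inherited from the two component subquiver analyses together with the positivity of the change formulas in Lemma~\ref{lem:fourChanges}.

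I will first verify the claim at the four corner quivers $Q$, $Q^{(1)}$, $R$, $L$ of Figure~\ref{fig:GeneralSemiCycles-notation}. At $Q$ and $Q^{(1)}$ the vertex $n$ is a sink (resp.\ source), so both subquivers are immediately acyclic. At $R$: positivity of $b_{12}(R), b_{1i}(R), b_{2i}(R)$ follows from $R|_{[1,n-1]}=\tilde R$ (Lemma~\ref{lem:SubquiverTildeQ}); Lemma~\ref{lem:binRQ} gives $b_{in}(R)<0$; and Proposition~\ref{pr:3VertexAlternatingMutations} applied to $Q^{(1)}|_{12n}$ gives $b_{2n}(R)>0$ and $b_{1n}(R)<0$. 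Hence $R|_{1in}$ and $R|_{2in}$ are acyclic with sources $n$ and $2$ respectively. At $L$, using Lemma~\ref{lem:fourChanges} together with the Chebyshev recurrence $\h_{j+1}(a) = a\h_j(a) - \h_{j-1}(a)$ to telescope, I compute
\begin{equation*}
b_{1n}(L) = -\bigl(\h_{2k-2}(a)\,y + \h_{2k-3}(a)\,z\bigr), \qquad b_{in}(L) = b_{in}(Q) + (\text{positive Chebyshev terms}),
\end{equation*}
where $a = b_{12}(Q)$, $y = b_{1n}(Q)$, $z = b_{2n}(Q)$. Positivity of $\h_j(a)$ for $j \ge 0$ when $a \ge 2$ yields $b_{1n}(L) < 0$ and $b_{in}(L) > 0$, producing a cyclic $L|_{1in}$ with orientation $1 \to i \to n \to 1$ and unique descent at~$1$ (opposite the maximal weight $b_{in}(L)$). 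The same method gives $b_{2n}(L) = \h_{2k-3}(a)\,y + \h_{2k-4}(a)\,z$, which is positive for $k \ge 2$ (yielding acyclic $L|_{2in}$) and equals $-z$ for $k = 1$ (yielding cyclic $L|_{2in}$, consistent with the coincidence $4k+n-2 = 2k+n$ in that case).

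To propagate between the corners, I use that along the interior of the bottom rim each $\mu[\ell]$ with $\ell \in [3, n-1]$ is a sink mutation (Lemma~\ref{lem:Sinks}), which preserves both 3-vertex subquivers when $\ell \notin \{1, i, n\}$ and merely reverses arrows at $i$ otherwise (leaving the subquiver acyclic). Along the right and left rims, each subquiver is reconstructed from the now-known signs of its entries, and a short case analysis on the parity of the position within the rim confirms acyclicity at every remaining $j$. The mirror-symmetric analysis at the positions $4k+n-2$ and $4k+n-1$ proceeds by reading the cycle backward from $Q$ and applying the change formulas~\eqref{eq:change3}--\eqref{eq:change4} in place of~\eqref{eq:change1}--\eqref{eq:change2}; the analogous Chebyshev computation gives $b_{1i}(Q^{(4k+n-1)}) = \h_{2k-2}(a)\,q_{1i} + \h_{2k-1}(a)\,q_{2i} > 0$ and confirms the predicted cyclic orientation. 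The main obstacle is the telescoping Chebyshev computation at $L$ and at $Q^{(4k+n-1)}$: two consecutive non-sink mutations simultaneously move $b_{1n}, b_{2n}, b_{in}$, and collapsing the nested sums via the identity $\h_{j+1}(a) - a\h_j(a) = -\h_{j-1}(a)$ together with positivity of $\h_j(a)$ is exactly what produces the precise cyclic/acyclic pattern asserted by the lemma.
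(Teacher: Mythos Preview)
Your approach is sound but takes a substantially more computational route than the paper. The paper's key shortcut is the elementary tournament fact that a 4-vertex quiver with nonzero weights has at most two cyclic 3-vertex subquivers. Since Lemmas~\ref{lem:R12nSubquiv} and~\ref{lem:Q12iSubquiv} already establish that $Q^{(j)}|_{12i}$ and $Q^{(j)}|_{12n}$ are \emph{both} cyclic whenever $j\in[2,2k]\cup[2k+n+1,4k+n-3]$, this immediately forces $Q^{(j)}|_{1in}$ and $Q^{(j)}|_{2in}$ to be acyclic for all those $j$ with no sign computation whatsoever. That single observation disposes of the entire interior of both vertical rims in one stroke, leaving only a handful of boundary positions to check directly. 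Your direct sign-reconstruction via Chebyshev polynomials reaches the same conclusions but must track orientations through all $4k+n$ positions.

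There is also a small incompleteness in your corner analysis. You treat $L=Q^{(2k+n)}$ explicitly but not $Q^{(2k+n-1)}$; yet the latter is where the exceptional cyclic subquiver $Q^{(2k+n-1)}|_{2in}$ lives, and its acyclicity on the $\{1,i,n\}$ side is exactly what certifies that the descent sequence of $L|_{1in}$ has length one (i.e., that $\mu[1]$ lands on an acyclic quiver). Your ``descent at $1$ opposite the maximal weight $b_{in}(L)$'' remark does not by itself give this: Lemma~\ref{lem:3VertexUniqueDescents} locates the descent \emph{given} that one exists, but to get a one-term descent sequence you need the stronger inequality $b_{in}(L)>|b_{1i}(L)|\cdot|b_{1n}(L)|$, which does follow from your formula $b_{in}(L)\ge b_{in}(Q)+b_{1i}(R)\,|b_{1n}(L)|$ together with $b_{1i}(R)=|b_{1i}(L)|\ge 2$, but should be stated. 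The paper sidesteps this by first proving acyclicity at the neighboring position and then reading off the descent sequence.
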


\begin{proof}
By Lemma~\ref{lem:fourChanges}, all four quivers $Q^{(2k+n)}|_{1in}$, $Q^{(4k+n-1)}|_{1in}$, $Q^{(2k+n-1)}|_{2in}$, $Q^{(4k+n-2)}|_{2in}$ are cyclic with descents at $1$, $1$, $2$, and~$2$, respectively. 
We will next check that $Q^{(j)}|_{1in}$ and $Q^{(j)}|_{2in}$ are acyclic otherwise, 
which will imply that these descents are actually the entire descent sequences.

Vertex $n$ is a sink or source in each of $Q|_{1in}$, $Q|_{2in}$, $Q^{(1)}|_{1in}$, and $Q^{(1)}|_{2in}$, so all four quivers are acyclic. 
In $R,$ we have $1 \points i$ and $2 \points i$, while $n \points i$ by Lemma~\ref{lem:binRQ}. 
Thus $R|_{1in}$ and $R|_{2in}$ are acyclic. 
It then follows by Lemma~\ref{lem:Sinks} that $Q^{(j)}|_{1in}$ and $Q^{(j)}|_{2in}$ are acyclic for~${2k+1 \le j \le 2k+n-2.}$
By inspection, at most two subquivers of a $4$-vertex quiver can be cyclic. 
By Lemmas~\ref{lem:R12nSubquiv}--\ref{lem:Q12iSubquiv}, both $Q^{(j)}|_{12i}$ and $Q^{(j)}|_{12n}$ are cyclic if ${2\le j \le2k}$ or~${2k+n+1 \le j \le 4k+n-3}$. 
So $Q^{(j)}|_{12in}$ has two cyclic $3$-vertex subquivers and therefore both~$Q^{(j)}|_{1in}$ and $Q^{(j)}|_{2in}$ are acyclic if
${0 \le j \le 2k}$ or ${2k+n+1 \le j \le 4k+n-3}$.

By Lemma~\ref{lem:binRQ}, we have $n \points i$ in~$R$. After applying a sink mutation at each vertex in $[2,n-\!1],$ we find that $i \points n$ in~$Q^{(2k+n-1)}$. 
Also, since $1$ is a source in $L|_{[1,n-1]}$ by Lemma~\ref{lem:SubquiverTildeQ}, $1$ is a sink in~$Q^{(2k+n-1)}|_{[1,n-1]}$. 
Thus $i \points 1$ and $i \points n,$ so $Q^{(2k+n-1)}|_{1in}$ is acyclic. 
Likewise, in $Q^{(4k+n-2)},$ Lemma~\ref{lem:3VertexAcyclicAscents} applied to $L|_{12i}$ implies that $i \points 1 \points 2$ 
while Lemma~\ref{lem:Subquiver12nStart} implies that $1$ is an elbow in $Q^{(4k+n-2)}|_{12n}$, 
so~$n \points 1$. Thus $Q^{(4k+n-2)}|_{1in}$ is acyclic too.

The case of the quivers $Q^{(j)}|_{2in}$ is handled similarly. 
By Lemma~\ref{lem:binRQ}, $n \points i$ in~$R$. After applying a sink mutation at each vertex in ${[3,n-1]},$ we find that $i \points n$ in $Q^{(2k+n-2)}$ and $2$ is a sink in~$Q^{(2k+n-2)}|_{[1,n-1]}$. 
Thus ${i \points 2},$ so $Q^{(2k+n-2)}|_{2in}$ is acyclic. 
Likewise, in $Q^{(4k+n-1)},$ Lemma~\ref{lem:3VertexAcyclicAscents} applied to $L|_{12i}$ implies that $i \points 2 \points 1$ while Lemma~\ref{lem:Subquiver12nStart} implies that $2$ is a source in $Q^{(4k+n-2)}|_{12n}$, so~$n \points 2$. 
Thus $Q^{(4k+n-1)}|_{2in}$ is acyclic too.
\end{proof}

\begin{lemma}
\label{lem:Q[3,n]Subquiv}
For any~$j$, the subquivers $Q^{(j)}|_{[3,n]}$, $Q^{(j)}|_{\{1\}\cup[3,n-1]}$, and $Q^{(j)}|_{\{2\}\cup[3,n-1]}$ are acyclic,
with large weights. 
\end{lemma}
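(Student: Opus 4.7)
The plan is to analyze each of the three subquivers separately, with the observation that they all contain $Q^{(j)}|_{[3,n-1]}$ as a common induced subquiver. I will first argue that $Q^{(j)}|_{[3,n-1]}$ is, for every $j$, a reorientation of $\tilde R|_{[3,n-1]}$, so that this common part is always acyclic with large weights. The sink mutations on the bottom rim at the vertices $3,\dots,n-1$ (Lemma~\ref{lem:Sinks}) and the initial sink mutation at~$n$ clearly preserve this property, so the question reduces to controlling the effect of mutations at~$\ell\in\{1,2\}$. Here I will invoke Lemma~\ref{lem:4VertexWeightChanges}: it suffices to check that $\ell$ is an ascent or descent in the 3-vertex subquiver $Q^{(j')}|_{12i}$ for every $i\in[3,n-1]$ and every step~$j'$ at which $\mu[\ell]$ is applied. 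For the right-rim mutations (steps $j'\in[1,2k]$) this follows from Lemma~\ref{lem:mut-epsilon}; for the remaining mutations at $1$ and $2$ it follows from the descent-sequence description in Lemma~\ref{lem:Q12iSubquiv}, combined with Lemma~\ref{lem:3VertexUniqueDescents} in the cyclic case and a direct check that $\mu[\ell]$ acts trivially on the exchange-matrix entries in question whenever the mutated vertex is a sink or source of the corresponding 3-vertex subquiver.

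Having shown that $Q^{(j)}|_{[3,n-1]}$ is always acyclic with large weights, I will then verify that adjoining the extra vertex (either $n$, $1$, or $2$) produces an acyclic subquiver with large weights. The large-weight property for the new arrows $b_{\ell i}$ with $\ell\in\{1,2,n\}$ and $i\in[3,n-1]$ follows immediately from Lemmas~\ref{lem:Q12iSubquiv} and~\ref{lem:Q1inOr2inSubquiv}, together with the fact (cf.\ Lemma~\ref{lem:3VertexAcyclicAscents}) that any 3-vertex quiver mutation equivalent to an acyclic one with large weights again has large weights.

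For acyclicity, I will use a synchronization argument. The 3-vertex subquivers $Q^{(j)}|_{12i}$ for different $i\in[3,n-1]$ all start (at~$R$) from quivers of identical shape $\tilde R|_{12i}$, and undergo exactly the same sequence of mutations along the cycle; hence they remain in identical orientations at every step~$j$. In particular, the signs of $b_{1i}$ and of $b_{2i}$ are independent of $i\in[3,n-1]$ at each step~$j$, so vertex~$1$ (respectively,~$2$) is always either a sink or a source of $Q^{(j)}|_{\{1\}\cup[3,n-1]}$ (respectively, of $Q^{(j)}|_{\{2\}\cup[3,n-1]}$) relative to $[3,n-1]$. A parallel synchronization argument, using Lemma~\ref{lem:Q1inOr2inSubquiv} in place of Lemma~\ref{lem:Q12iSubquiv}, shows that the signs of $b_{in}$ for $i\in[3,n-1]$ are likewise synchronized, making vertex~$n$ a sink or source of $Q^{(j)}|_{[3,n]}$ relative to $[3,n-1]$. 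Combined with the first step, this yields the desired acyclicity for each of the three subquivers.

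The main obstacle I anticipate is the careful bookkeeping required near the bottom-left and top-left corners of Figure~\ref{fig:GeneralSemiCycles-notation}: at a handful of boundary steps, the mutated vertex is a sink or source of a 3-vertex subquiver $Q^{(j')}|_{12i}$ rather than an ascent or descent, so Lemma~\ref{lem:4VertexWeightChanges} does not apply in its cleanest form. Fortunately, in precisely those cases the mutation has no net effect on the relevant exchange-matrix entries, and the argument still goes through --- but this is the one place where the previous lemmas must be unpacked manually rather than invoked as a black box.
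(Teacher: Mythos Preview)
Your synchronization argument has a genuine gap. The central claim---that the subquivers $Q^{(j)}|_{12i}$ for different $i\in[3,n-1]$ ``undergo exactly the same sequence of mutations along the cycle'' and therefore stay in identical orientations at every step---is false on the bottom rim. The sink mutation $\mu[i]$ (Lemma~\ref{lem:Sinks}) is applied at step $2k+n-i$, which is a \emph{different} step for each $i$. So at step $j=2k+2$, for instance, the subquiver $Q^{(2k+2)}|_{1,2,n-1}$ has already had its arrows at $n-1$ reversed, while $Q^{(2k+2)}|_{1,2,3}$ has not; the signs of $b_{1i}$ (and $b_{2i}$, and $b_{in}$) are \emph{not} independent of $i$ for $j\in[2k+2,\,2k+n-3]$. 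Your diagnosis of the ``main obstacle'' (sink/source versus ascent/descent at the corners) is not where the argument actually breaks.

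The gap is easy to patch once identified: for those bottom-rim steps, $Q^{(j)}|_{[1,n-1]}$ is a reorientation of $\tilde R$ via sink mutations, hence acyclic, and likewise $Q^{(j)}|_{[3,n]}$ is a reorientation of $R|_{[3,n]}$; so the three target subquivers are acyclic there for a direct reason. The paper's proof sidesteps the issue entirely by a different device: for the steps where all the $Q^{(j)}|_{12i}$ are cyclic, it uses the elementary fact that a $4$-vertex quiver has at most two cyclic $3$-vertex subquivers, forcing each $Q^{(j)}|_{1i\ell}$ and $Q^{(j)}|_{2i\ell}$ to be acyclic; for the bottom-rim steps it uses the direct acyclicity of $Q^{(j)}|_{[1,n-1]}$. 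Your ``parallel synchronization'' for $b_{in}$ via Lemma~\ref{lem:Q1inOr2inSubquiv} is also too vague as stated: that lemma classifies the subquivers $Q^{(j)}|_{1in}$ and $Q^{(j)}|_{2in}$ by descent sequence, but extracting the sign of $b_{in}$ at every step (especially at $j=4k+n-1$) really requires the explicit formulas of Lemma~\ref{lem:fourChanges}, which you do not invoke.
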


\begin{proof}
{\ }

\noindent
\textbf{Case~1:} subquivers $Q^{(j)}|_{[3,n]}$. 
For ${1\le j\le 4k+n-2}$, 
Lemmas~\ref{lem:binRQ},\ref{lem:Sinks} and \ref{lem:binUnchanged} imply that the quiver $Q^{(j)}|_{[3,n]}$ is acyclic. 
As $Q^{(1)}|_{[3,n]}$ is acyclic and $n$ is a sink, $Q|_{[3,n]}$ is acyclic. 
The remaining case of $Q^{(4k+n-1)}|_{[3,n]}$ follows from Lemma~\ref{lem:fourChanges}. 

\noindent
\textbf{Case~2:} subquivers $Q^{(j)}|_{\{1\}\cup[3,n-1]}$ and $Q^{(j)}|_{\{2\}\cup[3,n-1]}$. 
Lemmas~\ref{lem:R12nSubquiv}--\ref{lem:Q12iSubquiv} imply that 
for $\ell \in[i+1,n-1]$ and ${j \in[0,2k] \cup [2k+n+1, 4k+n]}$, 
both $Q^{(j)}|_{12i}$ and $Q^{(j)}|_{12\ell}$ are cyclic. 
Since a $4$-vertex quiver can have at most two cyclic $3$-vertex subquivers, 
both $Q^{(j)}|_{1i\ell}$ and $Q^{(j)}|_{2i\ell}$ are acyclic. 
Since $R|_{[1,n-1]}$ is acyclic by construction, Lemma~\ref{lem:Sinks} implies that $Q^{(j)}|_{1ij}$ and $Q^{(j)}|_{2ij}$ are acyclic for all~$j$. 
Given that we have already shown that $Q^{(j)}|_{[3,n]}$ is acyclic, it follows that $Q^{(j)}|_{[3,n-1]}$ is acyclic. 
Thus both $Q^{(j)}|_{\{1\}\cup[3,n-1]}$ and~$Q^{(j)}|_{\{2\}\cup[3,n-1]}$ are acyclic.
\end{proof}

We summarize some useful consequences of Lemmas~\ref{lem:R12nSubquiv}--\ref{lem:Q[3,n]Subquiv}
in Lemmas \ref{lem:orientations}--\ref{lem:cycleIsSinksAndDescents} below.

\begin{lemma}
\label{lem:orientations}
Let $0\le j\le 4k+n$. 
Any $3$-vertex subquiver of $Q^{(j)}$ is mutation equivalent to an acyclic quiver with large weights. 
Such a 3-vertex subquiver is cyclic if and only if it appears on the list below (here $i\in [3,n-1]$):
\begin{itemize}[leftmargin=.2in]
\item $Q^{(j)}|_{12n}$, for $j \in [2, 4k+n-3]$;
\item $Q^{(j)}|_{12i}$, for $j \not \in [2k+1, 2k+n]$;
\item $Q^{(j)}|_{1in}$, for $j \in \{2k+n, 4k+n-1\}$; 
\item $Q^{(j)}|_{2in}$, for $j \in \{2k+n-1, 4k+n-2\}$. 
\end{itemize}
\end{lemma}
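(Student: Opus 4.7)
The plan is to reduce this lemma to a straightforward case analysis, organizing every three-element vertex subset $\{p,q,r\}\subset[1,n]$ according to how it meets the distinguished set $\{1,2,n\}$, and then quoting the appropriate preceding lemma in each case. Every three-element subset of $[1,n]$ falls into exactly one of the following types: (i) $\{1,2,n\}$; (ii) $\{1,2,i\}$ with $i\in[3,n-1]$; (iii) $\{1,i,n\}$ or $\{2,i,n\}$ with $i\in[3,n-1]$; (iv) a three-element subset of $[3,n]$; or (v) a subset of the form $\{1,i,j\}$ or $\{2,i,j\}$ with $i,j\in[3,n-1]$ distinct. These five cases are precisely the hypotheses of Lemmas~\ref{lem:R12nSubquiv}, \ref{lem:Q12iSubquiv}, \ref{lem:Q1inOr2inSubquiv}, and~\ref{lem:Q[3,n]Subquiv}.

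For the first claim, that every three-vertex subquiver of $Q^{(j)}$ is mutation equivalent to an acyclic quiver with large weights, I would invoke each of these four lemmas in turn. Cases (iv) and (v) are handled by Lemma~\ref{lem:Q[3,n]Subquiv}, which directly asserts that such subquivers are already acyclic with large weights. In cases (i)--(iii), mutation equivalence to an acyclic large-weight quiver is provided by Lemmas~\ref{lem:R12nSubquiv}, \ref{lem:Q12iSubquiv}, and~\ref{lem:Q1inOr2inSubquiv} respectively; to conclude that the subquivers themselves carry large weights (rather than just lying in a mutation class containing one such quiver), I would appeal to Lemma~\ref{lem:3VertexAcyclicAscents-1}, which shows that the entire mutation class of any three-vertex acyclic quiver with large weights consists of large-weight quivers.

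For the "if and only if" characterization of cyclic subquivers, the same case partition applies. In case~(i), the explicit descent-sequence data \eqref{eq:R12nDesc}--\eqref{eq:Top12n} pinpoint cyclicity of $Q^{(j)}|_{12n}$ as occurring exactly for $j\in[2,4k+n-3]$ (the acyclic endpoints are $Q^{(0)}|_{12n}$, $Q^{(1)}|_{12n}$, $Q^{(4k+n-1)}|_{12n}$, $Q^{(4k+n)}|_{12n}$, and the equalities \eqref{eq:C12n} cover the middle range). In case~(ii), \eqref{eq:Qone12iDesc}--\eqref{eq:Q12iDesc} pinpoint cyclicity of $Q^{(j)}|_{12i}$ as occurring exactly for $j\notin[2k+1,2k+n]$. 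Case~(iii) is stated verbatim in Lemma~\ref{lem:Q1inOr2inSubquiv}. Finally, Lemma~\ref{lem:Q[3,n]Subquiv} rules out any cyclic subquiver in cases~(iv) and~(v).

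The proof is essentially bookkeeping, so there is no substantial obstacle; the only thing to verify with care is that the five-way case partition is exhaustive and cleanly matches the hypotheses of the earlier lemmas, and that each earlier lemma supplies both directions of the cyclicity equivalence on the vertex sets it concerns.
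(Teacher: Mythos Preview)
Your proposal is correct and matches the paper's approach: the paper presents this lemma explicitly as a summary of Lemmas~\ref{lem:R12nSubquiv}--\ref{lem:Q[3,n]Subquiv} without writing out a separate proof, and your five-way partition of three-element subsets of $[1,n]$ by their intersection with $\{1,2,n\}$ is exactly the bookkeeping needed to see that those lemmas exhaust all cases. Your invocation of Lemma~\ref{lem:3VertexAcyclicAscents-1} to conclude large weights for the subquivers themselves is a reasonable way to fill in that small detail (one could alternatively cite Lemma~\ref{lem:3VertexAcyclicAscents}).
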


\begin{lemma}
\label{lem:Qj-large}
Each quiver $Q^{(j)}$ has large weights.
\end{lemma}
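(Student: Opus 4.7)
The plan is to deduce this as a direct corollary of Lemma~\ref{lem:orientations}. Since $n \ge 4$, for any pair of distinct vertices $u,v \in [1,n]$ there exists a third vertex $w \in [1,n] \setminus \{u,v\}$. Consider the 3-vertex subquiver $Q^{(j)}|_{uvw}$. By Lemma~\ref{lem:orientations}, this subquiver is mutation equivalent to an acyclic 3-vertex quiver with large weights.

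Now I invoke the observation recorded in Definition~\ref{def:height}: any 3-vertex quiver mutation equivalent to an acyclic quiver with large weights itself has large weights (a consequence of Lemma~\ref{lem:3VertexAcyclicAscents}, since every mutation appearing in the descent sequence is an ascent or descent, and the acyclic quivers encountered retain the original large weights of $Q^{(j)}|_{uvw}$ up to the effect of the descent sequence). In particular,
\[
|b_{uv}(Q^{(j)})| = |b_{uv}(Q^{(j)}|_{uvw})| \ge 2.
\]
Since $u$ and $v$ were arbitrary, $Q^{(j)}$ has large weights.

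The only place where there is anything to check is the assertion that \emph{every} 3-vertex subquiver of $Q^{(j)}$ falls under the hypothesis of Lemma~\ref{lem:orientations}, but this is precisely what that lemma states (both for the cyclic subquivers enumerated in the list and implicitly for the acyclic ones, which have large weights by Lemmas~\ref{lem:R12nSubquiv}--\ref{lem:Q[3,n]Subquiv} where the relevant entries are explicitly seen to be $\ge 2$ or come from the construction of $Q$ and $\tilde R$). Thus there is no genuine obstacle; the work has already been done in the lemmas leading up to Lemma~\ref{lem:orientations}, and Lemma~\ref{lem:Qj-large} reduces to a short invocation of that lemma together with the basic fact that the mutation class of a 3-vertex acyclic quiver with large weights consists entirely of quivers with large weights.
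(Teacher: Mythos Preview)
Your proposal is correct and is precisely the argument the paper intends: the lemma is stated without proof because it is an immediate corollary of Lemma~\ref{lem:orientations} together with the observation in Definition~\ref{def:height} (via Lemma~\ref{lem:3VertexAcyclicAscents}) that any 3-vertex quiver mutation equivalent to an acyclic quiver with large weights itself has large weights. Your parenthetical justification is slightly roundabout, but the substance is right.
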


\begin{lemma}
\label{lem:descents}
Let $0\le j \le 4k+n$. Then: 

\noindent
1. The descent sequence of each $3$-vertex subquiver of $Q^{(j)}$ consists of $1$'s and~$2$'s. 

\noindent
2. Both $1$ and $2$ appear within descent sequences of $3$-vertex subquivers of~$Q^{(j)}$. 

\noindent
3.  Vertex $1$ is a descent of some $3$-vertex subquiver of $Q^{(j)}$ if and only if $j \!\notin\! [{2k+1}, \linebreak[3]
2k+n-2]$. 

\noindent
4. Vertex $2$ is a descent of some $3$-vertex subquiver of $Q^{(j)}$ if and only if $j \not \in \{0, 1\}$.
\end{lemma}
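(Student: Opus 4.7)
My plan is to deduce all four parts from the explicit descriptions of the 3-vertex subquivers of $Q^{(j)}$ recorded in Lemmas~\ref{lem:R12nSubquiv}--\ref{lem:Q[3,n]Subquiv}. The key initial observation is that every $3$-vertex subquiver of $Q^{(j)}$ is either (a) contained in $[3,n]$, $\{1\}\cup[3,n-1]$, or $\{2\}\cup[3,n-1]$, in which case it is acyclic by Lemma~\ref{lem:Q[3,n]Subquiv} and has an empty descent sequence, or (b) one of the four types $Q^{(j)}|_{12n}$, $Q^{(j)}|_{12i}$, $Q^{(j)}|_{1in}$, or $Q^{(j)}|_{2in}$ with $i\in[3,n-1]$, whose descent sequence is given by one of the chains~\eqref{eq:R12nDesc}, \eqref{eq:Lp12nDesc}, \eqref{eq:Qone12iDesc}, \eqref{eq:Q12iDesc} or by the one-term sequences in Lemma~\ref{lem:Q1inOr2inSubquiv}. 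Since every mutation appearing in those chains is at vertex~$1$ or~$2$, Part~1 is immediate.

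For Parts~3 and~4, I would read off from each chain the \emph{first} mutation of the corresponding descent sequence -- which, by Lemma~\ref{lem:3VertexUniqueDescents}, is the unique descent vertex of that subquiver -- and then take the union over the four families. Lemma~\ref{lem:R12nSubquiv} shows that the descent of $Q^{(j)}|_{12n}$ alternates between~$2$ and~$1$ as $j$ traverses $[2,2k+1]$, is fixed at~$2$ throughout $[2k+1, 2k+n-2]$ by~\eqref{eq:C12n}, and again alternates as $j$ traverses $[2k+n-2, 4k+n-3]$. Lemma~\ref{lem:Q12iSubquiv} shows that, for $i\in[3,n-1]$, the descent of $Q^{(j)}|_{12i}$ equals $\varepsilon(j)$ for $j\in[1,2k]$ (and also for $j=0$, since $Q^{(0)}|_{12i}=Q^{(1)}|_{12i}$), and alternates along~\eqref{eq:Q12iDesc} for $j\in[2k+n+1, 4k+n-1]$. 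Lemma~\ref{lem:Q1inOr2inSubquiv} adds a descent at~$1$ exactly for $j\in\{2k+n, 4k+n-1\}$ and at~$2$ exactly for $j\in\{2k+n-1, 4k+n-2\}$. Taking the union over the four families, one verifies that the set of $j$ for which some $3$-vertex subquiver has descent~$1$ is $[0,2k]\cup[2k+n-1, 4k+n]$, the complement of $[2k+1, 2k+n-2]$; and the set for descent~$2$ is $[2, 4k+n-1]$, the complement of $\{0,1\}$.

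For Part~2, I would observe that each of the chains~\eqref{eq:R12nDesc}--\eqref{eq:Q12iDesc} alternates between mutations at~$1$ and~$2$, so whenever the descent sequence of $Q^{(j)}|_{12n}$ or $Q^{(j)}|_{12i}$ has length at least~$2$ it already contains both labels. Every $j$ has at least one such cyclic subquiver with sequence length $\ge 2$, except for a handful of degenerate indices (such as $j=2k$ when $k=1$) where the missing label is supplied by a different cyclic subquiver -- typically the other of $Q^{(j)}|_{12n}$, $Q^{(j)}|_{12i}$, or one of the $\{1,i,n\}$/$\{2,i,n\}$ subquivers of Lemma~\ref{lem:Q1inOr2inSubquiv}. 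The main obstacle will be the parity-indexed bookkeeping in Parts~3 and~4: the four families of cyclic subquivers contribute overlapping but parity-shifted sets of $j$-values, and one has to verify that their union covers the claimed complement exactly, with no gap at the boundary indices. I would organize the analysis in a small table indexed by $j$-range and subquiver type.
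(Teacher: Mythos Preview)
Your proposal is correct and follows essentially the same approach as the paper, which provides no explicit proof for this lemma but presents it (together with Lemmas~\ref{lem:orientations}--\ref{lem:cycleIsSinksAndDescents}) as a direct consequence of Lemmas~\ref{lem:R12nSubquiv}--\ref{lem:Q[3,n]Subquiv}. Your case analysis---reducing to the four subquiver types $12n$, $12i$, $1in$, $2in$ and reading off descents from the alternating chains---is exactly the intended verification; the only refinement worth making is that Part~2 follows most cleanly from Parts~3 and~4 together with the observation that for $j\in[2k+1,2k+n-2]$ the sequence $(21)^k$ of $Q^{(j)}|_{12n}$ supplies the label~$1$, and for $j\in\{0,1\}$ the sequence $(12)^k$ of $Q^{(j)}|_{12i}$ supplies the label~$2$.
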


\pagebreak[3]

\begin{lemma}
\label{lem:no2cycles}
Let $ 0 \leq j < 4k+n$. 
If $\T{v}{Q^{(j)}}=\T{u}{Q^{(j)}}$, then $u=v$.
\end{lemma}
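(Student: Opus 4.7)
The plan is to proceed by contradiction, reducing the statement to a three-vertex subquiver. Suppose that $\mu[u](Q^{(j)})=\mu[v](Q^{(j)})$ for some distinct $u,v$. Since $n\ge 4$, I can pick a third vertex $w\in[1,n]\setminus\{u,v\}$. Recall from Section~\ref{sec:quiver-mut} that a mutation $\mu[i]$ commutes with restriction to any vertex subset containing~$i$; applying this with the subset $\{u,v,w\}$ to both sides of the assumed equation will yield
\[
\mu[u]\bigl(Q^{(j)}|_{uvw}\bigr)=\mu[v]\bigl(Q^{(j)}|_{uvw}\bigr).
\]

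The next step is to bring in the structural results established earlier in this section. By Lemma~\ref{lem:orientations}, the three-vertex subquiver $S:=Q^{(j)}|_{uvw}$ is mutation equivalent to an acyclic quiver with large weights. Lemma~\ref{lem:3VertexAcyclicAscents-1} then provides a complete description of the mutation graph of $[S]$: it consists of three complete infinite binary trees of cyclic quivers, each attached by a single edge at its root to one of three acyclic quivers forming a central triangle connected by sink/source mutations. This mutation graph is a simple graph, so the three outgoing edges from any vertex lead to three pairwise distinct neighbors. In particular, $\mu[u](S)$, $\mu[v](S)$, and $\mu[w](S)$ are pairwise distinct, contradicting the displayed equation. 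Hence $u=v$.

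The principal (and really the only) obstacle is the appeal to Lemma~\ref{lem:orientations}, which guarantees that every three-vertex subquiver $Q^{(j)}|_{uvw}$ sits inside the mutation-acyclic, large-weight setting governed by Lemma~\ref{lem:3VertexAcyclicAscents-1}. Lemma~\ref{lem:orientations} itself rests on the painstaking case analysis in Lemmas~\ref{lem:R12nSubquiv}--\ref{lem:Q[3,n]Subquiv}, identifying precisely which three-vertex subquivers along the cycle are cyclic versus acyclic. Given that, the distinctness of the three outgoing mutations from any quiver in $[S]$ is immediate from the simple-graph structure in Lemma~\ref{lem:3VertexAcyclicAscents-1}, or equivalently from the bijection there between normal-form mutation sequences and quivers in the mutation class.
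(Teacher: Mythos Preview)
Your proof is correct and follows essentially the same approach as the paper's: restrict to a three-vertex subquiver containing $u$, $v$, and one extra vertex, invoke Lemma~\ref{lem:orientations} to place it in the mutation-acyclic large-weight regime, and then use Lemma~\ref{lem:3VertexAcyclicAscents-1} to derive a contradiction. The paper states the final step more tersely (simply ``contradicting Lemma~\ref{lem:3VertexAcyclicAscents-1}''), but the content is the same.
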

\begin{proof}
Suppose $u\ne v$. Pick a vertex $i \notin \{u,v\}$.  
By Lemma~\ref{lem:orientations}, $Q^{(j)}|_{iuv}$ is mutation equivalent to an acyclic quiver with large weights.
But $\T{v}{Q^{(j)}|_{iuv}} = \T{u}{Q^{(j)}|_{iuv}}$,  contradicting Lemma~\ref{lem:3VertexAcyclicAscents-1}.
\end{proof}

\begin{lemma}
\label{lem:cycleIsSinksAndDescents}
For any $0 \le j < 4k+n$ and $1 \le v \le n$, the following are equivalent:

\begin{itemize}[leftmargin=.55in]
\item[\rm(\ref{lem:cycleIsSinksAndDescents}a)]
$\T{v}{Q^{(j)}} = Q^{(j \pm 1)}$ (with the superscript $j\pm 1$ taken modulo $4k+n$);
\item[\rm(\ref{lem:cycleIsSinksAndDescents}b)]
one of the following conditions holds: 
\begin{itemize}[leftmargin=.2in]
\item vertex $v$ is a sink/source in~$Q^{(j)}$; 
\item vertex $v$ is a descent of some $3$-vertex subquiver of $Q^{(j)}$ and there is at most one sink/source in~$Q^{(j)}$.
\end{itemize}
\end{itemize}
\end{lemma}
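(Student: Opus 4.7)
The plan is a case analysis over $j \in [0, 4k+n-1]$, organized using Lemmas~\ref{lem:orientations}, \ref{lem:descents}, and~\ref{lem:no2cycles}.

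First, Lemma~\ref{lem:no2cycles} implies that distinct vertices yield distinct mutations of $Q^{(j)}$, so condition~(a) is equivalent to $v \in \{v_{j-1}, v_j\}$, where $v_j$ denotes the mutation label of the step $Q^{(j)} \to Q^{(j+1)}$ in the cycle~\eqref{eq:GeneralSemiCycle} (with indices taken mod $4k+n$). Reading~\eqref{eq:GeneralSemiCycle} off, I would tabulate this set: it is $\{1,n\}$ for $j \in \{0,1\}$; $\{1,2\}$ for $j \in [2,2k]$; $\{2, n-1\}$ for $j = 2k+1$; a pair of consecutive integers in $[3, n-1]$ for $j \in [2k+2, 2k+n-3]$; $\{2,3\}$ for $j = 2k+n-2$; and $\{1,2\}$ for $j \in [2k+n-1, 4k+n-1]$.

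Second, to enumerate the sinks and sources of $Q^{(j)}$, I would apply the principle that any vertex belonging to a cyclic $3$-vertex subquiver cannot be a global sink or source. Combined with Lemma~\ref{lem:orientations}, this already handles the ``generic'' ranges: $Q^{(j)}$ has no sinks or sources for $j \in [2, 2k] \cup [2k+n+1, 4k+n-3]$, because the cyclic subquivers $Q^{(j)}|_{12n}$ and $Q^{(j)}|_{12i}$ (for $i \in [3,n-1]$) collectively cover every vertex. The boundary values require a direct analysis based on: (i)~the structure of $R$, with $R|_{[1,n-1]} = \tilde R$ satisfying $b_{ij} > 0$ for $i < j$, and $n \to i$ for $i \in [3,n-1]$ by Lemma~\ref{lem:binRQ}; (ii)~the sink-mutation character of $\mu[n-1], \ldots, \mu[3]$ along the middle leg (Lemma~\ref{lem:Sinks}), which rotates the acyclic ordering of $R|_{[3,n]}$ in the spirit of Proposition~\ref{pr:acyclic-mutation-cycle}; and (iii)~the symmetric structure near $L$ provided by Lemma~\ref{lem:SubquiverTildeQ}. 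The expected outcome is: $Q^{(0)}$ and $Q^{(1)}$ have $\{n\}$ as their sink/source set; $R$ has $\{n-1\}$; for $j \in [2k+2, 2k+n-3]$, $Q^{(j)}$ has exactly two sinks/sources, namely $v_{j-1}$ (source) and $v_j$ (sink), both lying in $[3,n-1]$; $Q^{(2k+n-2)}$ has $\{3\}$; and the remaining $Q^{(j)}$ (i.e., those with $j \in [2k+n-1, 4k+n-1]$) have no sinks/sources.

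Third, with these two lists in hand, (a) $\Leftrightarrow$ (b) reduces to a case-by-case matching for each $j$. By Lemma~\ref{lem:descents}(1), candidate descents lie in $\{1,2\}$; by parts~(3) and~(4) of that lemma, $1$ is a descent precisely when $j \notin [2k+1, 2k+n-2]$ and $2$ is a descent precisely when $j \notin \{0,1\}$. For each $j$ one verifies that the union of the sink/source set of $Q^{(j)}$ with those elements of $\{1,2\}$ that are descents (subject to the ``at most one sink/source'' proviso) coincides with $\{v_{j-1}, v_j\}$.

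The main obstacle will be the sink/source analysis in the transitional band $j \in \{2k+n-2, 2k+n-1, 2k+n\}$ (and its symmetric image near $j = 4k+n-1$), where the mutation sequence shifts between the sink mutations $\mu[n-1], \ldots, \mu[3]$ and the weight-changing mutations $\mu[1], \mu[2]$. In that band, the cyclic-subquiver exclusion by itself does not rule out every candidate for a sink/source; one must combine it with Lemma~\ref{lem:fourChanges}, whose non-vanishing weight changes certify that $\mu[1]$ and $\mu[2]$ are not sink/source mutations at the pivot quivers, and hence that vertices $1$ and $2$ are not global sinks/sources of the corresponding $Q^{(j)}$.
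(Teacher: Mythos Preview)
Your approach is correct and coincides with the paper's own (very terse) proof, which simply reads ``The claim can be checked using Lemmas~\ref{lem:R12nSubquiv}--\ref{lem:Q[3,n]Subquiv},'' together with the reduction via Lemma~\ref{lem:no2cycles} that you also identify. Your detailed case breakdown (tabulating $\{v_{j-1},v_j\}$, enumerating sinks/sources via the cyclic-subquiver exclusion principle, and matching against the descent data from Lemma~\ref{lem:descents}) is exactly the verification the paper leaves to the reader.

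One minor point: your final paragraph overstates the difficulty in the transitional band. You do not actually need Lemma~\ref{lem:fourChanges} to certify that $1$ and $2$ are not sinks/sources at those~$j$. Lemma~\ref{lem:orientations} already suffices: for $j\in\{2k+n-2,\,2k+n-1\}$ the subquiver $Q^{(j)}|_{12n}$ is cyclic (since these indices lie in $[2,4k+n-3]$ for $k\ge 1$), and for $j\in\{2k+n,\,4k+n-2,\,4k+n-1\}$ the cyclic subquivers $Q^{(j)}|_{1in}$ and/or $Q^{(j)}|_{2in}$ cover vertices $1$ and~$2$. So the cyclic-subquiver exclusion handles every vertex in that band as well, and the only place requiring a direct orientation argument (via Lemma~\ref{lem:Sinks} and the rotating acyclic structure of $R|_{[1,n-1]}$ and $R|_{[3,n]}$) is the determination of which vertices in $[3,n-1]$ are sinks/sources along the bottom rim---which you already outline correctly in your second step.
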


\begin{proof}
The claim can be checked using Lemmas~\ref{lem:R12nSubquiv}--\ref{lem:Q[3,n]Subquiv}.
We note that by Lemma~\ref{lem:no2cycles}, $\T{v}{Q^{(j)}} = Q^{(j \pm 1)}$ implies that the mutation $Q^{(j)}\mutation{v}Q^{(j \pm 1)}$
lies on the mutation cycle  in Theorem~\ref{thm:GeneralSemiCycles} 
(so for example, if $j=1$, then $v=1$ or $v=n$). 
\end{proof}

\begin{lemma}
\label{lem:oneSinkSource}
The only quivers $Q^{(j)}$ with exactly one sink/source vertex are 
$Q$, $Q^{(1)}$, $R$, and~$Q^{(2k+n-2)}$. 
\end{lemma}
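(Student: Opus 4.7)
My approach is to combine the classification of cyclic 3-vertex subquivers in Lemma~\ref{lem:orientations} with the elementary observation that a vertex $v$ cannot be a sink or source of $Q^{(j)}$ whenever some 3-vertex subquiver containing $v$ is cyclic (a cyclic triangle has no local sink or source). The plan is to first eliminate all $j$ for which the cyclic 3-subquivers listed in Lemma~\ref{lem:orientations} jointly cover every vertex, and then inspect the remaining cases by hand.

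Reading off Lemma~\ref{lem:orientations}, coverage-by-cyclic-subquivers holds for $j \in [2,2k] \cup [2k+n+1,\,4k+n-3]$ (where both $Q^{(j)}|_{12n}$ and each $Q^{(j)}|_{12i}$ are cyclic) and for $j \in \{2k+n-1,\,2k+n,\,4k+n-2,\,4k+n-1\}$ (where various combinations of $|_{12n}, |_{12i}, |_{1in}, |_{2in}$ exhaust the vertices; the coincidence $2k+n = 4k+n-2$ in the $k=1$ case causes no trouble, since both $|_{1in}$ and $|_{2in}$ are cyclic at that index). All these $Q^{(j)}$ have no sink/source. The indices still to be handled are $\{0,1,2k+1\} \cup [2k+2,\,2k+n-2]$. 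For $j \in \{0,1\}$ only the subquivers $|_{12i}$ are cyclic, and jointly they cover $\{1,2\} \cup [3,n-1]$; the remaining candidate $n$ is a sink of $Q$ by construction and a source of $Q^{(1)} = \mu[n](Q)$. For $j = 2k+1 = R$ only $|_{12n}$ is cyclic; using $R|_{[1,n-1]} = \tilde R$ (linearly oriented $1 \to 2 \to \cdots \to n-1$) together with the arrows $n \to i$ for $i \in [3,n-1]$ from Lemma~\ref{lem:binRQ}, vertex $n-1$ is the unique sink.

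The range $j \in [2k+2,\,2k+n-2]$ is the one that needs real work. Writing $j = 2k+\ell+1$ for $\ell \in [1, n-3]$, I trace the effect of the sequence of sink mutations $\mu[n-1], \mu[n-2], \ldots, \mu[n-\ell]$ applied to $R$. A direct induction on $\ell$ (using that each mutation in the sequence is at a sink, so it merely reverses all arrows at that vertex and creates no new paths) shows that at each stage the newly mutated vertex $n-\ell$ flips to a source, while the next-lower vertex $n-\ell-1$ becomes a new sink, since its only outgoing arrow in $\tilde R$ was to $n-\ell$, which has just been reversed. For $\ell \in [1, n-4]$ this produces two simultaneous sinks/sources and disqualifies $Q^{(j)}$. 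The decisive exception is $\ell = n-3$: the putative new sink would be vertex~$2$, but by Lemma~\ref{lem:mut-epsilon} $R$ contains the oriented triangle $1 \to 2 \to n \to 1$, so $R$ has the arrow $2 \to n$, and this arrow is untouched by any of $\mu[n-1], \ldots, \mu[3]$ (none of them acts at vertex $2$ or $n$, and since each intermediate mutation vertex is a sink there are no paths through them). Vertex $2$ therefore remains mixed in $Q^{(2k+n-2)}$, leaving vertex $3$ (just flipped to a source) as the unique sink/source. Isolating this single surviving arrow $2 \to n$ is the main subtlety; everything else is a routine propagation check.
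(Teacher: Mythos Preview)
Your proof is correct and follows the same route as the paper's: use Lemma~\ref{lem:orientations} to cover vertices by cyclic 3-subquivers wherever possible, and handle the bottom-rim indices by tracking the sink mutations of Lemma~\ref{lem:Sinks}. One wording slip to fix: for $\ell \ge 2$ the vertex $n-\ell-1$ has outgoing arrows in~$\tilde R$ to each of $n-\ell,\ldots,n-1$, not just to $n-\ell$; the reason $n-\ell-1$ becomes a sink after step~$\ell$ is that the arrows to $n-\ell+1,\ldots,n-1$ have already been reversed by the earlier sink mutations, leaving the arrow to $n-\ell$ as the sole outgoing one at that stage.
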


\begin{proof}
Lemma~\ref{lem:Sinks} implies that each of $Q$, $Q^{(1)}$, $R$, and $Q^{(2k+n-2)}$ has either a sink or a source. 
Lemma~\ref{lem:orientations} and the construction of $\tilde R$ imply that this sink/source is unique. 
More explicitly: 
\begin{itemize}[leftmargin=.25in]
\item[(a)]
in both $Q$ and~$Q^{(1)}$, vertex $n$ is a sink/source; 
every other vertex lies in some cyclic $3$-vertex subquiver; 
\item[(b)]
in $R$, vertex $n-1$ is the sink; 
vertex $1$ is the source of $\tilde R=R|_{[1,n-1]}$, but is not a source in~$R$ since 
$1$ is contained in the cyclic subquiver $R|_{12n}$; 
\item[(c)]
the same argument applies to $Q^{(2k+n-2)}$, with $1$ and $n-1$ replaced by $2$ and~$3$, respectively. 
\end{itemize}

Again by Lemmas~\ref{lem:Sinks} and~\ref{lem:orientations}, 
every quiver $Q^{(j)}$ other than $Q$, $Q^{(1)}$, $R$ or $Q^{(2k+n-2)}$ has either
both a sink and a source, or no sink/source. 
\end{proof}

\begin{lemma}
\label{lem:Q-neq-Qj}
For any $j\in [1,4k+n-1]$, we have $Q=Q^{(0)}\neq Q^{(j)}$. 
Also, $Q$ is distinct from $Q^{(j)}$ with all arrows reversed. 
\end{lemma}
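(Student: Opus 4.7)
The plan is to combine Lemma~\ref{lem:oneSinkSource} with a handful of case checks. That lemma narrows the candidates considerably: among the quivers $Q^{(0)},Q^{(1)},\dots,Q^{(4k+n-1)}$, only four possess exactly one sink/source vertex---namely $Q^{(0)}=Q$ (sink at $n$), $Q^{(1)}$ (source at $n$), $R=Q^{(2k+1)}$ (sink at $n-1$), and $Q^{(2k+n-2)}$ (source at $3$)---while every other $Q^{(j)}$ has either no sink/source or at least two. Since the number of sink/source vertices and the set of their labels is preserved both under equality of labeled quivers and under global arrow reversal, the only candidates for an equality $Q=Q^{(j)}$ or $Q=\overline{Q^{(j)}}$ with $j\in[1,4k+n-1]$ are $j\in\{1,2k+1,2k+n-2\}$. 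I therefore only need to refute six potential coincidences.

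For the three equalities $Q=Q^{(j)}$: the case $j=1$ fails because $n$ is a sink in $Q$ but a source in $Q^{(1)}=\mu[n](Q)$, so for instance $b_{1n}(Q)\ge 2$ while $b_{1n}(Q^{(1)})\le -2$. The case $j=2k+1$ fails because the unique sink/source of $R$ sits at vertex $n-1\ne n$. The case $j=2k+n-2$ fails similarly, with its unique sink/source sitting at vertex $3\ne n$ (here I use $n\ge 4$).

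For the three equalities $Q=\overline{Q^{(j)}}$, global arrow reversal swaps ``sink'' and ``source'' but preserves the underlying vertex labels, so the same mismatched-label argument rules out $\overline{R}$ and $\overline{Q^{(2k+n-2)}}$. The only subtle case is $\overline{Q^{(1)}}$, which---like $Q$---has vertex $n$ as its unique sink, so sink/source data alone cannot separate the two quivers. I will instead compare $b_{12}$. By construction $Q|_{[1,n-1]}=\mu[(12)^k](\tilde R)$, and a single mutation $\mu[1]$ or $\mu[2]$ merely negates $b_{12}$ (such a mutation contributes no additional arrows between the vertices $1$ and $2$, and step~(2) of Definition~\ref{def:quiver-mutation} simply flips the existing arrow). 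Hence the $2k$ alternating mutations restore the sign, giving $b_{12}(Q)=b_{12}(\tilde R)\ge 2$. Since $\mu[n]$ also fixes $b_{12}$, we conclude $b_{12}(\overline{Q^{(1)}})=-b_{12}(Q)<0<b_{12}(Q)$, so $Q\ne\overline{Q^{(1)}}$.

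The main obstacle---modest as it is---is this final case of $Q$ versus $\overline{Q^{(1)}}$: five of the six coincidences reduce to one-line comparisons of the unique sink/source label once Lemma~\ref{lem:oneSinkSource} is in hand, but for $\overline{Q^{(1)}}$ the sink/source heuristic produces no obstruction and one must locate a secondary $\mu[n]$-invariant of definite sign. The entry $b_{12}$ is the natural choice, and verifying its positivity reduces to the elementary observation that mutations at the endpoints of an edge do not alter the magnitude of that edge.
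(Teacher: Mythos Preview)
Your proof is correct and follows essentially the same approach as the paper: invoke Lemma~\ref{lem:oneSinkSource} to reduce to the three candidates $Q^{(1)}$, $R$, $Q^{(2k+n-2)}$, then dispose of each by comparing a sink/source feature. The only difference is the secondary invariant used for the $\overline{Q^{(1)}}$ case---the paper compares the elbow of $Q|_{12n}$ (vertex~$2$) with that of $Q^{(1)}|_{12n}$ (vertex~$1$), which simultaneously handles both equality and reversal, whereas you instead compare the sign of $b_{12}$; both arguments are equally short and valid.
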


\begin{proof}
Quiver $Q$ has a sink at $n$ and no sources. 
Therefore by Lemma~\ref{lem:oneSinkSource}, 
$Q$ is distinct from every quiver $Q^{(j)}$ (even allowing the reversal of all arrows)
except possibly $Q^{(1)}, R,$ and~$Q^{(2k+n-2)}$.  
However, $R$ has a sink at $n-1$, whereas $Q^{(2k+n-2)}$ has a source at~$3$. 
Finally, $Q^{(1)}_{12n}$ has an elbow at~1, whereas $Q_{12n}$ has an elbow~at~2.   
\end{proof}

\begin{lemma}
\label{lem:QDistinct} 
None of the quivers $Q^{(j)}$, for $1\le j\le 4k+n-1$, is isomorphic to~$Q$,
even if we allow the global reversal of arrows---unless $n=4$ and $b_{14}(Q)=b_{23}(R)$.
\end{lemma}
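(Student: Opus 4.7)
The plan is to combine two structural invariants preserved by isomorphism and global arrow reversal---the number of sink/source vertices and the number of oriented 3-cycles---to narrow down the candidate isomorphisms, then dispatch the remaining possibilities by explicit weight comparisons.

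By Lemma~\ref{lem:oneSinkSource}, since $Q$ has a unique sink/source (the sink at $n$), so must any iso-equivalent $Q^{(j)}$. This restricts attention to $j\in\{1,2k+1,2k+n-2\}$. The unique sink/source vertex of each candidate---namely $n$ in $Q^{(1)}$, $n-1$ in $R$, and $3$ in $Q^{(2k+n-2)}$---dictates where a prospective permutation $\sigma$ must send the vertex $n$ of $Q$. Moreover, the sink-versus-source dichotomy forces exactly one of direct or arrow-reversed isomorphism for each candidate: $Q\cong (Q^{(1)})^{\mathrm{rev}}$ with $\sigma(n)=n$, $Q\cong R$ with $\sigma(n)=n-1$, and $Q\cong(Q^{(2k+n-2)})^{\mathrm{rev}}$ with $\sigma(n)=3$.

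For the $R$ and $Q^{(2k+n-2)}$ cases, Lemma~\ref{lem:orientations} shows that $Q$ contains $n-3$ cyclic 3-vertex subquivers (the $Q|_{12i}$ for $i\in[3,n-1]$), while $R$ and $Q^{(2k+n-2)}$ each contain only one. For $n\ge 5$ this count by itself provides the contradiction. For $j=1$ the count does not help, since both $Q$ and $Q^{(1)}$ share the same family of cyclic 3-subquivers. There I would argue that $\sigma|_{[1,n-1]}$ must be a graph anti-automorphism of $Q|_{[1,n-1]}$ preserving the multiset of weights at $n$. Preservation of the cyclic-3-subquiver family forces $\sigma(\{1,2\})=\{1,2\}$ (by an intersection argument when $n\ge 5$, automatically when $n=4$). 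The case $\sigma(1)=1,\sigma(2)=2$ fails because the arrow $1\to 2$ cannot reverse to itself. For the case $\sigma(1)=2,\sigma(2)=1$, matching the three weights of each cyclic subquiver $Q|_{12i}$ (using the formulas of Proposition~\ref{pr:3VertexAlternatingMutations}) yields a $2\times 2$ linear system relating $(q_{1i},q_{2i})$ to $(q_{1\sigma(i)},q_{2\sigma(i)})$; inverting this system with the Chebyshev-type identity $p_{2k-1}^2-p_{2k-2}p_{2k}=1$ forces $q_{1\sigma(i)}$ to be a strictly negative combination of $q_{1i}$ and $q_{2i}$, contradicting $q_{1\sigma(i)}\ge 2$.

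The main obstacle, which motivates the hypothesis in the $n=4$ case, is handling $Q\cong R$ and $Q\cong(Q^{(2k+n-2)})^{\mathrm{rev}}$ when $n=4$. With only four vertices, the cyclic-3-subquiver invariant degenerates, and I would enumerate the candidate bijections explicitly. Cycle-orientation matching singles out $\sigma=(34)$ as the only direct-iso candidate for $Q\cong R$; equating its cycle weights then forces the false identity $p_{2k}=p_{2k-2}$ (the sequence $(p_j)$ is strictly increasing when $q_{12}\ge 2$). Similarly, the only $\sigma$ matching cycle orientations for $Q\cong(Q^{(2k+n-2)})^{\mathrm{rev}}$ is $\sigma=(12)(34)$, and matching the three sink-arrow weights yields the two equations $b_{14}(Q)=b_{23}(R)$ and $b_{24}(Q)=b_{13}(R)$. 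The hypothesis of the lemma falsifies the first of these, completing the proof. This last step is the most delicate, since it is precisely here that an accidental coincidence of parameters could---absent the hypothesis---produce a genuine isomorphism, as the remark following the statement anticipates.
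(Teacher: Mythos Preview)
Your overall strategy matches the paper's: restrict to $j\in\{1,2k+1,2k+n-2\}$ via Lemma~\ref{lem:oneSinkSource}, then use the cyclic-subquiver count from Lemma~\ref{lem:orientations} to dispose of $R$ and $Q^{(2k+n-2)}$ when $n\ge 5$. But your $n=4$ analysis has genuine gaps. In the $Q^{(1)}$ case, the assertion that $\sigma(\{1,2\})=\{1,2\}$ holds ``automatically'' is unjustified: with $\sigma(4)=4$ and only one cyclic subquiver $Q|_{123}$, you only get $\sigma|_{\{1,2,3\}}\in S_3$, and matching the reversed cycle orientation leaves the three transpositions $(12),(13),(23)$ as candidates, not just the two cases you treat. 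Similarly, in the $R$ case, matching the oriented $3$-cycle of $Q|_{123}$ to that of $R|_{124}$ (with $\sigma(4)=3$) yields three candidates $(34),(1243),(1432)$, not just $(34)$; and $(34)$ is in fact inconsistent with the descent data, since the descent of $Q|_{123}$ is~$1$ while that of $R|_{124}$ is~$2$.

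The paper closes these gaps by exploiting the \emph{full descent sequence} rather than just cycle orientation. For $Q^{(1)}$, the descent sequence $(12)^k$ of each $Q|_{12i}$ forces $\sigma(1)=1$ and $\sigma(2)=2$, after which a one-line elbow comparison ($Q|_{12n}$ has elbow~$2$, $Q^{(1)}|_{12n}$ has elbow~$1$) yields the contradiction---this works uniformly for all~$n$ and replaces your Chebyshev computation entirely. For $n=4$, comparing the descent sequences $(12)^k$ of $Q|_{123}$ and $(21)^k$ of $R|_{124}=Q^{(2k+2)}|_{124}$ forces $\sigma=(12)(34)$; the elbow of $Q|_{124}$ versus $R|_{123}$ then excludes~$R$ unconditionally, so only $Q^{(2k+2)}$ (with arrow reversal) survives to produce the exceptional condition $b_{14}(Q)=b_{23}(R)$.
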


\begin{proof}
Any isomorphism, possibly involving a global reversal of arrows, must map sinks/sources to sinks/sources. 
Likewise, it must map the descent sequence (resp., the elbow) of a $3$-vertex subquiver 
to another descent sequence (resp., elbow). 

Lemma~\ref{lem:oneSinkSource} implies that if the quivers $Q$ and $Q^{(j)}$ are isomorphic, possibly with a global reversal of arrows, 
then the quiver $Q^{(j)}$ is equal to either $Q^{(1)}$, $R$, or~$Q^{(2k+n-2)}$. 
We consider each possibility in turn.

Suppose that $\sigma\in S_n$ is a permutation such that 
$\sigma(Q)$ coincides with $Q^{(1)}$ up to global reversal of arrows.
Then ${\sigma(n) = n}$, ${\sigma(1) = 1}$, and~${\sigma(2) = 2}$. 
By Lemma~\ref{lem:Subquiver12nStart}, the elbow of $Q|_{12n}$ is $2$, 
yet the elbow of $Q^{(1)}|_{12n}$ is $1,$ a contradiction.

Lemma~\ref{lem:orientations} implies that $Q$ has $n-3$ cyclic subquivers, whereas both 
$R$ and $Q^{(2k+n-2)}$ have exactly one. 
Thus if $n\geq 5$, then $Q^{(j)}$ cannot be $R$ nor~$Q^{(2k+n-2)}$. 

If $n=4,$ then each of $Q$, $R$, and~$Q^{(2k+4-2)}=Q^{(2k+2)}$ have one sink/source and one cyclic subquiver.
The cyclic subquiver $Q|_{123}$ has descent $1$ while the cyclic subquiver $Q^{(2k+2)}|_{124} = R|_{124}$ has descent $2,$ 
and both $Q^{(2k+2)}$ and $R$ have a sink/source at~$3$.
So~the only permutation $\sigma\in S_4$ that could potentially work is given by 
$\sigma(1) = 2$, $\sigma(2)=1$, $\sigma(3)=4$, and $\sigma(4)=3$.  
But $2$ is an elbow in both $R|_{123}$ and $Q|_{124},$ so $Q^{(j)}$ is not $R$.
As $Q^{(2k+2)}$ has a source, $\sigma(Q)$ could only be $Q^{(2k+2)}$ after a global reversal of arrows. 
But then
\[
b_{23}(R) = -b_{23}(\T{3}{R}) = b_{23}(\sigma(Q)) = b_{14}(Q),
\]
as claimed. 
\end{proof}

\pagebreak[3]

\begin{lemma}
\label{lem:QAutomorphisms}
The quiver $Q=Q^{(0)}$ has no nontrivial automorphisms, even if we allow a global reversal of arrows.
\end{lemma}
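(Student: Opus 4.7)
The plan is to whittle down any alleged automorphism $\sigma$ of $Q$ (possibly composed with a global reversal of arrows, encoded by $\epsilon\in\{\pm1\}$) until only the identity remains. Throughout, I rely on the fact that sinks/sources, descents, and elbow structures of $3$-vertex subquivers are preserved by both isomorphism and arrow reversal.

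First I would pin down $\sigma$ on the distinguished vertices $1$, $2$, and $n$. By Lemma~\ref{lem:oneSinkSource}, $n$ is the unique sink/source of $Q$, so $\sigma(n)=n$. By Lemma~\ref{lem:orientations} at $j=0$, the cyclic $3$-vertex subquivers of $Q$ are exactly $Q|_{12i}$ for $i\in[3,n-1]$. Since $\sigma$ permutes this collection, it preserves the common intersection $\{1,2\}$ when $n\ge 5$, giving $\sigma(\{1,2\})=\{1,2\}$ and $\sigma([3,n-1])=[3,n-1]$. By Lemma~\ref{lem:Q12iSubquiv}, each such subquiver has descent sequence $(12)^k$ with first descent~$1$, and preservation of descents under $\sigma$ yields $\sigma(1)=1$ and then $\sigma(2)=2$. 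For the special $n=4$ case there is only one cyclic subquiver $Q|_{123}$, so the intersection argument only concludes $\sigma(\{1,2,3\})=\{1,2,3\}$; ruling out the remaining possibility $\sigma=(2\,3)$ requires combining the demanded relation $b_{12}(Q)=\epsilon\,b_{13}(Q)$ with the explicit formulas $b_{12}(Q)=q_{12}$ and $b_{13}(Q)=-(q_{13}+q_{12}q_{23})$ (obtained from Proposition~\ref{pr:3VertexAlternatingMutations} applied to $Q|_{123}=\mu[(12)^k]\tilde R|_{123}$) to produce arithmetic contradictions under $q_{ij}\ge 2$ for each choice of $\epsilon=\pm 1$.

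With $\sigma$ fixing $1$, $2$, and $n$, the identity $b_{12}(Q)=\epsilon\,b_{12}(Q)$ together with $b_{12}(Q)\ne 0$ forces $\epsilon=+1$, so no global arrow reversal occurs. Thus $\sigma$ is a genuine automorphism commuting with $\mu[n]$, $\mu[1]$, and $\mu[2]$. Applying this to the definition $R=\mu[(21)^k n](Q)$ shows $\sigma(R)=R$, and since $\sigma(n)=n$, restriction to $[1,n-1]$ gives an automorphism $\sigma(\tilde R)=\tilde R$ of the base quiver $\tilde R=R|_{[1,n-1]}$.

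Finally, by construction in Theorem~\ref{thm:GeneralSemiCycles}, the quiver $\tilde R$ has an arrow $i\to j$ (with multiplicity $\ge 2$) precisely when $i<j$. Any permutation $\sigma$ of $[1,n-1]$ satisfying $\sigma(\tilde R)=\tilde R$ must send arrows to arrows, hence $\sigma(i)<\sigma(j)$ whenever $i<j$; that is, $\sigma|_{[1,n-1]}$ is an order-preserving bijection and therefore the identity. Combined with $\sigma(n)=n$, this yields $\sigma=\mathrm{id}$. The main obstacle is the $n=4$ subcase of the first step, where the cyclic-subquiver count is too small to isolate $\{1,2\}$ by intersection, so the alternative $\sigma=(2\,3)$ must be eliminated through explicit arithmetic on weights rather than by combinatorial invariants of the $3$-subquiver structure.
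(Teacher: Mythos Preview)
Your argument is correct in outline and reaches the right conclusion, but it is more roundabout than necessary and contains one formula that only holds for $k=1$.

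The paper's proof is shorter in two respects. First, it rules out a global arrow reversal at the very start: since $Q$ has a sink (namely~$n$) but no source, $Q$ cannot be isomorphic to its arrow-reversal, so $\epsilon=+1$ immediately. You instead postpone this and deduce $\epsilon=+1$ from $b_{12}(Q)=\epsilon\,b_{12}(Q)$ after fixing $\sigma(1)=\sigma(2)$, which is fine but less direct. Second, once $\sigma(1)=1$, $\sigma(2)=2$, $\sigma(n)=n$ are known, the paper stays inside $Q$ itself: the subquiver $Q|_{[3,n-1]}$ is already acyclic with $3\points 4\points\cdots\points n-1$ (by Lemmas~\ref{lem:binRQ}--\ref{lem:Sinks}), so $\sigma$ is the identity on $[3,n-1]$. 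Your route via $\sigma(R)=R$ and then $\sigma(\tilde R)=\tilde R$ is valid (mutation commutes with a label-fixing automorphism) but adds an extra transfer step.

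More importantly, your $n=4$ detour is unnecessary. The descent-sequence argument you invoke for $n\ge 5$ works verbatim for $n=4$: the unique cyclic subquiver $Q|_{123}$ has descent sequence $(12)^k$, so its descent is $1$, forcing $\sigma(1)=1$; then the descent of $\mu[1](Q|_{123})$ is $2$, forcing $\sigma(2)=2$. No arithmetic on weights is needed. Your intersection argument was only ever a preliminary to the descent argument, not a prerequisite for it. Also note that the formula $b_{13}(Q)=-(q_{13}+q_{12}q_{23})$ you quote is the $k=1$ case only; for general $k$ one has $b_{13}(Q)=-(p_{2k-2}q_{13}+p_{2k-1}q_{23})$ from Theorem~\ref{thm:Summary}. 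The inequality you need still goes through for all $k\ge 1$, but as written your arithmetic does not cover the general case.
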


\begin{proof}
The quiver $Q$ has a (unique) sink~$n$ and no sources. 
Therefore an isomorphism~$\sigma$ of~$Q$
must leave the sink~$n$ in~$Q$ in place: $\sigma(n)=n$.
Also, there are no isomorphisms between $Q$ and the quiver obtained by reversing all arrows in~$Q$. 

By Lemma~\ref{lem:orientations}, the only cyclic 3-vertex subquivers of~$Q=Q^{(0)}$ are 
the quivers $Q|_{12i}$, for $i=3,\dots,n-1$;
their descent sequences, by Lemma~\ref{lem:Q12iSubquiv}, are of the form~$(12)^k$.  
It follows that ${\sigma(1)=1}$ and~${\sigma(2)=2}$.  

By Lemmas~\ref{lem:binRQ}--\ref{lem:Sinks}, the subquiver~$Q|_{[3,n-1]}$ is acyclic with orientations 
\begin{equation*}
3 \points 4 \points \cdots \points n-1.  
\end{equation*}
We conclude that $\sigma(i)=i$ for all~$i$.
\end{proof}

\begin{lemma}
\label{lem:isosMove}
Suppose that an isomorphism $\sigma$ sends $Q^{(\ell)}$ to $Q^{(j)}$ (or to $Q^{(j)}$ with the arrows reversed). 
Then one of the following statements holds: 
\begin{itemize}[leftmargin=.2in]
\item 
$\sigma$ sends $Q^{(\ell-m)}$ to $Q^{(j+m)}$, for all $m \geq 0$; 
\item 
$\sigma$ sends $Q^{(\ell-m)}$ to $Q^{(j+m)}$, with all the arrows reversed, for all $m \geq 0$; 
\item 
$\sigma$ sends $Q^{(\ell-m)}$ to $Q^{(j-m)}$, for all $m \geq 0$;
\item 
$\sigma$ sends $Q^{(\ell-m)}$ to $Q^{(j-m)}$, with all the arrows reversed, for all $m \geq 0$.
\end{itemize}
(Here the superscripts are taken modulo~$4k+n$.) 
In particular, taking $m=\ell$, we see that $\sigma$ sends $Q$ to $Q^{(j \pm \ell)}$, possibly with all arrows reversed.
\end{lemma}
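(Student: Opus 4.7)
The plan is to propagate the hypothesis $\sigma(Q^{(\ell)}) = Q^{(j)}$ (with arrow reversal encoded by a fixed sign $\rho \in \{+,-\}$ applied consistently throughout) step by step around the mutation cycle, exploiting the facts that graph isomorphism and global arrow reversal both commute with quiver mutation. For each $m \ge 0$, let $w_m$ denote the vertex such that $Q^{(\ell-m-1)} = \mu[w_m](Q^{(\ell-m)})$; this is an entry of the mutation word in~\eqref{eq:main-cycle}, so Definition~\ref{def:mutation cycle} forces $w_m \ne w_{m-1}$ for every $m \ge 1$.

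For the base case $m=1$, I would apply $\sigma$ to the identity $Q^{(\ell-1)} = \mu[w_0](Q^{(\ell)})$, obtaining
\[
\sigma(Q^{(\ell-1)}) \;=\; \mu[\sigma(w_0)](\sigma(Q^{(\ell)})) \;=\; \rho\bigl(\mu[\sigma(w_0)](Q^{(j)})\bigr).
\]
Since $\mu[w_0]$ is an edge of the cycle, Lemma~\ref{lem:cycleIsSinksAndDescents} tells us that $w_0$ either is a sink/source of $Q^{(\ell)}$ or is a descent of some 3-vertex subquiver of a quiver with at most one sink/source. Both properties are preserved by graph isomorphism and by global arrow reversal, so $\sigma(w_0)$ satisfies the same condition in $Q^{(j)}$. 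Reapplying Lemma~\ref{lem:cycleIsSinksAndDescents} then gives $\mu[\sigma(w_0)](Q^{(j)}) = Q^{(j+\epsilon)}$ for a well-defined sign $\epsilon \in \{-1,+1\}$. This commits us to exactly one of the four cases in the statement, namely $\sigma(Q^{(\ell-1)}) = \rho(Q^{(j+\epsilon)})$.

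I would then induct on $m$, establishing $\sigma(Q^{(\ell-m)}) = \rho(Q^{(j+\epsilon m)})$ throughout. The inductive step repeats the computation above, but with a new subtlety: Lemma~\ref{lem:cycleIsSinksAndDescents} only guarantees $\mu[\sigma(w_m)](Q^{(j+\epsilon m)}) \in \{Q^{(j+\epsilon m - 1)},\, Q^{(j+\epsilon m + 1)}\}$, and the wrong choice must be excluded. This is the heart of the argument: suppose, toward a contradiction, that $\mu[\sigma(w_m)](Q^{(j+\epsilon m)}) = Q^{(j+\epsilon(m-1))}$. Combined with the inductive hypothesis this yields $\sigma(Q^{(\ell-m-1)}) = \sigma(Q^{(\ell-m+1)})$, so the injectivity of~$\sigma$ forces $Q^{(\ell-m-1)} = Q^{(\ell-m+1)}$. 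Using the involutivity of mutation, this rewrites as $\mu[w_m](Q^{(\ell-m)}) = \mu[w_{m-1}](Q^{(\ell-m)})$, and then Lemma~\ref{lem:no2cycles} yields $w_m = w_{m-1}$, contradicting the defining property of the mutation word.

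The main obstacle is structural rather than computational: ensuring that the direction of propagation cannot flip mid-cycle, which is precisely what the consecutive-nonrepetition property of the mutation word, together with Lemma~\ref{lem:no2cycles}, is there to prevent. Arrow reversal threads harmlessly through the whole argument because it commutes with mutation and preserves both the sink/source and descent conditions, so the sign $\rho$ plays no active role beyond bookkeeping. Taking $m=\ell$ at the end of the induction yields the final assertion that $\sigma(Q) = \rho(Q^{(j\pm\ell)})$.
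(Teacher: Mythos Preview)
Your proposal is correct and follows essentially the same approach as the paper: induct on~$m$, use Lemma~\ref{lem:cycleIsSinksAndDescents} to propagate the isomorphism one step along the cycle, and use Lemma~\ref{lem:no2cycles} together with the nonrepetition of consecutive mutation labels to rule out a sign flip. The only cosmetic difference is that the paper applies Lemma~\ref{lem:no2cycles} on the image side (to the vertices $\sigma(v_m)\ne\sigma(v_{m-1})$ in $Q^{(j\pm m)}$), whereas you pull back via the injectivity of~$\sigma$ and apply it on the domain side (to $w_m\ne w_{m-1}$ in $Q^{(\ell-m)}$); these are equivalent.
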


\begin{proof}
We argue by induction on $m$.
For $m=0$, there is nothing to show.
Suppose the claim is true for some~$m$, i.e., $\sigma$ sends $Q^{(\ell-m)}$ to $Q^{(j\pm m)}$, 
potentially with the arrows reversed. 
We will argue that the analogous statement holds for $m+1$. 
Let $v_m$ be such that $Q^{(\ell-(m+1))} \mutation{v_m} Q^{(\ell-m)}$.
By Lemma~\ref{lem:cycleIsSinksAndDescents}, 
the vertex $v_m$ satisfies condition~(\ref{lem:cycleIsSinksAndDescents}b) with respect to the quiver~$Q^{(\ell-m)}$. 
It follows that $\sigma(v_m)$ satisfies~(\ref{lem:cycleIsSinksAndDescents}b) with respect to $\sigma(Q^{(\ell-m)})$,
i.e., with respect to $Q^{(j\pm m)}$ (possibly with the arrows reversed). 
Again by Lemma~\ref{lem:cycleIsSinksAndDescents}, this implies that 
$Q^{(j \pm m \pm 1)} \mutation{\sigma(v_m)} Q^{(j \pm m)}$.
Therefore
\[
\sigma(Q^{(\ell-(m+1))}) = \sigma(\T{v_m}{Q^{(\ell-m)}}) = \T{\sigma(v_m)}{Q^{(j\pm m)}} = Q^{(j \pm m \pm 1)}
\]
(or the same with all arrows in $Q^{(j \pm m)}$ and $Q^{(j \pm m\pm 1)}$ reversed). 

It remains to show that the signs in the superscript $j \pm m \pm 1$ must agree. 
Suppose that $\sigma(Q^{(\ell-m)}) = Q^{(j-m)}$; the other cases are completely analogous.
Let $v_{m-1}$ be such that $Q^{(\ell-m)} \mutation{v_{m-1}} Q^{(\ell-(m-1))}$. 
Then $v_{m-1} \neq v_m$ and therefore $\sigma(v_m) \neq \sigma(v_{m-1})$. 
The claim then follows by Lemma~\ref{lem:no2cycles}. 
\end{proof}

\begin{lemma}
\label{lem:noIsoFixesQ}
Let $\ell, j \in[0,4k+n-1]$. 
If $\sigma(Q^{(\ell)})=Q^{(j)}$ and $\sigma(Q)=Q$, then $\ell = j$.
Also, if $\sigma$ sends $Q^{(\ell)}$ to $Q^{(j)}$ with all arrows reversed 
and sends $Q$ to itself with all arrows reversed, then $\ell = j$.
\end{lemma}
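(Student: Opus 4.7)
The plan is to combine Lemma~\ref{lem:QAutomorphisms} (which restricts the possible $\sigma$ with $\sigma(Q)=Q$) with Lemma~\ref{lem:isosMove} (which propagates $\sigma$ along the cycle) and Lemma~\ref{lem:Q-neq-Qj} (which ensures $Q$ is distinct from every other $Q^{(j)}$ even up to global reversal of arrows).

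For the first assertion, I would start by applying Lemma~\ref{lem:QAutomorphisms}: since $\sigma(Q)=Q$, the permutation $\sigma$ must be the identity. Hence the assumption $\sigma(Q^{(\ell)})=Q^{(j)}$ collapses to $Q^{(\ell)}=Q^{(j)}$. I would then apply Lemma~\ref{lem:isosMove} with this identity $\sigma$. The two ``with arrows reversed'' cases of that lemma, evaluated at $m=0$, would force $Q^{(j)}$ to coincide with its own reversal, contradicting Lemma~\ref{lem:Qj-large} (since $Q^{(j)}$ has large weights, no nonzero arrow can equal its reverse). So only the two non-reversal cases survive, and in both, setting $m=\ell$ yields $Q = Q^{((j\pm\ell)\bmod(4k+n))}$. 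By Lemma~\ref{lem:Q-neq-Qj}, we must have $j\pm\ell \equiv 0\pmod{4k+n}$.

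The ``$-$'' choice of sign immediately gives $\ell=j$ since both indices lie in $[0,4k+n-1]$. The ``$+$'' choice gives either $\ell=j=0$ (done) or $\ell+j=4k+n$ with $\ell>0$. To kill this remaining subcase, I would apply Lemma~\ref{lem:isosMove} once more, in its first case with $m=\ell+1$: this yields $Q^{(4k+n-1)}=Q^{(1)}$. Reading off the cycle's mutation sequence $n,(12)^k,n{-}1,\dots,2,1,(21)^k$, both $Q^{(4k+n-1)} \mutation{1} Q$ and $Q^{(1)} \mutation{1} Q^{(2)}$ involve the same mutation $\mu[1]$; applying it to both sides of $Q^{(4k+n-1)}=Q^{(1)}$ then gives $Q=Q^{(2)}$, contradicting Lemma~\ref{lem:Q-neq-Qj}.

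For the second assertion, the hypothesis demands that $\sigma$ send $Q$ to itself with all arrows reversed, but Lemma~\ref{lem:QAutomorphisms} explicitly rules out any isomorphism between $Q$ and the quiver obtained by reversing all its arrows. Hence the hypothesis is vacuous and the conclusion holds trivially. The main obstacle of the whole argument is precisely the subcase $\ell+j=4k+n$ with $\ell>0$: it is not killed by a single-step application of Lemma~\ref{lem:isosMove}, and one must advance one more mutation step and exploit the specific coincidence that the final mutation of the cycle and the second mutation of the cycle are both labeled by the vertex~$1$.
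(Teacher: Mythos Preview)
Your proof is correct, and the overall structure (propagate via Lemma~\ref{lem:isosMove}, invoke Lemma~\ref{lem:Q-neq-Qj}, then handle the residual case $j+\ell=4k+n$) matches the paper's. The organization, however, differs in two respects.

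First, you invoke Lemma~\ref{lem:QAutomorphisms} at the outset to force $\sigma=\mathrm{id}$, so that the entire argument becomes a proof of $Q^{(\ell)}=Q^{(j)}\Rightarrow \ell=j$; the paper instead carries an arbitrary $\sigma$ through the argument and only uses Lemma~\ref{lem:QAutomorphisms} at the very end, deriving $\sigma(n)=1$ from the edges $Q\shortmutation{n}Q^{(1)}$ and $Q\shortmutation{1}Q^{(4k+n-1)}$ together with Lemma~\ref{lem:no2cycles}. Your reorganization is cleaner in that it dispatches the second assertion immediately (the hypothesis is vacuous, as you note) and avoids tracking $\sigma$ through the rest.

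Second, your endgame in the subcase $j+\ell=4k+n$ with $\ell>0$ is different: from $Q^{(4k+n-1)}=Q^{(1)}$ you apply $\mu[1]$ to both sides, using the coincidence that the cycle's last and second mutations are both labeled~$1$, and land on $Q=Q^{(2)}$, which contradicts Lemma~\ref{lem:Q-neq-Qj}. The paper instead exploits the \emph{difference} between the labels $n$ and $1$ on the edges out of $Q$ to conclude $\sigma(n)=1$ and then invoke Lemma~\ref{lem:QAutomorphisms}. Both work; yours avoids a second appeal to Lemma~\ref{lem:QAutomorphisms} but relies on the specific fact that the second mutation label equals the last one.
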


\begin{proof}
By Lemma~\ref{lem:isosMove} (with $m=\ell$), $\sigma$ sends $Q = Q^{(0)}$ to $Q^{(j \pm \ell)}$,
possibly with all arrows reversed. 
Assume that there is no reversal of arrows, as the other cases are similar. 
Then $Q=Q^{(j \pm \ell)}$.
Lemma~\ref{lem:Q-neq-Qj} implies that $j \pm \ell \equiv 0 \bmod\!(4k+n)$.
Since $\ell, j \in [0, 4k+n-1],$ we conclude that either $\ell=j$ or $j + \ell = 4k+n$,
in which case (cf.\ Lemma~\ref{lem:isosMove}, first bullet), $\sigma(Q^{(\ell-m)})=Q^{(j+m)}$.
If $\ell=j$, then we are~done. Otherwise, Lemma~\ref{lem:isosMove} (first bullet, with $m = \ell-1 = 4k+n-j-1$) 
implies ${\sigma(Q^{(1)})=Q^{(4k+n-1)}}$. 
Now $Q \shortmutation{n} Q^{(1)}$ implies $Q \mutation{\sigma(n)} {Q^{(4k+n-1)}}$.
But $Q \shortmutation{1} Q^{(4k+n-1)} $, so $\sigma(n)=1$ by Lemma \ref{lem:no2cycles}.
Thus $\sigma$ is a nontrivial automorphism of~$Q$, contradicting Lemma~\ref{lem:QAutomorphisms}.
\end{proof}

\begin{proof}[Proof of Theorem~\ref{thm:GeneralSemiCycleDistinct}]
We need to show that all the quivers $Q^{(j)}$ are distinct.
Suppose ${Q^{(\ell)} = Q^{(j)}}$. 
Applying Lemma~\ref{lem:noIsoFixesQ} with $\sigma=\textup{id}$, we get $\ell = j$.
\end{proof}

\begin{proof}[Proof of Theorem~\ref{thm:GeneralSemiCycleDistinct-noniso}]
Suppose $\sigma$ sends $Q^{(\ell)}$ to $Q^{(j)}$, possibly with all arrows reversed. 
By Lemma~\ref{lem:isosMove}, the isomorphism $\sigma$ sends $Q$ to $Q^{(j \pm \ell)}$,
possibly with all arrows reversed.
By Lemma~\ref{lem:QDistinct}, $j \pm \ell = 0\bmod\!(4k+n)$.
Then $\ell=j$ by Lemma~\ref{lem:noIsoFixesQ}.
\end{proof}

\begin{example}
\label{eg:4vertexSymmetric}
Consider the quiver $Q$ defined by the matrix
\[
B(Q) = \begin{pmatrix} 
0 & a & -(a^2-1)b -ac & c \\
-a & 0 & c + ab & b \\
(a^2-1)b +ac & -c-ab & 0 & x \\
-c & -b & -x & 0
\end{pmatrix}
\]
for some integers~$a,b,c,x\ge 2$. 
This quiver $Q$ lies on the (simple) mutation cycle from Theorem~\ref{thm:GeneralSemiCycles}, with $n=4$, $k=1$, 
and $\tilde R$ being the first quiver in~\eqref{eq:RR'-3vertex}. 
That~is, $Q = \T{1 2 123 21 4}{Q}$. 
Let $\sigma(1)=2, \sigma(2)=1, \sigma(3)=4$ and $\sigma(4)=3$. 
Then $\sigma(Q)$ coincides with $\T{3 2 1 4}{Q}$ with all arrows reversed. 
In fact, each quiver $Q^{(j)}$, for $0\le j\le 4$, is isomorphic to $Q^{(4-j)}$ with all arrows reversed, 
cf.\ Lemma~\ref{lem:isosMove}. 
See Figure~\ref{fig:cycleCollapse}.
\end{example}

\begin{figure}[ht]
\vspace{-.1in}
\begin{equation*}
\begin{tikzcd}[arrows={-stealth, cramped}
]
\red{\boxed{Q}}  \arrow[rrr, no head, "{[4]}"] \arrow[d, swap, no head, "1"] \ar[ddrr, dashed, no head]
  &&& \red{\bullet} \arrow[d, no head, "1"] \ar[dd, dashed, no head, bend right=10pt]\\
\blue{\bullet} \arrow[d, swap, no head,"2"]  \ar[dr, dashed, no head] &&&  \blue{\bullet} \arrow[d, no head,"2"]  \\
\blue{\boxed{L}} \arrow[r, no head, "1"] & \blue{\bullet} \arrow[r, no head, "2" ] 
    & \red{\bullet} \arrow[r, no head,"{[3]}"] & \red{\boxed{R}}
\end{tikzcd} 
\end{equation*}

\vspace{-.15in}
\caption{
Example~\ref{eg:4vertexSymmetric}. 
The dashed lines indicate pairs of isomorphic quivers, after a reversal of arrows. 
}
\label{fig:cycleCollapse}
\vspace{-.2in}
\end{figure}
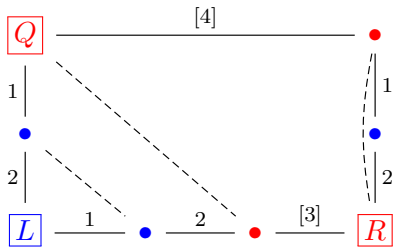

\newpage

\section{Vortices, global descents, and exits}
\label{sec:exits}

In this section, we introduce some technical tools that will be used in Sections~\ref{Sec:primitiveCycles}--\ref{sec:genericity}. 

Several ideas appearing in this section were discovered much earlier, albeit in somewhat different form, 
in the Ph.D.\ thesis of M.~Warkentin, 
see \cite[pages 13--18]{Warkentin}. \linebreak[3]
The key distinguishing features of the approach developed below are the use of vortex-free quivers 
and the notion of global descent, see Definitions~\ref{def:vortex} and~\ref{def:globalDescent}, respectively. 

\begin{remark}
Instead of the aforementioned concepts, \cite{Warkentin}~utilizes the notions of a ``fork'' and a ``point of return.''
For example, per \cite[Definition~2.1]{Warkentin}, 
a cyclic quiver~$Q$ with large weights and vertex set $\{i,j,k\}$ 
such that $|b_{ij}(Q)|>\max(|b_{ik}|,|b_{jk}|)$ 
would be called a``fork with point of return~$k$.''
Our Corollary~\ref{cor:weak-propagates} and Proposition~\ref{pr:weak-is-exit}
can be viewed as loose counterparts of \cite[Lemmas 2.5, 2.8, 3.5]{Warkentin},
although the exact statements and some of the proof arguments are different. 
\end{remark}

The following terminology is an adaptation of one introduced by D.~Knuth~\cite[Section~4]{MR1226891}. 
The papers \cite{brouwer, cameron, moon} use different terms for the same objects.

\begin{definition}
\label{def:vortex}
A \emph{vortex} is a 4-vertex quiver $Q$ such that
\begin{itemize}[leftmargin=.2in]
\item 
all weights in $Q$ are nonzero; 
\item
one of the vertices of $Q$ is a source or a sink; 
\item
the remaining three vertices of $Q$ support a cyclic 3-vertex subquiver. 
\end{itemize}
The (unique) sink/source of a vortex is called its \emph{apex}. 
See Figure~\ref{fig:vortices}. 

A quiver is \emph{vortex-free} if none of its (full) 4-vertex subquivers is a vortex. 
\end{definition}

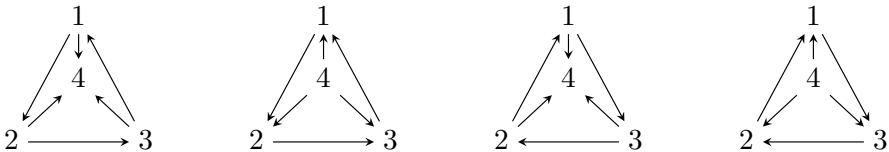
\begin{figure}[ht]
\vspace{-.15in}
\begin{equation*}
\begin{tikzcd}[arrows={-stealth}, sep=small, cramped]
  & 1 \ar[d] \ar[ddl]  & \\ 
  & 4 & \\ 
  2  \ar[ur] \ar[rr]
  & & 3 \ar[ul] \ar[uul]
\end{tikzcd} 
\hspace{.5in} 
\begin{tikzcd}[arrows={-stealth}, sep=small, cramped]
  & 1  \ar[ddl]  & \\ 
  & 4 \ar[u] \ar[dl] \ar[dr]& \\ 
  2   \ar[rr]
  & & 3  \ar[uul]
\end{tikzcd}
\hspace{.5in} 
\begin{tikzcd}[arrows={-stealth}, sep=small, cramped]
  & 1 \ar[d] \ar[ddr]  & \\ 
  & 4 & \\ 
  2  \ar[ur] \ar[uur]
  & & 3 \ar[ul] \ar[ll]
\end{tikzcd} 
\hspace{.5in} 
\begin{tikzcd}[arrows={-stealth}, sep=small, cramped]
  & 1  \ar[ddr]  & \\ 
  & 4 \ar[u] \ar[dl] \ar[dr]& \\ 
  2   \ar[uur]
  & & 3 \ar[ll]
\end{tikzcd}
\end{equation*}
\vspace{-.15in}
\caption{Four vortices with apex at vertex $4$. Weights are not shown.}
\label{fig:vortices}
\end{figure}

\vspace{-.25in}

\begin{definition}
\label{def:globalDescent}
We say that an $n$-vertex quiver $Q$ has a \emph{global descent} 
at vertex~$i$ if
\begin{itemize}[leftmargin=.2in]
\item $Q$ contains at least one cyclic $3$-vertex subquiver, 
and 
\item
all such subquivers have descent at~$i$. 
\end{itemize}
(The first condition simply means that $Q$ is not acyclic.) 
\end{definition}

In a quiver with large weights, the global descent vertex~$i$ is unique by Lemma~\ref{lem:3VertexUniqueDescents}. 
(In all applications appearing in this paper, the weights are large.) 
If the particular vertex~$i$ is not important, we will just say that $Q$ has a global descent.

\begin{remark}
In general, a global descent vertex~$i$ does not have to be a ``descent:'' 
mutating at~$i$ might increase some weights.
However, if $Q$ has large weights and is vortex-free, 
then a global descent  does not increase any weights. 
\end{remark}

\begin{lemma}
\label{lem:Q-i-is-acyclic}
Let $Q$ be a quiver on the vertex set $[1,n]$ that has large weights. 
If $Q$ has global descent at $i$, then the $(n-1)$-vertex subquiver $Q|_{[1,n]-\{i\}}$ is acyclic. 
\end{lemma}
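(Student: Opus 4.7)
The plan is to argue by contradiction via a minimum-length cycle. Suppose $Q|_{[1,n]\setminus\{i\}}$ contains an oriented cycle, and pick one of minimum length $k\geq 3$, say $C\colon v_1\to v_2\to\cdots\to v_k\to v_1$ with each $v_j\neq i$.

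First I will dispatch the base case $k=3$: here $\{v_1,v_2,v_3\}$ supports a cyclic $3$-vertex subquiver of $Q$ whose vertex set avoids~$i$. This contradicts the global-descent hypothesis, since having descent at~$i$ in a $3$-vertex subquiver forces $i$ to be one of that subquiver's three vertices.

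For $k\geq 4$ the plan is to reduce to the base case by splicing a chord of~$C$. Working under the paper's standing large-weights convention (cf.\ Definition~\ref{def:largeWeights} and the remark following Definition~\ref{def:globalDescent}), the weight $b_{v_1 v_3}(Q)$ is nonzero. If $v_1\to v_3$ in~$Q$, then $v_1\to v_3\to v_4\to\cdots\to v_k\to v_1$ is a strictly shorter oriented cycle in $Q|_{[1,n]\setminus\{i\}}$, contradicting minimality of~$k$. If instead $v_3\to v_1$ in~$Q$, then $v_1\to v_2\to v_3\to v_1$ is an oriented $3$-cycle inside $Q|_{[1,n]\setminus\{i\}}$, putting us back in the already-excluded $k=3$ case.

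The main obstacle is the chord existence in the step just above: one needs to know that $v_1$ and $v_3$ are joined by an arrow in~$Q$. This is exactly what the large-weights assumption provides (every pair of distinct vertices is joined by at least two arrows). With that in hand, the induction on~$k$ collapses to the $3$-vertex contradiction, and the lemma follows.
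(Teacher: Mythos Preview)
Your argument is correct and shares the paper's core idea: a cyclic $3$-vertex subquiver of $Q|_{[1,n]\setminus\{i\}}$ cannot have its descent at~$i$, contradicting the global-descent hypothesis. The paper's entire proof is the single sentence ``If $Q|_{[1,n]-\{i\}}$ contains a cyclic 3-vertex subquiver, then $i$ cannot be its descent, a contradiction,'' which tacitly equates ``acyclic'' with ``no cyclic $3$-vertex subquiver.'' You make the reduction from an arbitrary oriented cycle to a $3$-cycle explicit via the minimum-length/chord argument, and you correctly flag that this step requires a nonzero weight between $v_1$ and~$v_3$. That hypothesis is not part of the lemma's statement, but it holds in every application in the paper (large weights are in force throughout Section~\ref{sec:exits}; see the parenthetical after Definition~\ref{def:globalDescent}). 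So your version is a more careful writeup of the same argument and, as a bonus, surfaces a mild imprecision in the lemma's statement/proof pair: without some positivity assumption on the weights, a quiver can contain a long oriented cycle while having no cyclic $3$-vertex subquiver.
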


\begin{proof}
If $Q|_{[1,n]-\{i\}}$ contains a cyclic 3-vertex subquiver, then $i$ cannot be its descent, a contradiction. 
\end{proof}

\pagebreak[3]

\begin{lemma}
\label{lem:4VertexGlobalDescentTest}
Let $Q$ be a quiver with at least 4 vertices. 
Assume that $Q$ is not acyclic and has large weights. Fix a vertex~$i$. 
The following are equivalent: 
\begin{itemize}[leftmargin=.2in]
\item $Q$ has global descent at~$i$.
\item every $4$-vertex subquiver of $Q$ containing $i$ either has global descent at $i$ or is acyclic. 
\end{itemize}
\end{lemma}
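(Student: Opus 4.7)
For the forward implication, I would fix a 4-vertex subquiver $Q'$ of $Q$ containing $i$ and split into two cases. If $Q'$ contains at least one cyclic 3-vertex subquiver, then every such subquiver is also a cyclic 3-vertex subquiver of $Q$, and hence by hypothesis has descent at $i$; consequently $Q'$ itself has global descent at $i$. If $Q'$ contains no cyclic 3-vertex subquiver, I would invoke the following observation: a quiver with large weights has no oriented 2-cycles (they were removed in Definition~\ref{def:quiver}), and every pair of its vertices is connected by arrows in exactly one direction, so $Q'$ is underlied by a tournament on four vertices; a standard fact is that a tournament is acyclic precisely when it has no oriented 3-cycle, hence $Q'$ is acyclic.

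For the reverse implication, the same tournament observation applied to $Q$ itself shows that a non-acyclic quiver with large weights must contain at least one cyclic 3-vertex subquiver, so Definition~\ref{def:globalDescent} is not vacuous for~$Q$. Let $T$ be an arbitrary cyclic 3-vertex subquiver of $Q$; the goal is to show $T$ has descent at $i$. First I would establish that $T$ must contain $i$: otherwise, form $Q' = Q|_{V(T) \cup \{i\}}$, a 4-vertex subquiver containing $i$ that is not acyclic (since $T \subset Q'$ is cyclic), so by hypothesis $Q'$ has global descent at $i$, forcing $T$ to have descent at $i$---impossible since $i \notin V(T)$. With $T$ known to contain $i$, pick any fourth vertex $v \in V(Q) \setminus V(T)$ (which exists because $Q$ has at least 4 vertices) and apply the hypothesis to $Q' = Q|_{V(T) \cup \{v\}}$: again $Q'$ contains $i$ and is not acyclic, so $Q'$ has global descent at $i$, and in particular $T$ has descent at $i$.

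The only nontrivial ingredient is the tournament observation stated above, which is what rules out 4-vertex subquivers that lack a cyclic 3-subquiver but still admit a longer oriented cycle. Beyond this, the argument is a short case analysis on 4-vertex subquivers containing $i$, so I do not anticipate a serious obstacle.
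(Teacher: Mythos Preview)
Your proof is correct and follows essentially the same approach as the paper's. You are more explicit about the tournament observation (a quiver with large weights is acyclic iff it has no cyclic 3-vertex subquiver), which the paper invokes only implicitly via the parenthetical remark after Definition~\ref{def:globalDescent}, and you split the reverse direction into the cases $i\in V(T)$ versus $i\notin V(T)$, which the paper collapses into a single sentence.
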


\begin{proof}
Suppose that $Q$ has global descent at~$i$. Let $Q'$ be a subquiver of~$Q$. 
Every cyclic 3-vertex subquiver of~$Q'$ is a cyclic 3-vertex subquiver of~$Q$, so it must have descent~$i$. 
It follows that either $Q'$ has global descent~$i$ or else is acyclic, cf.\ Definition~\ref{def:globalDescent}. 

Now suppose that every $4$-vertex subquiver of $Q$ containing $i$ either has global descent at $i$ or is acyclic.
Every cyclic $3$-vertex subquiver $Q'$ of~$Q$ appears in a $4$-vertex subquiver~$Q''$ which contains~$i$. 
Since $Q''$ is not acyclic, it has global descent at~$i$; thus, in particular, $Q'$ has descent~$i$. 
\end{proof}

\begin{lemma}
\label{lem:strong=>weak}
Let $i$ be a vertex in a quiver $Q$ with large weights.
Suppose that the pair $(Q,i)$ satisfies the conditions 
\begin{align}
\label{eq:Q-global-descent}
&\text{$Q$ has global descent at a vertex different from~$i$; \hspace{1.8in}} \\
&\label{eq:Q-vortex-free}
\text{$Q$ is vortex-free.}
\end{align}
Then $(Q,i)$ satisfies the conditions 
\begin{align}
\label{eq:Q-ascent-3}
&\text{$i$ is an ascent in every cyclic 3-vertex subquiver of~$Q$ that contains it; \hspace{.35in}} \\
&\label{eq:Q-not-sink/source}
\text{$i$ is not a sink/source in~$Q$;}\\
&\label{eq:Q-not-apex}
\text{$i$ is not the apex of a vortex in~$Q$.}
\end{align}
\end{lemma}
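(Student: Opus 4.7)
The plan is to prove each of the three conclusions \eqref{eq:Q-ascent-3}, \eqref{eq:Q-not-sink/source}, and \eqref{eq:Q-not-apex} separately, letting $j$ denote a vertex (distinct from $i$) at which $Q$ has global descent, as provided by \eqref{eq:Q-global-descent}.

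First, for \eqref{eq:Q-ascent-3}, I will take an arbitrary cyclic $3$-vertex subquiver $T = Q|_S$ of $Q$ with $i \in S$. Since $Q$ has global descent at $j$, the (unique) descent of $T$ is $j$; in particular $j \in S$. Because $T$ inherits large weights from $Q$, Lemma~\ref{lem:3VertexUniqueDescents} asserts that the two non-descent vertices of $T$ are both ascents; since $i \neq j$, $i$ is an ascent in~$T$.

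Second, for \eqref{eq:Q-not-sink/source}, I will argue by contradiction: suppose $i$ is a sink or source in~$Q$. Since $Q$ has a global descent, it is not acyclic, so it contains at least one cyclic $3$-vertex subquiver $T = Q|_S$. A sink or source cannot lie on any oriented cycle, hence $i \notin S$. Then $Q|_{\{i\} \cup S}$ is a $4$-vertex subquiver whose weights are all nonzero (by largeness of weights in $Q$), in which $i$ is a sink or source inherited from~$Q$, and whose three remaining vertices support the cyclic subquiver~$T$. By Definition~\ref{def:vortex}, this subquiver is a vortex, contradicting \eqref{eq:Q-vortex-free}.

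Third, \eqref{eq:Q-not-apex} will be immediate: if $i$ were the apex of a vortex $V$ in~$Q$, then $V$ itself is a $4$-vertex subquiver of~$Q$ that is a vortex, directly contradicting \eqref{eq:Q-vortex-free}. I do not foresee any serious obstacle in this lemma; the only slightly delicate point is the elementary observation used in the second step, namely that a sink or source vertex cannot belong to any oriented cycle, which is precisely what permits enlarging the cyclic $3$-vertex subquiver $T$ by the external vertex $i$ to produce the forbidden vortex.
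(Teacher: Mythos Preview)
Your proof is correct and follows essentially the same argument as the paper's own proof: both establish \eqref{eq:Q-ascent-3} via Lemma~\ref{lem:3VertexUniqueDescents} applied to a cyclic subquiver whose descent is the global-descent vertex, both derive \eqref{eq:Q-not-sink/source} by exhibiting a vortex with apex~$i$ from any cyclic $3$-vertex subquiver, and both note that \eqref{eq:Q-not-apex} is immediate from vortex-freeness. Your version is slightly more explicit (e.g., spelling out why $i \notin S$), but there is no substantive difference.
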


\begin{proof}
Suppose $Q$ has global descent~$v$.
\begin{itemize}[leftmargin=.42in]
\item[\eqref{eq:Q-ascent-3}:]
Since $v$ is a global descent of~$Q$, any cyclic 3-vertex subquiver of~$Q$ has descent~$v$. 
Thus, by Lemma~\ref{lem:3VertexUniqueDescents}, $i$ is an ascent of any such subquiver. 
\item[\eqref{eq:Q-not-sink/source}:]
Since $Q$ has global descent~$v$, it must contain a cyclic subquiver $Q|_{uvw}$. 
If $i$ were a sink/source, then $Q|_{iuvw}$ would be a vortex with apex~$i$. 
\item[\eqref{eq:Q-not-apex}:]
By assumption $Q$ is vortex-free, so $i$ is not the apex of a vortex. \qedhere
\end{itemize}
\end{proof}

\begin{lemma}
\label{lem:StrictAscents}
Let $i$ and $i'$ be distinct vertices in a quiver $Q$ with large weights.
Let~$Q'\!=\!\T{i}{Q}$. 
If the pair $(Q,i)$ satisfies  \eqref{eq:Q-ascent-3}-\eqref{eq:Q-not-apex}, then 
\begin{itemize}[leftmargin=.2in]
\item 
$(Q',i')$ satisfies~\eqref{eq:Q-global-descent}, with global descent~$i$; 
\item
$(Q',i')$ satisfies~\eqref{eq:Q-vortex-free}; 
\item
$|b_{uv}(Q)| \leq |b_{uv}(Q')|$ for all $u$ and~$v$, with at least one strict inequality.
\end{itemize}
\end{lemma}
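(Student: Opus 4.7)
The plan is to handle the three conclusions in turn by analyzing how $\mu[i]$ transforms the $3$-vertex and $4$-vertex subquivers of $Q$ through~$i$, exploiting hypotheses \eqref{eq:Q-ascent-3}--\eqref{eq:Q-not-apex} at each step.

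I would first dispose of the weight-monotonicity statement (third bullet). For any pair $u,v\ne i$, the effect of $\mu[i]$ on $|b_{uv}|$ is determined by $Q|_{iuv}$: if $i$ is a sink or source in $Q|_{iuv}$, then $b_{uv}$ is unchanged; if $Q|_{iuv}$ is acyclic with $i$ at the elbow, then $|b_{uv}|$ strictly increases; and if $Q|_{iuv}$ is cyclic, then by \eqref{eq:Q-ascent-3} the vertex $i$ is an ascent, so $|b_{uv}|$ again strictly increases. Arrows incident to $i$ merely reverse, preserving their magnitudes. To secure at least one strict increase, invoke \eqref{eq:Q-not-sink/source} to produce $u,v\ne i$ with $u\to i\to v$ in $Q$; then $i$ is not a sink/source in $Q|_{iuv}$, so one of the elbow or ascent cases applies.

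Next I would establish the global-descent claim (first bullet). The $3$-vertex analysis above already shows that every cyclic $3$-vertex subquiver of $Q'$ containing $i$ has $i$ as its descent (applying $\mu[i]$ to an elbow or an ascent configuration produces a cyclic triangle whose new descent is $i$), and the same step produces a cyclic $Q'|_{iuv}$, so $Q'$ is not acyclic. It remains to rule out cyclic $3$-vertex subquivers of $Q'$ that avoid $i$. For any triple $u,v,w\ne i$, I would examine $Q|_{iuvw}$: if $i$ is a sink/source there, then \eqref{eq:Q-not-apex} forces $Q|_{uvw}$ to be acyclic and $\mu[i]$ leaves $Q|_{uvw}$ untouched; otherwise $i$ has both in-neighbors and out-neighbors in $\{u,v,w\}$, and in each of the $2{+}1$ and $1{+}2$ splits the corrections added by $\mu[i]$ make the unique out-neighbor of $i$ a sink, respectively the unique in-neighbor a source, in $Q'|_{uvw}$, which is therefore acyclic. (When $Q|_{iuv}$ is cyclic, the ascent hypothesis \eqref{eq:Q-ascent-3} forces the sign of $b_{uv}$ to flip under $\mu[i]$; this is the point at which \eqref{eq:Q-ascent-3} enters.) Combining these observations with Lemma~\ref{lem:4VertexGlobalDescentTest} yields global descent of $Q'$ at $i$, which is \eqref{eq:Q-global-descent} for $(Q',i')$ since $i\ne i'$.

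Finally I would prove vortex-freeness (second bullet) by eliminating each way a putative vortex $Q'|_{jklm}$ of $Q'$ could arise. If the apex is $i$, or if the apex is some $j\ne i$ with $i\notin\{k,l,m\}$, then the cyclic triangle $Q'|_{klm}$ is a cyclic $3$-vertex subquiver of $Q'$ avoiding $i$, contradicting the previous step. The main obstacle, and the case I expect to be trickiest, is when the apex is some $j\ne i$ and $i$ lies on the cyclic triangle, say $i=k$. I~would then compute $Q|_{jilm}=\mu[i](Q'|_{jilm})$ entry by entry: WLOG $j$ is a source in $Q'|_{jilm}$ and $Q'|_{ilm}$ is oriented $i\to l\to m\to i$, so the path $j\to i\to l$ in $Q'$ contributes a strictly positive correction $b_{ji}(Q')\,b_{il}(Q')$ to $b_{jl}$, giving $|b_{jl}(Q)|=b_{jl}(Q')+b_{ji}(Q')b_{il}(Q')$. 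Since $Q$ has large weights and arrows incident to $i$ preserve their magnitudes under $\mu[i]$, one readily verifies that $|b_{jl}(Q)|$ strictly exceeds both $|b_{ji}(Q)|$ and $|b_{il}(Q)|$. The subquiver $Q|_{jil}$ is cyclic (from orientations $i\to j$, $j\to l$, $l\to i$), so Lemma~\ref{lem:3VertexUniqueDescents} places its unique descent opposite the maximum weight, namely at~$i$, contradicting hypothesis \eqref{eq:Q-ascent-3}.
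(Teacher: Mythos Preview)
Your proof is correct and rests on the same underlying observations as the paper's: the weight monotonicity comes from the three-way case split on $Q|_{iuv}$, the global-descent claim comes from analyzing each $4$-vertex subquiver $Q|_{iuvw}$ (the ``$2{+}1$/$1{+}2$ split'' is exactly the content of the paper's explicit $n=4$ computation), and vortex-freeness follows once you know $Q'$ has no cyclic triangles avoiding~$i$. The organization differs: the paper first proves the $n=4$ case by writing down the explicit orientation of $Q'$ and then, for general~$n$, reduces every $4$-vertex subquiver through~$i$ to that case via Lemma~\ref{lem:4VertexGlobalDescentTest}; you instead work directly in arbitrary~$n$ and, for the vortex-free part, handle the case ``apex $j\ne i$ with $i$ on the cyclic triangle'' by pulling back to $Q$ and deriving a contradiction with~\eqref{eq:Q-ascent-3}. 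One small tightening: your appeal to Lemma~\ref{lem:3VertexUniqueDescents} to locate the descent of $Q|_{jil}$ presupposes that a descent exists, which that lemma does not assert; it is cleaner to note directly that $\mu[i](Q|_{jil})=Q'|_{jil}$ and $|b_{jl}(Q')|<|b_{jl}(Q)|$, so $i$ is a descent by definition, contradicting~\eqref{eq:Q-ascent-3}.
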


\begin{proof}
Let $Q$ be a quiver on the vertex set $[1,n]$. 
We will first prove the result for $n=1, 2, 3, 4$, then use the $n=4$ case to prove the general case.

If $n=1$ or $n=2$, the result is vacuous: every vertex is a sink/source, so \eqref{eq:Q-not-sink/source} is never satisfied.  
For $n=3$, the claim is trivial, cf.\ Lemmas~\ref{lem:acyclic-descent}--\ref{lem:3VertexUniqueDescents}. 

Let $n=4$ and say $i=2$. 
Since $2$ is not a sink/source, we may assume, without loss of generality, that $1\points 2$, $2\points 3$, and $2\points 4$. 
We may further assume that $3\points 4$. 
Thus $Q|_{123}$ and $Q|_{124}$ are either cyclic or have an elbow at~$2$. 
Regardless, $2$~is an ascent in both $Q|_{123}$ and $Q|_{124}$. 
By Lemma~\ref{lem:acyclic-descent}, both $Q'|_{123}$ and $Q'|_{124}$ are cyclic with descent at~$2$.
We conclude that $Q'$ is oriented as follows: 
\begin{equation*}
Q \quad \begin{tikzcd}[arrows={stealth-, dashed}, sep=normal, ampersand replacement=\&]
  1 \arrow[d, -] \arrow[dr, -]
  \& 2 \arrow[l]
  \\
   4 \arrow[ur] \arrow[r] \& 3 \arrow[u] 
\end{tikzcd}
\mutation{2}
\begin{tikzcd}[arrows={-stealth, dashed}, sep=normal, ampersand replacement=\&]
  1 \arrow[d] \arrow[dr]
  \& 2 \arrow[l]
  \\
   4 \arrow[ur] \& 3 \arrow[u] \arrow[l]
\end{tikzcd}
\quad
Q'
\end{equation*}
In particular, $Q'$ is vortex free and all cyclic subquivers have descent~$2$. 
Finally, since $2$ is an ascent in $Q|_{123}$ and $Q|_{124}$, 
we have $|b_{13}(Q)| < |b_{13}(Q')|$ and $|b_{14}(Q)| < |b_{14}(Q')|$. 
Since all other weights in~$Q'$ are unchanged from~$Q$, we are done with the $n=4$ case.

\pagebreak[3]

Now suppose that $n > 4$. 
Since $i$ is not a sink/source in~$Q$, let $u\points i\points v$ in~$Q$.
As $i$ is an ascent of every cyclic $3$-vertex subquiver of~$Q$ containing~$i$,
it follows that 
\begin{itemize}[leftmargin=.3in]
\item[{\rm (a)}]
 the weights don't decrease when we mutate from~$Q$ to~$Q'$;
moreover, at least one weight does increase: $|b_{uv}(Q)| < |b_{uv}(Q')|$;
\item[{\rm (b)}]
the subquiver $Q'|_{iuv}$ has descent at~$i$, hence $Q'$ is not acyclic.
\end{itemize}
Statement (a) above establishes the last claim in Lemma~\ref{lem:StrictAscents}. 
To prove \eqref{eq:Q-global-descent}-\eqref{eq:Q-vortex-free}, we will need the following statements for all distinct vertices $u,v,w$: 
\begin{itemize}[leftmargin=.3in]
\item[{\rm (c)}]
any 4-vertex subquiver $Q'|_{iuvw}$ is vortex-free; 
\item[{\rm (d)}]
any 4-vertex subquiver $Q'|_{iuvw}$ is either acyclic or has global descent~$i$.
\end{itemize}
Indeed, if $i$ is not a sink/source in $Q|_{iuvw}$, then $(Q|_{iuvw}, i)$ satisfies \eqref{eq:Q-ascent-3}-\eqref{eq:Q-not-apex}; 
hence $Q'|_{iuvw}= \T{i}{Q|_{iuvw}}$ is vortex-free and has global descent~$i$ by the $n=4$ case. 
If instead $i$ is a sink/source in~$Q|_{iuvw}$ (but not the apex of a vortex), then~$Q|_{iuvw}$ is acyclic,
so~$Q'|_{iuvw}$ is acyclic as well (in particular, not a vortex). 
In either case, statements (c) and (d) follow.

Combining Lemma~\ref{lem:4VertexGlobalDescentTest} (for the quiver~$Q'$)
with statements (b) and (d) above,
we conclude that $Q'$ has global descent~$i$. 
In particular, $(Q', i')$ satisfies~\eqref{eq:Q-global-descent}.

It remains to show that $Q'$ satisfies~\eqref{eq:Q-vortex-free}. 
By Lemma~\ref{lem:Q-i-is-acyclic}, the subquiver~$Q'|_{[1,n]-\{i\}}$ is acyclic. 
So a vortex in~$Q'$ must contain~$i$; but this is impossible by statement~(c). 
\end{proof}

Lemmas~\ref{lem:strong=>weak}--\ref{lem:StrictAscents} imply that conditions \eqref{eq:Q-ascent-3}-\eqref{eq:Q-not-apex}
(resp., \eqref{eq:Q-global-descent}-\eqref{eq:Q-vortex-free})
propagate: 

\begin{corollary}
\label{cor:weak-propagates}
Let $i$ and $i'$ be distinct vertices in a quiver $Q$ with large weights.
Let~$Q'=\T{i}{Q}$. 
Suppose that the pair $(Q,i)$ satisfies  \eqref{eq:Q-ascent-3}-\eqref{eq:Q-not-apex} (resp., \eqref{eq:Q-global-descent}-\eqref{eq:Q-vortex-free}). 
Then the pair $(Q',i')$ satisfies the same conditions; 
in fact, $Q'$~has global descent~$i$. 
Moreover, $|b_{uv}(Q)| \leq |b_{uv}(Q')|$ for all $u$ and~$v$, with at least one strict inequality.
\end{corollary}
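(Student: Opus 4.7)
The plan is to observe that Corollary~\ref{cor:weak-propagates} is essentially a bookkeeping consequence of the two preceding lemmas, obtained by chaining them appropriately. The two sets of hypotheses \eqref{eq:Q-ascent-3}--\eqref{eq:Q-not-apex} and \eqref{eq:Q-global-descent}--\eqref{eq:Q-vortex-free} are linked by Lemmas~\ref{lem:strong=>weak} and~\ref{lem:StrictAscents}, the former going from the ``strong'' conditions to the ``weak'' ones (within a single quiver), and the latter propagating the ``weak'' conditions through mutation, yielding the ``strong'' ones on the mutated quiver.

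First I would handle the case where $(Q,i)$ satisfies the weak conditions \eqref{eq:Q-ascent-3}--\eqref{eq:Q-not-apex}. Apply Lemma~\ref{lem:StrictAscents} directly: this immediately gives that $Q'$ has global descent at $i$, that $Q'$ is vortex-free, and that $|b_{uv}(Q)| \le |b_{uv}(Q')|$ for all $u,v$ with at least one strict inequality. Since $i' \ne i$, the pair $(Q',i')$ satisfies \eqref{eq:Q-global-descent}--\eqref{eq:Q-vortex-free}. Then apply Lemma~\ref{lem:strong=>weak} to $(Q',i')$ to conclude that $(Q',i')$ also satisfies \eqref{eq:Q-ascent-3}--\eqref{eq:Q-not-apex}, closing the loop for this case.

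For the case where $(Q,i)$ satisfies the strong conditions \eqref{eq:Q-global-descent}--\eqref{eq:Q-vortex-free}, I would first invoke Lemma~\ref{lem:strong=>weak} at $(Q,i)$ to deduce that $(Q,i)$ also satisfies the weak conditions \eqref{eq:Q-ascent-3}--\eqref{eq:Q-not-apex}; crucially, condition~\eqref{eq:Q-global-descent} guarantees that the global descent of $Q$ is at some vertex $v \ne i$, which is exactly what is needed to verify~\eqref{eq:Q-not-sink/source} and \eqref{eq:Q-not-apex}. Then the previous case applies verbatim and yields the full conclusion (including $Q'$ having global descent at $i$).

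There is no real obstacle here: the content is entirely in the two lemmas, and this corollary only needs one to check that the hypothesis $i' \ne i$ is available at the moment it is needed (to apply Lemma~\ref{lem:strong=>weak} to $(Q',i')$, using that the global descent of $Q'$ is at $i$). The only mildly delicate point is the asymmetric statement in Lemma~\ref{lem:StrictAscents}, which concludes about an \emph{arbitrary} $i' \ne i$; this is precisely what makes the propagation work for a fixed second vertex $i'$ throughout the chain, so no extra argument is required.
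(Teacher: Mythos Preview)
Your proposal is correct and matches the paper's approach exactly: the paper states the corollary without proof, merely prefacing it with the remark that Lemmas~\ref{lem:strong=>weak}--\ref{lem:StrictAscents} imply that the two sets of conditions propagate. Your write-up simply spells out the chaining of these two lemmas in both directions, which is precisely the intended argument.
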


The following notion and its properties discussed below will be used in Section~\ref{Sec:primitiveCycles}. 

\begin{definition}
\label{def:transient} 
In a quiver $Q$, a vertex~$i$ is an \emph{exit} if for every sequence of vertices 
$i\neq i_1\neq i_2\neq\cdots\neq i_{\ell-1}\neq i_\ell$, we have~$Q \neq \T{i_\ell \cdots i_1\,i}{Q}$. 
Informally, once we mutate at~$i$, we cannot return to~$Q$. 
Consequently, the edge $Q\shortmutation{i} \mu[i](Q)$ of the mutation graph does not lie on any mutation cycle. 
\end{definition}

\begin{lemma}
\label{lem:exit-leads-to-tree}
Suppose that $i$ is an exit in a quiver~$Q$. Let $Q'=\mu[i](Q)$. 
Consider the subgraph of the mutation graph that ``lies beyond'' the edge $Q \shortmutation{i}Q'$. 
That is, take the induced subgraph of the mutation graph whose vertices correspond to all possible quivers of the form $\T{i_\ell \cdots i_1\,i}{Q}$ as above (including~$Q'$). 
This subgraph is a complete rooted $(n-1)$-ary tree with root~$Q'$.
\end{lemma}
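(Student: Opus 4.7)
The plan is to identify $V$ and its induced subgraph with an abstract rooted $(n-1)$-ary tree~$T$. Formally, let $T$ be the tree whose nodes are the finite sequences $(i_1,\dots,i_\ell)$ with $i\neq i_1\neq i_2\neq\cdots\neq i_\ell$ (including the empty sequence $\varnothing$ as the root), where the children of $(i_1,\dots,i_\ell)$ are the sequences $(i_1,\dots,i_\ell,v)$ with $v\neq i_\ell$, and the children of $\varnothing$ are the $(v)$ with $v\neq i$. Define $\Phi\colon T\to V$ by $\Phi(i_1,\dots,i_\ell)=\mu[i_\ell\cdots i_1\, i](Q)$ and $\Phi(\varnothing)=Q'$. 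I would prove that $\Phi$ is a bijection whose image inherits exactly the edge structure of~$T$ from the mutation graph, thereby establishing the lemma.

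Once injectivity of~$\Phi$ is in hand, the tree structure falls out directly. At the quiver $\Phi(i_1,\dots,i_\ell)$, the $n$ neighbors in the mutation graph split as follows: the mutation at~$i_\ell$ (or at~$i$, when $\ell=0$) returns to the parent in~$T$---and when $\ell=0$ it lands at~$Q$, which cannot lie in~$V$ by the exit hypothesis; the remaining $n-1$ mutations, at vertices~$v\neq i_\ell$ (respectively $v\neq i$ when $\ell=0$), each yield $\Phi(i_1,\dots,i_\ell,v)$, a child in~$T$. Injectivity guarantees that these children are pairwise distinct and distinct from all previously reached nodes.

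The heart of the proof is injectivity. Suppose for contradiction that distinct sequences $(i_1,\dots,i_\ell)$ and $(j_1,\dots,j_m)$ satisfy $\Phi(i_1,\dots,i_\ell)=\Phi(j_1,\dots,j_m)$. Travelling forward along the $i$-path, then backward along the $j$-path, and padding with an outer~$\mu[i]$ on each end so as to begin and end at~$Q$, produces a closed walk at~$Q$ whose letter sequence (in order of application) reads
\begin{equation*}
i,\ j_1,\ j_2,\ \ldots,\ j_m,\ i_\ell,\ i_{\ell-1},\ \ldots,\ i_1,\ i.
\end{equation*}
All adjacent pairs are automatically unequal by the defining constraints on the two sequences, except possibly the joint $(j_m,i_\ell)$. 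If $j_m\neq i_\ell$ (and both~$\ell$ and~$m$ are positive), this is precisely an exit-violating mutation sequence for~$i$ at~$Q$, contradicting the hypothesis. If instead $j_m=i_\ell$, then $\mu[i_\ell]$ cancels $\mu[j_m]$, and after cancellation we are left with two still-distinct, still-valid sequences of strictly smaller total length whose $\Phi$-images still agree; I would then induct on $\ell+m$. The boundary cases (one path empty, or an end-cap collapse such as $i_\ell=i$) are handled by the same cancellation mechanism, each time strictly reducing the length, so the process terminates in finitely many steps with a genuine exit-violating sequence.

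The main obstacle is purely bookkeeping in the cancellation step: one has to check at each reduction that the shortened pair of sequences remains distinct, still satisfies the no-consecutive-repeat constraint, and still begins with a letter different from~$i$, so that the induction hypothesis (or, at the base, the exit hypothesis itself) continues to apply. Once this is verified, the tree description follows immediately.
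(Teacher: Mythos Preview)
Your proposal is correct and follows essentially the same approach as the paper's proof: assume two distinct mutation sequences starting with $i$ reach the same quiver, concatenate one with the reverse of the other, cancel adjacent repeated letters, and obtain a sequence contradicting the exit hypothesis. The paper compresses your inductive cancellation into the single phrase ``removing consecutive entries equal to each other,'' while you spell out the bookkeeping more carefully; the underlying argument is the same.
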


\begin{proof}
Suppose for contradiction that there are two mutation sequences, both based at~$Q$ and starting with~$i$, that lead to the same quiver. 
Reversing one of the sequences, concatenating them, and removing consecutive entries equal to each other
produces a sequence  $i\neq i_1\neq i_2\neq\cdots\neq i_{\ell-1}\neq i_\ell$ satisfying $Q = \T{i_\ell \cdots i_1\,i}{Q}$, a contradiction.
\end{proof}

\begin{remark}
The description given in Lemma~\ref{lem:exit-leads-to-tree} still holds 
if we identify quivers up to isomorphism and global reversal of arrows, 
except that the degree of each non-root vertex in a tree will be \emph{at most}~$n$. 
Cf.\ \cite[Lemmas 2.7--2.8]{Warkentin}.
\end{remark}

\pagebreak[3]

\begin{proposition}
\label{pr:weak-is-exit}
If $(Q,i)$ satisfies \eqref{eq:Q-ascent-3}-\eqref{eq:Q-not-apex}, then $i$ is an exit (cf.\ Definition~\ref{def:transient}).
\end{proposition}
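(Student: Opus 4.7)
The plan is to iterate Corollary~\ref{cor:weak-propagates}. Given any sequence of vertices $i = j_0 \neq j_1 \neq j_2 \neq \cdots \neq j_\ell$, define $Q^{(0)} = Q$ and $Q^{(m+1)} = \mu[j_m](Q^{(m)})$ for $0 \le m \le \ell$. The goal is to show $Q^{(\ell+1)} \neq Q$.

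First I would verify by induction on $m$ that the pair $(Q^{(m)}, j_m)$ satisfies \eqref{eq:Q-ascent-3}-\eqref{eq:Q-not-apex} for every $0 \le m \le \ell$. The base case $m=0$ is the hypothesis. For the inductive step, assuming the claim for $m$, the hypothesis $j_{m+1} \neq j_m$ allows me to apply Corollary~\ref{cor:weak-propagates} to $(Q^{(m)}, j_m)$ with $i' := j_{m+1}$, which yields that $(Q^{(m+1)}, j_{m+1})$ again satisfies \eqref{eq:Q-ascent-3}-\eqref{eq:Q-not-apex} (in fact, also \eqref{eq:Q-global-descent}-\eqref{eq:Q-vortex-free}, with global descent at $j_m$).

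Next, Corollary~\ref{cor:weak-propagates} (equivalently, Lemma~\ref{lem:StrictAscents}) tells us that at each step
\[
|b_{uv}(Q^{(m)})| \;\le\; |b_{uv}(Q^{(m+1)})|
\]
for all vertices $u,v$, with strict inequality for at least one pair. Therefore the total weight
\[
W(Q^{(m)}) \;:=\; \sum_{u<v} |b_{uv}(Q^{(m)})|
\]
is strictly increasing in $m$. In particular, $W(Q^{(\ell+1)}) > W(Q^{(0)}) = W(Q)$, which forces $Q^{(\ell+1)} \neq Q$. By Definition~\ref{def:transient}, this means $i$ is an exit.

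There is no real obstacle here: the combinatorial work has already been carried out in Lemmas~\ref{lem:strong=>weak}--\ref{lem:StrictAscents} and packaged as Corollary~\ref{cor:weak-propagates}. The only thing to be careful about is that Corollary~\ref{cor:weak-propagates} requires $j_{m+1} \neq j_m$ in order to apply at step $m$; this is guaranteed by the definition of the sequence in Definition~\ref{def:transient}. Once the propagation of \eqref{eq:Q-ascent-3}-\eqref{eq:Q-not-apex} is in hand, the monotonicity of $W$ makes returning to $Q$ impossible.
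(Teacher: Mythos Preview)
Your proof is correct and is essentially the same as the paper's: both iterate Corollary~\ref{cor:weak-propagates} along the mutation sequence to propagate conditions \eqref{eq:Q-ascent-3}--\eqref{eq:Q-not-apex}, and both use the resulting strict increase of the total number of arrows (your $W$) to rule out returning to~$Q$. Your write-up is just a more explicit unpacking of the same argument.
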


\begin{proof}
Let $i=i_0\neq i_1\neq i_2\neq\cdots$. 
Repeatedly applying Corollary~\ref{cor:weak-propagates}, we conclude that for any~$j$,
the pair $(\T{i_j \cdots i_1\,i}{Q}, i_{j+1})$ satisfies \eqref{eq:Q-ascent-3}-\eqref{eq:Q-not-apex} 
and moreover the total number of arrows in $\T{i_j \cdots i_1\,i}{Q}$ increases with~$j$.
Thus $\T{i_j \cdots i_1\,i}{Q}\neq Q$ for all~$j$, as desired. 
\end{proof}

We conclude this section by some observations that will be used in Section~\ref{sec:genericity}. 

\begin{lemma}
\label{lem:orientations-fixed}
Let $Q$ be a quiver with large weights and let $i$ be a vertex in~$Q$
such that the pair $(Q, i)$ satisfies conditions \eqref{eq:Q-ascent-3}--\eqref{eq:Q-not-apex}. 
Let $i\,i_1\cdots i_m$ be a sequence that begins with~$i$
and satisfies $i\neq i_1\neq \cdots\neq i_m$. 
Then the orientations of arrows in the quiver $\T{i_m\cdots i_1\,i}{Q}$ are uniquely determined by 
the orientations of arrows in~$Q$. 
\end{lemma}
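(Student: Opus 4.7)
The plan is to argue by induction on~$m$, using at each stage that (i) a single mutation at a vertex satisfying \eqref{eq:Q-ascent-3}--\eqref{eq:Q-not-apex} determines all new orientations from the old ones, and (ii) these conditions propagate to the next step by Corollary~\ref{cor:weak-propagates}.

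For the base case $m=0$, I would show that the orientations of $Q'=\mu[i](Q)$ depend only on the orientations of~$Q$. Arrows incident to~$i$ are simply reversed by the mutation, so there is nothing to check for them. For a pair $u,v\neq i$, I split into cases based on $Q|_{uiv}$. If $i$ is a sink or source of $Q|_{uiv}$ (equivalently, if $b_{ui}(Q)$ and $b_{iv}(Q)$ have the same sign), then no new arrows appear between $u$ and $v$, so $b_{uv}(Q')=b_{uv}(Q)$ and the orientation is preserved. Otherwise, up to swapping $u$ and~$v$ we may assume $u\to i\to v$ in~$Q$. If $Q|_{uiv}$ is acyclic (so $u\to v$), the mutation only adds $|b_{ui}||b_{iv}|$ arrows from $u$ to~$v$ and the orientation remains $u\to v$. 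If instead $Q|_{uiv}$ is cyclic (so $v\to u$), condition~\eqref{eq:Q-ascent-3} says that $i$ is an ascent in~$Q|_{uiv}$; for a large-weight 3-vertex quiver this amounts to the inequality $|b_{ui}(Q)|\,|b_{iv}(Q)|>2|b_{uv}(Q)|$, which forces the majority orientation after cancellation of 2-cycles to be $u\to v$. In every case the sign of $b_{uv}(Q')$ is dictated by the signs of $b_{ui}(Q)$, $b_{iv}(Q)$, $b_{uv}(Q)$.

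For the inductive step, I would set $Q'=\mu[i](Q)$. By Corollary~\ref{cor:weak-propagates}, $Q'$ has large weights and $(Q',i_1)$ satisfies \eqref{eq:Q-ascent-3}--\eqref{eq:Q-not-apex}, since $i_1\neq i$. The base case shows that the orientations of $Q'$ are determined by those of~$Q$. Applying the induction hypothesis to $(Q',i_1)$ with the shorter sequence $i_1i_2\cdots i_m$ then shows that the orientations of $\mu[i_m\cdots i_1](Q')=\mu[i_m\cdots i_1\,i](Q)$ are determined by those of~$Q'$, and hence by those of~$Q$.

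The main (and essentially only) obstacle is handling the cyclic sub-case of the base case: one must check that the ascent hypothesis is strong enough to guarantee a strictly positive surplus in the $u\to v$ direction after cancelling 2-cycles, rather than a tie or a flip back to $v\to u$. This is a brief three-variable calculation, but it is crucial because it is the one place where magnitudes of weights (not merely their signs) enter the argument; everything else is purely combinatorial and runs on the orientations alone.
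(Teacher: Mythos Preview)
Your proof is correct and follows essentially the same approach as the paper: reduce via Corollary~\ref{cor:weak-propagates} to the single-mutation case, then check that arrows incident to~$i$ reverse while for $u,v\neq i$ the acyclic case preserves orientation and the cyclic case flips it by the ascent condition~\eqref{eq:Q-ascent-3}. The only cosmetic difference is that you translate the ascent condition into the explicit inequality $|b_{ui}|\,|b_{iv}|>2|b_{uv}|$, whereas the paper argues directly from $|b_{uv}(\mu[i](Q))|>|b_{uv}(Q)|$ together with $b_{uv}(\mu[i](Q))<b_{uv}(Q)$ to conclude the sign change.
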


That is, suppose that $Q'$ is another quiver on the same set of vertices such that \linebreak[3]
(a) $Q'$ has large weights, (b) $Q$ and $Q'$ have the same orientations of arrows, 
and \linebreak[3]
(c) the pair $(Q',i)$ satisfies conditions \eqref{eq:Q-ascent-3}--\eqref{eq:Q-not-apex}. 
Then the quivers $\T{i_m\cdots i_1\,i}{Q}$ and $\T{i_m\cdots i_1\,i}{Q'}$
have the same orientations of arrows. 

\begin{proof}
By Corollary~\ref{cor:weak-propagates}, the quiver $\mu[i](Q)$ and the sequence $i_1\cdots i_m$ satisfy 
the conditions of Lemma~\ref{lem:orientations-fixed}. 
It therefore suffices to check that the orientations of arrows in $\mu[i](Q)$ are uniquely determined by 
the orientations of arrows in~$Q$. 

By definition, $\mu[i]$ reverses all arrows in~$Q$ that are incident to~$i$. 
For any $u \points v$  in $Q$ with $u\neq i$ and $v\neq i$, we have the following cases. 

\noindent
\emph{Case~1:} $Q|_{iuv}$ is acyclic.
Then $u \points v$ in $\T{i}{Q}$, unchanged from~$Q$. 

\noindent
\emph{Case~2:} $Q|_{iuv}$ is cyclic.
Then, by condition \eqref{eq:Q-ascent-3}, 
\begin{equation*}
b_{uv}(\mu[i](Q)) = b_{uv}(Q) - b_{ui}(Q) b_{iv}(Q) < b_{uv}(Q) < |b_{uv}(\mu[i](Q))|, 
\end{equation*}
implying that $v\points u$ in $\T{i}{Q}$. 
\end{proof}

\begin{corollary}
\label{cor:orientations-fixed}
Let $Q$ be an acyclic quiver with large weights. 
Then for any sequence $i_1\cdots i_m$, 
the orientations of arrows in the quiver $\T{i_m\cdots i_1}{Q}$ are uniquely determined by 
the orientations of arrows in~$Q$. 
\end{corollary}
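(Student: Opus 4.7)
The plan is to reduce the corollary to Lemma~\ref{lem:orientations-fixed} by peeling off any initial run of sink/source mutations. First I would use that $\mu[i]$ is an involution to cancel consecutive duplicates in $i_1\cdots i_m$, so I may assume $i_\ell \neq i_{\ell+1}$ for all~$\ell$. Set $Q^{(\ell)} = \mu[i_\ell \cdots i_1](Q)$ and let $j$ be the smallest index such that $i_j$ is \emph{not} a sink or source in $Q^{(j-1)}$; if no such $j$ exists, then every mutation along the way is a sink/source mutation, which merely reverses the arrows incident to the mutated vertex, so the orientations of $Q^{(m)}$ are read off directly from the orientations of~$Q$ and the corollary holds trivially.

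Next I would record the auxiliary fact that sink/source mutations preserve both acyclicity and large weights. Large weights is immediate because a sink/source mutation changes no weights at all. Acyclicity follows by noting that any directed cycle in $\mu[i_\ell](Q^{(\ell-1)})$ either avoids~$i_\ell$---in which case it already existed in $Q^{(\ell-1)}$---or passes through $i_\ell$, contradicting that $i_\ell$ is a source or sink after the mutation. Consequently $Q^{(j-1)}$ is acyclic with large weights, and the orientations of each intermediate $Q^{(\ell)}$ for $\ell < j$ are plainly determined by those of~$Q$ by induction on~$\ell$.

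Finally, since $Q^{(j-1)}$ is acyclic it contains no cyclic 3-vertex subquivers and \emph{a fortiori} no vortices, so conditions \eqref{eq:Q-ascent-3} and \eqref{eq:Q-not-apex} are satisfied vacuously for $(Q^{(j-1)}, i_j)$, while \eqref{eq:Q-not-sink/source} holds by the very choice of~$j$. I would then invoke Lemma~\ref{lem:orientations-fixed} with starting vertex~$i_j$ and tail $i_{j+1}\cdots i_m$ to conclude that the orientations of $\mu[i_m \cdots i_j](Q^{(j-1)}) = \mu[i_m \cdots i_1](Q)$ are uniquely determined by those of $Q^{(j-1)}$, hence ultimately by those of~$Q$. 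The only step requiring any genuine thought is the preservation of acyclicity under sink/source mutations---once that is in place, Lemma~\ref{lem:orientations-fixed} supplies essentially all the content, with two of its three hypotheses satisfied vacuously at the first non-trivial mutation.
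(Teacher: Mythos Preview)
Your proposal is correct and follows essentially the same approach as the paper's proof: strip consecutive duplicates, peel off any initial sink/source mutations (which preserve acyclicity and large weights and whose effect on orientations is transparent), then invoke Lemma~\ref{lem:orientations-fixed} at the first genuinely non-trivial mutation. The paper's proof compresses all of this into three sentences (``If $i_1$ is a sink/source, then $\mu[i_1](Q)$ is again acyclic\dots''), while you spell out the preservation of acyclicity and the vacuous satisfaction of \eqref{eq:Q-ascent-3} and \eqref{eq:Q-not-apex} explicitly; the underlying argument is the same.
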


\begin{proof}
If $i_1$ is a sink/source, then $\mu[i_1](Q)$ is again acyclic. 
Therefore, without loss of generality, we may assume that $i_1$ is not a sink/source. 
We may moreover assume that  $i_1\neq \cdots\neq i_m$. 
Now Lemma~\ref{lem:orientations-fixed} applies, proving the claim. 
\end{proof}

\begin{remark}
The results of this section have been generalized by T.~Ervin \cite{ervinPrefork} to include certain quivers with a single `small' weight. 

While not all of Theorem~\ref{thm:Summary} extends to arbitrary weights $|b_{ij}| \le 1$, 
many such choices do produce a mutation cycle, and the size and location of other mutation cycles can be controlled.
\end{remark}

\newpage

\section{Mutation cycles that cannot be paved by short cycles}
\label{Sec:primitiveCycles}

In this section, we show that the long mutation cycles constructed in Theorem~\ref{thm:GeneralSemiCycles}
cannot be paved by mutation cycles of bounded length: 

\begin{theorem}
\label{thm:primitive}
Fix~$n \geq 4$ and $k\ge 1$. 
Let $Q$ be an $n$-vertex quiver constructed as in Theorem~\ref{thm:GeneralSemiCycles}.
Then $Q$ does not lie on any mutation cycle of length $\le 4k+1$. 
Consequently, the mutation cycle~\eqref{eq:GeneralSemiCycle} 
(which has length $4k+n$) cannot be paved by mutation cycles of length $\le 4k+1$. 
\end{theorem}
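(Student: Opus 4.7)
The plan is to leverage the exit framework from Section~\ref{sec:exits} to constrain any mutation cycle through~$Q$. First I would analyze the local structure of~$Q$: by Lemmas~\ref{lem:R12nSubquiv}--\ref{lem:Q1inOr2inSubquiv}, the only cyclic $3$-subquivers of~$Q$ are $Q|_{12i}$ for $i \in [3,n-1]$, each with descent at vertex~$1$, so $Q$ has global descent~$1$. A case analysis of the $4$-subquivers (using Lemma~\ref{lem:Q[3,n]Subquiv} to handle subquivers not involving both $1$ and $n$) shows the only vortices in~$Q$ are the $Q|_{12in}$ with apex at the sink~$n$. Applying Lemma~\ref{lem:strong=>weak} and Proposition~\ref{pr:weak-is-exit} to $(Q,i)$ for $i \in \{2,3,\dots,n-1\}$ verifies the exit conditions~\eqref{eq:Q-ascent-3}--\eqref{eq:Q-not-apex}, so each such $i$ is an exit in~$Q$; only $\mu[1]$ and $\mu[n]$ remain as non-exit mutations.

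Now suppose for contradiction that $Q$ lies on a mutation cycle $\mathcal{C}=(Q\shortmutation{i_1}\cdots\shortmutation{i_N} Q)$ of length $N \le 4k+1$. Since neither the first nor the last mutation of $\mathcal{C}$ can be an exit (after reversing $\mathcal{C}$ if needed to handle the latter), both $i_1$ and $i_N$ lie in $\{1, n\}$; the constraint $i_1 \ne i_N$ then forces $\{i_1,i_N\} = \{1,n\}$, and after a possible reversal we may assume $i_1 = n$ and $i_N = 1$. The same local analysis applied inductively at $Q^{(1)}=\mu[n](Q)$, then at $Q^{(2)}$, and so on, should reveal that at each successive quiver along the main cycle the only non-exit mutations are the two main-cycle directions (forward and backward). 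Coupled with $i_{j+1} \ne i_j$, this pins the walk to the main cycle, forcing $N = 4k + n > 4k + 1$---the desired contradiction.

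The main obstacle is the inductive step, particularly for $n \ge 5$, where certain intermediate quivers on the main cycle (for instance $R$) have a vertex $i \in [3,n-1]$ that is the apex of the vortex $R|_{12in}$, so that condition~\eqref{eq:Q-not-apex} fails and one cannot directly conclude~$\mu[i]$ is an exit via Proposition~\ref{pr:weak-is-exit}. To handle these ``escape routes'' from the main cycle, I would argue that any detour initiated by such a mutation must still lead to an exit situation before it could return to~$Q$. One approach is to apply Corollary~\ref{cor:weak-propagates} iteratively along the detour: once the detour enters a vortex-free quiver with global descent, the total weight strictly increases at each subsequent step, making a return to~$Q$ impossible without first re-entering the narrow set of quivers with the original global-descent structure---a set that, by Lemma~\ref{lem:oneSinkSource} and the local analyses already in~Section~\ref{sec:distinctness}, can only be reached via long detours. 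An alternative is to track a carefully chosen $3$-subquiver such as $Q|_{12i}$, which by Lemma~\ref{lem:3VertexAcyclicAscents-1} sits at depth~$2k$ in a tree structure; any closed walk of the cycle, when restricted to the mutation class of this subquiver, must incur enough length to force the lower bound $N \ge 4k+n$.
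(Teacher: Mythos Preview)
Your overall strategy---pin any putative cycle to the main cycle by showing that every off-cycle direction is an exit---is exactly the paper's, but your execution has two real gaps.

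First, the inductive step via global descent breaks immediately. Already at $Q^{(2)}$ there is no global descent: by Lemma~\ref{lem:Q12iSubquiv} the subquiver $Q^{(2)}|_{12i}$ (for $i\in[3,n-1]$) has descent~$2$, while by Lemma~\ref{lem:R12nSubquiv} the subquiver $Q^{(2)}|_{12n}$ has descent~$1$. So you cannot simply re-run the ``global descent plus vortex-free'' analysis at each step. (Incidentally, even at $Q$ itself your invocation of Lemma~\ref{lem:strong=>weak} is off: $Q$ is \emph{not} vortex-free, as you yourself note. You should verify \eqref{eq:Q-ascent-3}--\eqref{eq:Q-not-apex} directly for each $i\in[2,n-1]$, which is easy given your description of the cyclic subquivers and vortex apexes.) The paper instead packages all the necessary ascent/descent information into Lemma~\ref{lem:cycleIsSinksAndDescents}: for a non-main-cycle direction $v$ at $Q^{(j)}$ with at most one sink/source, that lemma forces $v$ to be an ascent in every cyclic $3$-subquiver containing it, which is exactly \eqref{eq:Q-ascent-3}. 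Combined with the vortex analysis (Lemma~\ref{lem:VortexFreeEll}), this yields \eqref{eq:Q-ascent-3}--\eqref{eq:Q-not-apex} and hence the exit property---see Lemma~\ref{lem:cycle-exits}.

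Second, and more importantly, the ``main obstacle'' you identify at $R$ and along the bottom rim is not an obstacle at all; you are trying to prove too much. Re-index with base point $L$, setting $L^{(\ell)}=Q^{(2k+n+\ell)}$, so that $Q=L^{(2k)}$, $L=L^{(0)}$, $R=L^{(4k+1)}$. The exit analysis only needs to cover $L^{(\ell)}$ for $0<\ell\le 4k$, which entirely avoids the bottom rim. Since these $4k$ quivers (plus the two endpoints $L^{(0)}$ and $L^{(4k+1)}$) are pairwise distinct by Theorem~\ref{thm:GeneralSemiCycleDistinct}, any mutation cycle through $Q=L^{(2k)}$ is forced to traverse all of $L^{(0)},\dots,L^{(4k+1)}$, hence has length at least $4k+2>4k+1$. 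No detour analysis, no tracking of $3$-subquiver depths, and no handling of vortex apexes in $[3,n-1]$ is needed.
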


\begin{theorem}
\label{thm:primitive-nonisom}
If we also assume that either $n\ge 5$ or  else $n=4$ and ${b_{14}(Q) \neq b_{23}(R)}$,  
then the above statements remain true if we identify quivers that differ by an isomorphism and/or a reversal of all arrows.
\end{theorem}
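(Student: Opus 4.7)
The plan is to apply the exit machinery of Section~\ref{sec:exits} (especially Proposition~\ref{pr:weak-is-exit}) at~$Q$ and at each subsequent $Q^{(j)}$ along the main cycle~\eqref{eq:GeneralSemiCycle}, using the explicit descriptions of their cyclic $3$-subquivers provided by Lemmas~\ref{lem:R12nSubquiv}--\ref{lem:Q[3,n]Subquiv}. The goal is to force any mutation cycle based at~$Q$ to traverse the entire main cycle, giving length $4k+n>4k+1$; the paving statement then follows since any pavement of the main cycle by cycles of length $\le 4k+1$ would exhibit a short cycle through~$Q$.

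First I verify that for every $i\in[2,n-1]$ the pair $(Q,i)$ satisfies conditions~\eqref{eq:Q-ascent-3}--\eqref{eq:Q-not-apex}. By Lemma~\ref{lem:Q12iSubquiv} the cyclic $3$-vertex subquivers of~$Q$ are exactly $\{Q|_{12i} : i\in[3,n-1]\}$, each with descent at vertex~$1$; the explicit formulas in Theorem~\ref{thm:Summary} yield $b_{1i}(Q)<0<b_{2i}(Q)$ for $i\in[3,n-1]$; and vertex~$2$ lies in every cyclic $3$-subquiver, so cannot be the apex of any vortex. Together these facts imply that every $i\in[2,n-1]$ is an exit in~$Q$ by Proposition~\ref{pr:weak-is-exit}, so any mutation cycle based at~$Q$ begins with $\mu[1]$ or $\mu[n]$; by reversal symmetry it suffices to treat the case $\mu[n]$. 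I then propagate this analysis inductively along the right rim of Figure~\ref{fig:GeneralSemiCycles-notation}: at each $Q^{(j)}$ for $j\in[1,2k]$, Lemmas~\ref{lem:R12nSubquiv}--\ref{lem:Q[3,n]Subquiv} identify the cyclic $3$-subquivers and their descent vertices, and a sign analysis for $b_{1i},b_{2i},b_{ij},b_{in}$ under $\mu[1]\mu[2]$ iterations (governed by the recurrence~\eqref{eq:chebyshev-rec}) verifies~\eqref{eq:Q-ascent-3}--\eqref{eq:Q-not-apex} at every vertex outside the two adjacent main-cycle directions, so the next mutation is forced to be the forward main-cycle one. A symmetric argument from the other end handles the left rim, and the intermediate bottom rim between $R=Q^{(2k+1)}$ and $L=Q^{(2k+n)}$ consists of $n-2$ sink mutations (Lemma~\ref{lem:Sinks}) at which the cycle is forced to traverse the rim in order.

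The main obstacle lies at the corner quivers $R$, $Q^{(2k+n-2)}$ and their left-rim counterparts, where the global descent vertex switches from~$1$ to~$2$ and, crucially, the subquiver $R|_{i12n}$ becomes a vortex with apex $i\in[3,n-2]$ (since in $R$ the arrows $1\to i$, $2\to i$, $n\to i$ all point toward~$i$). There condition~\eqref{eq:Q-not-apex} fails and Proposition~\ref{pr:weak-is-exit} does not directly rule out the candidate mutations $\mu[i]$ for $i\in[3,n-2]$. I plan to handle these corners by analyzing the quiver $\mu[i](R)$ directly: the mutation creates additional cyclic $3$-subquivers $\mu[i](R)|_{ijn}$ for $j>i$, and a refined exit analysis at $\mu[i](R)$ (in the spirit of the ``forks and points of return'' machinery of~\cite{Warkentin}) shows that any continuation from $\mu[i](R)$ cannot reach~$Q$ within the remaining budget of $4k+1-(2k+1)-1=2k-1$ steps. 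Once the corner forcing is complete, the only mutation cycle based at~$Q$ is the main cycle of length $4k+n>4k+1$, establishing Theorem~\ref{thm:primitive}. For Theorem~\ref{thm:primitive-nonisom}, the exit argument is insensitive to relabeling and global arrow reversal; the only new concern is that an isomorphism (possibly with reversal) might identify two distinct $Q^{(\ell)},Q^{(j)}$ along the main cycle and thereby produce a spurious short return to~$Q$, but Theorem~\ref{thm:GeneralSemiCycleDistinct-noniso} rules this out under the stated hypotheses, completing the extension.
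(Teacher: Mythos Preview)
Your rim analysis (verifying conditions \eqref{eq:Q-ascent-3}--\eqref{eq:Q-not-apex} at each $Q^{(j)}$ for $j\in[0,2k]$ and symmetrically on the left) is essentially the content of Lemma~\ref{lem:cycle-exits}, and that part is fine. The gap is that you then try to push the forcing through the bottom rim, and this is both unnecessary and, as written, unjustified.

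It is unnecessary because the paper's proof never touches the bottom rim. In the $L$-indexing, $Q=L^{(2k)}$ sits in the interior of the range $0<\ell\le 4k$ where Lemma~\ref{lem:cycle-exits} applies. Any mutation cycle (in the ordinary or the isomorphism-quotient graph) through $Q$ is therefore forced to stay on the main cycle in both directions until it reaches $L^{(0)}=L$ and $L^{(4k+1)}=R$; invoking Theorem~\ref{thm:GeneralSemiCycleDistinct-noniso} to guarantee that these $4k+2$ quivers are pairwise non-isomorphic already yields length $\ge 4k+2>4k+1$. No statement about exits at $R$, $L$, or the intermediate bottom-rim quivers is needed.

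It is unjustified because the paper explicitly flags (Remark after Lemma~\ref{lem:cycle-exits} and Figure~\ref{fig:mutations-which-may-not-be-exits}) that Proposition~\ref{pr:weak-is-exit} fails along the bottom rim: there are mutations there violating \eqref{eq:Q-ascent-3} or \eqref{eq:Q-not-apex}, and not only at the two ``corner'' quivers you single out. Your proposed fix (``a refined exit analysis at $\mu[i](R)$ in the spirit of~\cite{Warkentin}'') is not an argument; it is a hope, and the paper leaves open whether those directions are exits at all.

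Finally, your treatment of the isomorphism case is incomplete. The danger is not merely that two $Q^{(\ell)}$, $Q^{(j)}$ on the main cycle become identified; it is that after leaving the main cycle through an exit one might reach some $Q'\cong Q$ off the cycle. The paper's argument handles this by contradiction: beyond any exit the mutation graph is a tree (Lemma~\ref{lem:exit-leads-to-tree}), yet $Q'$ would carry its own copy of the main cycle. You need that step; ``the exit argument is insensitive to relabeling'' does not supply it.
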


The proofs of these theorems will rely on Lemmas~\ref{lem:QTildeGlobalDescentVF}--\ref{lem:cycle-exits} below.

We will need some notation that will be used throughout. 
We will use~$L$ (instead of~$Q$) as a base point for our mutation cycle~\eqref{eq:GeneralSemiCycle}. 
We will denote by~$L^{(\ell)}$ the result of applying $\ell$ mutations to~$L$, going clockwise along the cycle. 
Thus $L^{(\ell)}=Q^{(n+2k+\ell)}$. 

\begin{lemma}
\label{lem:QTildeGlobalDescentVF}
The subquivers $L^{(\ell)}|_{[1,n-1]}$ are vortex-free. 
\end{lemma}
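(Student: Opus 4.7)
The plan is to verify that $L^{(\ell)}|_{[1,n-1]}$ is vortex-free by traversing the mutation cycle from $L$ in four segments: the left rim (from $L$ up to $Q$), the top edge ($\mu[n]$ from $Q$ to $Q^{(1)}$), the right rim (from $Q^{(1)}$ down to $R$), and the bottom rim (from $R$ back to $L$). The case $n=4$ is trivial, since the restricted subquiver has only three vertices whereas vortices require four, so I assume $n \ge 5$ throughout. The base point is $L|_{[1,n-1]} = \tilde R$ by Lemma~\ref{lem:SubquiverTildeQ}, which is acyclic (all $b_{ij}(\tilde R) \ge 2$ for $i<j$) and therefore vortex-free.

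For the left rim, the $2k$ alternating mutations $\mu[2], \mu[1], \mu[2], \ldots, \mu[1]$ all lie in $[1,n-1]$, so they commute with the restriction. I would first verify that $(\tilde R, 2)$ satisfies conditions \eqref{eq:Q-ascent-3}--\eqref{eq:Q-not-apex}: \eqref{eq:Q-ascent-3} is vacuous since $\tilde R$ has no cyclic $3$-vertex subquivers, \eqref{eq:Q-not-sink/source} holds because the orientations $1 \points 2 \points 3$ in $\tilde R$ make $2$ an elbow, and \eqref{eq:Q-not-apex} is trivial because $\tilde R$ contains no vortices. Repeated application of Corollary~\ref{cor:weak-propagates} then shows each $L^{(\ell)}|_{[1,n-1]}$ with $1 \le \ell \le 2k$ is vortex-free. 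The top-edge mutation $\mu[n]$ does not affect $[1,n-1]$, so $L^{(2k+1)}|_{[1,n-1]} = L^{(2k)}|_{[1,n-1]}$, still vortex-free.

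The right rim is handled by a retracing argument. Its $2k$ mutations $\mu[1], \mu[2], \mu[1], \ldots, \mu[2]$ form the reverse of the left-rim sequence; since $L^{(2k+1)}|_{[1,n-1]}$ coincides with $L^{(2k)}|_{[1,n-1]}$ and $\mu[1]$ is an involution, the first right-rim mutation $\mu[1]$ carries the restriction back to $L^{(2k-1)}|_{[1,n-1]}$. By induction on $j$, we obtain $L^{(2k+1+j)}|_{[1,n-1]} = L^{(2k-j)}|_{[1,n-1]}$ for $j = 0, 1, \ldots, 2k$, so the right-rim restrictions coincide with the already-verified left-rim ones.

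Finally, the bottom rim consists of the $n-1$ mutations $\mu[n-1], \mu[n-2], \ldots, \mu[2], \mu[1]$, all in $[1,n-1]$, starting from $L^{(4k+1)}|_{[1,n-1]} = R|_{[1,n-1]} = \tilde R$. By Proposition~\ref{pr:acyclic-mutation-cycle} applied to $\tilde R$, this is exactly the canonical cycle of sink/source mutations on $\tilde R$ that returns to $\tilde R$. A short direct check shows that sink/source mutations preserve vortex-freeness: a sink mutation $\mu[i]$ just reverses the arrows at $i$ (no $2$-cycle cancellation occurs), $3$-vertex subquivers avoiding $i$ are unchanged, and in any $4$-vertex subquiver containing $i$ the vertex $i$ remains a sink or source, so no new vortex can arise. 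The subtlety I would highlight is that Lemma~\ref{lem:Sinks} covers only the full-quiver sink mutations at vertices $\ge 3$; the last two bottom-rim mutations $\mu[2]$ and $\mu[1]$ are not sinks of the full quiver, but after restriction to $[1,n-1]$ they become sink mutations via Proposition~\ref{pr:acyclic-mutation-cycle}, which is exactly what the argument needs.
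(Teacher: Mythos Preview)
Your proof is correct and follows essentially the same route as the paper's: start at $L|_{[1,n-1]}=\tilde R$, use Corollary~\ref{cor:weak-propagates} to propagate vortex-freeness along the $\mu[2],\mu[1],\ldots$ rim, use the reflection identity $L^{(2k+1+j)}|_{[1,n-1]}=L^{(2k-j)}|_{[1,n-1]}$ for the opposite rim, and finish with sink/source mutations on the bottom. The paper invokes Lemma~\ref{lem:SubquiverTildeQ} for the reflection and simply asserts that sink/source mutations preserve vortex-freeness; you rederive both facts directly, and also dispose of $n=4$ separately, but these are cosmetic differences rather than a different strategy.
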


\begin{proof}
By Lemmas~\ref{lem:orientations}--\ref{lem:Qj-large}, the quiver $L_{[1,n-1]}$ is acyclic, has large weights, 
and its vertex~$2$ is not a sink/source.
So Lemma~\ref{lem:StrictAscents} implies that 
the quiver $L^{(1)}|_{[1,n-1]} = \T{2}{L|_{[1,n-1]}}$ is vortex-free and has global descent at~$2$. 
Then, by induction on~$\ell$, Corollary~\ref{cor:weak-propagates} implies that $L^{(\ell)}|_{[1,n-1]}$ is vortex-free 
for~$0 < \ell \leq 2k$. 
By Lemma~\ref{lem:SubquiverTildeQ}, ${L^{(\ell)}|_{[1,n-1]}=L^{(4k+1 - \ell)}|_{[1,n-1]}}$,
implying the claim for $2k<\ell\le 4k$.
For the remaining values $4k\le \ell\le 4k+n$, the claim follows from the fact 
(cf.~\eqref{eq:L[n-1]=tildeL}) 
that the mutations at the bottom of Figure~\ref{fig:GeneralSemiCycles-notation},
when restricted to the vertex set $[1,n-1]$, are sink/source mutations,
hence they preserve the property of being vortex-free. 
\end{proof}

\begin{lemma}
\label{lem:VortexFreeEll}
If $0\le \ell \le 2k-1$ or $2k+2\le \ell \leq 4k$, then $L^{(\ell)}$ is vortex-free. 
If $\ell = 2k$ or $\ell=2k+1$, then $L^{(\ell)}$ has vortices, but all of them have apex~$n$. 
\end{lemma}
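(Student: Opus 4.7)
The plan is to combine Lemma~\ref{lem:QTildeGlobalDescentVF} (any vortex of $L^{(\ell)}$ must involve vertex~$n$) with the cyclic-triangle enumeration in Lemma~\ref{lem:orientations}, together with an orientation invariant that holds along each arc of the cycle. The invariant I would establish first is that for $0\le\ell\le 2k$, the restriction $L^{(\ell)}|_{[3,n]}$ is independent of $\ell$ and equals the acyclic quiver with $v_1\to v_2$ for $3\le v_1<v_2\le n-1$ and $v\to n$ for every $v\in[3,n-1]$. The reason is that at each mutation on this arc, the vertex being mutated (either $1$ or $2$) has all its arrows to $[3,n]$ pointing the same way, so no path $u\to 1\to v$ or $u\to 2\to v$ with $u,v\in[3,n]$ is ever created. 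A mirror invariant holds on the arc $2k+1\le\ell\le 4k+n-1$, with $n$ a source of $L^{(\ell)}|_{[3,n]}$ instead of a sink.

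For $\ell\in\{2k,2k+1\}$, the quiver $L^{(\ell)}$ is $Q$ or $Q^{(1)}$, so $n$ is a global sink or source. Lemma~\ref{lem:orientations} identifies the only cyclic triangles as $L^{(\ell)}|_{12i}$ for $i\in[3,n-1]$, none involving~$n$. Each such triangle combined with the globally aligned vertex $n$ produces a vortex with apex~$n$; the absence of cyclic triangles through $n$ rules out any other apex, giving the second clause.

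For $\ell\in[0,2k-1]\cup[2k+2,4k]$ I would split by the role of~$n$. If $n$ is the apex, then some $L^{(\ell)}|_{12in}$ must have $n$ as a sink or source, i.e., the arrows $1$--$n$ and $2$--$n$ must point the same way. This is impossible whenever $L^{(\ell)}|_{12n}$ is cyclic (the generic sub-case by Lemma~\ref{lem:orientations}), because $n$ is one-in/one-out of a cyclic triangle. For the residual values $\ell\in\{2k-2,2k-1\}$ and their mirror images, where $L^{(\ell)}|_{12n}$ is acyclic, I would compute its orientation by running $\mu[1]$ or $\mu[2]$ backward from $Q|_{12n}$ (respectively $Q^{(1)}|_{12n}$); in each case $n$ turns out to be an elbow, not a sink/source. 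If instead $n$ sits inside the cyclic triangle, that triangle is one of $L^{(\ell)}|_{12n}$, $L^{(\ell)}|_{1in}$, $L^{(\ell)}|_{2in}$ (existing only for specific $\ell$, per Lemma~\ref{lem:orientations}). For each triangle $T$ and each candidate apex $v\in[1,n-1]\setminus T$, I verify that the arrows $v$--$u$ ($u\in T$) cannot all align, using three inputs: the orientation invariant $v\to n$ (or $n\to v$ on the mirror arc) for $v\in[3,n-1]$; the one-in/one-out property of cyclic triangles, which forbids $v\in\{1,2,i\}$ from being sink/source in any $L^{(\ell)}|_{\{v,1,2,i\}}$; and the preserved order $v_1\to v_2$ for $v_1<v_2$ inside $[3,n-1]$, which disqualifies apex candidates in $[3,n-1]$ when $T=L^{(\ell)}|_{1in}$ or $L^{(\ell)}|_{2in}$.

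The principal difficulty is the orientation bookkeeping at the transitional indices $\ell\in\{2k-2,2k-1\}$ and their mirror images on the opposite arc, where the cyclicity of $L^{(\ell)}|_{12n}$ flips and additional cyclic triangles $L^{(\ell)}|_{1in},L^{(\ell)}|_{2in}$ appear. These endpoint cases fall outside the uniform argument and require explicit mutation computations to pin down the orientations of a handful of 3- and 4-subquivers, then a by-hand verification that no sink/source alignment is possible in any of the resulting 4-subquivers.
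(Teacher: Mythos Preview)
Your approach is workable but considerably more laborious than the paper's, and it contains a couple of slips in the endpoint analysis.

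The paper's proof rests on a single clean observation: a vortex is precisely a $4$-vertex quiver with \emph{exactly one} cyclic $3$-vertex subquiver. So for $\ell\notin\{2k,2k+1\}$, it suffices to check via Lemma~\ref{lem:orientations} that every $4$-vertex subquiver of $L^{(\ell)}$ containing~$n$ has either $0$ or $2$ cyclic $3$-subquivers. This is a short count: in $L^{(\ell)}|_{12in}$ the cyclic triangles always come in pairs (for instance $\{12i,12n\}$ at generic~$\ell$, or $\{12i,2in\}$ at $\ell=2k-2$, or $\{12n,1in\}$ at $\ell=0$); in $L^{(\ell)}|_{1ijn}$ and $L^{(\ell)}|_{2ijn}$ the triangles $1in,1jn$ (resp.\ $2in,2jn$) are simultaneously cyclic or simultaneously acyclic, while the remaining two triangles are always acyclic by Lemma~\ref{lem:Q[3,n]Subquiv}. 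No orientation bookkeeping is needed.

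Your case split (apex $=n$ or not) is driven by the same enumeration but processes it vertex by vertex, which is what forces you into the orientation analysis you flag as the hard part. Two specific corrections: first, there are no ``mirror images'' of $\ell\in\{2k-2,2k-1\}$ on the arc $[2k+2,4k]$, since $L^{(\ell)}|_{12n}$ is cyclic throughout that arc. Second, for $\ell=2k-2$ one has $L^{(2k-2)}|_{12n}=Q^{(4k+n-2)}|_{12n}=Q^{(1)}|_{12n}$, whose elbow is~$1$, not~$n$; in fact $n$ is a source there. What actually rescues this case is that $L^{(2k-2)}|_{2in}$ is cyclic (again Lemma~\ref{lem:orientations}), so $n$ cannot be a sink/source in $L^{(2k-2)}|_{12in}$---but that is precisely the ``two cyclic triangles'' phenomenon the paper exploits directly, without the detour through orientations.
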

\begin{proof}
By Lemma~\ref{lem:QTildeGlobalDescentVF}, the quiver $L^{(\ell)}|_{[1,n-1]}$ is vortex-free. 
Let us now assume that $\ell \notin \{2k, 2k+1\}$.
We can check using Lemma~\ref{lem:orientations} 
that in every 4-vertex subquiver of $L^{(\ell)}$ containing~$n$, 
the number of cyclic 3-vertex subquivers is either 0 or~2---hence this subquiver is not a vortex. 
Finally, $n$ is a sink/source in both $L^{(2k)}$ and $L^{(2k+1)}$, so $n$ is the only possible apex
for the vortices contained in these quivers. 
\end{proof}

The following result shows that for a quiver $L^{(\ell)}$ 
not lying on the bottom rim of Figure~\ref{fig:GeneralSemiCycles-notation},
every mutation that takes us away from the mutation cycle~\eqref{eq:GeneralSemiCycle}
gives rise to a tree in the mutation graph. 

\begin{lemma}
\label{lem:cycle-exits}
For all $0 < \ell \le 4k$ and every vertex $i$, either 
$\T{i}{L^{(\ell)}} = L^{(\ell \pm 1)}$ or 
$i$ is an exit in $L^{(\ell)}$.
\end{lemma}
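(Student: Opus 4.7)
I will apply Proposition~\ref{pr:weak-is-exit} to each pair $(L^{(\ell)}, i)$ for which $\mu[i](L^{(\ell)}) \neq L^{(\ell \pm 1)}$, concluding that such $i$ is an exit by verifying the three ``weak'' conditions \eqref{eq:Q-ascent-3}--\eqref{eq:Q-not-apex}. First, reading off the explicit mutation sequence~\eqref{eq:main-cycle} and invoking Lemma~\ref{lem:no2cycles} to rule out coincidences, I identify the two-element ``cycle-vertex'' set $V_\ell = \{v : \mu[v](L^{(\ell)}) \in \{L^{(\ell-1)}, L^{(\ell+1)}\}\}$: it equals $\{1, 2\}$ for $\ell \in (0, 4k] \setminus \{2k, 2k+1\}$ and $\{1, n\}$ for $\ell \in \{2k, 2k+1\}$. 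Thus the lemma reduces to proving that every $i \notin V_\ell$ is an exit of~$L^{(\ell)}$.

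Condition~\eqref{eq:Q-not-apex} is immediate from Lemma~\ref{lem:VortexFreeEll}: either $L^{(\ell)}$ is vortex-free, or $\ell \in \{2k, 2k+1\}$ and every vortex has apex $n \in V_\ell$, which is excluded by $i \notin V_\ell$. For condition~\eqref{eq:Q-ascent-3}, the key uniform observation is that, by Lemmas~\ref{lem:R12nSubquiv}, \ref{lem:Q12iSubquiv}, and~\ref{lem:Q1inOr2inSubquiv}, the unique descent (Lemma~\ref{lem:3VertexUniqueDescents}) of every cyclic 3-vertex subquiver of any $L^{(\ell)}$ lies in $\{1, 2\}$. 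Hence every $i \in [3, n]$ is automatically an ascent in each cyclic 3-vertex subquiver containing it. The only remaining case is $i = 2$ at the corners $\ell \in \{2k, 2k+1\}$; here Lemma~\ref{lem:Q12iSubquiv} shows that the cyclic 3-vertex subquivers of $L^{(\ell)} \in \{Q, Q^{(1)}\}$ containing $2$ are exactly the $L^{(\ell)}|_{12j}$ for $j \in [3, n-1]$, each with descent~$1$; hence $2$ is an ascent.

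Condition~\eqref{eq:Q-not-sink/source} is deduced by exhibiting a cyclic 3-vertex subquiver of $L^{(\ell)}$ that contains $i$ (inside any such 3-cycle, $i$ automatically has both an incoming and an outgoing arrow). Using Lemma~\ref{lem:orientations}: for $i \in [3, n-1]$, the subquiver $L^{(\ell)}|_{12i}$ is cyclic throughout the range $\ell \in (0, 4k]$ (since translating the excluded interval $j \in [2k+1, 2k+n]$ of Lemma~\ref{lem:orientations} to the $\ell$-coordinates lands outside $(0, 4k]$); for $i = n$, the subquiver $L^{(\ell)}|_{12n}$ is cyclic whenever $\ell \in (0, 2k-3] \cup [2k+2, 4k]$, and at the two near-corner indices $\ell \in \{2k-2, 2k-1\}$ on the left rim, Lemma~\ref{lem:Q1inOr2inSubquiv} instead supplies $L^{(\ell)}|_{2jn}$ and $L^{(\ell)}|_{1jn}$ respectively; and for $i = 2$ at the corners, the $L^{(\ell)}|_{12j}$ from the previous paragraph do the job. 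With all three conditions in place, Proposition~\ref{pr:weak-is-exit} yields that $i$ is an exit.

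The main obstacle is the bookkeeping at the near-corner indices $\ell \in \{2k-2, 2k-1\}$ on the left rim, where the cyclic 3-vertex subquiver containing~$n$ changes type from $L^{(\ell)}|_{12n}$ to $L^{(\ell)}|_{2jn}$ or $L^{(\ell)}|_{1jn}$; fortunately, Lemma~\ref{lem:Q1inOr2inSubquiv} is tailored for exactly this purpose, so the case analysis is short.
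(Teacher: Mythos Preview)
Your proof is correct and follows the same overall strategy as the paper: verify conditions \eqref{eq:Q-ascent-3}--\eqref{eq:Q-not-apex} for each $i\notin V_\ell$ and invoke Proposition~\ref{pr:weak-is-exit}. The difference lies in how \eqref{eq:Q-ascent-3} and \eqref{eq:Q-not-sink/source} are established. The paper packages both into a single appeal to Lemma~\ref{lem:cycleIsSinksAndDescents}: once one checks (via Lemma~\ref{lem:orientations}) that $L^{(\ell)}$ has at most one sink/source, the equivalence in Lemma~\ref{lem:cycleIsSinksAndDescents} converts the hypothesis $\mu[i](L^{(\ell)})\neq L^{(\ell\pm1)}$ directly into ``$i$ is not a sink/source and not a descent in any cyclic $3$-vertex subquiver,'' which (together with mutation-acyclicity of these subquivers) forces $i$ to be an ascent everywhere. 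This avoids the explicit near-corner bookkeeping.

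Your route instead reconstructs these facts by hand from the structural Lemmas~\ref{lem:R12nSubquiv}--\ref{lem:Q1inOr2inSubquiv} and~\ref{lem:orientations}, exhibiting for each $i\notin V_\ell$ a concrete cyclic $3$-vertex subquiver containing~$i$. This is slightly longer and requires the case split at $\ell\in\{2k-2,2k-1\}$ (where $L^{(\ell)}|_{12n}$ is acyclic and one must fall back on $L^{(\ell)}|_{1jn}$ or $L^{(\ell)}|_{2jn}$), but it has the virtue of being self-contained and makes the geometry of the cycle transparent. Either argument is fine; the paper's is more economical because Lemma~\ref{lem:cycleIsSinksAndDescents} was set up precisely to absorb this case analysis.
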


\begin{proof}
Suppose that $\T{i}{L^{(\ell)}} \neq L^{(\ell \pm 1)}$.
It follows from Lemma~\ref{lem:VortexFreeEll} that $i$ is not an apex of a vortex in~$L^{(\ell)}$. 
One can check, via repeated use of Lemma~\ref{lem:orientations}, 
that $L^{(\ell)}$ has at most one sink/source. 
(Specifically, $L^{(\ell)}$ has no sinks/sources unless $\ell = 2k$ or $\ell=2k+1$, 
in which case $n$ is the only sink/source.)

Combining  Lemma~\ref{lem:cycleIsSinksAndDescents},
the assumption $\T{i}{L^{(\ell)}} \neq L^{(\ell \pm 1)}$, and the fact that $L^{(\ell)}$ has at most one sink/source, 
we conclude that $i$ is an ascent of every cyclic $3$-vertex subquiver of~$L^{(\ell)}$ that contains~$i$.

Now Proposition~\ref{pr:weak-is-exit} implies that $i$ is an exit in $L^{(\ell)}$. 
\end{proof}

\begin{remark}
It may well be that the restriction on $\ell$ in Lemma~\ref{lem:cycle-exits} is unnecessary. 
The current proof of the lemma does not extend to all~$\ell$ since Proposition~\ref{pr:weak-is-exit} may not apply. 
This is illustrated in Figure~\ref{fig:mutations-which-may-not-be-exits}.
\end{remark}

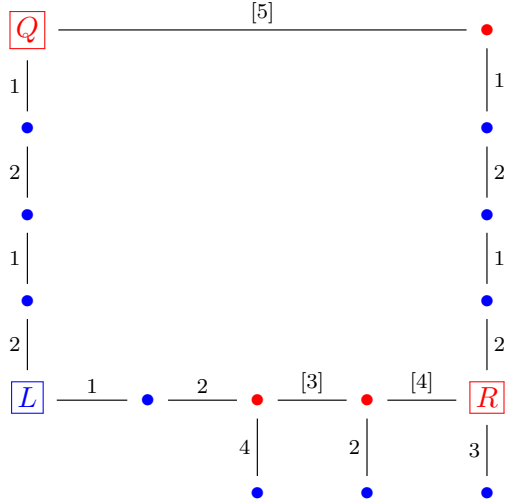
\begin{figure}[ht]
\begin{center}
\vspace{-10pt}
\begin{equation*}
\begin{tikzcd}[arrows={-stealth, cramped, sep=small}]
\red{\boxed{Q}}  \arrow[rrrr, no head, "{[5]}"] \arrow[d, swap, no head, "1"]
  &&&& \red{\bullet} \arrow[d, no head, "1"] \\
\blue{\bullet} \arrow[d, swap, no head,"2"] &&&&  \blue{\bullet} \arrow[d, no head,"2"]  \\
\blue{\bullet} \arrow[d, swap, no head,"1"] &&&&  \blue{\bullet} \arrow[d, no head,"1"]  \\
\blue{\bullet} \arrow[d, swap, no head,"2"] &&&&  \blue{\bullet} \arrow[d, no head,"2"]  \\
\blue{\boxed{L}} \arrow[r, no head, "1"] & \blue{\bullet} \arrow[r, no head, "2" ] 
    & \red{\bullet} \arrow[r, no head,"{[3]}"] & \red{\bullet} \arrow[r, no head,"{[4]}"] & \red{\boxed{R}}\\
& & \blue{\bullet} \ar[u, no head, "4"] & \blue{\bullet} \ar[u, no head, "2"] & \blue{\bullet} \ar[u, no head, "3"]
\end{tikzcd}
\end{equation*}
\vspace{-10pt}
\end{center}
\caption{
The cycle in Theorem~\ref{thm:GeneralSemiCycles} (${n=5, k=2}$). 
Each of the three mutations shown at the bottom as vertical edges does not satisfy \eqref{eq:Q-ascent-3} 
or \eqref{eq:Q-not-apex}. 
}
\label{fig:mutations-which-may-not-be-exits}
\end{figure}

\vspace{-20pt}

\begin{proof}[Proof of Theorem~\ref{thm:primitive}]
By Lemmas~\ref{lem:cycle-exits} and~\ref{lem:exit-leads-to-tree}, 
no mutation cycle can contain a mutation of the form 
$L^{(\ell)}\shortmutation{i} L'\neq L^{(\ell\pm1)}$ for $0<\ell\le 4k$. 
That is, we cannot deviate from the original mutation cycle anywhere outside the bottom rim of Figure~\ref{fig:GeneralSemiCycles-notation}. 

By Theorem~\ref{thm:GeneralSemiCycleDistinct}, all the quivers $L^{(\ell)}$ are distinct, 
so any mutation cycle containing $Q=L^{(2k)}$ has to contain all the quivers $L^{(\ell)}$, for $0\le \ell\le 4k+1$. 
The claim follows. 
\end{proof}

\begin{proof}[Proof of Theorem~\ref{thm:primitive-nonisom}]
We claim that no cycle in the mutation graph (where we identify quivers up to isomorphism and/or reversal of all arrows) 
can go through~$Q$ and contain a mutation of the form 
$L^{(\ell)}\shortmutation{i} L'\neq L^{(\ell\pm1)}$ for $0<\ell\le 4k$. 
Otherwise, continuing the cycle beyond~$L'$, we would eventually reach a quiver~$Q'$ isomorphic to~$Q$
(up to global reversal of arrows).
The mutation graph contains a cycle passing through~$Q'$, 
namely an appropriate version of the cycle~\eqref{eq:GeneralSemiCycle}. 
On the other hand, $i$~is an exit by Lemma~\ref{lem:cycle-exits}, so the portion of the mutation graph
that lies beyond~$L'$ is a tree (by Lemma~\ref{lem:exit-leads-to-tree}), a contradiction.
We conclude that we cannot deviate from the original mutation cycle anywhere outside the bottom rim of Figure~\ref{fig:GeneralSemiCycles-notation}. 

By Theorem~\ref{thm:GeneralSemiCycleDistinct-noniso}, all the quivers $L^{(\ell)}$ are pairwise non-isomorphic, 
even allowing a reversal of all arrows. 
Hence any cycle in the mutation graph (where we identify quivers up to isomorphism and/or reversal of all arrows) 
that contains $Q=L^{(2k)}$ has to contain all the quivers $L^{(\ell)}$, for $0\le \ell\le 4k+1$. 
The claim follows.
\end{proof}

\newpage

\section{Genericity}
\label{sec:genericity}

We next demonstrate that the quivers
that appear in Theorem~\ref{thm:GeneralSemiCycles} are in some sense ``generic.''
Making this statement precise will require some preliminaries.

\begin{definition}
\label{def:quivers=lattice-points}
We denote by $\Quiv_n$ the set of all quivers on the vertex set $[1,n]$.
A subset $\mathbf{Q}\subset\Quiv_n$ is called a \emph{family of quivers} (on this vertex set). 

Each quiver $Q\in\Quiv_n$ is encoded by a skew-symmetric matrix ${B(Q)=(b_{ij})}$, as explained in Definition~\ref{def:B(Q)}. 
Restricting to the upper-triangular part $(b_{ij})_{1\le i<j\le n}$, 
we can~view~$\Quiv_n$ as an ${\binom{n}{2}}$-dimensional integer lattice. 
Consequently, any family $\mathbf{Q}\subset\Quiv_n$ can be viewed as a collection of lattice points. 
\end{definition}

\begin{definition}
\label{def:z-biregular}
Let $A$ and $B$ be two subsets of $\ZZ^d$. 
A map 
\vspace{-5pt}
\begin{align*}
A&\stackrel{f}{\to} B \\[-3pt]
(a_1,\dots,a_d)&\mapsto (b_1,\dots,b_d) 
\end{align*}
is called \emph{$\ZZ$-biregular} if 
\begin{itemize}[leftmargin=.2in]
\item 
$f$ is a bijection; 
\item
both $f$ and its inverse  $g=f^{-1}$ are given by polynomials with integer coefficients: 
\begin{align*}
b_i&=f_i(a_1,\dots,a_d)\in\ZZ[a_1,\dots,a_d], \quad i=1,\dots, d; \\
a_i&=g_i(b_1,\dots,b_d)\in\ZZ[b_1,\dots,b_d], \quad i=1,\dots, d. 
\end{align*}
\end{itemize}
In light of Definition~\ref{def:quivers=lattice-points}, we can extend this notion to the cases where
either $A$ or $B$ (or both) are subsets of~$\Quiv_n\cong \ZZ^{\binom{n}{2}}$.  
\end{definition}

\begin{remark}
We will typically use the above notion for infinite subsets of~$\ZZ^d$ defined by algebraic inequalities. 
\end{remark}

\begin{remark}
The inverse of a $\ZZ$-biregular map (resp., a composition of $\ZZ$-biregular maps) is $\ZZ$-biregular. 
Also, if $f:A\to B$ is a $\ZZ$-biregular map and $A'\subset A$, then the restriction $f|_{A'}:A'\to f(A')$ 
is again a $\ZZ$-biregular map. 
\end{remark}

\begin{remark}
\label{rem:relabeling-regular}
The notion of $\ZZ$-biregularity, when applied to maps involving families of quivers, does not depend on the choice of vertex labeling. 
Put differently, any permutation of the vertex labels of a quiver induces a $\ZZ$-biregular map. 
\end{remark}

\begin{example}
\label{eg:QQ'}
Let $\mathbf{Q}$ be the following family of acyclic 3-vertex quivers:
\begin{equation*}
\mathbf{Q}=\Bigl\{
Q(a,b,c)=
\begin{tikzcd}[arrows={-stealth}, sep=small, cramped]
  \scriptstyle 1 \arrow[r, "a"]  
  & \scriptstyle 2 
  \\
  \scriptstyle 3 \arrow[u,"b"] \arrow[ur, swap, "c" , outer sep=-1pt] &
\end{tikzcd}
\,\Bigl| \, a,b,c\ge 0
\Bigr\}\subset\Quiv_3. 
\end{equation*}
The parametrization map $\ZZ_{\ge0}^3\to\mathbf{Q}$ given by $(a,b,c)\mapsto Q(a,b,c)$ is clearly $\ZZ$-biregular. 
A more interesting example is the restriction of the mutation map $\mu[1]$ to~$\mathbf{Q}$: 
\begin{align*}
\mu[1]: \mathbf{Q}&\to\mathbf{Q'}=\{Q'(a,b,c')=\begin{tikzcd}[arrows={-stealth}, sep=small, cramped, ampersand replacement=\&]
  \scriptstyle 1    \arrow[d, swap, "b"] 
  \& \scriptstyle 2 \arrow[l, swap, "a" ]  
  \\
   \scriptstyle 3 \arrow[ur, swap, "{c'}", outer sep=-2pt] \&
\end{tikzcd} \mid a\ge 0, b\ge 0, c'\ge ab\}\subset\Quiv_3. 
\end{align*}
Composing the two maps, we get the $\ZZ$-biregular parametrization $\ZZ^3_{\ge0}\to\mathbf{Q'}$ defined by
$(a,b,c)\mapsto Q'(a,b,c+ab)$. 
The inverse map sends $Q'(a,b,c')\in \mathbf{Q'}$ to $(a,b,c'-ab)\in\ZZ^3_{\ge0}$. 
\end{example}

\begin{example}
\label{eg:Q1Q3}
Let $a,b,c\ge0$ and let the quiver $Q$ be as shown in~\eqref{eq:Q1Q2Q3-12} below. 
Apply the mutation sequence $\mu[21]$ to~$Q$ and assume that the resulting quivers are oriented as follows: 
\begin{equation}
\label{eq:Q1Q2Q3-12}
\!\!\!\!\begin{array}{ccccccc}
\begin{tikzcd}[arrows={-stealth}]
  1 \arrow[r, "a"]  
  & 2 
  \\
  3 \arrow[u,"b"] \arrow[ur, swap, "c" ] &
\end{tikzcd}
& &
\begin{tikzcd}[arrows={-stealth}]
  1    \arrow[d, swap, "b"] 
  & 2 \arrow[l, swap, "a" ]  
  \\
   3 \arrow[ur, swap, "ab+c"] &
\end{tikzcd}
& &
\!\begin{tikzcd}[arrows={-stealth}]
  1    \arrow[r, "a"]
  & 2  \arrow[dl, "ab+c" ] 
  \\
   3 \arrow[u, "\!\!\!(a^2\!-1)b+ac"] &
\end{tikzcd}
\\
Q
& \!\!\!\!\!\!\mutation{1}\!\!\!\!\!\!
& Q'
& \!\!\!\!\!\!\mutation{2}\!\!\!\!\!\!
& \qquad\qquad Q''.
\end{array}
\end{equation}
In other words, we assume that 
\begin{equation*}
Q\in \mathbf{Q} = \{ Q(a,b,c) \mid a,b,c, (a^2-1)b+ac\ge 0\} \subset\Quiv_3. 
\end{equation*}


Alternatively, one could start with a quiver $Q''$ as in~\eqref{eq:Q1Q2Q3-12-tilde} below and apply $\mu[12]$. 
Assume that the orientations are the same as in~\eqref{eq:Q1Q2Q3-12}: 
\begin{equation}
\label{eq:Q1Q2Q3-12-tilde}
\!\!\!\!\begin{array}{ccccccc}
\begin{tikzcd}[arrows={-stealth}]
  1 \arrow[r, "a"]  
  & 2 
  \\
  3 \arrow[u,"a {c''} - {b''}"] \arrow[ur, swap, "a{b''} - ( a^2 - 1){c''}" ] &
\end{tikzcd}
& &
\begin{tikzcd}[arrows={-stealth}]
  1    \arrow[d, swap, "a {c''} - {b''}"] 
  & 2 \arrow[l, swap, "a" ]  
  \\
   3 \arrow[ur, swap, "{c''}"] &
\end{tikzcd}
& &
\!\begin{tikzcd}[arrows={-stealth}]
  1    \arrow[r, "a"]
  & 2  \arrow[dl, "{c''}" ] 
  \\
   3 \arrow[u, "{b''}"] &
\end{tikzcd}
\\
Q
& \!\!\!\!\!\!\!\mutation{1}\!\!\!\!\!\!
& \qquad Q'
& \!\!\!\!\!\!\mutation{2}\!\!\!\!\!\!
& Q''
\end{array}
\end{equation}
In other words, we assume that 
\begin{equation*}
Q''\in \mathbf{Q''} = \Bigl\{ \begin{tikzcd}[arrows={-stealth}, sep=small, cramped]
  \scriptstyle 1    \arrow[r, "a"]
  & \scriptstyle 2  \arrow[dl, "{c''}" , outer sep=-2] 
  \\
   \scriptstyle 3 \arrow[u, "{b''}"] &
\end{tikzcd} \mid a,{b''},{c''}, a{c''}-{b''}, a{b''} -(a^2-1){c''} \ge 0
\Bigr\} . 
\end{equation*}
The natural map $\mu[21]:\mathbf{Q}\to \mathbf{Q''}$ given by
${b''} = (a^2\!-1)b+ac$ and ${c''} = ab+c$
is $\ZZ$-biregular. 
The inverse map 
$\mu[12]:\mathbf{Q''}\to \mathbf{Q}$
is given by 
$b=a{c''}-{b''}$,  $c=a{b''} - ( a^2 - 1){c''}$. 

One could also define the family $\mathbf{Q'}$ of quivers~$Q'$ that appear in mutation sequences
of the form \eqref{eq:Q1Q2Q3-12}--\eqref{eq:Q1Q2Q3-12-tilde}, with specified orientations of all quivers. 
We would then get $\ZZ$-biregular maps $\mathbf{Q}\to \mathbf{Q'}\to \mathbf{Q''}$. 
\end{example}

\begin{remark}
Generalizing Example~\ref{eg:Q1Q3}, we can see that fixing orientations of the quivers related to each other by 
a sequence of successive mutations yields a $\ZZ$-biregular map
between the families of quivers defined by appropriate inequalities. 
\end{remark}

Let $\mathbf{Acyc}_n$ denote the family of all $n$-vertex acyclic quivers 
with standard orientation and large weights:
\begin{equation}
\label{eq:Acyc_n}
\mathbf{Acyc}_n=\{Q\in\Quiv_n\mid b_{ij}(Q)\ge 2\ \text{for all $i<j$}\}. 
\end{equation}

\begin{lemma}
\label{lem:mut-acyclic-orientations}
Fix a mutation sequence $\mathbf{i}=i_1\cdots i_m$. 
Then the restriction of the map $\mu[\mathbf{i}]:\Quiv_n\to\Quiv_n$ to $\mathbf{Acyc}_n$
is $\ZZ$-biregular onto its image.
\end{lemma}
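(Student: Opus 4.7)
The plan is to reduce everything to Corollary~\ref{cor:orientations-fixed}. Every $Q \in \mathbf{Acyc}_n$ has the same sign pattern, namely $b_{ij}(Q) > 0$ for $i < j$. Applying that corollary to each initial segment $i_1 \cdots i_j$ of $\mathbf{i}$ shows that every intermediate quiver $Q^{(j)} := \mu[i_j \cdots i_1](Q)$ has a sign pattern $\sigma_j \in \{\pm 1\}^{\binom{n}{2}}$ that depends only on~$\mathbf{i}$, not on the specific choice of $Q \in \mathbf{Acyc}_n$.

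Once the sign pattern of $B(Q^{(j)})$ is frozen, the mutation step $Q^{(j)} \mapsto Q^{(j+1)} = \mu[i_{j+1}](Q^{(j)})$ becomes a polynomial map with integer coefficients. Indeed, writing $k := i_{j+1}$, the standard mutation formula $b'_{uv} = b_{uv} + \frac{|b_{uk}| b_{kv} + b_{uk} |b_{kv}|}{2}$ (for $u,v \neq k$), with the signs of $b_{uk}$ and $b_{kv}$ known, reduces to $b'_{uv} = b_{uv} + \varepsilon \, b_{uk} b_{kv}$ for some $\varepsilon \in \{0, \pm 1\}$ determined by $\sigma_j$; for $u = k$ or $v = k$ it simply flips the sign. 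Composing $m$ such steps shows that $\mu[\mathbf{i}]|_{\mathbf{Acyc}_n}$ is given by polynomials with integer coefficients in the entries of~$B(Q)$.

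For the inverse, I would use that $\mu[\mathbf{i}]$ is a bijection $\Quiv_n \to \Quiv_n$ (being a composition of involutions), so its restriction to $\mathbf{Acyc}_n$ is a bijection onto its image $\mathbf{Q}^{(m)} := \mu[\mathbf{i}](\mathbf{Acyc}_n)$. The inverse is the sequence $\mu[i_1 \cdots i_m]$; applied to any element of $\mathbf{Q}^{(m)}$, it traverses the intermediate quivers $Q^{(m)}, Q^{(m-1)}, \dots, Q^{(0)}$ whose sign patterns $\sigma_m, \sigma_{m-1}, \dots, \sigma_0$ were already identified in the forward direction and remain independent of the starting $Q$. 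The same argument therefore makes the inverse $\mathbf{Q}^{(m)} \to \mathbf{Acyc}_n$ polynomial with integer coefficients, establishing $\ZZ$-biregularity.

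No serious obstacle is expected here. The one point that requires care is the uniform pinning down of sign patterns along the entire mutation path across the whole family $\mathbf{Acyc}_n$; but this is precisely the content of Corollary~\ref{cor:orientations-fixed} (via the inductive argument based on Lemma~\ref{lem:orientations-fixed}). Everything else is a routine verification that the frozen-sign mutation formula is a polynomial with integer coefficients.
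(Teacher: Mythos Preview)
Your proposal is correct and follows essentially the same approach as the paper's proof: invoke Corollary~\ref{cor:orientations-fixed} to pin down the orientation pattern of every intermediate quiver uniformly over $\mathbf{Acyc}_n$, observe that with fixed signs each mutation step (and its inverse) is an integer-polynomial map, and then compose. Your write-up is simply more explicit about the mutation formula and the treatment of the inverse direction than the paper's terse version.
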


\begin{proof}
By Corollary~\ref{cor:orientations-fixed}, the orientations of arrows in each quiver $\T{i_j\cdots i_1}{Q}$
do not depend on the choice of $Q\in\mathbf{Acyc}_n$. 
This implies that every map 
\begin{align*}
\mu[i_j]:\T{i_{j-1} \cdots i_1}{\mathbf{Acyc}_n} &\longrightarrow \T{i_j\cdots i_1}{\mathbf{Acyc}_n} \\
 \T{i_{j-1}\cdots i_1}{Q} &\longmapsto\T{i_j\cdots i_1}{Q} 
\end{align*}
is given by polynomials, as is its inverse. 
Thus, all these maps are $\ZZ$-biregular. 
Composing them all, we obtain the claim. 
\end{proof}

\begin{definition}
\label{def:generic-quivers}
Fix an integer $n\ge 2$. 
A family $\mathbf{Q}$ of $n$-vertex quivers is called \emph{fully generic} if
it contains a subset that is related to $\ZZ_{\ge0}^{\binom{n} 2}$ via a $\ZZ$-biregular parametrization. 
\end{definition}

\begin{example}[Quivers with a given orientation]
\label{eg:fixed-orientation-generic}
For each pair of indices $i,j \in [1,n]$, fix a sign $\varepsilon_{ij}\in\{-1,1\}$. 
Then the following quiver family is fully generic: 
\begin{equation*}
\mathbf{Q}=\{Q\in\Quiv_n \mid b_{ij}(Q)\cdot\varepsilon_{ij}\ge 0\}. 
\end{equation*}
This remains true if we replace (some of) the weak inequalities $b_{ij}(Q)\cdot\varepsilon_{ij}\ge 0$ 
by the strict inequalities $b_{ij}(Q)\cdot\varepsilon_{ij}> 0$, 
or require $Q$ to have large weights. 
In particular, the family $\mathbf{Acyc}_n$ (cf.\ \eqref{eq:Acyc_n}) is fully generic. 
\end{example}

\begin{definition}
\label{def:generic-mut-cycle}
We say that a quiver family 
$\mathbf{Q}\subset\Quiv_n$ and a sequence $\mathbf{i}=i_1\cdots i_k$ define a \emph{fully generic mutation cycle} if
\begin{itemize}[leftmargin=.2in]
\item 
$\mathbf{Q}$ is a fully generic family of quivers; 
\item
for any $Q\in\mathbf{Q}$, the sequence $\mathbf{i}$ is a mutation cycle based at~$Q$, cf.\ Definition~\ref{def:mutation cycle}. 
\end{itemize}
\end{definition}

\begin{example}
\label{eg:acyclic-generic-cycle}
Each acyclic quiver $Q\in\mathbf{Acyc}_n$ lies on the mutation cycle $\mathbf{i}=12\cdots n$, 
see Proposition~\ref{pr:acyclic-mutation-cycle}.
Since $\mathbf{Acyc}_n$ is a fully generic family (see Example~\ref{eg:fixed-orientation-generic}),
this mutation cycle is generic. 
\end{example}

\begin{example}[{cf.\ \cite[Example~12.30]{Warkentin}}]
\label{eg:6-cycle-vortices}\
Figure~\ref{fig:generic-6-cycle} shows a fully generic mutation cycle of length~6 consisting
of 4-vertex quivers, all of them vortices. 
\end{example}

\newcommand{\sns}{-stealth}
\newcommand{\snsl}{stealth-} 

\begin{figure}[ht]
\vspace{-.1in}
\begin{equation*}
\begin{array}{ccccc}
\hspace{-17pt}Q=\!\!
\begin{tikzcd}[arrows={-stealth}, column sep=60, row sep=60]
  1  \arrow[r,  "a", \snsl] \arrow[dr, "d" near start, outer sep=-.8, \snsl]  \arrow[d, "f", \snsl]  
  & 2  \arrow[d, swap, "b+ce", \snsl] \arrow[dl, "e", near end, outer sep=-.8, \sns]
  \\
   4 
   & 3  \arrow[l, "c", swap, \snsl] 
\end{tikzcd}

 &\mutation 4 &
 \begin{tikzcd}[arrows={-stealth}, column sep=60, row sep=60]
  1  \arrow[r,  "a+ef", \snsl] \arrow[dr, "d" near start, outer sep=-.8, \snsl]  \arrow[d, "f", \sns]  
  & 2  \arrow[d, swap, "b", \snsl] \arrow[dl, "e", near end, outer sep=-.8, \snsl]
  \\
   4 
   & 3  \arrow[l, "c", swap, \sns] 
\end{tikzcd}
 
 & \mutation 3 
 &  \begin{tikzcd}[arrows={-stealth}, column sep=60, row sep=60]
  1  \arrow[r,  "a+ef", \snsl] \arrow[dr, "d" near start, outer sep=-.8, \sns]  \arrow[d, "f", \sns]  
  & 2  \arrow[d, swap, "b", \sns] \arrow[dl, "e", near end, outer sep=-.8, \snsl]
  \\
   4 
   & 3  \arrow[l, "c", swap, \snsl] 
\end{tikzcd}
\\
\\[-10pt]
\vmutation{1}   & & & & \vmutation{4} \\[7pt]
\begin{tikzcd}[arrows={-stealth}, column sep=60, row sep=60]
  1  \arrow[r,  "a", \sns] \arrow[dr, "d" near start, outer sep=-.8, \sns]  \arrow[d, "f", \sns]  
  & 2  \arrow[d, swap, "b+ce", \snsl] \arrow[dl, "e", near end, outer sep=-.8, \sns]
  \\
   4 
   & 3  \arrow[l, "c", swap, \snsl] 
\end{tikzcd}
& \mutation 4 
&  \begin{tikzcd}[arrows={-stealth}, column sep=60, row sep=60]
  1  \arrow[r,  "a", \sns] \arrow[dr, "d+cf" near start, outer sep=-2, \sns]  \arrow[d, "f", \snsl]  
  & 2  \arrow[d, swap, "b", \snsl] \arrow[dl, "e", near end, outer sep=-.8, \snsl]
  \\
   4 
   & 3  \arrow[l, "c", swap, \sns] 
\end{tikzcd}
& \mutation 2 
& \begin{tikzcd}[arrows={-stealth}, column sep=60, row sep=60]
  1  \arrow[r,  "a", \snsl] \arrow[dr, "d+cf" near start, outer sep=-2, \sns]  \arrow[d, "f", \snsl]  
  & 2  \arrow[d, swap, "b", \sns] \arrow[dl, "e", near end, outer sep=-.8, \sns]
  \\
   4 
   & 3  \arrow[l, "c", swap, \sns] 
\end{tikzcd}
\end{array}
\end{equation*}
\vspace{-.1in}
\caption{A fully generic mutation cycle involving 6 vortices.
Here ${a,b,c,d,e,f \geq 0}$. 
The parametrization $(a,b,c,d,e,f)\mapsto Q$ 
is a $\ZZ$-biregular map $\ZZ_{\ge0}^6\to\mathbf{Q}$, 
where $\mathbf{Q}$~consists of all 4-vertex quivers oriented as shown at the upper-left
and satisfying $b_{43}(Q)\ge b_{23}(Q)b_{42}(Q)$. 
}
\label{fig:generic-6-cycle}
\vspace{-.2in}
\end{figure}
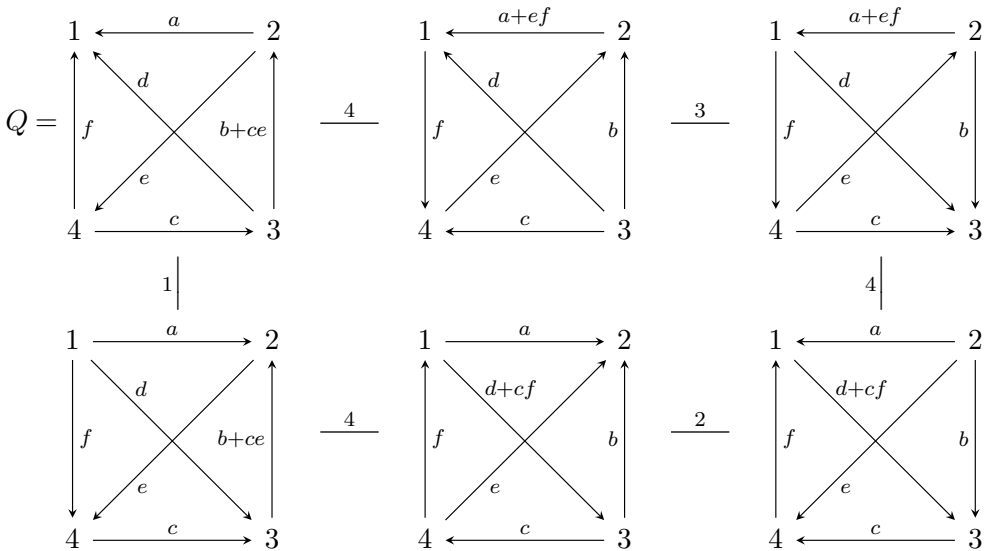

\pagebreak[3]

\begin{example}[\emph{Non-example \#1:} Mutation cycles of type~$A_2$]
\label{eg:A2-nongeneric-cycle}
Let $\mathbf{Q}=\{Q\in\Quiv_n\mid b_{12}(Q)=1\}$.
Then $\mathbf{i}=(12)^5$ is a mutation cycle, see \cite{FWZ, ca2}. 
However, the family of quivers $\mathbf{Q}$ is not fully generic [why?]. 
\end{example}

\begin{example}[\emph{Non-example \#2:} Fordy-Marsh mutation cycles]
\label{eg:fordy-marsh-nongeneric-cycle}
Mutation cycles constructed in \cite{Fordy-Marsh} are not fully generic:
the weights in the quivers that they consider need to satisfy some algebraic constraints. 
\end{example}

\begin{remark}
While we will not rely on this statement, it can be shown that the property of being fully generic does not depend 
on the choice of a base point on a mutation cycle. 
Thus, replacing $\mathbf{Q}$ (resp., $\mathbf{i}=i_1\cdots i_k$) 
by $\mu[i_1](\mathbf{Q})$ (resp., $\mathbf{i}'=i_2\cdots i_k i_1$)
produces another fully generic mutation cycle, etc. 
\end{remark}

For $n\ge 4$ and~$k\ge 1$, 
we denote by $\mathbf{Q}_{n,k}\subset\mathbf{Q}_n$ the family of quivers~$Q$ described in Theorem~\ref{thm:GeneralSemiCycles}. 

\begin{theorem}
\label{th:long-cycle-generic}
For any $n\ge 4$ and~$k\ge 1$, 
the quiver family $\mathbf{Q}_{n,k}$ is fully generic. 
Consequently, the mutation cycle in Theorem~\ref{thm:GeneralSemiCycles} is fully generic. 
\end{theorem}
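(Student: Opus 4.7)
The plan is to exhibit an explicit $\ZZ$-biregular parametrization of $\mathbf{Q}_{n,k}$ by $\ZZ_{\ge 2}^{\binom{n}{2}}$; a coordinatewise shift by $-2$ then identifies this lattice with $\ZZ_{\ge 0}^{\binom{n}{2}}$, certifying $\mathbf{Q}_{n,k}$ as fully generic in the sense of Definition~\ref{def:generic-quivers}.

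The construction in Theorem~\ref{thm:GeneralSemiCycles} supplies the parametrization directly. A quiver $Q\in\mathbf{Q}_{n,k}$ is determined by a pair $(\tilde R,\mathbf{c})$, where $\tilde R\in\mathbf{Acyc}_{n-1}$ contributes $\binom{n-1}{2}$ weights $b_{ij}(\tilde R)\ge 2$ and $\mathbf{c}=(b_{1n},\dots,b_{n-1,n})\in\ZZ_{\ge 2}^{n-1}$ records the arrow weights into the sink~$n$, totaling $\binom{n-1}{2}+(n-1)=\binom{n}{2}$ parameters.

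Next I will verify that the resulting map $\mathbf{Acyc}_{n-1}\times\ZZ_{\ge 2}^{n-1}\to \mathbf{Q}_{n,k}$ is $\ZZ$-biregular. Because the mutations $(12)^k$ never involve vertex~$n$, the parametrization splits as a product: the subquiver $Q|_{[1,n-1]}=\mu[(12)^k](\tilde R)$ depends only on~$\tilde R$, while the entries of~$\mathbf{c}$ coincide with the weights $b_{in}(Q)$ (their signs being fixed by the sink orientation at~$n$). On the first factor, Lemma~\ref{lem:mut-acyclic-orientations} applied to the sequence~$(12)^k$ shows that the restriction $\mu[(12)^k]|_{\mathbf{Acyc}_{n-1}}$ is $\ZZ$-biregular onto its image; the inverse, obtained by applying $\mu[(21)^k]$ to $Q|_{[1,n-1]}$, is therefore polynomial. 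The second factor is the identity. Combining these two $\ZZ$-biregular maps yields the desired parametrization.

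The mutation-cycle assertion is then immediate from Theorem~\ref{thm:GeneralSemiCycles}: every $Q\in\mathbf{Q}_{n,k}$ is based at the mutation cycle~\eqref{eq:GeneralSemiCycle}, so this fully generic family together with this mutation sequence satisfies Definition~\ref{def:generic-mut-cycle}. I do not foresee any genuine obstacle in this argument; the real content is absorbed into Lemma~\ref{lem:mut-acyclic-orientations} (which in turn relies on Corollary~\ref{cor:orientations-fixed}), and the product structure that decouples $\tilde R$ from the arrows into~$n$ is built into the very definition of $\mathbf{Q}_{n,k}$.
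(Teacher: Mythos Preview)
Your proof is correct and follows essentially the same approach as the paper: parametrize $\mathbf{Q}_{n,k}$ by the weights of $\tilde R\in\mathbf{Acyc}_{n-1}$ together with the $n-1$ weights $b_{in}(Q)$, invoke Lemma~\ref{lem:mut-acyclic-orientations} for the $\ZZ$-biregularity of $\mu[(12)^k]$ on the first factor, and use the identity on the second. The paper's version is terser but structurally identical.
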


Before proving Theorem~\ref{th:long-cycle-generic}, we illustrate it by examining its simplest instance.  

\begin{example}
For $n$=4 and $k=1$, we get the mutation cycle of length~8 shown in 
Figure~\ref{fig:generic-8-cycle}, with $a,b,c,d,e,f\ge2$.
The family $\mathbf{Q}_{4,1}$ consists of all 4-vertex quivers~$Q$ of the form 
shown in the upper-left corner of Figure~\ref{fig:generic-8-cycle}. 
To see that this family is fully generic, we will verify that the map $(a,b,c,d,e,f)\mapsto Q$ 
gives a $\ZZ$-biregular parametrization of $\mathbf{Q}_{4,1}$ by the lattice points in~$\ZZ_{\ge2}^6$. 
This amounts to checking that the inverse map $(b_{ij})\mapsto (a,b,c,d,e,f)$,
where $(b_{ij})=B(Q)$ (see Definition~\ref{def:B(Q)}), is given by polynomials over~$\ZZ$. 
It is straightforward to check that, indeed,
\begin{align*}
a&=b_{12}, \quad \\
b&=-b_{23}+b_{12}b_{31}, \quad \\
c&=b_{34},\quad \\
d&=b_{31}+b_{12}b_{23}-b_{12}^2b_{31}, \quad \\
e&=b_{24}, \quad \\
f&=b_{14}. 
\end{align*}
\end{example}

\begin{proof}[Proof of Theorem~\ref{th:long-cycle-generic}]
We will prove this theorem by constructing a $\ZZ$-biregular parametrization 
$\ZZ_{\ge2}^{\binom{n}{2}} \to \mathbf{Q}_{n,k}$. 
The parameters of this parametrization are: 
\begin{itemize}[leftmargin=.2in]
\item 
the weights $b_{ij}(R)$ ($1\le i< j\le n-1$) of the subquiver~$\tilde R$, and 
\item
the numbers $b_{in}(Q)$ for $i\in [1,n-1]$. 
\end{itemize}
(These are precisely the parameters $q_{ij}$ in Theorem~\ref{thm:Summary}.)
By Lemma~\ref{lem:mut-acyclic-orientations}, the map 
\begin{align*}
\mu[(12)^k]: \mathbf{Acyc}_{n-1} &\longrightarrow \mu[(12)^k](\mathbf{Acyc}_{n-1}) \\
\tilde R &\longrightarrow Q|_{[1,n-1]}
\end{align*}
is $\ZZ$-biregular. 
Adding the $n-1$ parameters $b_{in}(Q)$, we obtain the claim. 
\end{proof}

\pagebreak[3]

It is not hard to show that for $n\ge5$ and $k\ge1$, 
different choices of $\binom{n}{2}$ parameters in Theorem~\ref{thm:Summary}/\ref{thm:GeneralSemiCycles} 
produce mutation cycles that are different 
from each other even if we allow rotating the cycle, reversing the direction along it, 
treating quivers up to isomorphism, and/or global reversal of arrows. 
The $n=4$ case is a bit different: 

\begin{proposition}
\label{pr:double-cycle}
Let $n=4$ and $k\ge1$. 
Let $Q=Q(a,b,c,d,e,f)$ denote the quiver constructed as in Theorem~\ref{thm:GeneralSemiCycles}, 
with parameters
\begin{equation*}
\begin{array}{lll}
b_{12}(\tilde R)\!=\!a, \quad & b_{13}(\tilde R)\!=\!d,   \quad & b_{14}(Q)\!=\!f, \\[2pt]
& b_{23}(\tilde R)\!=\!b, \quad & b_{24}(Q)\!=\!e, \\[2pt]
& & b_{34}(Q)\!=\!c. 
\end{array}
\end{equation*}
Let $\mathcal{C}(a,b,c,d,e,f)$ be the mutation cycle~\eqref{eq:GeneralSemiCycle} based at $Q(a,b,c,d,e,f)$, cf.\ Figure~\ref{fig:generic-8-cycle}. 
Transform this mutation cycle as follows: 
\begin{itemize}[leftmargin=.2in]
\item[-]
Move the base point to the opposite quiver~$Q^{(2k+2)}$ (cf.\ Figure~\ref{fig:GeneralSemiCycles-notation}).
\item[-]
Reverse the arrows of all quivers.
\item[-]
Relabel their vertices using the permutation $1\leftrightarrow2$, $3\leftrightarrow4$.
\item[-]
Relabel the mutations in the same way.
\item[-]
Reverse the direction along the cycle from clockwise to counterclockwise.
\end{itemize}
Then the resulting mutation cycle is $\mathcal{C}(a,f,c,e,d,b)$. 
\end{proposition}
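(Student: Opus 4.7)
The plan is to verify separately that the described transformation carries the word of mutation indices of $\mathcal{C}(a,b,c,d,e,f)$ to that of the Theorem~\ref{thm:GeneralSemiCycles} cycle at $Q(a,f,c,e,d,b)$, and that it carries the basepoint correspondingly. The first task is combinatorial bookkeeping: the clockwise mutation word of the original cycle from $Q$ (for $n=4$) is $4,(12)^k,3,2,1,(21)^k$, so rotating the basepoint to $Q^{(2k+2)}$ rewrites this clockwise word as the cyclic rotation $(21)^{k+1},4,(12)^k,3$. Reversing the traversal direction gives the counterclockwise word $3,(21)^k,4,(12)^{k+1}$, and applying the index permutation $\sigma=(12)(34)$ letter by letter turns this into $4,(12)^k,3,(21)^{k+1}=4,(12)^k,3,2,1,(21)^k$, the original Theorem~\ref{thm:GeneralSemiCycles} word.

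The core of the proof is identifying the transformed basepoint $\tilde Q:=\sigma(\bar Q^{(2k+2)})$ (where $\bar{\,\cdot\,}$ denotes arrow reversal) with $Q(a,f,c,e,d,b)$. Since vertex~$3$ is a sink in $R=Q^{(2k+1)}$ by Lemma~\ref{lem:oneSinkSource}, $\mu[3]$ is a sink mutation; consequently $Q^{(2k+2)}|_{124}=R|_{124}$ and $Q^{(2k+2)}|_{[1,3]}$ is just the reorientation of $\tilde R$ with~$3$ as source. Proposition~\ref{pr:3VertexAlternatingMutations}, applied to the acyclic triangle $Q^{(1)}|_{124}$ (elbow at~$1$, with initial weights $b_{12}=a$, $b_{41}=f$, $b_{42}=e$) under $\mu[(21)^k]$, yields
\[
b_{12}(R)=a,\qquad b_{24}(R)=\h_{2k-1}(a)f+\h_{2k-2}(a)e,\qquad b_{41}(R)=\h_{2k}(a)f+\h_{2k-1}(a)e.
\]
Combining these with the sink mutation $\mu[3]$ and then reading off $\tilde Q$ via arrow reversal and the action of~$\sigma$ produces $b_{12}(\tilde Q)=a$, $b_{14}(\tilde Q)=b$, $b_{24}(\tilde Q)=d$, $b_{34}(\tilde Q)=c$, $b_{13}(\tilde Q)=-(\h_{2k-1}(a)f+\h_{2k-2}(a)e)$, and $b_{23}(\tilde Q)=\h_{2k}(a)f+\h_{2k-1}(a)e$.

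On the other side, $Q(a,f,c,e,d,b)$ by construction has $b_{14}=b$, $b_{24}=d$, $b_{34}=c$ (matching $\tilde Q$ immediately), and its $[1,3]$-subquiver is $\mu[(12)^k](\tilde R')$, where $\tilde R'$ is the acyclic triangle with $b_{12}=a$, $b_{13}=e$, $b_{23}=f$. A parallel Chebyshev computation---governed by the same recurrence $p_{j+1}=ap_j-p_{j-1}$ as Proposition~\ref{pr:3VertexAlternatingMutations}---applied to $\mu[(12)^k](\tilde R')$ produces $b_{12}=a$, $b_{13}=-(\h_{2k-1}(a)f+\h_{2k-2}(a)e)$, and $b_{23}=\h_{2k}(a)f+\h_{2k-1}(a)e$, exactly matching the formulas above, so $\tilde Q=Q(a,f,c,e,d,b)$. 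The main obstacle is keeping vertex identifications and signs consistent between the two triangular computations; once one notices that the same Chebyshev recurrence with the same parameter $a=b_{12}$ drives both sides, the matching of coefficients is automatic.
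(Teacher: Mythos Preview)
Your proof is correct and follows precisely the approach the paper indicates: the paper omits the proof, saying only that it ``is a straightforward application of Proposition~\ref{pr:3VertexAlternatingMutations},'' and that is exactly what you do---the word-transformation bookkeeping is routine, and the identification of the basepoint reduces to two parallel Chebyshev computations via Proposition~\ref{pr:3VertexAlternatingMutations} (one for $R|_{124}$ starting from $Q^{(1)}|_{124}$, one for $Q(a,f,c,e,d,b)|_{123}$ starting from~$\tilde R'$), which match coefficient by coefficient. One very small point: that vertex~$3$ is a sink in~$R$ is most directly seen from Lemma~\ref{lem:Sinks} (or the explicit description in the proof of Lemma~\ref{lem:oneSinkSource}) rather than from the statement of Lemma~\ref{lem:oneSinkSource} alone.
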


We omit the proof, which is a straightforward application of Proposition~\ref{pr:3VertexAlternatingMutations}. 

\begin{example}
Consider the case $k=1$ of Proposition~\ref{pr:double-cycle}. 
In this case, the quiver $Q^{(2k+2)}=Q^{(4)}$ appears in the bottom-right corner of Figure~\ref{fig:generic-8-cycle}. 
Changing the labels and reversing all arrows produces the following quiver:
\begin{equation*}
\qquad 
\begin{tikzcd}[arrows={-stealth}, sep=6em]
  1  \arrow[r,  "a"]
  & 2  \arrow[d,  "fa^2-f+ae"]
  \\
   4 \arrow[ur, "d", swap, bend left=12, near start, outer sep=-1.8, stealth-] \arrow[u, "b", stealth-] \arrow[r, "c", swap, stealth-]  
   & 3  \arrow[ul, "e+af", swap, bend left=12, very near end, outer sep=-1.8]
\end{tikzcd}
\end{equation*}
Swapping $b\leftrightarrow f$ and $d\leftrightarrow e$ everywhere recovers the original quiver~$Q$
shown in the upper-left corner of Figure~\ref{fig:generic-8-cycle}, in agreement with Proposition~\ref{pr:double-cycle}. 
In other words, all our theorems about~$Q$ apply to~$Q^{(4)}$ as well, after a suitable relabeling. 
\end{example}

\begin{theorem}
For $n=4$ and $k \geq 1,$ let $Q$ be a quiver constructed as in Theorem~\ref{thm:GeneralSemiCycles}. The mutation cycle described in Theorem~\ref{thm:GeneralSemiCycles} is the unique simple mutation cycle in its mutation class.
\end{theorem}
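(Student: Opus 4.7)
The plan is to show that the mutation graph of the class $[Q]$ consists of the cycle $\mathcal{C}$ from Theorem~\ref{thm:GeneralSemiCycles} with a rooted tree attached at each of its $4k+4$ vertices; uniqueness then follows, since a minimal mutation cycle (Definition~\ref{def:minimal-cycle}) is by definition a simple cycle, and in a ``cycle with trees attached'' the only simple cycle is the underlying $\mathcal{C}$.

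Most of the required tree structure is already supplied by Lemma~\ref{lem:cycle-exits}: at every quiver $L^{(\ell)}$ with $0 < \ell \le 4k$, any mutation not continuing along $\mathcal{C}$ is an exit (Definition~\ref{def:transient}), and hence produces a tree via Lemma~\ref{lem:exit-leads-to-tree}. What remains is to verify exit-ness at the four \emph{bottom-rim} quivers not covered by Lemma~\ref{lem:cycle-exits}, namely $R=L^{(4k+1)}$, $Q^{(2k+2)}=L^{(4k+2)}$, $Q^{(2k+3)}=L^{(4k+3)}$, and $L=L^{(0)}$. At each such $Q^{(j)}$, two mutations lie on $\mathcal{C}$, so one must confirm that the remaining two off-cycle vertices are exits.

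To do so, for each off-cycle pair $(Q^{(j)}, v)$ I would apply Proposition~\ref{pr:weak-is-exit} by verifying its three conditions \eqref{eq:Q-ascent-3}--\eqref{eq:Q-not-apex}, using the enumeration of cyclic 3-vertex subquivers from Lemma~\ref{lem:orientations} together with the descent information from Lemmas~\ref{lem:R12nSubquiv}--\ref{lem:Q1inOr2inSubquiv}. For instance, $R$ has a unique cyclic 3-subquiver $R|_{124}$ with descent $2$, so both off-cycle vertices $1$ and $4$ are ascents there; $R$ is itself a vortex, but its apex is the sink vertex~$3$ (not $1$ or $4$), and neither $1$ nor $4$ is a sink or source of $R$. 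The quiver $Q^{(2k+2)}$ is handled symmetrically (it too is a vortex with apex~$3$, and unique cyclic subquiver $|_{124}$). The quivers $L$ and $Q^{(2k+3)}$ each have \emph{two} cyclic 3-subquivers sharing two common vertices, so no vertex can serve as a vortex apex and the quiver contains no vortex at all; the remaining conditions \eqref{eq:Q-ascent-3} and \eqref{eq:Q-not-sink/source} follow from direct inspection of the orientations and descents. Combined with the distinctness of the $4k+4$ quivers on $\mathcal{C}$ (Theorem~\ref{thm:GeneralSemiCycleDistinct}), this completes the argument.

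The main obstacle is this last step of the case analysis, since the combinatorial type of $L$ (and of $Q^{(2k+3)}$) depends subtly on~$k$: for example, the pair of cyclic 3-subquivers of $L$ is $\{L|_{134}, L|_{234}\}$ when $k=1$ but $\{L|_{124}, L|_{134}\}$ when $k \ge 2$, so the verification of \eqref{eq:Q-ascent-3} must split into cases. In every case, however, the three exit conditions of Proposition~\ref{pr:weak-is-exit} are satisfied, and the argument then concludes as described above.
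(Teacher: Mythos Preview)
Your approach is correct and would complete the proof, but it differs from the paper's argument. The paper avoids the four-quiver case analysis on the bottom rim entirely by exploiting the hidden symmetry recorded in Proposition~\ref{pr:double-cycle}: for $n=4$, the quiver $Q^{(2k+2)}$ is itself (after the relabeling $1\leftrightarrow 2$, $3\leftrightarrow 4$ and a global arrow reversal) a quiver of the type described in Theorem~\ref{thm:GeneralSemiCycles}. Hence Lemma~\ref{lem:cycle-exits} applies \emph{a second time}, now centered at $Q^{(2k+2)}$, and yields the exit property for $2<j\le 4k+2$. Together with the first application (which gives $0\le j\le 2k$ and $2k+4<j\le 4k+3$), this covers every index once $k\ge 1$, with no residual casework.

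Your route---verifying \eqref{eq:Q-ascent-3}--\eqref{eq:Q-not-apex} by hand at $R$, $Q^{(2k+2)}$, $Q^{(2k+3)}$, and $L$---also works, and your identification of the cyclic $3$-subquivers and their descents (including the $k=1$ versus $k\ge 2$ split for $L$) is accurate. The trade-off is clear: the paper's argument is shorter and uniform in~$k$ but relies on the extra structural input of Proposition~\ref{pr:double-cycle}; your argument is self-contained using only Section~\ref{sec:exits} machinery, at the cost of the case analysis you flagged.
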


\begin{proof}
This mutation cycle is simple by Theorems~\ref{thm:GeneralSemiCycles} and~\ref{thm:GeneralSemiCycleDistinct}.

Let us show that every mutation away from the cycle is an exit. 
That is, we claim that for every $0 \leq j < 4k+4$ and every vertex $v \in [1,4]$, 
either $\T{v}{Q^{(j)}} = Q^{(j\pm1)}$ or $Q^{(j)}$ has exit $v$.
By Lemma~\ref{lem:cycle-exits} applied to $Q$ (respectively $Q^{(2k+2)}$), the claim holds for $2k+4 < j \leq 4k+3$ and $0 \leq j \leq 2k$ (respectively $2 < j \leq 4k+2$).
Since $k \ge 1$, the claim holds for all $j$.
\end{proof}

\newpage

\section{No cycles in exchange graphs}
\label{sec:no-seed-cycles}

It is natural to ask whether any of the mutation cycles discussed above give rise to cycles of seed mutations
(equivalently, cycles in the exchange graph of an associated cluster algebra). 
It turns out that they do not: 

\begin{theorem}
\label{th:no-seed-cycle}
Let $C$ be a mutation cycle of $n$-vertex quivers ($n\geq 2$) such that all quivers along the cycle have large weights. 
Then $C$ does not underlie a mutation cycle of seeds in the associated cluster algebra (with arbitrary coefficients).

In particular, the mutation cycle in Theorem~\ref{thm:GeneralSemiCycles} is never a mutation cycle of seeds.
\end{theorem}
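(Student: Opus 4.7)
The plan is a proof by contradiction, combined with a specialization argument. Suppose the mutation cycle $C$ of length $N>0$ lifts to a mutation cycle of seeds in $\mathcal{A}$. It suffices to handle trivial coefficients: any seed cycle in the cluster algebra with an arbitrary coefficient semifield $\mathbb{P}$ specializes, via the coefficient-trivializing ring homomorphism $\mathbb{P}\to\{1\}$, to a seed cycle in the trivial-coefficient cluster algebra. By the Laurent phenomenon and the Positivity Theorem, each cluster variable $x_i^{(t)}$ along the cycle lies in $\ZZ_{\ge 0}[x_1^{\pm 1},\dots,x_n^{\pm 1}]$, and the cycle condition gives $x_i^{(N)} = x_i = x_i^{(0)}$. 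Evaluating at $x_1=\dots=x_n=1$ produces positive integers $f_i^{(t)} := x_i^{(t)}(1,\dots,1)$ with $f_i^{(0)} = f_i^{(N)} = 1$, satisfying
\[
f_k^{(t+1)} f_k^{(t)} \;=\; \prod_{j} (f_j^{(t)})^{[b_{jk}^{(t)}]_+} + \prod_{j} (f_j^{(t)})^{[-b_{jk}^{(t)}]_+}
\]
whenever we mutate at $k$ at step $t$. The task reduces to ruling out such a positive-integer sequence under the hypothesis $|b_{ij}^{(t)}|\ge 2$.

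To do so, I would analyze the backward evolution starting from $f^{(N)}=\mathbf{1}$. The last mutation, at $v=i_N$, forces $f_v^{(N-1)}=2$ and $f_j^{(N-1)}=1$ for $j\ne v$ (both factors on the right-hand side collapse to $1$). The penultimate mutation, at $u=i_{N-1}\ne v$, then forces $f_u^{(N-2)} = 2^{|b_{vu}^{(N-2)}|} + 1 \ge 5$ by large weights, while $f_v^{(N-2)}$ stays equal to $2$. Iterating, each backward step either activates a previously trivial coordinate (producing a new $f_j \ge 5$) or strictly grows an already nontrivial coordinate. A suitable monovariant---for instance, the pair consisting of the number of vertices with $f_j^{(t)}>1$ and the value of $\prod_j f_j^{(t)}$, compared lexicographically---should be strictly monotone going backward, forcing $f^{(0)} \ne \mathbf{1}$ and contradicting $N>0$.

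The main obstacle will be making the monovariant precise in the delicate case of un-mutating at a vertex $w$ with $f_w^{(t+1)}>1$, when no other coordinate is strictly larger: naively, $f_w^{(t)} = (M_+^{(t+1)}+M_-^{(t+1)})/f_w^{(t+1)}$ could decrease the maximum. The large-weights hypothesis rescues the argument via the bound
\[
M_+^{(t+1)} + M_-^{(t+1)} \;\ge\; (f_j^{(t+1)})^{|b_{jw}^{(t+1)}|} + 1 \;\ge\; (f_j^{(t+1)})^2 + 1
\]
for any activated vertex $j\ne w$; combined with a short case analysis distinguishing whether $f_w^{(t+1)}$ equals~$1$ or exceeds~$1$, this ensures the product $\prod_j f_j^{(t)}$ grows strictly going backward once any coordinate has been activated. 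Tracing the argument all the way back to $t=0$ contradicts $f^{(0)}=\mathbf{1}$, completing the proof.
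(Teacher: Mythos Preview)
Your approach—specializing all initial cluster variables to $1$ and tracking the resulting positive integers $f_j^{(t)}$—is a legitimate alternative to the paper's degree-tracking argument, and the overall strategy is sound. However, the case analysis you outline does not close.

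The gap is exactly in the ``delicate case'' you yourself flag: un-mutating at a vertex $w$ with $f_w^{(t+1)}>1$. You invoke the bound $M_+ + M_- \ge (f_j^{(t+1)})^2 + 1$ for some activated $j\ne w$ and claim this, ``combined with a short case analysis,'' forces $\prod_j f_j$ to grow. But $(f_j^{(t+1)})^2 + 1$ only dominates $(f_w^{(t+1)})^2$ when $f_j^{(t+1)} \ge f_w^{(t+1)}$; if every activated vertex other than $w$ has a strictly smaller value, your inequality gives nothing, and the product can drop. You have not ruled this situation out, and the lexicographic pair (number of activated vertices, product) does not help: the first component can stay fixed while the second decreases.

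The clean fix is to prove, by induction on the step, that the variable just mutated is always the strict maximum:
\[
f_{i_\ell}^{(\ell)} \;>\; f_j^{(\ell-1)} \quad\text{for all } j \text{ and all } \ell\ge 1.
\]
Since $i_{\ell+1}\ne i_\ell$, the vertex $i_\ell$ is then always an activated neighbour with $f_{i_\ell}^{(\ell)} > f_{i_{\ell+1}}^{(\ell)}$; combined with $M_+ + M_- \ge (f_{i_\ell}^{(\ell)})^2 + 1$ this yields $f_{i_{\ell+1}}^{(\ell+1)} > f_{i_\ell}^{(\ell)}$, closing the induction and showing the maximum strictly increases forever—so the delicate case in fact never occurs. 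This is precisely the paper's argument transported from $\deg_{x_1}$ to values at $\mathbf{1}$: the induction is literally the same inequality $b\ge 2$ used in the same way. So once the gap is filled, your route is not genuinely different from the paper's; it is the same proof in a different valuation.
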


\begin{proof}
We replicate the ``tropical degeneration'' approach used in the proof of \cite[Theorem~5.1.1]{FWZ} and in many other places. 
Roughly, the idea is to track the degrees of cluster variables with respect to a particular initial variable, 
verifying that these degrees increase strictly as we go around the cycle. 

It is enough to consider the cluster algebra~$\mathcal{A}$ with trivial coefficients, as the general case will follow. 

Assume that our cycle $C$ is based at a quiver $Q$, with mutation sequence $i_1 i_2\cdots$: 
\begin{equation*}
Q=Q^{(0)} \mutation{i_1}
Q^{(1)} \mutation{i_2}
Q^{(2)} \mutation{i_3}
\cdots
.
\end{equation*}
Let $(Q,\xx)$ be the initial seed in~$\mathcal{A}$, with $\xx\!=\!(x_1,\dots,x_n)$ the initial cluster.
Let $(Q^{(\ell)},\xx^{(\ell)})$ be the seed obtained after $\ell$ mutations as above, with $\xx^{(\ell)}=(x^{(\ell)}_1,\dots,x^{(\ell)}_n)$. 
Then every $x^{(\ell)}_i$ is a Laurent polynomial in $x_1,\dots,x_n$. 
Assume, without loss of generality, that $i_1 \neq 1$. 
Let $\deg_1$ denote the degree with respect to the variable~$x_1$. 

We will argue by induction on $\ell$ that $\deg_1(x_{i_{\ell+1}}^{(\ell +1)}) > \deg_1(x_j^{(\ell)}) \geq 0$ for all~$j$.
This will imply that, as the degrees of cluster variables keep increasing, there may be no cycle in the exchange graph.

\emph{Base:} $\ell = 0$.
Since $i_1 \neq 1$, we have 
$x_{i_1}^{(1)} = \frac{ x_1^{b}m  + n}{ x_{i_1}}$, 
where $b=|b_{1i_1}(Q)|\ge 2$ 
and $m$ and $n$ are monomials in $x_2, \ldots, x_n$. 
Therefore 
\begin{equation*}
\deg_1(x_{i_1}^{(1)}) = b > 1 = \deg_1(x_1^{(0)}) \geq \deg_1(x_j^{(0)})
\end{equation*}
for all $j$.

\emph{Induction step.} 
Suppose that $\deg_1(x_{i_{\ell}}^{(\ell)}) > \deg_1(x_j^{(\ell-1)}) \geq 0$. 
Since $x_{j}^{(\ell)} = x_j^{(\ell-1)}$ for $j \neq i_{\ell}$, we conclude that $\deg_1(x_j^{(\ell)}) \geq 0$ for all~$j$.
Since $i_{\ell+1} \neq i_{\ell}$, it follows that 
\begin{equation*}
x_{i_{\ell+1}}^{(\ell+1)} = \frac{ (x_{i_{\ell}}^{(\ell)})^b\,m  + n}{ x_{i_{\ell+1}}^{(\ell)}},
\end{equation*}
where $b=|b_{i_{\ell}i_{\ell+1}}(Q^{(\ell)})|\ge2$ and $m$ and $n$ are monomials in $x_1^{(\ell)}, \ldots, x_n^{(\ell)}$. 
Since $\deg_1(x_j^{(\ell)}) \geq 0$ for all $j$, we have $\deg_1(m)\ge0$ and $\deg_1(n) \geq 0$.
Consequently  
\begin{align*}
\deg_1(x_{i_{\ell+1}}^{(\ell+1)}) 
&\geq b \deg_1(x_{i_{\ell}}^{(\ell)}) - \deg_1(x_{i_{\ell+1}}^{(\ell)}) \\
&=b \deg_1(x_{i_{\ell}}^{(\ell)}) - \deg_1(x_{i_{\ell+1}}^{(\ell-1)})
> \deg_1(x_{i_{\ell}}^{(\ell)}) 
\geq \deg_1(x_j^{(\ell)})
\end{align*}
for all $j$, as desired.
\end{proof}

\newpage

\section{A gallery of mutation cycles}
\label{sec:moreCycles}

In this section, we discuss several additional families of mutation cycles. 

\begin{example}
\label{eg:rosette}
Figure~\ref{fig:generic-12-cycle-cyclic} shows a fully generic mutation cycle of length~12. 

Let $\mathbf{C}=\mathbf{C}(a,b,c,d,e,f)$ denote the mutation class containing this mutation cycle. 
In can be shown that (a) this mutation cycle is unique within~$\mathbf{C}$, 
(b) none of the quivers in $\mathbf{C}$ has a sink/source (in particular, none is acyclic), but 
(c) every proper subquiver of every quiver in~$\mathbf{C}$ is mutation-acyclic. 
\end{example}

\newcommand{\sepps}{3.5em}
\newcommand{\outsepd}{-1}
\newcommand{\outsepe}{-1.5}

\begin{figure}[ht]
\begin{equation*}
\hspace{-10pt}
\begin{array}{ccccccccc}
\begin{tikzcd}[arrows={-stealth}, sep=\sepps]
  1  \arrow[r,  "a", -stealth] \arrow[dr, "d" near start, outer sep=\outsepd, -stealth]  \arrow[d, "f", stealth-]  
  & 2  \arrow[d, swap, "b", -stealth] \arrow[dl, "\overline{e}", near end, outer sep=\outsepe, stealth-]
  \\
   4 
   & 3  \arrow[l, "c_{\ssquare}", swap, -stealth] 
\end{tikzcd}
 & \shortmutation{1}
 &
\begin{tikzcd}[arrows={-stealth}, sep=\sepps]
  1  \arrow[r,  "a", stealth-] \arrow[dr, "d" near start, outer sep=\outsepd, stealth-]  \arrow[d, "f", -stealth]  
  & 2  \arrow[d, swap, "b", -stealth] \arrow[dl, "\widehat{e}", near end, outer sep=\outsepe, stealth-]
  \\
   4 
   & 3  \arrow[l, "c_\circ", swap, -stealth] 
\end{tikzcd}
 & \shortmutation{2}
 & 
\begin{tikzcd}[arrows={-stealth}, sep=\sepps]
  1  \arrow[r,  "a"] \arrow[dr, "d" near start, outer sep=\outsepd, stealth-]  \arrow[d, "f_\bullet", stealth-]  
  & 2  \arrow[d, swap, "b", stealth-] \arrow[dl, "\widehat{e}", near end, outer sep=\outsepe]
  \\
   4 
   & 3  \arrow[l, "c", swap, stealth-] 
\end{tikzcd}
 & \shortmutation{3}
 &
\begin{tikzcd}[arrows={-stealth}, sep=\sepps]
  1  \arrow[r,  "a"] \arrow[dr, "d" near start, outer sep=\outsepd]  \arrow[d, "f_{\ssquare}", stealth-]  
  & 2  \arrow[d, swap, "b"] \arrow[dl, "e", near end, outer sep=\outsepe]
  \\
   4 
   & 3  \arrow[l, "c", swap] 
\end{tikzcd}
\\
\\[-8pt]
\vmutation{2}   & & & & & & \vmutation{2} \\[-6pt]
\\
\begin{tikzcd}[arrows={-stealth}, sep=\sepps]
  1  \arrow[r,  "a", stealth-] \arrow[dr, "\widehat{d}" near start, outer sep=\outsepd, -stealth]  \arrow[d, "f", stealth-]  
  & 2  \arrow[d, swap, "b", stealth-] \arrow[dl, "\overline{e}", near end, outer sep=\outsepe, -stealth]
  \\
   4 
   & 3  \arrow[l, "c_\bullet", swap, -stealth] 
\end{tikzcd}
 &\multicolumn{2}{c}{
 \begin{aligned} 
 c_\bullet &\!=\! abf\!+\!df\!-\!c\\
 c_{\nsquare}  &\!=\! abf\!+\!b\overline{e}\!+\!df\!-\!c\\
c_\circ &\!=\! abf\!+\!b\overline{e}\!-\!c
\end{aligned}
\hspace{-20pt}
 } 
& &
\multicolumn{2}{c}{
\hspace{-15pt}
 \begin{aligned}
  f_\bullet &\!=\! abc\!+\!ae\!-\!f \\
 f_{\nsquare} &\!=\! abc\!+\!ae\!+\!cd \!-\!f  \\
 f_\circ &\!=\! abc\!+\!cd \!-\!f 
 \end{aligned}
 }
 &  
\begin{tikzcd}[arrows={-stealth}, sep=\sepps]
  1  \arrow[r,  "a", stealth-] \arrow[dr, "\widehat{d}", near start, outer sep=\outsepd]  \arrow[d, "f_\circ", stealth-]  
  & 2  \arrow[d, swap, "b",stealth-] \arrow[dl, "e", near end, outer sep=\outsepe,stealth-]
  \\
   4 
   & 3  \arrow[l, "c", swap] 
\end{tikzcd}
 & \\
 \\[-8pt]
\vmutation{1}   & 
&\multicolumn{1}{c}{\begin{aligned} \overline{e} &\!=\! e \!+\! bc \!-\!af \\
\widehat{e}&\!=\!e\!+\!bc
\end{aligned}} & & 
\multicolumn{1}{c}{\begin{aligned} \overline{d} &\!=\! d \!+\! ab \!-\!cf \\
\widehat{d} &\!=\!d\!+\!ab \end{aligned} }& &\vmutation{3} \\[-6pt]
\\
\begin{tikzcd}[arrows={-stealth}, sep=\sepps]
  1  \arrow[r,  "a", -stealth] \arrow[dr, "\widehat{d}" near start, outer sep=\outsepd, stealth-]  \arrow[d, "f", -stealth]  
  & 2  \arrow[d, swap, "b_\circ", -stealth] \arrow[dl, "\overline{e}", near end, outer sep=\outsepe, -stealth]
  \\
   4 
   & 3  \arrow[l, "c", swap, stealth-] 
\end{tikzcd}
 &\multicolumn{2}{c}{
 \begin{aligned}
  b_\bullet &\!=\! acf\!+\!c\overline{e}\!-\!b \\
 b_{\nsquare} &\!=\! acf\!+\!a\overline{d}\! +\!c\overline{e}\!-\!b\\
b_\circ &\!=\! acf\!+\!a\overline{d} \!-\!b 
 \end{aligned}
 \hspace{-20pt}
}
& &\multicolumn{2}{c}{
\hspace{-15pt}
 \begin{aligned}
  a_\bullet &\!=\! bcf\!+\!b\overline{d}\!-\!a \\
 a_{\nsquare} &\!=\! bcf\!+\!b\overline{d}\!+\!ef\!-\!a \\
 a_\circ &\!=\! bcf\!+\!ef\!-\!a
\end{aligned}
 }
 
&  
\begin{tikzcd}[arrows={-stealth}, sep=\sepps]
  1  \arrow[r,  "a_\bullet", -stealth] \arrow[dr, "\widehat{d}", near start, outer sep=\outsepd, stealth-]  \arrow[d, "f", -stealth]  
  & 2  \arrow[d, swap, "b",-stealth] \arrow[dl, "e", near end, outer sep=\outsepe,stealth-]
  \\
   4 
   & 3  \arrow[l, "c", swap, stealth-] 
\end{tikzcd}
 & \\ 
\\[-8pt]
\vmutation{4}   & & & & & & \vmutation{4} \\[-6pt]
\\
\begin{tikzcd}[arrows={-stealth}, sep=\sepps]
  1  \arrow[r,  "a", -stealth] \arrow[dr, "\overline{d}", near start, outer sep=\outsepd, stealth-]  \arrow[d, "f", stealth-]  
  & 2  \arrow[d, swap, "b_{\ssquare}",-stealth] \arrow[dl, "\overline{e}", near end, outer sep=\outsepe,stealth-]
  \\
   4 
   & 3  \arrow[l, "c", swap, -stealth] 
\end{tikzcd}
& \shortmutation 1
& 
\begin{tikzcd}[arrows={-stealth}, sep=\sepps]
  1  \arrow[r,  "a", stealth-] \arrow[dr, "\overline{d}", near start, outer sep=\outsepd, -stealth]  \arrow[d, "f", -stealth]  
  & 2  \arrow[d, swap, "b_\bullet",-stealth] \arrow[dl, "\widehat{e}", near end, outer sep=\outsepe,stealth-]
  \\
   4 
   & 3  \arrow[l, "c", swap, -stealth] 
\end{tikzcd}
& \shortmutation 4 
&  
\begin{tikzcd}[arrows={-stealth}, sep=\sepps]
  1  \arrow[r,  "a_\circ", -stealth] \arrow[dr, "\overline{d}", near start, outer sep=\outsepd, -stealth]  \arrow[d, "f", stealth-]  
  & 2  \arrow[d, swap, "b",stealth-] \arrow[dl, "\widehat{e}", near end, outer sep=\outsepe,-stealth]
  \\
   4 
   & 3  \arrow[l, "c", swap, stealth-] 
\end{tikzcd}
& \shortmutation 3 
&  
\begin{tikzcd}[arrows={-stealth}, sep=\sepps]
  1  \arrow[r,  "a_{\ssquare}", -stealth] \arrow[dr, "\overline{d}", near start, outer sep=\outsepd, stealth-]  \arrow[d, "f", stealth-]  
  & 2  \arrow[d, swap, "b",-stealth] \arrow[dl, "e", near end, outer sep=\outsepe,-stealth]
  \\
   4 
   & 3  \arrow[l, "c", swap, -stealth] 
\end{tikzcd}
\end{array}
\hspace{-22pt}{\ }
\end{equation*}
\vspace{-.1in}
\caption{
Fully generic mutation cycle of length~12 involving 4-vertex quivers.  
The integer parameters $a,b,c,d,e,f$ can be chosen as follows. 
First pick any $a,b,c,f\ge 2$. 
Then choose $d\ge\max(2, 2-ab+cf)$ and $e\ge\max(2,2+af-bc)$
to make sure that $\overline{d} = d +ab-cf \geq 2$ and $\overline{e} = e+bc-af \geq 2$.
The formulas expressing the weights in terms of $a,b,c,d,\overline{d}, e, \overline{e}, f$ 
appear in the center of the diagram.
}
\label{fig:generic-12-cycle-cyclic}
\vspace{-.2in}
\end{figure}
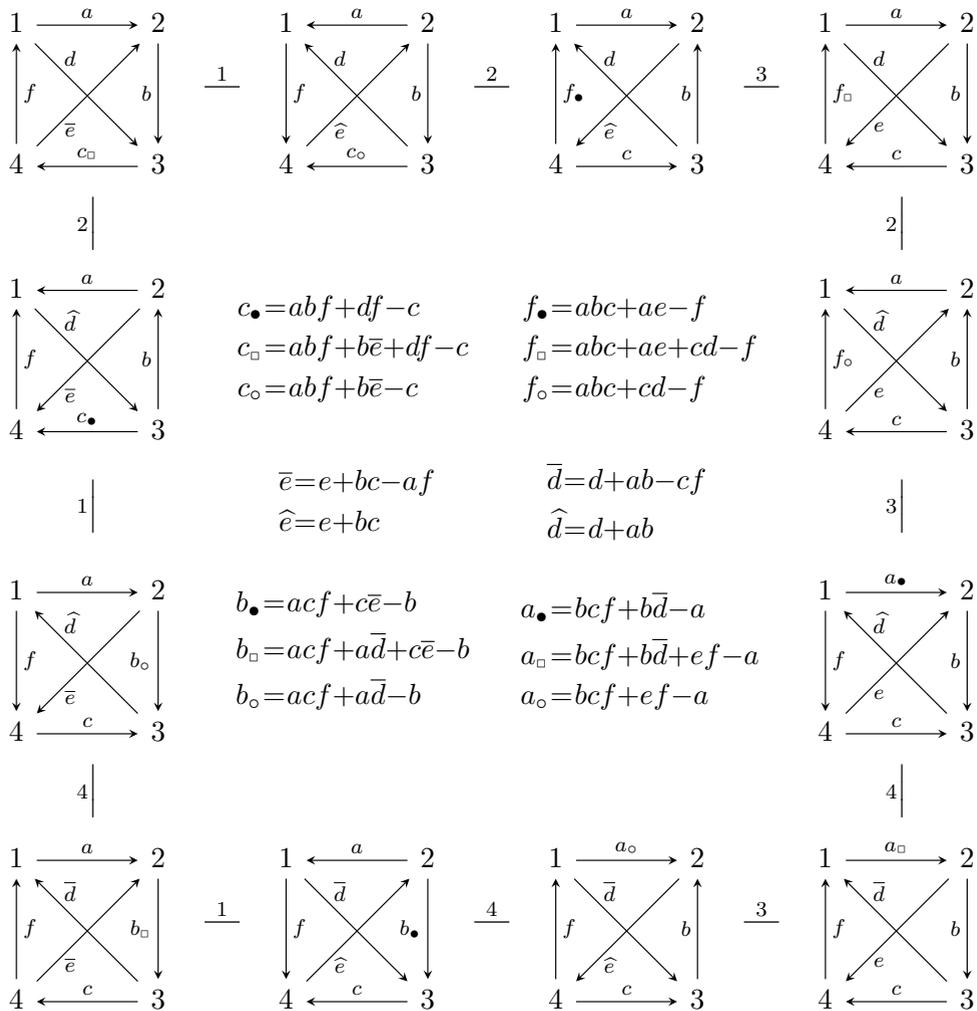

\begin{example}
\label{eg:rosette-2}
Figure~\ref{fig:generic-12-cycle-fractured-cyclic} shows another fully generic mutation cycle of length~12. 
Properties (a)--(c) from Example~\ref{eg:rosette} hold in this example as well. 

\end{example}

\newcommand{\snls}{-stealth}

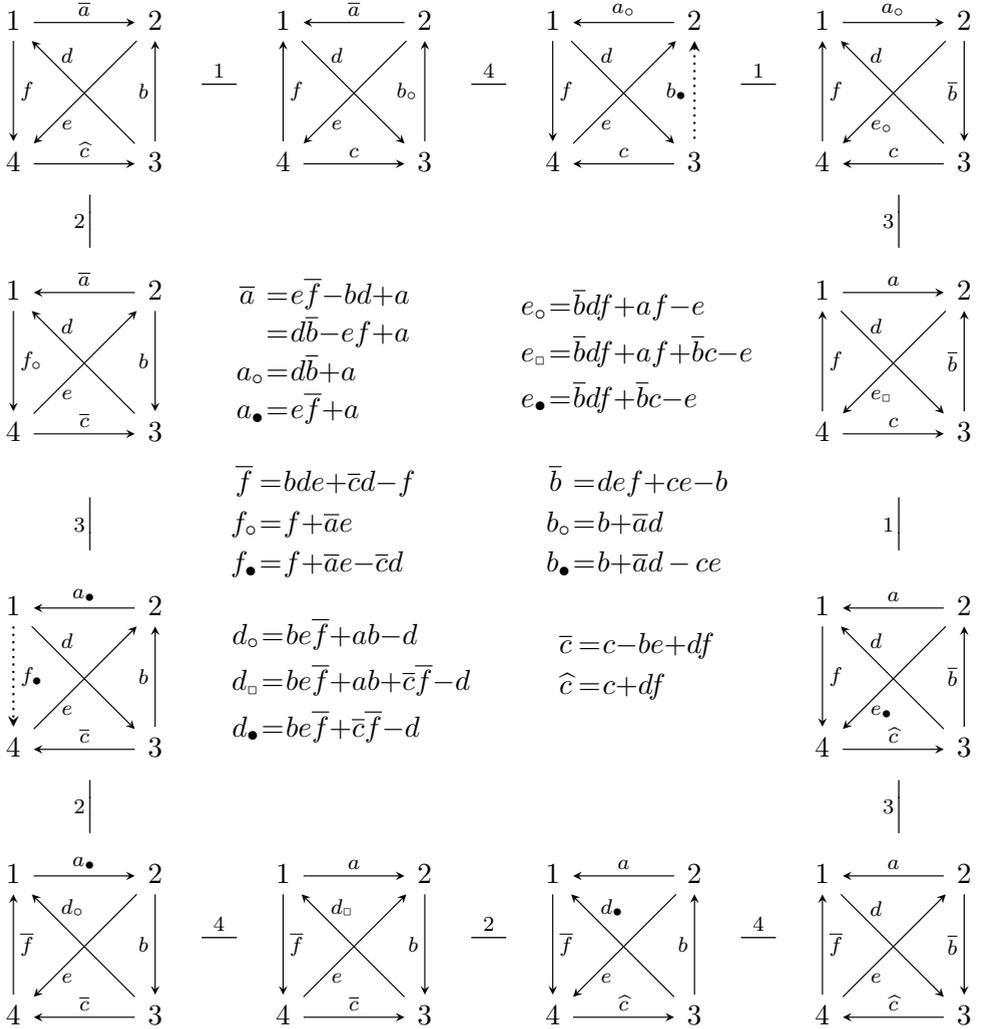
\begin{figure}[ht]
\begin{equation*}
\hspace{-10pt}
\begin{array}{ccccccccc}

\begin{tikzcd}[arrows={-stealth}, sep=\sepps]
  1  \arrow[r,  "\overline{a}", -stealth] \arrow[dr, "d" near start, outer sep=\outsepd, stealth-]  \arrow[d, "f", -stealth]  
  & 2  \arrow[d, swap, "b", stealth-] \arrow[dl, "e", near end, outer sep=\outsepe, -stealth]
  \\
   4 
   & 3  \arrow[l, "\widehat{c}", swap, stealth-] 
\end{tikzcd}
 & \shortmutation{1}
 &
 \begin{tikzcd}[arrows={-stealth}, sep=\sepps]
  1  \arrow[r,  "\overline{a}", stealth-] \arrow[dr, "d" near start, outer sep=\outsepd, -stealth]  \arrow[d, "f", stealth-]  
  & 2  \arrow[d, swap, "b_\circ", stealth-] \arrow[dl, "e", near end, outer sep=\outsepe, -stealth]
  \\
   4 
   & 3  \arrow[l, "c", swap, \snsl] 
\end{tikzcd}
 & \shortmutation{4}
 & 
 
\begin{tikzcd}[arrows={-stealth}, sep=\sepps]
  1  \arrow[r,  "a_{\circ}", \snsl] \arrow[dr, "d" near start, outer sep=\outsepd, \snls]  \arrow[d, "f", \snls]  
  & 2  \arrow[d, swap, "b_\bullet", thick, dotted, \snsl] \arrow[dl, "e", near end, outer sep=\outsepe, \snsl]
  \\
   4 
   & 3  \arrow[l, "c", swap, \snls] 
\end{tikzcd}
 
 & \shortmutation{1}
 &
\begin{tikzcd}[arrows={-stealth}, sep=\sepps]
  1  \arrow[r,  "a_{\circ}", \snls] \arrow[dr, "d" near start, outer sep=\outsepd, \snsl]  \arrow[d, "f", \snsl]  
  & 2  \arrow[d, swap, "\overline{b}", \snls] \arrow[dl, "e_{\circ}", near end, outer sep=\outsepe, \snls]
  \\
   4 
   & 3  \arrow[l, "c", swap, \snls] 
\end{tikzcd}
\\
\\[-8pt]
\vmutation{2}   &\multicolumn{2}{c}{
\begin{aligned}
 \end{aligned}}&
  & & & \vmutation{3} \\[-6pt]
\\

\begin{tikzcd}[arrows={-stealth}, sep=\sepps]
  1  \arrow[r,  "\overline{a}", \snsl] \arrow[dr, "d" near start, outer sep=\outsepd, \snsl]  \arrow[d, "f_\circ", \snls]  
  & 2  \arrow[d, swap, "b", \snls] \arrow[dl, "e", near end, outer sep=\outsepe, \snsl]
  \\
   4 
   & 3  \arrow[l, "\overline{c}", swap, \snsl] 
\end{tikzcd}

 &\multicolumn{2}{c}{
 \begin{aligned}
 \overline{a}\,\,&\!=\! 
 e\overline{f}\!-\!bd\!+\!a\\[-3pt]
 &\!=\! d\overline{b}\!-\!ef\!+\!a \\[-3pt]
 a_\circ&\!=\! d\overline{b}\!+\!a \\[-3pt]
 a_\bullet&\!=\! e\overline{f}\!+\!a
 \end{aligned}
 }
& &\multicolumn{2}{c}{
\hspace{-15pt}
\begin{aligned}
 e_{\circ} &\!=\!  \overline{b}df  \!+\!af\!-\!e\\
 e_{\nsquare}
 &\!=\! \overline{b}df\!+\!af\!+\!\overline{b}c\! -\! e\\
 e_\bullet &\!=\! \overline{b}df\!+\!\overline{b}c\! -\! e\\
 \end{aligned}
 }
 
&  
 
\begin{tikzcd}[arrows={-stealth}, sep=\sepps]
  1  \arrow[r,  "a", \snls] \arrow[dr, "d" near start, outer sep=\outsepd, \snls]  \arrow[d, "f", \snsl]  
  & 2  \arrow[d, swap, "\overline{b}", \snsl] \arrow[dl, "e_{\ssquare}", near end, outer sep=\outsepe, \snls]
  \\
   4 
   & 3  \arrow[l, "c", swap, \snsl] 
\end{tikzcd}
 & \\
 \\[-8pt]
\vmutation{3}   & 
\multicolumn{2}{c}{
\begin{aligned} 
 \overline{f} \,&\!=\! bde\!+\!\overline{c}d\!-\!f\\
f_\circ&\!=\!f\! +\! \overline{a}e\\ 
f_\bullet &\!=\! f \!+\! \overline{a} e\! - \! \overline{c}d\\
 \end{aligned} }& &
 \multicolumn{2}{c}{
\hspace{-15pt}
\begin{aligned}
 \overline{b}\,\, &\!=\! def\!+\!ce\!-\!b\\
b_\circ&\!=\! b \!+\! \overline{a}d \\
b_\bullet&\!=\!b\!+\!\overline{a} d-ce\\
 \end{aligned}
}&  
\vmutation{1}\\
\begin{tikzcd}[arrows={-stealth}, sep=\sepps]
  1  \arrow[r,  "a_\bullet", \snsl] \arrow[dr, "d" near start, outer sep=\outsepd, \snls]  \arrow[d, "f_\bullet", thick, dotted, \snls]  
  & 2  \arrow[d, swap, "b", \snsl] \arrow[dl, "e", near end, outer sep=\outsepe, \snsl]
  \\
   4 
   & 3  \arrow[l, "\overline{c}", swap, \snls] 
\end{tikzcd}
 &\multicolumn{2}{c}{
\hspace{5pt}  \begin{aligned}
 \ \\[-5pt]
 d_\circ &\!=\! b e\overline{f} \! + \!ab\! -\!d\\
 d_{\nsquare} &\!= \! b e\overline{f} \! + \!ab\!+\! \overline{c} \overline{f} \! -\!d \\
 d_\bullet &\!=\! b e\overline{f} \!+\! \overline{c} \overline{f}\! -\!d
 \end{aligned}
 \hspace{-26pt}
}
& &\multicolumn{2}{c}{
\hspace{-15pt}
 \begin{aligned}
 \overline{c} \,&\!=\! c \!-\! be\!+\! df  \\
 \widehat{c} \,&\!=\! c \!+\! df\\ 
 \end{aligned}
 }
 
&  

\begin{tikzcd}[arrows={-stealth}, sep=\sepps]
  1  \arrow[r,  "a", \snsl] \arrow[dr, "d" near start, outer sep=\outsepd, \snsl]  \arrow[d, "f", \snls]  
  & 2  \arrow[d, swap, "\overline{b}", \snsl] \arrow[dl, "e_\bullet", near end, outer sep=\outsepe, \snls]
  \\
   4 
   & 3  \arrow[l, "\widehat{c}", swap, \snsl] 
\end{tikzcd}

 & \\ 
\\[-8pt]
\vmutation{2}   & & & & & & \vmutation{3} \\[-6pt]
\\
\begin{tikzcd}[arrows={-stealth}, sep=\sepps]
  1  \arrow[r,  "a_\bullet", \snls] \arrow[dr, "d_\circ" near start, outer sep=\outsepd, \snsl]  \arrow[d, "\overline{f}", \snsl]  
  & 2  \arrow[d, swap, "b", \snls] \arrow[dl, "e", near end, outer sep=\outsepe, \snls]
  \\
   4 
   & 3  \arrow[l, "\overline{c}", swap, \snls] 
\end{tikzcd}

& \shortmutation {4}
& 
\begin{tikzcd}[arrows={-stealth}, sep=\sepps]
  1  \arrow[r,  "a", \snls] \arrow[dr, "d_{\ssquare}" near start, outer sep=\outsepd, \snsl]  \arrow[d, "\overline{f}", \snls]  
  & 2  \arrow[d, swap, "b", \snls] \arrow[dl, "e", near end, outer sep=\outsepe, \snsl]
  \\
   4 
   & 3  \arrow[l, "\overline{c}", swap, \snsl] 
\end{tikzcd}
& \shortmutation 2 
&  
\begin{tikzcd}[arrows={-stealth}, sep=\sepps]
  1  \arrow[r,  "a", \snsl] \arrow[dr, "d_\bullet" near start, outer sep=\outsepd, \snsl]  \arrow[d, "\overline{f}", \snls]  
  & 2  \arrow[d, swap, "b", \snsl] \arrow[dl, "e", near end, outer sep=\outsepe, \snls]
  \\
   4 
   & 3  \arrow[l, "\widehat{c}", swap, \snsl] 
\end{tikzcd}
& \shortmutation 4
&  

\begin{tikzcd}[arrows={-stealth}, sep=\sepps]
  1  \arrow[r,  "a", \snsl] \arrow[dr, "d" near start, outer sep=\outsepd, \snls]  \arrow[d, "\overline{f}", \snsl]  
  & 2  \arrow[d, swap, "\overline{b}", \snls] \arrow[dl, "e", near end, outer sep=\outsepe, \snsl]
  \\
   4 
   & 3  \arrow[l, "\widehat{c}", swap, \snls] 
\end{tikzcd}

\end{array}
\hspace{-22pt}{\ }
\end{equation*}
\vspace{-.1in}
\caption{
Fully generic mutation cycle of length~12 involving 4-vertex quivers.  \linebreak[3]
The integer parameters $a,b,c,d,e,f$ can be chosen as follows. 
Begin by picking any ${b,d,e,f \geq 2}$. 
Choose $c\ge\max(2, 2+be-df)$
and then $a\ge \max(2, 2-d^2ef -c d e + bd+fe)$, 
ensuring that 
$\overline{a}=  d^2ef +c d e -bd-fe + a \geq 2$
and $\overline{c} = c-be+df \geq 2$.
\linebreak[3]
Then~${\overline{b}=def+ce-b=be^2+\overline{c}e-b\ge e\ge 2}$
and similarly $\overline{f}=bde+\overline{c}d-f\ge d\ge 2$. 
The~edge weights are expressed in terms these parameters 
as shown in the center of the diagram.
It is straightforward to check that these weights are positive, with the possible exception
of $b_\bullet$ and~$f_\bullet$ whose signs do not matter because we do not mutate 
at the vertices incident to an edge carrying one of those weights. 
}
\label{fig:generic-12-cycle-fractured-cyclic}
\vspace{-.2in}
\end{figure}

\clearpage

\newpage

\begin{example}
\label{eg:big-horseshoe}
Figure~\ref{fig:generic-10-cycle-allmutations} shows a fully generic mutation cycle of length~10. 
Like in our construction from Theorem~\ref{thm:GeneralSemiCycles}, there are two quivers with a sink/source at $n=4$ on top, identical sequences of mutations on the left and right sides,
and mutations at $3,2,1$ along the bottom rim (though their order is reversed).
Unlike in our main construction, the bottom rim has no sink/source mutations.
Surprisingly, the weights still line up and forms another generic cycle.
This mutation cycle again satisfies properties (a) and (c) (but not (b)) from Example~\ref{eg:rosette}.
\end{example}

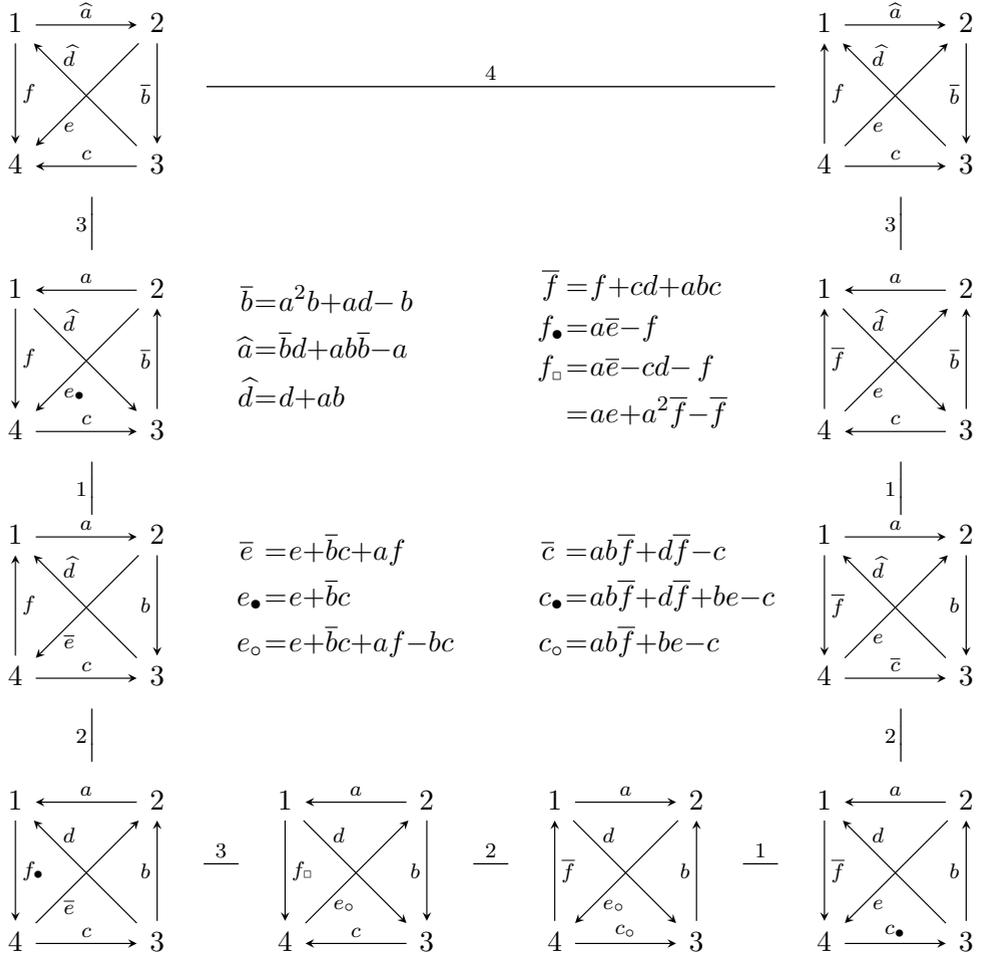
\begin{figure}[ht]
\begin{equation*}
\hspace{-10pt}
\begin{array}{ccccccccc}

\begin{tikzcd}[arrows={-stealth}, sep=\sepps]
  1  \arrow[r,  "\widehat{a}", -stealth] \arrow[dr, "\widehat{d}" near start, outer sep=\outsepd, stealth-]  \arrow[d, "f", -stealth]  
  & 2  \arrow[d, swap, "\overline{b}", \snls] \arrow[dl, "e", near end, outer sep=\outsepe, -stealth]
  \\
   4 
   & 3  \arrow[l, "c", swap, \snls] 
\end{tikzcd}
 & \multicolumn{5}{c}{ \stackrel{4}{\rule[.5ex]{19.5em}{0.5pt}}}
 &
\begin{tikzcd}[arrows={-stealth}, sep=\sepps]
  1  \arrow[r,  "\widehat{a}", -stealth] \arrow[dr, "\widehat{d}" near start, outer sep=\outsepd, stealth-]  \arrow[d, "f", \snsl]  
  & 2  \arrow[d, swap, "\overline{b}",\snls] \arrow[dl, "e", near end, outer sep=\outsepe, \snsl]
  \\
   4 
   & 3  \arrow[l, "c", swap, \snsl] 
\end{tikzcd}
\\
\\[-8pt]
\vmutation{3}   & & & & & & \vmutation{3} \\[-6pt]
\\

\begin{tikzcd}[arrows={-stealth}, sep=\sepps]
  1  \arrow[r,  "a", \snsl] \arrow[dr, "\widehat{d}" near start, outer sep=\outsepd, \snls]  \arrow[d, "f", -stealth]  
  & 2  \arrow[d, swap, "\overline{b}", stealth-] \arrow[dl, "e_\bullet", near end, outer sep=\outsepe, -stealth]
  \\
   4 
   & 3  \arrow[l, "c", swap, \snsl] 
\end{tikzcd}

 &\multicolumn{2}{c}{
 \begin{aligned}
  \overline{b} &\!=\! a^2b \!+\! ad\! -b\\
\widehat{a} &\!=\! \overline{b}d\! +\! ab\overline{b}\!-\!a\\
 \widehat{d} &\!=\! d\!+\!ab\\
 \end{aligned}
 }
& &\multicolumn{2}{c}{
\hspace{-20pt}
 \begin{aligned}
\overline{f} \,&\!=\! f\! +\! cd\! +\! abc \\
f_\bullet &\!=\! a \overline e \!-\! f\\
 f_{\nsquare} &\!=\! a \overline e \!-\! cd \!-f \\
 &\!=\!ae \!+\! a^2 \overline{f}\! -\! \overline{f}
 \end{aligned}
 }
 
&  
\begin{tikzcd}[arrows={-stealth}, sep=\sepps]
  1  \arrow[r,  "a", \snsl] \arrow[dr, "\widehat{d}" near start, outer sep=\outsepd, \snls]  \arrow[d, "\overline{f}", \snsl]  
  & 2  \arrow[d, swap, "\overline{b}", stealth-] \arrow[dl, "e", near end, outer sep=\outsepe, \snsl]
  \\
   4 
   & 3  \arrow[l, "c", swap, \snls] 
\end{tikzcd}
 & \\
 
 \\[-8pt]
\vmutation{1}   & 
\multicolumn{5}{c}{\begin{aligned}  \end{aligned} }& 
\vmutation{1}\\

\begin{tikzcd}[arrows={-stealth}, sep=\sepps]
  1  \arrow[r,  "a", \snls] \arrow[dr, "\widehat{d}" near start, outer sep=\outsepd, \snsl]  \arrow[d, "f", \snsl]  
  & 2  \arrow[d, swap, "b", \snls] \arrow[dl, "\overline e", near end, outer sep=\outsepe, \snls]
  \\
   4 
   & 3  \arrow[l, "c", swap, \snsl] 
\end{tikzcd}
 &\multicolumn{2}{c}{
 \begin{aligned}
 \overline e \,\,&\!=\! e \!+\! \overline{b} c \!+ \!af\\
 e_\bullet &\!=\!e\!+\!\overline{b} c\\
 e_\circ &\!=\! e \!+\! \overline{b} c \!+\! af \!-\!bc 
 \end{aligned}
 \hspace{-15pt}
 }
& &\multicolumn{2}{c}{
 \begin{aligned}
 \overline c \,\,&\!= \!ab\overline{f}\! +\!d\overline{f}\!-\! c \\ 
 c_\bullet &\!= \!ab\overline{f}\! +\!d\overline{f}\!+\!be \!-\! c \\
 c_\circ &\!= \!ab\overline{f}\!+\!be \!-\! c \\
 \end{aligned}
 }
 
&  
\begin{tikzcd}[arrows={-stealth}, sep=\sepps]
  1  \arrow[r,  "a", \snls] \arrow[dr, "\widehat{d}" near start, outer sep=\outsepd, \snsl]  \arrow[d, "\overline{f}", \snls]  
  & 2  \arrow[d, swap, "b", \snls] \arrow[dl, "e", near end, outer sep=\outsepe, \snsl]
  \\
   4 
   & 3  \arrow[l, "\overline c",  swap, \snsl] 
\end{tikzcd}

 & \\ 
\\[-8pt]
\vmutation{2}   & & & & & & \vmutation{2} \\[-6pt]
\\
\begin{tikzcd}[arrows={-stealth}, sep=\sepps]
  1  \arrow[r,  "a", \snsl] \arrow[dr, "d" near start, outer sep=\outsepd, \snsl]  \arrow[d, "f_\bullet", \snls]  
  & 2  \arrow[d, swap, "b", \snsl] \arrow[dl, "\overline e", near end, outer sep=\outsepe, \snsl]
  \\
   4 
   & 3  \arrow[l, "c", swap, \snsl] 
\end{tikzcd}

& \shortmutation {3}
& 
\begin{tikzcd}[arrows={-stealth}, sep=\sepps]
  1  \arrow[r,  "a", \snsl] \arrow[dr, "d" near start, outer sep=\outsepd, \snls]  \arrow[d, "f_{\ssquare}", \snls]  
  & 2  \arrow[d, swap, "b", \snls] \arrow[dl, "e_\circ", near end, outer sep=\outsepe, \snsl]
  \\
   4 
   & 3  \arrow[l, "c", swap, \snls] 
\end{tikzcd}
& \shortmutation 2 
&  
\begin{tikzcd}[arrows={-stealth}, sep=\sepps]
  1  \arrow[r,  "a", \snls] \arrow[dr, "d" near start, outer sep=\outsepd, \snls]  \arrow[d, "\overline{f}", \snsl]  
  & 2  \arrow[d, swap, "b", \snsl] \arrow[dl, "e_\circ", near end, outer sep=\outsepe, \snls]
  \\
   4 
   & 3  \arrow[l, "c_\circ", swap, \snsl] 
\end{tikzcd} 
& \shortmutation 1
&  

\begin{tikzcd}[arrows={-stealth}, sep=\sepps]
  1  \arrow[r,  "a", \snsl] \arrow[dr, "d" near start, outer sep=\outsepd, \snsl]  \arrow[d, "\overline{f}", \snls]  
  & 2  \arrow[d, swap, "b", \snsl] \arrow[dl, "e", near end, outer sep=\outsepe, \snls]
  \\
   4 
   & 3  \arrow[l, "c_\bullet",  swap, \snsl] 
\end{tikzcd}

\end{array}
\hspace{-22pt}{\ }
\end{equation*}
\vspace{-.1in}
\caption{
Fully generic mutation cycle of length~10 involving 4-vertex quivers.  \linebreak[3]
The integer parameters $a,b,c,d,e,f \geq 2$ can be chosen arbitrarily. 
For ease of notation, define the (positive) weights $\overline{b}=a^2b + ad -b$, $\overline{f}=f+cd+abc$ and $\overline{e}=e+\overline{b}c + af$.
The remaining weights can be expressed in terms these parameters, 
as shown in the center of the diagram.
}
\label{fig:generic-10-cycle-allmutations}
\vspace{-.2in}
\end{figure}

\newpage

\begin{example}
\label{eg:7-cycle}
Figure~\ref{fig:7-cycle} shows a 5-vertex quiver that lies on a fully generic mutation cycle of length~7. 
\end{example}

\begin{figure}[ht]
\vspace{-.1in}
\begin{equation*}
\begin{tikzcd}[arrows={-stealth}, sep=3.5em]
  1  \arrow[rr,  "a"] \arrow[drr, "e",  near start, outer sep=-1]  \arrow[ddr, "h", swap, pos=0.6, outer sep=-1.8]  \arrow[d, "j", swap, outer sep=-0.5]
  & & 2  \arrow[d, "b"] \arrow[ddl, "f+di", swap, near end, outer sep=-1.8] \arrow[dll, "i", swap, near end, outer sep=-1, stealth-]
  \\
   5 
  & & 3  \arrow[ll, "g", swap, stealth-] \arrow[dl, "c+gd", outer sep=-1.8]\\
  & 4 \arrow[ul, "d",  near end, outer sep=-1.8]
\end{tikzcd}
\end{equation*}
\vspace{-.1in}
\caption{
A 5-vertex quiver lying on a fully generic mutation cycle of length~7.
Apply mutations at $1, 5, 2, 3, 5, 4, 5$, in this order, to return to the original quiver. 
}
\label{fig:7-cycle}
\vspace{-.2in}
\end{figure}
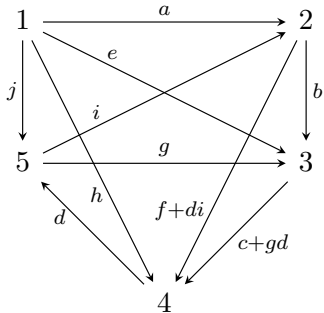

\begin{remark}
The constructions in Examples~\ref{eg:6-cycle-vortices} and~\ref{eg:7-cycle} are special cases (for $n=4$ and $n=5,$ respectively) of a general construction of a mutation cycle of $n$-vertex quivers. We next sketch this construction.

Start with $n-1$ integers $q_{in} \ge 2$, for $i=1,\dots,n-1$.
Pick integers $a$ and~$b$ satisfying $1 \!<\! a \!<\! b \!< \!n\!-\!1$.
Create an acyclic quiver $\tilde Q$ on the vertices $1,\dots,n\!-\!1$, with standard orientation and very large weights, 
say, bigger than any product $q_{in} q_{jn}$. 
Construct the quiver $Q$ as follows.
Set $Q|_{[1,n-1]} = \tilde Q$.
Set $n\stackrel{q_{in}}{\longrightarrow} i$ if $a<i\le b$; set $i\stackrel{q_{in}}{\longrightarrow} n$ otherwise. 
Then quiver~$Q$ lies on the following mutation cycle: 
\[
\T{n (n-1) \cdots (b+1) n b \cdots (a+1) n a \cdots 1}{Q} = Q.
\]
In this construction, every mutation $\mu[i]$ with $i \neq n$ is a sink/source mutation, whereas every mutation at $n$ is not.
\end{remark}

\providecommand{\bysame}{\leavevmode\hbox to3em{\hrulefill}\thinspace}
\providecommand{\MR}{\relax\ifhmode\unskip\space\fi MR }
\providecommand{\MRhref}[2]{%
  \href{http://www.ams.org/mathscinet-getitem?mr=#1}{#2}
}
\providecommand{\href}[2]{#2}

\end{document}